\newcommand\numberthis{\addtocounter{equation}{1}\tag{\theequation}}
\newcommand{\Hess}{\operatorname{Hess}}
\newcommand{\id}{\operatorname{Id}}
\newcommand{\ud}{\,\mathrm{d}}	
\newcommand{\dWs}{\ud W_s}
\newcommand{\eps}{{\ensuremath{\varepsilon}}}
\newcommand{\C}{\operatorname{C}}
\crefname{equation}{}{}
\newtheorem{lemma}{Lemma}[section]
\newtheorem{theorem}[lemma]{Theorem}
\newtheorem{prop}[lemma]{Proposition}
\newtheorem{corollary}[lemma]{Corollary}
\newtheorem{sett}[lemma]{Setting}
\crefname{subsection}{Subsection}{Subsections}
\crefname{enumi}{item}{items}
\DeclareMathAlphabet{\mathscr}{LS1}{stixscr}{m}{n}
\newcommand{\defeq}{\curvearrowleft}
\newcommand{\vast}[2]{\left#2 \rule{0pt}{#1}\kern-.25ex\right.}
\newcommand{\R}{\mathbbm{R}}
\newcommand{\N}{\mathbbm{N}}
\newcommand{\1}{\mathbbm{1}}
\renewcommand{\P}{\mathbbm{P}}
\newcommand{\E}{\mathbbm{E}}
\newcommand{\funcH}[2]{{\left\vert\kern-0.25ex\left\vert\kern-0.25ex\left\vert #1     \right\vert\kern-0.25ex\right\vert\kern-0.25ex\right\vert}_{2,#2}}
\newcommand{\funcN}[2]{{\left\vert\kern-0.25ex\left\vert\kern-0.25ex\left\vert #1     \right\vert\kern-0.25ex\right\vert\kern-0.25ex\right\vert}_{1,#2}}
\NewDocumentCommand{\fabs}{sO{}m}{%
  {\IfBooleanTF{#1}
    {\fabsaux{\left|}{\right|}{#3}}
    {\fabsaux{#2|}{#2|}{#3}}}
}
\newcommand{\fabsaux}[3]{\mathpalette\fabsaux@i{{#1}{#2}{#3}}}
\newcommand{\fabsaux@i}[2]{\fabsaux@ii#1#2}
\newcommand{\fabsaux@ii}[4]{%
  \sbox\z@{$\m@th#1#2#4#3$}%
  \sbox\tw@{$\m@th\|$}%
  \mathopen{\hbox to\wd\tw@{\hss\vrule height \ht\z@ depth \dp\z@ width .3\wd\tw@\hss}}%
  \mkern-2mu #4 \mkern-2mu 
  \mathclose{\hbox to\wd\tw@{\hss\vrule height \ht\z@ depth \dp\z@ width .3\wd\tw@\hss}}%
}
\NewDocumentCommand{\ffabs}{som}{%
  {\IfBooleanTF{#1}
    {\fabsaux{\left|}{\right|}{#3}}
    {\IfNoValueTF{#2}
      {\fnabsaux{|}{|}{#3}}
      {\fabsaux{#2|}{#2|}{#3}}
    }
  }
}
\newcommand{\fnabsaux}[3]{\mathpalette\fnabsaux@i{{#1}{#2}{#3}}}
\newcommand{\fnabsaux@i}[2]{\fnabsaux@ii#1#2}
\newcommand{\fnabsaux@ii}[4]{%
  \sbox\z@{$\m@th#1#2#4#3$}%
  \sbox\tw@{$\m@th\|$}%
  \mathopen{\hbox to\wd\tw@{\hss\vrule height .8\ht\z@ depth .5\dp\z@ width .3\wd\tw@\hss}}%
  \mkern-2mu #4 \mkern-2mu 
  \mathclose{\hbox to\wd\tw@{\hss\vrule height .8\ht\z@ depth .5\dp\z@ width .3\wd\tw@\hss}}%
}
\title
{Strong convergence rates for full-discrete approximations\\
of stochastic Burgers equations with multiplicative noise}
\author{Martin Hutzenthaler$^{1}$, Robert Link$^{2}$, \\
\small{$^1$ Faculty of Mathematics, University of Duisburg-Essen,}\\
\small{Essen, Germany; e-mail: \texttt{martin.hutzenthaler}\textcircled{\texttt{a}}\texttt{uni-due.de}}\\
\small{$^2$ Faculty of Mathematics, University of Duisburg-Essen,}\\
\small{Essen, Germany; e-mail: \texttt{robert.link}\textcircled{\texttt{a}}\texttt{uni-due.de}}
}
\begin{document}
\makeatletter
\let\@Xmakefnmark\@makefnmark
\let\@Xthefnmark\@thefnmark
\let\@makefnmark\relax
\let\@thefnmark\relax
\@footnotetext{\emph{AMS 2010 subject classification:} 60H15; 65C30}
\@footnotetext{\emph{Keywords and phrases:}
stochastic partial differential equations, exponential moments, stochastic Burgers equations,
tamed euler approximations, strong convergence rates}
\let\@makefnmark\@Xmakefnmark
\let\@thefnmark\@Xthefnmark
\makeatother

\maketitle
\begin{abstract}
  In this article we establish strong convergence rates on the whole probability space
  for explicit full-discrete approximations of
 stochastic Burgers equations with multiplicative trace-class noise.
  The key step in our proof is to establish uniform exponential moment estimates for the numerical approximations.
\end{abstract}

\section{Introduction}
Stochastic evolution equations (SEEs) are frequently used for modeling all
kinds of spatial dynamics with stochastic influence.
For example, stochastic Navier-Stokes equations are used as model 
for
the motion of fluid parcels in turbulent and randomly forced fluid flows.
Nevertheless, to the best of our knowledge there exist no results in the scientific literature
establishing strong convergence rates on the whole probability space for an
explicit space-time discrete
approximation method
 for SEEs with a non-globally monotone nonlinearity and multiplicative noise
such as
 stochastic Burgers equations, stochastic Navier-Stokes equations,
Cahn-Hilliard-Cook equations, or
stochastic Kuramoto-Sivashinsky equations.
The key contribution of our following main result is to partially solve this problem
for stochastic Burgers equations.
\begin{theorem}\label{thm:intro}
  Let $H=L^2((0,1);\R)$, let $A\colon D(A)\subseteq H\to H$ be the Laplace operator
with zero Dirichlet boundary conditions on H,
  let $T\in(0,\infty)$, $c\in\R$, $\xi \in D(A)$, 
  let $(e_k)_{k\in\N}\subseteq H$ satisfy for all $k\in\N=\{1,2,\ldots\}$
  almost everywhere that $e_k(\cdot)=\sqrt{2}\sin(k\pi(\cdot))$,
  let $(P_N)_{N\in\N}\subseteq L(H)$ satisfy for all $N\in\N$, $x\in H$ that
 $P_N(x)=\sum_{k=1}^N\langle e_k,x\rangle_H e_k$,
  let $F\colon D((-A)^{\frac{1}{2}})\to H$ satisfy for all $x\in D((-A)^{\frac{1}{2}})$ that $F(x)=cx'x$,
  let $(\Omega,\mathcal{F},\P)$ be a probability space with a normal filtration $\mathbb{F}=(\mathbb{F}_t)_{t\in[0,T]}$,
  let $(W_t)_{t\in[0,T]}$ be an $\textup{Id}_H$-cylindrical $\mathbb{F}$-Wiener process,
  let $Q\in L(H)$ be a trace-class operator,
	let $X\colon[0,T]\times\Omega\to D((-A)^{\frac{1}{2}})$ 
	be an adapted stochastic process with continuous sample paths such that for all $t\in[0,T]$
    it holds a.s.\ that
    \begin{equation}  \begin{split}
		\label{eq: Burgers eq}
      X_t=e^{tA}\xi+\int_0^t e^{(t-s)A}F(X_s) \ud s+\int_0^t e^{(t-s)A} \sin(X_s)Q \dWs,
    \end{split}     \end{equation}
and
  for all $N,n\in\N$ let $Y^{N,n}\colon[0,T]\times\Omega\to H$ be an adapted stochastic process
  satisfying that for all $k\in\{0,1,\ldots,N-1\}$, $t\in(\frac{kT}{n},\frac{(k+1)T}{n}]$ it holds a.s.\ that
  $Y_{0}^{N,n}=P_N(\xi)$ and
  \begin{equation}  \begin{split}\label{eq:intro.approximations}
    Y_t^{N,n}={}&e^{(t-\frac{kT}{n})A}Y_{\frac{kT}{n}}^{N,n}
+\1_{\{\|(-A)^{\frac{1}{2}}Y_{\frac{kT}{n}}^{N,n}\|_H^2\leq  (n/T)^{\nicefrac 14}\}}
  \int_{\frac{kT}{n}}^{t}e^{(t-s)A}P_N F(Y_{\frac{kT}{n}}^{N,n})\, \ud s 
	\\ &
	+\frac
		{
			\1_{\{\|(-A)^{\frac{1}{2}}Y_{\frac{kT}{n}}^{N,n}\|_H^2\leq  (n/T)^{\nicefrac 14}\}}
			\int_{\frac{kT}{n}}^{t}e^{(t-s)A}P_N\sin(X_s)Q \, P_N \dWs}
		{
			1+
			\big \| 
				\int_{\frac{kT}{n}}^{t}e^{(t-s)A}P_N \sin(X_s)Q \, P_N \dWs
			\big \|^2_H
		}.
  \end{split}     \end{equation}
  Then
     for all $p\in(0,\infty)$ there exists $C\in\R$ such that for all $N,n\in\N$ it holds that
     \begin{equation}  \begin{split}\label{eq:intro.estimate}
       \sup_{t\in[0,T]}\Big(\E\Big[\big\|X_t-Y_t^{N,n}\big\|^p_H\Big]\Big)^{\frac{1}{p}}\leq
       C\big(N^{-1}+n^{-\frac{1}{2}}\big).
     \end{split}     \end{equation}
\end{theorem}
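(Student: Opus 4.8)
The plan is to combine a uniform\nobreakdash-in\nobreakdash-$(N,n)$ a priori estimate for the approximations $Y^{N,n}$ --- most importantly exponential moment bounds for $\|(-A)^{\nicefrac12}Y^{N,n}_t\|_H^2$ --- with a Gronwall-type comparison of $Y^{N,n}$ with the exact solution $X$, carried out after splitting the probability space according to whether all the indicator functions in \eqref{eq:intro.approximations} are equal to one or not. A structural feature I would exploit throughout is that the stochastic integrand in \eqref{eq:intro.approximations} involves $\sin(X_s)$, i.e.\ the \emph{exact} solution, so that the stochastic increments appearing in the scheme depend only on $X$ and $W$ and hence act, in the recursion for $Y^{N,n}$, as an exogenous forcing with controllable moments. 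As a first step I would record the standard regularity and moment estimates for $X$: for every $p\in(0,\infty)$ and every $\rho\in[0,\nicefrac12]$ one has $\sup_{t\in[0,T]}\E[\|(-A)^{\rho}X_t\|_H^p]<\infty$, the temporal regularity bound $\E[\|X_t-X_s\|_H^p]\le C|t-s|^{\nicefrac p2}$ (with a weaker exponent in the $D((-A)^{\nicefrac12})$-norm), and exponential moment bounds such as $\sup_{t\in[0,T]}\E[\exp(c\|(-A)^{\nicefrac12}X_t\|_H^2)]<\infty$. These rely on $\xi\in D(A)$, on the dissipativity of $A$, on the fact that $x\mapsto\sin(x)Q$ is globally bounded (since $|\sin|\le1$ and $Q$ is trace class and thus Hilbert--Schmidt), and on the Burgers energy identities $\langle x,F(x)\rangle_H=0$ and $\langle(-A)x,F(x)\rangle_H=\tfrac{c}{2}\int_0^1(x'(r))^3\,\ud r$, the cubic term being absorbed via the coercivity $\|(-A)x\|_H^2\geq\|(-A)^{\nicefrac12}x\|_H^4/\|x\|_H^2$.

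The key step --- and the main obstacle --- is the uniform exponential moment bound
\[
  \sup_{N,n\in\N}\E\!\Big[\exp\!\Big(c\max\nolimits_{k\in\{0,1,\dots,n\}}\|(-A)^{\nicefrac12}Y^{N,n}_{\nicefrac{kT}{n}}\|_H^2\Big)\Big]<\infty .
\]
I would prove this through a one-step analysis of $V^{N,n}_k:=\|(-A)^{\nicefrac12}Y^{N,n}_{\nicefrac{kT}{n}}\|_H^2$. On the event $\{V^{N,n}_k\le (n/T)^{\nicefrac14}\}$ the scheme performs an exponential-Euler step of the Burgers drift plus the tamed stochastic increment; the point is that the truncation threshold $(n/T)^{\nicefrac14}$ grows slowly enough in $n$ that, for a step of length $T/n$, the Burgers energy estimate of the previous paragraph still closes, with the dissipation controlling the cubic term uniformly in $(N,n)$. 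On the complementary event the drift is switched off, so the step is a contraction under $e^{(\cdot)A}$ plus the tamed stochastic increment, which is bounded by $\tfrac12$ in the $H$-norm by construction (because $r/(1+r^2)\le\tfrac12$) and which, conditionally on $\mathbb{F}_{\nicefrac{kT}{n}}$, has all exponential moments uniformly in $(N,n)$ since it only involves $\sin(X_\cdot)$ and $W$ with $\|\sin(X_s)\|_{L(H)}\le1$ and $Q$ Hilbert--Schmidt. Combining the two cases produces a recursive estimate of the form $\E[\exp(cV^{N,n}_{k+1})\mid\mathbb{F}_{\nicefrac{kT}{n}}]\le\exp((1+CT/n)\,cV^{N,n}_k+CT/n)$, which I would iterate over $k\in\{0,\dots,n-1\}$; the analogous bound at non-grid times follows from the explicit formula \eqref{eq:intro.approximations}. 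This also yields the corresponding uniform polynomial moment bounds for $\sup_{t\in[0,T]}\|(-A)^{\nicefrac12}Y^{N,n}_t\|_H$ and $\sup_{t\in[0,T]}\|Y^{N,n}_t\|_H$.

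Next, fixing $p\in(0,\infty)$ and $t\in[0,T]$, let $G_{N,n}:=\{\max_{0\le k\le n}\|(-A)^{\nicefrac12}Y^{N,n}_{\nicefrac{kT}{n}}\|_H^2\le (n/T)^{\nicefrac14}\}$, on which every indicator in \eqref{eq:intro.approximations} equals one. On $G_{N,n}$ the process $Y^{N,n}$ solves a genuine space--time discrete mild equation, and I would decompose $X_t-Y^{N,n}_t$, writing $\lfloor s\rfloor_n$ for the largest grid point $\le s$ and $(\lambda_k)_{k\in\N}$ for the eigenvalues of $-A$, into: (i) the initial projection error $(\id-P_N)e^{tA}\xi=O(\lambda_{N+1}^{-1})=O(N^{-2})$ since $\xi\in D(A)$; (ii) the spatial drift error $\int_0^t(\id-P_N)e^{(t-s)A}F(X_s)\,\ds$, which is $O(\lambda_{N+1}^{-1})$ via $\|(\id-P_N)e^{rA}\|_{L(H)}=e^{-\lambda_{N+1}r}$, $\int_0^\infty e^{-\lambda_{N+1}r}\,\ud r=\lambda_{N+1}^{-1}$ and $\|F(X_s)\|_H\lesssim\|(-A)^{\nicefrac12}X_s\|_H^2$; (iii) the temporal drift error from replacing $e^{(t-s)A}F(X_s)$ by $e^{(t-\lfloor s\rfloor_n)A}P_NF(X_{\lfloor s\rfloor_n})$, split into a semigroup-shift part and a $F(X_s)-F(X_{\lfloor s\rfloor_n})$ part, estimated to order $n^{-\nicefrac12}$ using the temporal regularity of $X$, martingale (mean-zero) structure and It\^o's isometry as for standard exponential-Euler schemes; (iv) the feedback term $\int_0^te^{(t-\lfloor s\rfloor_n)A}P_N(F(X_{\lfloor s\rfloor_n})-F(Y^{N,n}_{\lfloor s\rfloor_n}))\,\ds$, absorbed by a Gronwall argument using the local Lipschitz bound $\|F(x)-F(y)\|_H\lesssim(\|(-A)^{\nicefrac12}x\|_H+\|(-A)^{\nicefrac12}y\|_H)\|(-A)^{\nicefrac12}(x-y)\|_H$ together with a smoothing estimate $\|(-A)^{-\beta}(F(x)-F(y))\|_H\lesssim(1+\|(-A)^{\nicefrac12}x\|_H+\|(-A)^{\nicefrac12}y\|_H)\|x-y\|_H$ for a suitable $\beta\in(0,\nicefrac12)$, so that the $e^{(\cdot)A}$-smoothing converts this into a closed inequality for $\sup_{s\le t}\E[\1_{G_{N,n}}\|X_s-Y^{N,n}_s\|_H^p]$ with a Gronwall constant that is uniform in $(N,n)$ by the a priori estimates; and (v) the stochastic error, combining the Galerkin cut-off $\sin(X_s)Q-P_N\sin(X_s)QP_N$ (of order $N^{-1}$, the dominant contribution, via $\sum_{j}\sum_{k>N}\lambda_k^{-1}\langle e_k,\sin(X_s)Qe_j\rangle^2\le\lambda_{N+1}^{-1}\|Q\|_{\mathrm{HS}}^2$ and the standing assumptions on $Q$), the semigroup shift (order $n^{-\nicefrac12}$), and the taming correction $\tfrac{Z}{1+\|Z\|_H^2}-Z=-\tfrac{\|Z\|_H^2Z}{1+\|Z\|_H^2}$, whose per-step $L^p(\Omega;H)$-size is $O((T/n)^{\nicefrac32})$ and hence $O(n^{-\nicefrac12})$ after summing the $n$ steps. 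Altogether this gives $\E[\1_{G_{N,n}}\|X_t-Y^{N,n}_t\|_H^p]\le C(N^{-1}+n^{-\nicefrac12})^p$.

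Finally, on $G_{N,n}^c$ Markov's inequality together with the exponential moment bound of the second step yields $\P(G_{N,n}^c)\le\exp(-c(n/T)^{\nicefrac14})\sup_{N,n\in\N}\E[\exp(c\max_k\|(-A)^{\nicefrac12}Y^{N,n}_{\nicefrac{kT}{n}}\|_H^2)]$, which decays faster than any power of $n^{-1}$; combined with the uniform polynomial moments of $X$ and $Y^{N,n}$ and H\"older's inequality, $\E[\1_{G_{N,n}^c}\|X_t-Y^{N,n}_t\|_H^p]$ is $o((N^{-1}+n^{-\nicefrac12})^p)$ uniformly in $t$. Adding the two contributions, taking $p$\nobreakdash-th roots and the supremum over $t\in[0,T]$ produces \eqref{eq:intro.estimate}. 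The decisive difficulty is the second step: making the one-step exponential-moment recursion close uniformly in both discretization parameters, which hinges on the precise interplay between the slowly growing truncation threshold $(n/T)^{\nicefrac14}$, the strong dissipativity required in the Burgers energy estimate, and the uniform boundedness and exponential integrability of the tamed multiplicative-noise increments built from $\sin(X_\cdot)$.
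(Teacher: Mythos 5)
The key step you identify as the ``decisive difficulty'' --- the uniform exponential moment bound for $\max_{k}\|(-A)^{1/2}Y^{N,n}_{kT/n}\|_H^2$ --- is in fact unavailable, and the one-step recursion you sketch does not close. For the Burgers nonlinearity $F(x)=cx'x$ one has $\langle(-A)x,F(x)\rangle_H = -\tfrac{c}{2}\int_0^1 (x'(r))^3\,\ud r$, and by Gagliardo--Nirenberg this cubic term is of order $\|(-A)^{1/2}x\|_H^{5/2}\|(-A)x\|_H^{1/2}$. After using Young's inequality to absorb the factor $\|(-A)x\|_H^{1/2}$ into the dissipative term $-\|(-A)x\|_H^2$, what remains is of order $V^{5/3}$ where $V=\|(-A)^{1/2}x\|_H^2$, i.e.\ strictly superlinear in the quantity whose exponential moments you want. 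Even on the event $V\le (n/T)^{1/4}$ this gives a per-step drift of order $(T/n)\,(n/T)^{5/12}=(T/n)^{7/12}$, which summed over $n$ steps produces $T^{7/12}\,n^{5/12}\to\infty$; thus the claimed one-step bound $\E[\exp(cV_{k+1})\mid\mathbb{F}_{kT/n}]\leq\exp((1+CT/n)\,cV_k+CT/n)$ fails to hold uniformly in $n$. The analogous claim $\sup_{t}\E[\exp(c\|(-A)^{1/2}X_t\|_H^2)]<\infty$ for the exact solution faces the same obstruction; the clean cancellation $\langle x,F(x)\rangle_H=0$ that makes exponential Lyapunov arguments work for Burgers is only available in the $H$-norm, not in $H_{1/2}$.

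The paper keeps two different ingredients separate that your proposal conflates. For the Gronwall/perturbation constant it proves $\sup_{N,n}\sup_t\E[\exp(V(t,Y^{N,n}_t)+\int_0^t\1_{D_n}(Y^{N,n}_{\llcorner s\lrcorner_n})\overline V(Y^{N,n}_s)\,\ds)]<\infty$ with $V(t,x)=e^{-2\eta^2t}(\|x\|_H^2+1)$ and $\overline V(x)=e^{-2\eta^2 T}\eps\|x\|_{H_{1/2}}^2$ (\cref{prop: expo Momente Burger}), i.e.\ exponential moments of the $H$-norm squared plus the time-\emph{integrated} $H_{1/2}$-norm squared; this is exactly what is needed to control the exponential weight in the perturbation estimate of \cref{l: 2.10 in Hutz Jen}, which replaces your ad hoc Gronwall argument on $G_{N,n}$. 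For the bad event $G_{N,n}^c$, the paper does not use an exponential tail at all: it uses Markov's inequality with \emph{polynomial} moments of $\|Y^{N,n}\|_{H_{1/2}}$ of order $8(p+1)$, which are uniformly bounded by the separate bootstrap \eqref{t: Lp H 1/2 estimate}; since the threshold $(n/T)^{1/4}$ grows polynomially in $n$, polynomial tails already yield $\|\1_{G_{N,n}^c}\|_{L^{2p}}\lesssim n^{-1/2}$, which suffices. Two further points worth flagging: the paper compares $Y^{N,n}$ first with the Galerkin solution $X^N$ and then $X^N$ with $X$ (\cref{cor: spatial conv}), rather than directly with $X$; and the observation that the noise coefficient $\sin(X_s)$ makes the stochastic increments exogenous, while a fair reading of the displayed approximation, does not match the scheme analysed in the body of the paper, where the diffusion coefficient is evaluated at $Y^{N,n}_{\llcorner s\lrcorner_n}$ --- so the simplification it appears to buy is not actually available in the proof one is asked to supply.
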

\noindent
Theorem \ref{thm:intro} follows from Corollary 
\ref{cor:comb_res}\footnote{with $B \curvearrowleft (H \ni x \mapsto \sin(x)Q \in HS(H,H))$}.

The process $X$ in Theorem \ref{thm:intro} is a mild solution of the stochastic Burgers equation
\begin{equation}  \begin{split}
  dX_t(x)=\xi(x)+\Big( X_t''(x)+cX_t'(x) X_t(x)\Big)\,dt +\sin(X_t(x)) d(QW_t)(x),\quad t\in[0,T],x\in(0,1)
\end{split}     \end{equation}
with zero Dirichlet boundary conditions driven by the trace-class noise $QW$.
We discretize space with Galerkin projections $P_N$, $N\in\N$.
Time is discretized with uniform grids. 
We note that the discretization method \eqref{eq:intro.approximations} does not discretize the semigroup
$(e^{tA})_{t\in[0,T]}$. The reason for this is that
exponential Euler approximations have higher convergence rates;
see \cite{jentzen2009overcoming}, and \cite[Section 4]{jentzen2009numerical}
for notes on the implementation.
The indicator function in \eqref{eq:intro.approximations} has no effect on most trajectories 
(since $\sup_{t\in[0,T]}\|(-A)^{\frac{1}{2}}X_t\|_H<\infty$) and prevents the approximation from
exploding on exponentially unlikely events which could lead to divergence of moments; see \cite{hjk11,HutzenthalerJentzenKloeden2013,beccari2019strong}.
The denominator in the noise term in \eqref{eq:intro.approximations} is typically close to $1$
and ensures that the normally distributed Brownian increments do not result in infinite exponential moments;
cf.\ \cite[Section 5]{hutzenthaler2018exponential}.
The error estimate \eqref{eq:intro.estimate} shows that the approximations in \eqref{eq:intro.approximations}
converge in $L^2$ with spatial rate $1-$ and with temporal rate $1/2$ to the solution process $X$.
Since the computational effort in computing $Y^{N,n}$ is of order $N\cdot n$, we obtain an overall
rate 
$1/3$ in terms of computational cost
(choose $N=K^{\frac{1}{3}}$ and $n=K^{\frac{2}{3}}$ to get total cost $K$ and an error of order $K^{-\frac{1}{3}}$).
This strong error estimate together with the multilevel Monte Carlo 
method in \cite{giles2008multilevel} also yields convergence rates for
multilevel Monte Carlo approximations of expectations of functions of
the solution of \eqref{eq: Burgers eq}. This together with the representation result in
\cite[Theorem 1.1]{HutzenthalerLink2022} yields
convergence rates of approximations of viscosity solutions of the
Kolmogorov backward equation associated with \eqref{eq: Burgers eq}.

The literature on strong convergence rates for stochastic differential equations
with superlinearly
growing coefficients grows steadily in the last decade.
Moments of 
explicit (exponential) Euler approximations diverge in this case;
see \cite{hjk11,HutzenthalerJentzenKloeden2013,beccari2019strong}.
To overcome this issue, tamed Euler approximations were introduced
in \cite{HutzenthalerJentzenKloeden2012,HutzenthalerJentzen2015}.
Subsequently further tamed Euler approximations were introduced and analyzed;
see, e.g., \cite{
chassagneux2016explicit,
hutzenthaler2022stopped,
liu2013strong,
mao2015truncated,
Sabanis2013ECP,
Sabanis2016,
zong2014convergence}
for stochastic ordinary differential equtaions
and, e.g., \cite{BeckerGessJentzenKloeden2017,
  BeckerJentzen2019,
  GyongySabanisSiska2016,
  hutzenthaler2018strong,
  jentzen2020strong,
  jentzen2019strong,
  mazzonetto2020existence} for SEEs.
Strong convergence rates for explicit time discrete and explicit space-time discrete numerical methods for SEEs with a non-globally Lipschitz continuous but globally monotone nonlinearity have been derived in, e.g.,
\cite{BeckerGessJentzenKloeden2017,
BeckerJentzen2019,
BrehierCuiHong2018,
jentzen2020strong,
Wang2018}.
Strong convergence rates
 for
approximations of SEEs with non-globally monotone coefficients
on suitable large subsets of the probability space
(sometimes referred to as semi-strong convergence rates)
have been established in, e.g.,
\cite{bessaih2014splitting,
CarelliProhl2012,
furihata2018strong}.
Semi-strong convergence yields convergence in probability.
However, semi-strong convergence does not imply strong convergence.
Moreover,
strong
convergence with rates have been established
for
fully drift-implicit Euler
approximations
 in the case of 2D stochastic
Navier-Stokes equations with additive trace-class noise
by exploiting a rather specific property (see Bessaih \& Millet~\cite[(2.4) in Section~2]{bessaih2019strong})
of the Navier-Stokes-nonlinearity;
see \cite{bessaih2019strong,bessaih2021space}.
These fully drift-implicit Euler approximations
involve solutions of nonlinear equations that are not known to be unique
and it is unclear how to efficiently solve these nonlienar equations numerically.
Strong convergence rates for nonlinear-implicit numerical schemes for SEEs with non-globally monotone coefficients have also been analyzed in
\cite{cui2018strong, cui2019analysis, cui2018strong,cui2019strong,cui2019energy}
(cf.\ also, e.g., \cite{cui2017strong,yang2017convergence}).
The case of additive trace-class noise is by now better understood.
Theorem 5.9 in \cite{hutzenthaler2019strong} proves strong convergence rates for 
SEEs with additive noise if -- roughly speaking --
the nonlinearity $F$ has has at most one-sided linear growth as function from $H$ to $H$,
the drift $A+F$ is globally one-sided Lipschitz continuous as function
from $H$ to $H_{\frac{1}{2}}$,
and the nonlinearity $F$
is locally Lipschitz continuous with at most polynomially growing local
Lipschitz constant as function from $H$ to $H_{\frac{1}{2}}$.
In particular, \cite[Theorem 1.1]{hutzenthaler2019strong} establishes strong convergence rates
for stochastic Burgers equations with additive trace-class noise.
The method of proof of \cite{hutzenthaler2019strong}
is to subtract the  noise term and to analyze the resulting
Hilbert-space-valued  random partial differential equation.
This approach cannot be extended or adapted to the case of non-additive noise.
For this reason, it remained an open problem how to establish strong convergence rates
in the multiplicative noise case.

The remainder of this article is organized as follows. In \cref{sec:exponential}
we establish exponential moment estimates for SPDEs
which are a central ingredient in our analysis.
\Cref{sec:moments} provides (exponential) moment estimates for tamed exponential Euler approximations.
\Cref{sec:perturbation} provides a perturbation estimate for stochastic differential equations.
Finally, in \cref{sec:rate.Burgers} we apply these results to
the case of stochastic Burgers equations with zero Dirichlet boundary
conditions and bounded multiplicative noise.
Throughout this article,
we denote by $\infty^0$, $0^0$, $0 \cdot \infty$ the real numbers satisfying that
		$\infty^0=1$, $0^0=1$, and that $0 \cdot \infty=0$.

\label{sec:intro}

	\section{Exponential moment estimates for stochastic integrals and
  perturbed SPDEs}\label{sec:exponential}
	In this section we establish exponential moments bound of
	solutions of perturbed
	SPDEs.
  \Cref{l: Y0 estimate} and \cref{l: F estimate} provide
  elementary semigroup estimates.
  In
	\Cref{l: exp 12 bound} and \Cref{l: exp H bound}
  we derive
	exponential moment bounds for stochastic integrals, which we later need 
	to show the assumptions
	of Lemma \ref{l: basic estimate exp momente}.
	Lemma \ref{lem: Lemma 2.1} then 
	generalizes Corollary 2.2 in Jentzen \& Pu\v{s}nik \cite{JentzenPusnik2018}
	and derives
	an exponential moment estimate for stochastic processes having
	a one step exponential moment bound. 
	Moreover,
	Lemma \ref{l: basic estimate exp momente}
	proves an exponential moment bound for perturbed SPDEs
	and 
	will
	be used to 
	deduce a one step exponential moment bound for tamed exponential Euler approximations.
	%
	%
	%
	%
	\begin{sett}
	\label{set: 1}
		Let $T \in (0,\infty)$,
		let $\theta \subseteq [0,T]$ satisfy that
		$\{0, T\} \subseteq \theta$ and that $\#\theta < \infty$,
		let	$ | \theta | \in [0,T]$ be the real number satisfying that
		\begin{equation}
				|\theta |
			=
				\max\{
					x \in (0,\infty) \colon 
						(
							\exists a,b \in \theta \colon
								[x = b-a \textrm{ and } \theta \cap (a,b) =\emptyset]
						)
				\},
		\end{equation}
		let $\llcorner \cdot \lrcorner_\theta \colon [0,\infty) \to [0,\infty)$
		satisfy for all $t \in (0,\infty)$ that
		$\llcorner t \lrcorner_\theta = \max\{[0,t) \cap \theta\}$,
		and that $\llcorner 0 \lrcorner_\theta =0$,
		let
		$(H, \langle \cdot, \cdot \rangle_H, \| \cdot \|_{H})$ and
		$(U, \langle \cdot, \cdot \rangle_U, \| \cdot \|_{U})$
		be separable $\R$-Hilbert spaces with $\# H \wedge \# U> 1$,
		let $\mathcal{I}, \mathcal{J} \subseteq \N$, 
		let $(\lambda_i)_{i \in \mathcal{I}} \subseteq (-\infty,0)$,
		satisfy that 
		$
			0>\sup_{i \in \mathcal{I}} \lambda_i \geq \inf_{i \in \mathcal{I}} \lambda_i >-\infty,
		$
		let  $(e_i)_{i \in \mathcal{I}} \subseteq H$ be an orthonormal basis of $H$,
		let $A \colon H \to H$ be
		a linear operator
		satisfying for all
		$x \in H$ that
		$A x= \sum_{i \in \mathcal{I}} \lambda_i \langle x, e_i \rangle_{H} e_i$,
		let $\| \cdot \|_{H_r} \colon H \to \R$, $r \in \R$, be a norm satisfying 
		for all $x \in H$ and all $r \in \R$ that
		$\|x\|_{H_r} = \|(-A)^{r} x\|_H$,
		let
		$(\tilde{e}_j)_{j \in \mathcal{J}} \subseteq U$, be an orthonormal basis of $U$,
		let 
		$ ( \Omega, \mathcal{F}, \P ) $
		be a probability space with a normal filtration 
		$ \mathbb{F} =  (\mathbb{F}_t)_{t \in [0,T]}$, 
		and let 
		$
			( W_t )_{ t \in [0,T] } 
		$ 
		be an $ \operatorname{Id}_U $-cylindrical
		$ \mathbb{F} $-Wiener
		process with continuous sample paths.	
	\end{sett}
	\begin{lemma}
	\label{l: Y0 estimate}
		Assume Setting \ref{set: 1} and let
		$x \in H$,
		$\gamma \in [0,1]$, $\delta\in \R$, $t \in [0,T]$.
		Then it holds that
	\begin{equation}
					\big \|
						(e^{tA} -\id_H) 
						x
					\big \|_{H_\delta}
				\leq
					\|x\|_{H_{\delta+\gamma}}
						t^{\gamma}.
		\end{equation}
	\end{lemma}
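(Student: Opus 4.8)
The plan is to reduce everything to a one-dimensional spectral estimate. Since $A$ is diagonal with respect to the orthonormal basis $(e_i)_{i\in\mathcal I}$ with eigenvalues $\lambda_i\in(-\infty,0)$, the operators $e^{tA}$, $\id_H$, and $(-A)^\delta$ all act diagonally, so for $x=\sum_{i\in\mathcal I}\langle x,e_i\rangle_H e_i$ we have
\begin{equation}
  \big\|(e^{tA}-\id_H)x\big\|_{H_\delta}^2
  =\sum_{i\in\mathcal I}(-\lambda_i)^{2\delta}\,\big(e^{t\lambda_i}-1\big)^2\,\langle x,e_i\rangle_H^2 .
\end{equation}
Thus it suffices to show, for each $i$, the scalar inequality $\big|e^{t\lambda_i}-1\big|\le (-\lambda_i)^{\gamma}t^{\gamma}$, since then each summand is bounded by $(-\lambda_i)^{2(\delta+\gamma)}t^{2\gamma}\langle x,e_i\rangle_H^2$ and summing over $i$ yields $\|x\|_{H_{\delta+\gamma}}^2\,t^{2\gamma}$, as claimed.

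First I would handle the scalar claim: for $\mu\in(0,\infty)$ and $s\in[0,\infty)$ one has $0\le 1-e^{-\mu s}\le (\mu s)^{\gamma}$ whenever $\gamma\in[0,1]$. This follows from the elementary bound $1-e^{-u}\le\min\{1,u\}\le u^{\gamma}$ for $u\ge 0$ and $\gamma\in[0,1]$ (the first inequality is standard; the second holds because $u^\gamma\ge u$ when $u\le 1$ and $u^\gamma\ge 1$ when $u\ge 1$). Applying this with $\mu=-\lambda_i\ge 0$ and $s=t$ gives $\big|e^{t\lambda_i}-1\big|=1-e^{t\lambda_i}\le(-\lambda_i t)^{\gamma}=(-\lambda_i)^{\gamma}t^{\gamma}$.

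Then I would assemble the pieces. Plugging the scalar bound into the spectral series gives
\begin{equation}
  \big\|(e^{tA}-\id_H)x\big\|_{H_\delta}^2
  \le t^{2\gamma}\sum_{i\in\mathcal I}(-\lambda_i)^{2(\delta+\gamma)}\langle x,e_i\rangle_H^2
  = t^{2\gamma}\,\big\|(-A)^{\delta+\gamma}x\big\|_H^2
  = t^{2\gamma}\,\|x\|_{H_{\delta+\gamma}}^2 ,
\end{equation}
and taking square roots finishes the proof. One should note the degenerate cases: if $\|x\|_{H_{\delta+\gamma}}=\infty$ there is nothing to prove, and if $t=0$ then $e^{tA}-\id_H=0$ so both sides vanish (using the convention $0^0=1$ only matters formally here). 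There is no real obstacle in this lemma; the only mild point of care is that the series manipulation is legitimate because all terms are nonnegative, so Tonelli/monotone convergence lets us interchange the sum with the bound regardless of whether $x\in D((-A)^{\delta+\gamma})$.
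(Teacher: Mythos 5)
Your proof is correct and is essentially the same argument as the paper's: both reduce to the scalar inequality $u^{-\gamma}(1-e^{-u})\leq 1$ for $u>0$, the paper via the operator-norm bound $\|A^{-\gamma}(e^{tA}-\id_H)\|_{L(H_\delta,H_\delta)} \leq \sup_{\lambda>0}\lambda^{-\gamma}(1-e^{-t\lambda})\leq t^\gamma$, and you via the explicit spectral series, which amounts to the same computation.
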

	\begin{proof}
		The fact that
	$
		\forall \lambda \in (0,\infty) \colon
			\lambda^{-\gamma} (1-e^{- \lambda}) \leq 1
	$
	implies that
		\begin{equation}
			\begin{split}
					&\big \|
						(e^{t A} -\id_H) 
						x
					\big \|_{H_\delta}
				\leq
					 \|x\|_{H_{\delta+\gamma}}
						\big\|
							A^{-\gamma} (e^{ t A} -\id_H)
						\big \|_{L(H_\delta,H_\delta)} \\
				\leq{}
					&\|x\|_{H_{\delta+\gamma}}
						\sup_{\lambda \in (0,\infty)}
						\big(
							\lambda^{-\gamma} (1-e^{-t \lambda})
						\big)
				\leq
					\|x\|_{H_{\delta+\gamma}}
						t^{\gamma}.
			\end{split}
		\end{equation}
		This finishes the proof of Lemma \ref{l: Y0 estimate}.
	\end{proof}
	\begin{lemma}
	\label{l: F estimate}
		Assume Setting \ref{set: 1} and let $x \in H$,
		$\gamma \in [0,1)$, $\delta\in \R$,
		$t \in [0,T]$.
		Then it holds  that
	\begin{equation}
			\Big \|
				\int^t_{\llcorner t \lrcorner_\theta}
					e^{(t- s)A} 
					x
				\ud s
			\Big \|_{H_\delta}
		\leq
			(\tfrac{\gamma}{e})^{\gamma}
						\tfrac{(t-\llcorner t \lrcorner_\theta)^{1-\gamma}}{1-\gamma}
						\|
							x
						\|_{H_{\delta-\gamma}}.
	\end{equation}
	\end{lemma}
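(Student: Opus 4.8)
The plan is to commute the fractional power $(-A)^\delta$ with the time integral, bound the $H$-norm of the integrand by the triangle inequality for Bochner integrals, and estimate the operator norm $\|(-A)^\gamma e^{rA}\|_{L(H)}$ for $r\in(0,T]$ by an elementary one-variable optimization; this essentially parallels the proof of the preceding lemma. We may assume that $\|x\|_{H_{\delta-\gamma}}<\infty$ and $t>\llcorner t\lrcorner_\theta$, since otherwise the asserted inequality is immediate from the conventions $0^0=1$ and $0\cdot\infty=0$ (for $t=\llcorner t\lrcorner_\theta$, which forces $t=0$, both sides vanish).

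First I would note that for $r\in(0,T]$ the operators $(-A)^\gamma e^{rA}$ and $(-A)^{\delta-\gamma}$ act diagonally in the orthonormal basis $(e_i)_{i\in\mathcal I}$, hence commute, and that $(-A)^\delta$ is closed; together with the local integrability of $r\mapsto\|(-A)^\gamma e^{rA}\|_{L(H)}$ near $0$ established below (which is where $\gamma<1$ enters), this justifies
\begin{equation*}
\begin{split}
\Big\|\int_{\llcorner t\lrcorner_\theta}^{t}e^{(t-s)A}x\,\ud s\Big\|_{H_\delta}
&=\Big\|\int_{\llcorner t\lrcorner_\theta}^{t}(-A)^{\gamma}e^{(t-s)A}(-A)^{\delta-\gamma}x\,\ud s\Big\|_{H}\\
&\le\int_{\llcorner t\lrcorner_\theta}^{t}\big\|(-A)^{\gamma}e^{(t-s)A}\big\|_{L(H)}\,\ud s\;\|x\|_{H_{\delta-\gamma}}.
\end{split}
\end{equation*}
Next, since $Ax=\sum_{i\in\mathcal I}\lambda_i\langle x,e_i\rangle_H e_i$, for $r\in(0,T]$ one has $\|(-A)^{\gamma}e^{rA}\|_{L(H)}=\sup_{i\in\mathcal I}|\lambda_i|^{\gamma}e^{r\lambda_i}\le\sup_{\mu\in(0,\infty)}\mu^{\gamma}e^{-r\mu}$, and a short calculus computation (for $\gamma\in(0,1)$ the map $\mu\mapsto\mu^{\gamma}e^{-r\mu}$ attains its maximum at $\mu=\gamma/r$; for $\gamma=0$ the supremum equals $1=0^0$) gives $\sup_{\mu\in(0,\infty)}\mu^{\gamma}e^{-r\mu}=(\gamma/e)^{\gamma}r^{-\gamma}$. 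Finally I would substitute $u=t-s$ and integrate, using $\gamma<1$:
\begin{equation*}
\int_{\llcorner t\lrcorner_\theta}^{t}(\gamma/e)^{\gamma}(t-s)^{-\gamma}\,\ud s
=(\gamma/e)^{\gamma}\int_{0}^{t-\llcorner t\lrcorner_\theta}u^{-\gamma}\,\ud u
=(\gamma/e)^{\gamma}\,\frac{(t-\llcorner t\lrcorner_\theta)^{1-\gamma}}{1-\gamma}.
\end{equation*}
Combining the three displays yields the claim.

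There is no genuine obstacle here; the argument is elementary. The only points requiring a little care are the justification for pulling $(-A)^\delta$ through the Bochner integral (closedness of $(-A)^\delta$ together with the local integrability of $r\mapsto\|(-A)^\gamma e^{rA}\|_{L(H)}$, which is exactly what the hypothesis $\gamma\in[0,1)$ buys us) and the treatment of the boundary exponent $\gamma=0$ and of the degenerate interval $t=\llcorner t\lrcorner_\theta$ via the stated conventions.
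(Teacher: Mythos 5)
Your proof is correct and follows essentially the same approach as the paper's: bound the Bochner integral by the triangle inequality, estimate $\|(-A)^{\gamma}e^{(t-s)A}\|$ via the elementary optimization $\sup_{\lambda>0}\lambda^{\gamma}e^{-(t-s)\lambda}=(\gamma/e)^{\gamma}(t-s)^{-\gamma}$, and integrate in $s$. Your extra remarks on the degenerate case $t=\llcorner t\lrcorner_\theta$ and on justifying the commutation of $(-A)^{\delta}$ with the integral are sound but only make explicit what the paper leaves implicit.
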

	\begin{proof}
		The fact that
		$
			\forall \lambda \in (0,\infty) \colon
				\lambda^{\gamma} e^{-\lambda} \leq (\tfrac{\gamma}{e})^\gamma
		$
		shows
		that
		\begin{equation}
			\begin{split}
					&\Big \|
						\int^t_{\llcorner t \lrcorner_\theta}
							e^{(t- s)A} 
							x
						\ud s
					\Big \|_{H_\delta}\\
				\leq{}
					&\int^t_{\llcorner t \lrcorner_\theta}
							\|
								(-A)^{\gamma} e^{(t- s)A} 
							\|_{L(H_\delta,H_\delta)} \,
							\|
								x
							\|_{H_{\delta-\gamma}}
						\ud s\\
				\leq{}
					&\int^t_{\llcorner t \lrcorner_\theta}
							\big( \sup_{\lambda \in (0,\infty)}
								\lambda^{\gamma} e^{-(t- s) \lambda}
							\big)
							\|
								x
							\|_{H_{\delta-\gamma}}
						\ud s\\
				\leq{}
					&\int^t_{\llcorner t \lrcorner_\theta}
							(\tfrac{\gamma}{e})^{\gamma} (t-s)^{-\gamma}
						\ud s \,
						\|
							x
						\|_{H_{\delta-\gamma}} \\
				={}
					&(\tfrac{\gamma}{e})^{\gamma}
						\tfrac{(t-\llcorner t \lrcorner_\theta)^{1-\gamma}}{1-\gamma}
						\|
							x
						\|_{H_{\delta-\gamma}}.
			\end{split}
		\end{equation}
		This finishes the proof of Lemma \ref{l: F estimate}.
	\end{proof}

\begin{lemma}
\label{l: exp 12 bound}
Assume Setting \ref{set: 1},
and let $Q \colon [0,T] \times \Omega \to HS(U,H)$ be progressively measurable.
Then it holds that
\begin{align}
	\begin{split}
		&\E \Big[
				\exp \big( 
					\int_0^T 
							\big \| 
									\int_0^s (-A)^{1/2} e^{A(s-u)} Q(u)  \ud W_u
							\big \|_H^2
						\ud s
				\big)
			\Big]
	\\ \leq{} &
			\tfrac 1T \int_0^T \E \Bigg [
				\frac
					{1}
					{
						\big(
							1-4 \, e T
							\big (
								\big (
									\int_0^s
										\frac{\|Q(u)-Q(s) \|^2_{HS(U,H)}}{e \cdot (s-u)} 
									\ud u
								\big)^{\nicefrac 12}
									+\|Q(s)\|_{HS(U,H)}
							\big )^2
					\big)^+
					}
			\Bigg ] \ud s.
	\end{split}
\end{align}
\end{lemma}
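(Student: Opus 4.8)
The plan is to reduce the claim, via Jensen's inequality, to a one-time exponential moment estimate for the stochastic convolution $M_s:=\int_0^s(-A)^{1/2}e^{(s-u)A}Q(u)\,\ud W_u$, $s\in[0,T]$, and then to combine a general exponential moment bound for $H$-valued stochastic integrals with the elementary semigroup estimates underlying \cref{l: F estimate}.

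\textbf{Step 1 (reduction to one time point).} Since $t\mapsto\|M_t\|_H^2$ is nonnegative, Jensen's inequality applied pathwise to the convex function $\exp$ and the probability measure $\tfrac1T\ud s$ on $[0,T]$ gives $\exp(\int_0^T\|M_s\|_H^2\ud s)\le\tfrac1T\int_0^T\exp(T\|M_s\|_H^2)\ud s$; taking expectations and applying Tonelli's theorem, it suffices to prove for Lebesgue-a.e.\ $s\in[0,T]$ that
\[
\E\big[\exp(T\|M_s\|_H^2)\big]\le\E\bigg[\frac{1}{\big(1-4eT\beta_s^2\big)^+}\bigg],
\qquad
\beta_s:=\bigg(\int_0^s\frac{\|Q(u)-Q(s)\|_{HS(U,H)}^2}{e\,(s-u)}\ud u\bigg)^{\!1/2}+\|Q(s)\|_{HS(U,H)},
\]
with the convention $\tfrac1{0}:=+\infty$. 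Fix such an $s$; we may assume $\P(\beta_s<\infty)=1$, as otherwise the right-hand side equals $+\infty$ and nothing is to be shown.

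\textbf{Step 2 (Hilbert--Schmidt bound on the integrand).} Split $Q(u)=(Q(u)-Q(s))+Q(s)$. The estimate $\|(-A)^{1/2}e^{(s-u)A}\|_{L(H)}^2=\sup_{i\in\mathcal I}(-\lambda_i)e^{2(s-u)\lambda_i}\le\sup_{\lambda>0}\lambda e^{-2(s-u)\lambda}=\tfrac1{2e(s-u)}$ bounds $\int_0^s\|(-A)^{1/2}e^{(s-u)A}(Q(u)-Q(s))\|_{HS(U,H)}^2\ud u$ by $\int_0^s\tfrac{\|Q(u)-Q(s)\|_{HS(U,H)}^2}{e(s-u)}\ud u$, and the substitution $v=s-u$ together with $\int_0^s(-\lambda_i)e^{2\lambda_i v}\ud v=\tfrac{1-e^{2\lambda_i s}}2\le\tfrac12$ bounds $\int_0^s\|(-A)^{1/2}e^{(s-u)A}Q(s)\|_{HS(U,H)}^2\ud u$ by $\tfrac12\|Q(s)\|_{HS(U,H)}^2$. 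Minkowski's inequality in $L^2((0,s);HS(U,H))$ then yields
\[
\int_0^s\big\|(-A)^{1/2}e^{(s-u)A}Q(u)\big\|_{HS(U,H)}^2\ud u\le\beta_s^2<\infty\quad\text{a.s.},
\]
so in particular $M_s$ is a well-defined $H$-valued random variable.

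\textbf{Step 3 (the stochastic integral estimate; the main obstacle).} The core input is: for every progressively measurable $\Phi\colon[0,T]\times\Omega\to HS(U,H)$ and every $\alpha\in(0,\infty)$,
\[
\E\bigg[\exp\!\Big(\alpha\big\|\textstyle\int_0^s\Phi_u\ud W_u\big\|_H^2\Big)\bigg]\le\E\bigg[\frac{1}{\big(1-2\alpha\int_0^s\|\Phi_u\|_{HS(U,H)}^2\ud u\big)^+}\bigg].
\]
Granting this, one applies it with $\alpha=T$ and $\Phi_u=(-A)^{1/2}e^{(s-u)A}Q(u)$ and then, using that $y\mapsto\tfrac1{(1-y)^+}$ is nondecreasing together with the chain $2T\int_0^s\|\Phi_u\|_{HS(U,H)}^2\ud u\le2T\beta_s^2\le4eT\beta_s^2$ from Step~2, obtains the one-time estimate of Step~1; this finishes the proof. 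To establish the displayed bound I would: (i) reduce to finite-dimensional $H$ and $U$ by standard truncation arguments (monotone convergence, Fatou), observing that $\int_0^s\|\Phi_u\|_{HS(U,H)}^2\ud u$ is the trace of $\int_0^s\Phi_u\Phi_u^*\ud u$ and varies monotonically under the truncations; (ii) use the Gaussian duality $\exp(\alpha|v|^2)=\E_g[\exp(\sqrt{2\alpha}\langle v,g\rangle)]$ with $g$ a standard Gaussian vector independent of $W$, rewrite $\langle\int_0^s\Phi_u\ud W_u,g\rangle=\int_0^s\langle\Phi_u^*g,\ud W_u\rangle$ as a scalar It\^o integral in the filtration enlarged by $\sigma(g)$, and control its exponential through the supermartingale property of the associated stochastic exponential; (iii) treat the \emph{randomness} of the quadratic variation $\int_0^s\|\Phi_u^*g\|^2\ud u=\big\langle(\int_0^s\Phi_u\Phi_u^*\ud u)g,g\big\rangle$ by localising along the stopping times $\tau_c:=\inf\{t\in[0,s]\colon\int_0^t\|\Phi_u\|_{HS(U,H)}^2\ud u\ge c\}$ and, on the event of then deterministically bounded quadratic variation, using the elementary estimate $\prod_i(1-2\alpha\mu_i)^{-1/2}\le(1-2\alpha\sum_i\mu_i)^{-1}$ for the eigenvalues $\mu_i$ of the covariance, before passing to the limit. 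Part~(iii) is the hard point: since $Q$, hence $\Phi$, is genuinely random, the variance proxy $\int_0^s\|\Phi_u\|_{HS(U,H)}^2\ud u$ cannot be pulled outside the expectation, which is exactly why the conclusion must retain the form $\tfrac1T\int_0^T\E[\,\cdot\,]\ud s$; Steps~1 and~2 are routine.
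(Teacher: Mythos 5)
Your Steps~1 and~2 parallel the paper: reduce via Jensen's inequality to a one-time estimate and bound $\int_0^s\|(-A)^{1/2}e^{(s-u)A}Q(u)\|^2_{HS}\,\mathrm{d}u$ using the triangle inequality and $\sup_{\lambda>0}\lambda e^{-2(s-u)\lambda}=\tfrac{1}{2e(s-u)}$. The decisive difference is how the one-time exponential moment is handled. The paper expands $\exp$ into its power series \emph{before} doing any probability, applies Jensen to swap the $n$-th power with the $\tfrac1T\!\int_0^T$-average, then uses the Burkholder--Davis--Gundy inequality (Carlen--Kree, with constant $2\sqrt{2n}$ in $L^{2n}$) term by term, and finally Stirling's bound $n!\ge\sqrt{2\pi n}(n/e)^n$ to resum the series into a geometric-type bound $\sum_n(4eT\cdot)^n=\tfrac{1}{(1-4eT\cdot)^+}$. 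This requires no conditioning, no Gaussian duality, and no localization, and it applies to arbitrary progressively measurable $Q$.

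Your Step~3 instead isolates a self-contained claim, $\E[\exp(\alpha\|\int_0^s\Phi\,\mathrm{d}W\|_H^2)]\le\E[\tfrac{1}{(1-2\alpha\int_0^s\|\Phi\|^2_{HS}\,\mathrm{d}u)^+}]$, and this is where there is a genuine gap. First, the sketched proof of (iii) does not go through: after localizing at $\tau_c$, the \emph{trace} of the accumulated covariance is deterministically bounded, but $\int_0^{\tau_c}\Phi_u\,\mathrm{d}W_u$ is \emph{not} conditionally Gaussian when $\Phi$ is random, so the eigenvalue inequality $\prod_i(1-2\alpha\mu_i)^{-1/2}\le(1-2\alpha\sum_i\mu_i)^{-1}$ (which is an identity-based computation for Gaussian vectors) cannot be applied; the correlation between the ``sign'' of the integral and its quadratic variation is precisely the obstruction, and neither the stochastic-exponential supermartingale bound nor a Girsanov change of measure resolves it. Second, even a term-by-term route to your Step~3 would require BDG constants of asymptotic size $C_p\sim\sqrt{p/e}$ for the pointwise (non-supremum) inequality, which is the Gaussian-sharp value; the constant $2\sqrt{p}$ from Carlen--Kree, which the paper cites and uses, is strictly too large to produce the factor $2\alpha$. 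What the BDG$+$Stirling argument \emph{does} give is the same statement with $8e\alpha$ in place of $2\alpha$; combined with your Step~2 bound $\int_0^s\|\Phi\|^2_{HS}\,\mathrm{d}u\le\tfrac12\beta_s^2$, that already yields the stated $4eT\beta_s^2$, so the sharper constant in Step~3 is neither proved nor needed. In short, your overall plan is sound, but you should replace Step~3 with a power-series expansion of the exponential followed by a BDG$+$Stirling summation as in the paper, rather than trying to prove the one-time exponential-moment lemma in the strong form you state.
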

\begin{proof}
First note that the Jensen inequality 
and the Burkholder-Davis-Gundy inequality (see, e.g., Theorem A in \cite{CarlenKree1991}) imply that
	\begin{align} \begin{split}
		\label{eq: first estimate exp bound}
			&\E \Big[
				\exp \big( 
					\int_0^T 
							\big \| 
									\int_0^s (-A)^{1/2} e^{A(s-u)} Q(u)  \ud W_u
							\big \|^2_{H}
						\ud s
				\big)
			\Big]
		\\ = &
			\E \Big[
				1+\sum^\infty_{n=1} 
					\tfrac{1}{n!}
					\Big(
						\int_0^T 
							\| 
									\int_0^s (-A)^{1/2} e^{A(s-u)} Q(u) \ud W_u
							\|^{2}_{H}
						\ud s
					\Big)^n
			\Big]
		\\ ={} &
			1+\sum^\infty_{n=1} 
				\tfrac{T^n}{n!}
				\E \Big[
					\Big(
						\tfrac 1T
						\int_0^T 
							\| 
									\int_0^s (-A)^{1/2} e^{A(s-u)} Q(u) \ud W_u
							\|^{2}_{H}
						\ud s
					\Big)^n
			\Big]
		\\ \leq{} &
			1+\sum^\infty_{n=1} 
				\tfrac{T^n}{n!}
				\tfrac 1T		
					\int_0^T 
						\E \Big[
							\| 
									\int_0^s (-A)^{1/2} e^{A(s-u)} Q(u) \ud W_u
							\|^{2n}_{H}
						\ud s
			\Big]
		\\ ={} &
			1+\sum^\infty_{n=1} 
				\tfrac{T^{n-1}}{n!}
					\int_0^T 
						\Big \|
								\int_0^s (-A)^{1/2} e^{A(s-u)} Q(u) \ud W_u
						\Big \|^{2n}_{L^{2n}(\P;H)}
					\ud s
		\\ \leq{} &
			1+\sum^\infty_{n=1} 
				\tfrac{T^{n-1}}{n!}
					\int_0^T 
					\Big(
						2 \sqrt{2n} \,
						\Big \|
							\big( \int_0^s \| (-A)^{1/2} e^{A(s-u)} Q(u) \|^2_{HS(U,H)} \ud u \big)^{\nicefrac 12}
						\Big \|_{L^{2n}(\P;\R)}
					\Big)^{2n}
					\ud s.
	\end{split} \end{align}
	Moreover, the triangle inequality and
	the fact that $\sup_{x \in (0,\infty)} e^{-x} x \leq e^{-1}$
	show
	for all $s \in [0,T]$ that
	\begin{align} \begin{split}
		\label{eq: 2nd estimate exp bound}
			&\Big( 
				\int_0^s \| (-A)^{1/2} e^{A(s-u)} Q(u) \|^2_{HS(U,H)} \ud u
			\Big)^{\nicefrac 12}
		\\ ={} & 
			\Big(
				\int_0^s
					\sum_{i\in \mathcal{I}}\| (Q(u))^* e^{A(s-u)} (-A)^{1/2} e_i \|^2_{U} 
				\ud u 
			\Big)^{\nicefrac 12}
		\\ \leq{} &
				\Big(
					\sum_{i\in \mathcal{I}} 
					\int_0^s 
						e^{2\lambda_i (s-u)} (-\lambda_i) \, \|(Q(u)-Q(s))^*  e_i \|^2_{U}
					\ud s
				\Big)^{\nicefrac 12}
			\\ & \quad
					+\Big(
						\sum_{i\in \mathcal{I}} \int_0^s 
							e^{2\lambda_i (s-u)} (-\lambda_i) \, \|(Q(s))^*  e_i \|^2_{U}
						\ud u
					\Big)^{\nicefrac 12}
		\\ ={} &
				\Big(
					\sum_{i\in \mathcal{I}} 
						\int_0^s 
							e^{2\lambda_i (s-u)} \tfrac{2(-\lambda_i) (s-u)}{2(s-u)} \, \|(Q(u)-Q(s))^*  e_i \|^2_{U}
						\ud u
				\Big)^{\nicefrac 12}
			\\ & \quad
					+ \Big(
						\sum_{i\in \mathcal{I}}( 1- \tfrac{e^{2\lambda_i s}}{2}) \, \|(Q(s))^*  e_i \|^2_{U}
					\Big)^{\nicefrac 12}
		\\ \leq{} &
				\Big( \sum_{i\in \mathcal{I}} 
					\int_0^s 
						\tfrac{\|(Q(u)-Q(s))^*  e_i \|^2_{U}}{2 e \cdot (s-u)} 
					\ud u
				\Big)^{\nicefrac 12}
					+\Big(
						\sum_{i\in \mathcal{I}}( 1- \tfrac{e^{2\lambda_i s}}{2}) \, \|(Q(s))^*  e_i \|^2_{U}
					\Big)^{\nicefrac 12}
		\\ \leq{} &
			\Big(
				\int_0^s
					\tfrac{\|Q(u)-Q(s) \|^2_{HS(U,H)}}{2 e \cdot (s-u)}
				\ud u
			\Big)^{\nicefrac 12}
				+\tfrac{\|Q(s)\|_{HS(U,H)}}{\sqrt{2}}.
	\end{split} \end{align}
	Combining \eqref{eq: first estimate exp bound} and \eqref{eq: 2nd estimate exp bound}
	proves that
	\begin{align} \begin{split}
		&\E \Big[
				\exp \big( 
					\int_0^T 
							\big \| 
									\int_0^s (-A)^{1/2} e^{A(s-u)} Q(u)  \ud W_u
							\big \|^2_{H}
						\ud s
				\big)
			\Big]
		\\ \leq{} &
			1+\sum^\infty_{n=1} 
				\tfrac{T^{n-1}}{n!}
					\int_0^T 
					\Big(
						2 \sqrt{2n} \,
						\Big \|
							\Big(
								\int_0^s
									\tfrac{\|Q(u)-Q(s) \|^2_{HS(U,H)}}{2 e \cdot (s-u)}
								\ud u
							\Big)^{\nicefrac 12}
				+\tfrac{\|Q(s)\|_{HS(U,H)}}{\sqrt{2}}
						\Big \|_{L^{2n}(\P;\R)}
					\Big)^{2n}
					\ud s
		\\ ={} &
			1+\sum^\infty_{n=1} 
				\tfrac{8^n n^n T^{n-1}}{n!} 
					\int_0^T
						\E \Big [
							\Big( 
								\Big(
									\int_0^s
										\tfrac{\|Q(u)-Q(s) \|^2_{HS(U,H)}}{2 e \cdot (s-u)} 
									\ud u
								\Big)^{\nicefrac 12}
								+\tfrac{\|Q(s)\|_{HS(U,H)}}{\sqrt{2}}
							\Big)^{2n}
						\Big ]
					\ud s
		\\ \leq{} &
			\tfrac 1T \int_0^T \E \Big [
						1+\sum^\infty_{n=1} 
							\frac{8^n n^n T^{n}}{\sqrt{2\pi n} \big( \tfrac {n}{e} \big)^n} 
							\Big (
								\Big (
									\int_0^s
										\tfrac{\|Q(u)-Q(s) \|^2_{HS(U,H)}}{2 e \cdot (s-u)}
									\ud u
								\Big )^{\nicefrac 12}
								+\tfrac{\|Q(s)\|_{HS(U,H)}}{\sqrt{2}}
							\Big )^{2n}
				\Big ] \ud s
		\\ \leq{} &
			\tfrac 1T \int_0^T \E \Big [
						1+\sum^\infty_{n=1} 
						4^n e^n T^{n}
						\Big (
							\Big (
								\int_0^s
									\tfrac{\|Q(u)-Q(s) \|^2_{HS(U,H)}}{e \cdot (s-u)} 
								\ud u
							\Big )^{\nicefrac 12}
								+\|Q(s)\|_{HS(U,H)}
							\Big )^{2n}
				\Big ] \ud s
		\\ ={} &
			\tfrac 1T \int_0^T \E \Bigg [
				\frac
					{1}
					{
						\Big(
							1-4 \, e T \big (
							\big (
								\int_0^s
										\frac{\|Q(u)-Q(s) \|^2_{HS(U,H)}}{e \cdot (s-u)} 
									\ud u
							\big )^{\nicefrac 12}
									+\|Q(s)\|_{HS(U,H)}
							\big )^2
					\Big)^+
					}
			\Bigg ] \ud s.
	\end{split} \end{align}
This finishes the proof of Lemma \ref{l: exp 12 bound}.
\end{proof}
\begin{lemma}
\label{l: exp H bound}
Assume Setting \ref{set: 1},
let
$Q \colon [0,T] \times \Omega \to HS(U,H)$ be progressively measurable,
and let $t \in [0,T]$.
Then it holds that
\begin{align*}
		\E \Big[
				\exp \big(
							\big \| 
									\int_0^t e^{A(t-s)} Q(s)  \ud W_s
							\big \|_{H}
				\big)
			\Big]
	\leq
		2 \, \E \Big [
					\exp \Big( 2 e \int_0^t \|e^{A(t-s)}Q(s) \|^2_{HS(U,H)} \ud s \Big)
				\Big ].
\end{align*}
\end{lemma}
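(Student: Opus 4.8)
The plan is to reduce the exponential moment of the norm $\|\int_0^t e^{A(t-s)}Q(s)\,\ud W_s\|_H$ to an exponential moment of its \emph{square}, at the cost of a factor $2$ and a constant in the exponent, and then to bound the latter by a Novikov-type estimate. First I would use the elementary inequality $x \le 1 + x^2$, valid for all $x\in[0,\infty)$, or rather the sharper observation that $e^{x}\le e\cdot e^{x^2/2}\cdot\mathbbm{1}_{\{x\le 1\}}+\ldots$; more cleanly, I would split on the event $\{\|M_t\|_H\le 1\}$ and its complement, where $M_t:=\int_0^t e^{A(t-s)}Q(s)\,\ud W_s$. On $\{\|M_t\|_H\le 1\}$ one has $\exp(\|M_t\|_H)\le e$, and on $\{\|M_t\|_H>1\}$ one has $\|M_t\|_H\le \|M_t\|_H^2$, hence $\exp(\|M_t\|_H)\le \exp(\|M_t\|_H^2)$. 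This yields
\begin{equation*}
  \E\big[\exp(\|M_t\|_H)\big]\le e+\E\big[\exp(\|M_t\|_H^2)\big]\le 2\,\E\big[\exp(\|M_t\|_H^2)\big],
\end{equation*}
using $e\le \E[\exp(\|M_t\|_H^2)]$ since $\exp(\|M_t\|_H^2)\ge 1$ — wait, that only gives $1$, not $e$; so instead I would keep the bound $\E[\exp(\|M_t\|_H)]\le e + \E[\exp(\|M_t\|_H^2)]$ and absorb the $e$ differently, or simply note $\exp(\|M_t\|_H)\le e^{1/(2)}\exp(\|M_t\|_H^2)$ pointwise is false; the robust route is $e\cdot\mathbbm 1\le e\cdot\exp(\|M_t\|^2_H)$ and $\exp(\|M_t\|^2_H)\cdot\mathbbm 1\le \exp(\|M_t\|^2_H)$, giving total $\le (e+1)\E[\exp(\|M_t\|_H^2)]$, which is already worse than the claimed $2$. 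To land exactly on the constant $2$ I would instead use $\exp(x)\le 2\exp(x^2/4)$ for all $x\ge 0$ (check: at $x=2$, $e^2\approx 7.39$ vs $2e\approx 5.44$ — also false). The honest approach: the claim has $2e$ in the exponent, which is generous, so I would use $x\le x^2/4+1$ for all $x\ge 0$ (equality at $x=2$), hence $\exp(\|M_t\|_H)\le e\cdot\exp(\|M_t\|_H^2/4)$; this is not yet the factor $2$. I will therefore expand the square: writing $\|M_t\|^2_H=\sum_{i\in\mathcal I}\langle M_t,e_i\rangle_H^2$, the process $(\langle M_s,e_i\rangle_H)_{s}$ — more precisely, after fixing the terminal time $t$, the martingale $N^{(i)}_r:=\int_0^r \langle e^{A(t-s)}Q(s)\,\ud W_s, e_i\rangle_H$ — is a real continuous martingale, and the key step is a Gaussian-type tail/Novikov bound for a sum of such martingales.

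The main technical step is the second inequality: bounding $\E[\exp(c\|M_t\|_H^2)]$ for a suitable $c>0$ by $\E[\exp(2e\int_0^t\|e^{A(t-s)}Q(s)\|^2_{HS(U,H)}\,\ud s)]$. Here I would invoke the standard exponential estimate for Hilbert-space-valued stochastic integrals: for a continuous local martingale $M$ in $H$ with quadratic variation process $\langle\!\langle M\rangle\!\rangle$ (a trace-class-valued increasing process, here $\langle\!\langle M\rangle\!\rangle_t=\int_0^t\|e^{A(t-s)}Q(s)\|^2_{HS(U,H)}\,\ud s$ in terms of its trace), one has $\E[\exp(\tfrac12\|M_t\|_H^2)]\le\E[\exp(\text{const}\cdot\operatorname{trace}\langle\!\langle M\rangle\!\rangle_t)]$; this is an infinite-dimensional analogue of the one-dimensional fact that a mean-zero Gaussian with variance $\sigma^2$ has $\E[e^{\lambda Z^2}]=(1-2\lambda\sigma^2)^{-1/2}$ for $\lambda\sigma^2<1/2$, made uniform via a dominating exponential. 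Concretely, I expect the cleanest proof to mirror the proof of \Cref{l: exp 12 bound}: Taylor-expand $\exp(c\|M_t\|_H^2)=1+\sum_{n\ge1}\tfrac{c^n}{n!}\|M_t\|_H^{2n}$, take expectations, apply Burkholder--Davis--Gundy (Theorem A in \cite{CarlenKree1991}) to get $\|\,\|M_t\|_H\,\|_{L^{2n}(\P;\R)}\le 2\sqrt{2n}\,\|(\int_0^t\|e^{A(t-s)}Q(s)\|^2_{HS(U,H)}\,\ud s)^{1/2}\|_{L^{2n}(\P;\R)}$, then use Stirling's formula $n!\ge\sqrt{2\pi n}(n/e)^n$ to turn the resulting series $\sum_n \tfrac{c^n}{n!}(8n)^n\E[(\int_0^t\cdots)^n]$ into $\sum_n (2ec)^n\E[(\int_0^t\cdots)^n]=\E[\exp(2ec\int_0^t\cdots)]$, and finally choose $c$ so that the bookkeeping produces exactly the factor $2$ out front and $2e$ in the exponent.

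Carrying out that computation with $c=1$ is in fact the right choice: $\sum_{n\ge1}\tfrac{1}{n!}(8n)^n X^n\le \sum_{n\ge1}\tfrac{(8n)^n}{\sqrt{2\pi n}(n/e)^n}X^n=\sum_{n\ge1}\tfrac{(8e)^n}{\sqrt{2\pi n}}X^n\le\sum_{n\ge1}(8e)^nX^n$ — this overshoots $2e$, so I would instead not expand $\|M_t\|_H$ via BDG directly on $\exp(\|M_t\|^2_H)$ but rather first apply the factor-$2$ reduction $\E[\exp(\|M_t\|_H)]\le 2\E[\exp(\tfrac14\|M_t\|_H^2)]$ and only then Taylor-expand $\exp(\tfrac14\|M_t\|^2_H)$, so that the constant $c=\tfrac14$ feeds through as $2ec=\tfrac{e}{2}$ — still not matching. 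Given the slack between the various universal constants, the safe plan is: (i) prove the pointwise bound $\exp(x)\le 2\exp(\tfrac14 x^2)$ for $x\ge 0$ — this actually holds, since $f(x)=\tfrac14x^2-x+\log2$ has minimum at $x=2$ with value $\log2-1<0$, so $f(x)\ge\log2-1$ is false but $f(x)\to+\infty$ and $f$ only dips slightly below zero near $x=2$ where $\exp(x)\le 2\exp(x^2/4)$ fails; hence I would instead accept a larger constant in the reduction and compensate using that $2e$ in the exponent is large, i.e. use $\exp(x)\le 2\exp(\tfrac14 x^2)$ on $\{x\le 2\}$ (where both sides are between $1$ and $e^2$, and $2\exp(\tfrac14x^2)\ge 2\ge e^{?}$ — on $[0,2]$, $\exp(x)\le e^2\approx7.39$ and $2\exp(x^2/4)\ge 2$, not enough at $x=2$)... (ii) On reflection, the clean statement is $\exp(x)\le 2\exp(x^2/4)$ \emph{is} true for all $x\ge0$: checking $x=2$ gives $e^2=7.389$ and $2e=5.436$, so it is \emph{false}; therefore I abandon trying to match $2$ exactly by this route and instead follow the paper's own pattern — the inequality to prove has so much room ($2e\approx 5.44$ in the exponent) that expanding $\exp(\|M_t\|_H)=\sum_n\|M_t\|_H^n/n!$ directly, applying BDG to each moment $\|M_t\|_H^n$ to get $\|M_t\|_H^n\le(2\sqrt n)^n\|(\int_0^t\|e^{A(t-s)}Q(s)\|^2\,\ud s)^{1/2}\|^n_{L^n}$ followed by Stirling, yields $\sum_n\tfrac{(2\sqrt n)^n}{n!}Y^{n/2}\le\sum_n\tfrac{(2e)^{n/2}}{\sqrt{2\pi n}} Y^{n/2}\le 2\sum_{n\ge0}\tfrac{(2e Y)^{n/2}}{?}$; splitting into even and odd $n$ and bounding odd terms by even ones via Cauchy--Schwarz turns this into $2\,\E[\exp(2e\int_0^t\|e^{A(t-s)}Q(s)\|^2_{HS(U,H)}\,\ud s)]$, which is exactly the claim. \textbf{The main obstacle} is precisely this last bookkeeping with the odd powers and Stirling constants to land simultaneously on the leading factor $2$ and the exponential constant $2e$; the probabilistic content (Jensen, Taylor expansion, BDG via \cite{CarlenKree1991}, and the Itô isometry identifying the quadratic variation) is routine and parallels \Cref{l: exp 12 bound} verbatim.
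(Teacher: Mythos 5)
You correctly identify the overall shape of the argument — Taylor expansion, Burkholder--Davis--Gundy via \cite{CarlenKree1991}, Stirling, parallel to Lemma~\ref{l: exp 12 bound} — but you miss the one elementary observation that makes the constants land cleanly and that the paper actually uses: for every $x\in\R$ one has $\exp(x)\le\exp(x)+\exp(-x)=2\cosh(x)=2\bigl(1+\sum_{n\ge1}\tfrac{x^{2n}}{(2n)!}\bigr)$. This single line simultaneously (i) produces the leading factor $2$ and (ii) removes the odd powers entirely, so there is nothing to split and no Cauchy--Schwarz interpolation between odd and even moments is needed. After that, the computation is exactly as you anticipate: apply BDG to $\E[\|M_t\|_H^{2n}]$, giving a factor $(2\sqrt{2n})^{2n}=8^n n^n$, and then use Stirling in the form $(2n)!\ge\sqrt{4\pi n}\,(2n/e)^{2n}$ together with $n!\le\sqrt{2}\sqrt{2\pi n}\,(n/e)^n$ to compute $\tfrac{8^n n^n}{(2n)!}\le\tfrac{(2e)^n}{n!}$, which resummed gives exactly $2\,\E\bigl[\exp\bigl(2e\int_0^t\|e^{A(t-s)}Q(s)\|^2_{HS(U,H)}\,\ud s\bigr)\bigr]$.

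The gap in your proposal is therefore genuine, though not fatal in principle. Your various attempts to replace this with a pointwise inequality of the form $\exp(x)\le 2\exp(cx^2)$ for a small $c$ are correctly rejected by you (they are false near $x=2$), and your fallback — expand $\sum_n\|M_t\|_H^n/n!$ and handle the odd terms by Cauchy--Schwarz — is left as an unverified sketch; it is not clear it produces the stated constant $2$ rather than something larger, and it introduces exactly the bookkeeping you identify as ``the main obstacle.'' You should replace all of that exploratory material with the $\cosh$ identity, after which the rest of your plan goes through verbatim.
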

\begin{proof}
First note that for all $x \in \R$ it holds that
\begin{equation}
		\exp(x) 
	\leq 
		\exp(x) + \exp(-x) = 2 \Big(1+ \sum^\infty_{n=1} \tfrac{x^{2n}}{(2n)!} \Big).
\end{equation}
Thus, the Jensen inequality and the Burkholder-Davis-Gundy inequality (see, e.g., Theorem A in \cite{CarlenKree1991})
verify that
	\begin{align} \begin{split}
			&\E \Big[
				\exp \big(
							\big \| 
									\int_0^t e^{A(t-s)} Q(s)  \ud W_s
							\big \|_{H}
				\big)
			\Big]
		\\ \leq &
			2 \, \E \Big[
				1+\sum^\infty_{n=1} 
					\tfrac{1}{(2n)!}
					\Big(
						\big\| 
									\int_0^t e^{A(t-s)} Q(s) \ud W_s
						\big\|_{H}
					\Big)^{2n}
			\Big]
		\\ ={} &
			2+2 \sum^\infty_{n=1} 
				\tfrac{1}{(2n)!}
						\E \Big[
							\| 
									\int_0^t e^{A(t-s)} Q(s) \ud W_s
							\|^{2n}_{H}
			\Big]
		\\ ={} &
			2+2\sum^\infty_{n=1} 
				\tfrac{1}{(2n)!}
						\Big \|
								\int_0^t e^{A(t-s)} Q(s) \ud W_s
						\Big \|^{2n}_{L^{2n}(\P;H)}
		\\ \leq{} &
			2+2\sum^\infty_{n=1} 
				\tfrac{1}{(2n)!} 
					\Big(
						2 \sqrt{2n} \,
						\Big \|
							\big( \int_0^t \|e^{A(t-s)} Q(s) \|^2_{HS(U,H)} \ud s \big)^{\nicefrac 12}
						\Big \|_{L^{2n}(\P;\R)}
					\Big)^{2n}.
	\end{split} \end{align}
	Therefore, the fact that
	$
			\forall n \in \N \colon
			\sqrt{2\pi n} \big( \tfrac {n}{e} \big)^{n} 
		\leq 
			n! 
		\leq
			\tfrac{12}{11}\sqrt{2\pi n} \big( \tfrac {n}{e} \big)^{n}
		\leq
			\sqrt{2}\sqrt{2\pi n} \big( \tfrac {n}{e} \big)^{n}
	$
	proves that
	\begin{align} \begin{split}
		&\E \Big[
				\exp \big(
							\big \| 
									\int_0^t e^{A(t-s)} Q(s)  \ud W_s
							\big \|_{H}
				\big)
			\Big]
		\\ \leq{} &
			2+2\sum^\infty_{n=1} 
				\tfrac{1}{(2n)!} 
					\Big(
						2 \sqrt{2n} \,
						\Big \|
							\big( \int_0^t \|e^{A(t-s)} Q(s) \|^2_{HS(U,H)} \ud s \big)^{\nicefrac 12}
						\Big \|_{L^{2n}(\P;\R)}
					\Big)^{2n}
		\\ \leq{} &
			2+2\sum^\infty_{n=1} 
				\tfrac{8^n n^{n}}{(2n)!} 
					\Big(
						\Big \|
							\big( \int_0^t \|e^{A(t-s)} Q(s) \|^2_{HS(U,H)} \ud s \big)^{\nicefrac 12}
						\Big \|_{L^{2n}(\P;\R)}
					\Big)^{2n}
		\\ \leq{} &
			\E \Big [
						2+2\sum^\infty_{n=1} 
							\frac{8^n n^{n}}{\sqrt{4 \pi n} \big( \tfrac {2n}{e} \big)^{2n}} 
						\Big( \int_0^t \|e^{A(t-s)} Q(s) \|^2_{HS(U,H)} \ud s \Big)^{n}
				\Big ]
		\\ ={} &
			\E \Big [
						2+2\sum^\infty_{n=1} 
							\frac{e^n \cdot 2^{n}}{\sqrt{2} \sqrt{2\pi n} \big( \tfrac {n}{e} \big)^{n}}
						\Big( \int_0^t \|e^{A(t-s)} Q(s) \|^2_{HS(U,H)} \ud s \Big)^{n}
				\Big ]
		\\ \leq{} &
			\E \Big [
						2+2 \sum^\infty_{n=1} 
							\frac{e^n \cdot 2^{n}}{n!}
						\Big( \int_0^t \|e^{A(t-s)} Q(s) \|^2_{HS(U,H)} \ud s \Big)^{n}
				\Big ]
		\\ ={} &
			2 \, \E \Big [
					\exp \Big( 2 e \int_0^t \|e^{A(t-s)} Q(s) \|^2_{HS(U,H)} \ud s \Big)
				\Big ].
	\end{split} \end{align}
	This finishes the proof of Lemma \ref{l: exp H bound}.
	\end{proof}

	The next lemma generalizes Corollary 2.2 in Jentzen \& Pu\v{s}nik \cite{JentzenPusnik2018}
	to more general stochastic processes. 
	\begin{lemma}
	\label{lem: Lemma 2.1}
	Assume Setting \ref{set: 1},
		let $\rho, c \in [0,\infty)$,
		let
		$E\in \mathcal{B}(H)$ be a set,
		let $S \colon (0,T] \to L(H)$,
		let $V \in \C^{1,2}([0,T] \times H, [0,\infty))$,
		let $\overline{V} \colon [0,T] \times H \to \R$ be measurable,
		let $Y \colon [0,T] \times \Omega \to H$
		be measurable and adapted,
		assume that for all $t \in (0,T]$, $r\in [t,T]$,
		$x \in H$ it holds that 
		$V(r,S_t x) \leq V(r-t,x),$
		and 
		\begin{equation}
		\label{eq: 10}
				\1_{H \backslash E}(Y_{\llcorner t \lrcorner_\theta}) 
				\big(
					Y_t
					-S_{t- \llcorner t \lrcorner_\theta} (
						Y_{\llcorner t \lrcorner_\theta}
					)
				\big)
			=
				0,
		\end{equation}
		and assume
		that for all $t \in (0,T]$ it holds a.s. that
			$\int_0^T \1_E(Y_{\lfloor s \rfloor_\theta}) |\overline{V}(s,Y_s)| \ud s < \infty$
			and
		\begin{align}
			\begin{split}
			\label{eq: 11}
			&\E \Big[ 
				\exp \Big(
					\1_{E}(Y_{\llcorner t \lrcorner_{\theta}})
						\tfrac
								{V(t,Y_t)}
								{
									e^{\rho (t- \llcorner t \lrcorner_\theta))}
								}
					+\1_{E}(Y_{\llcorner t \lrcorner_{\theta}}) 
						\int_{\llcorner t \lrcorner_{\theta}}^t 
							\tfrac
								{\overline{V}(s,Y_s)}
								{
									e^{\rho (s- \llcorner t \lrcorner_\theta))}
								}
						\ud s 
				\Big) 
				~\Big | (Y_r)_{r \in [0, {\llcorner t \lrcorner_\theta}]}
			\Big] \\
				\leq{} 
						&e^{
								\1_{E}(Y_{\llcorner t \lrcorner_{\theta}})
								V(\llcorner t \lrcorner_{\theta},Y_{\llcorner t \lrcorner_{\theta}})
							} 
						\cdot
						e^{c (t-\llcorner t \lrcorner_{\theta})}.
			\end{split}
		\end{align}
		Then it holds for all $t \in [0,T]$ that
		\begin{equation}
			\begin{split}
					\E \Big[ 
						\exp \Big(
							\tfrac{V(t,Y_t)}{e^{\rho t}}
							+ \int_{0}^t
								\1_{E}(Y_{\llcorner s \lrcorner_{\theta}})
								\tfrac{\overline{V}(s,Y_s)}{e^{\rho s}} 
							\ud s 
						\Big) 
					\Big] 
			\leq{} 
				\E \Big[ 
					\exp \big(
						V(0, Y_{0})
						+ c t
					\big)
				\Big].
			\end{split}
		\end{equation}
	\end{lemma}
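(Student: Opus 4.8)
The plan is to establish the estimate by peeling off the subintervals of the grid $\theta$ one at a time, invoking \eqref{eq: 11} on the portion of the trajectory that sits in $E$ and using \eqref{eq: 10} together with the contraction bound $V(r,S_tx)\leq V(r-t,x)$ on its complement. Throughout, write $\mathcal{G}_r:=\sigma((Y_u)_{u\in[0,r]})$ for $r\in[0,T]$ and set
\[
G_t := \frac{V(t,Y_t)}{e^{\rho t}} + \int_0^t \1_E(Y_{\llcorner s\lrcorner_\theta})\,\frac{\overline{V}(s,Y_s)}{e^{\rho s}}\,\ud s ,
\]
so that the assertion reads $\E[\exp(G_t)]\leq\E[\exp(V(0,Y_0))]\,e^{ct}$. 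Since $V\geq0$ and the integrability hypothesis makes the integral in $G_t$ a.s.\ finite, $\exp(G_t)$ is a well-defined nonnegative random variable and every (conditional) expectation below is meaningful with values in $[0,\infty]$.

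The heart of the argument is a one-step estimate: for every $t\in(0,T]$, abbreviating $u:=\llcorner t\lrcorner_\theta$,
\[
\E[\exp(G_t)\mid\mathcal{G}_u]\ \leq\ \exp(G_u)\,e^{c(t-u)} .
\]
To prove it I would split over the $\mathcal{G}_u$-measurable event $\{Y_u\in E\}$, after recording that $G_t=\big(\int_0^{u}\1_E(Y_{\llcorner s\lrcorner_\theta})\tfrac{\overline{V}(s,Y_s)}{e^{\rho s}}\,\ud s\big)+\tfrac{V(t,Y_t)}{e^{\rho t}}+\1_E(Y_u)\int_u^t\tfrac{\overline{V}(s,Y_s)}{e^{\rho s}}\,\ud s$ with a $\mathcal{G}_u$-measurable first bracket, and that $e^{\rho r}=e^{\rho u}e^{\rho(r-u)}$. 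On $\{Y_u\notin E\}$ the last summand drops out and \eqref{eq: 10} gives $Y_t=S_{t-u}Y_u$, so the contraction bound (with shift $t-u\in(0,T]$ and base time $t\in[t-u,T]$) yields $V(t,Y_t)\leq V(u,Y_u)$ and hence $\tfrac{V(t,Y_t)}{e^{\rho t}}\leq\tfrac{V(u,Y_u)}{e^{\rho u}}$ (using $V\geq0$, $t\geq u$); the one-step estimate then holds pointwise on this event since $c(t-u)\geq0$. On $\{Y_u\in E\}$ set $X:=\tfrac{V(t,Y_t)}{e^{\rho(t-u)}}+\int_u^t\tfrac{\overline{V}(s,Y_s)}{e^{\rho(s-u)}}\,\ud s$, so that $\tfrac{V(t,Y_t)}{e^{\rho t}}+\int_u^t\tfrac{\overline{V}(s,Y_s)}{e^{\rho s}}\,\ud s=e^{-\rho u}X$; then \eqref{eq: 11} gives $\E[\exp(X)\mid\mathcal{G}_u]\leq\exp(V(u,Y_u))\,e^{c(t-u)}<\infty$ a.s.\ on this event, and, since $e^{-\rho u}\in(0,1]$, conditional Jensen for the concave map $y\mapsto y^{e^{-\rho u}}$ on $[0,\infty)$ gives $\E[\exp(e^{-\rho u}X)\mid\mathcal{G}_u]\leq(\E[\exp(X)\mid\mathcal{G}_u])^{e^{-\rho u}}\leq\exp\!\big(\tfrac{V(u,Y_u)}{e^{\rho u}}\big)e^{c(t-u)}$, where the final inequality uses $e^{-\rho u}\leq1$ and $c(t-u)\geq0$. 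Multiplying back the $\mathcal{G}_u$-measurable factor $\exp\big(\int_0^{u}\1_E(Y_{\llcorner s\lrcorner_\theta})\tfrac{\overline{V}(s,Y_s)}{e^{\rho s}}\,\ud s\big)$ completes the one-step estimate on $\{Y_u\in E\}$, and combining the two events establishes it in general.

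To conclude, I would iterate the one-step estimate along the grid. Enumerate $\theta=\{0=t_0<t_1<\dots<t_K=T\}$ and fix $t\in[0,T]$; the case $t=0$ is immediate because $G_0=V(0,Y_0)$, so assume $t\in(t_m,t_{m+1}]$, i.e.\ $\llcorner t\lrcorner_\theta=t_m$. Taking expectations and using $\llcorner t_j\lrcorner_\theta=t_{j-1}$ for $j\in\{1,\dots,m\}$, the one-step estimate gives $\E[\exp(G_t)]\leq e^{c(t-t_m)}\E[\exp(G_{t_m})]$ and $\E[\exp(G_{t_j})]\leq e^{c(t_j-t_{j-1})}\E[\exp(G_{t_{j-1}})]$; chaining these and telescoping the exponents leaves $\E[\exp(G_t)]\leq e^{ct}\E[\exp(G_{t_0})]=e^{ct}\E[\exp(V(0,Y_0))]$. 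The only genuinely delicate point is the mismatch between the $e^{\rho(\cdot-\llcorner t\lrcorner_\theta)}$ normalisation in \eqref{eq: 11} and the $e^{\rho(\cdot)}$ normalisation appearing in the conclusion; this is exactly what the conditional Jensen / power-concavity step absorbs, and alongside it one must check that the conditional expectations estimated on $\{Y_u\in E\}$ are a.s.\ finite — which holds thanks to \eqref{eq: 11} and the finiteness of $V$.
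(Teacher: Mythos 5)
Your proof is correct and follows essentially the same route as the paper: split the exponent according to $\{Y_{\llcorner t\lrcorner_\theta}\in E\}$ and its complement, use \eqref{eq: 10} together with the semigroup-monotonicity of $V$ on the complement, and apply conditional Jensen with the constant exponent $e^{-\rho\llcorner t\lrcorner_\theta}\le 1$ together with \eqref{eq: 11} on $E$, then iterate over the grid points of $\theta$. The only difference is organizational — you isolate the one-step bound as a conditional-expectation inequality before taking expectations and chaining, whereas the paper writes the decomposition and iteration directly in terms of unconditioned expectations.
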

	\begin{proof}[Proof of Lemma \ref{lem: Lemma 2.1}]
	It follows from \eqref{eq: 10}
	that for all $t \in (0,T]$ it holds that
	\begin{align}
		\begin{split}
				&\E \Big[ 
				\exp \Big(
					\tfrac{V(t,Y_t)}{e^{\rho t}}
					+ \int_{0}^t
						\1_{E}(Y_{\llcorner s \lrcorner_{\theta}})
						\tfrac{\overline{V}(s,Y_s)}{e^{\rho s}} 
					\ud s 
				\Big) 
			\Big] 
		\\ ={} &
			\E \Big[ 
				\exp \Big(
					\1_{H \backslash E}(Y_{\llcorner t \lrcorner_{\theta}})
					\tfrac
						{V(t,Y_t)}
						{e^{\rho t}}
					+\int_{0}^{\llcorner t \lrcorner_{\theta}} 
							\1_{E}(Y_{\llcorner s \lrcorner_{\theta}})
							\tfrac{\overline{V}(s,Y_s)}{e^{\rho s}} 
						\ud s 
					+\1_{E}(Y_{\llcorner t \lrcorner_{\theta}})
					\tfrac
						{V(t,Y_t)}
						{e^{\rho t}}
				\\ \quad &
					+\1_{E}(Y_{\llcorner t \lrcorner_{\theta}}) 
						\int_{\llcorner t \lrcorner_{\theta}}^t \tfrac{\overline{V}(s,Y_s)}{e^{\rho s}} \ud s
				\Big) 
			\Big] 
		\\ ={} &
			\E \Big[ 
				\exp \Big(
						\tfrac
						{
							\1_{H \backslash E}(Y_{\llcorner t \lrcorner_\theta})
							V \big(
								t,
								S_{t- \llcorner t \lrcorner_\theta} (
									Y_{\llcorner t \lrcorner_\theta}
								)
							\big )
						}
						{
							e^{\rho t}
						}
						+\int_{0}^{\llcorner t \lrcorner_{\theta}} 
							\1_{E}(Y_{\llcorner s \lrcorner_{\theta}})
							\tfrac{\overline{V}(s,Y_s)}{e^{\rho s}}
						\ud s 
				\Big) 
		\\ \quad &
				\cdot \E \Big[ \exp \Big( 
						\tfrac
						{
							\1_{E}(Y_{\llcorner t \lrcorner_\theta})
							V(t,Y_{t})
						}
						{
							e^{\rho (t- \llcorner t \lrcorner_\theta)}
						}
						+\1_{E}(Y_{\llcorner t \lrcorner_{\theta}})
						\int_{\llcorner t \lrcorner_{\theta}}^t 
							\tfrac
								{\overline{V}(s,Y_s)}
								{
									e^{\rho (s- \llcorner t \lrcorner_\theta)}
								}
						\ud s 
				\Big)^{e^{-\rho \llcorner t \lrcorner_\theta}} 
				~\Big | (Y_r)_{r \in [0, {\llcorner t \lrcorner_\theta}]}
				\Big]
			\Big].
			\end{split}
		\end{align}
		Thus, the Jensen inequality and 
		\eqref{eq: 11} verify 
		for all $t \in [0,T]$ that
		\begin{align}
			\begin{split}
					&\E \Big[ 
				\exp \Big(
					\tfrac{V(t,Y_t)}{e^{\rho t}}
					+ \int_{0}^t
						\1_{E}(Y_{\llcorner s \lrcorner_{\theta}})
						\tfrac{\overline{V}(s,Y_s)}{e^{\rho s}} 
					\ud s 
				\Big) 
			\Big] 
		\\ \leq{} &
			\E \Big[ 
				\exp \Big(
						\tfrac
						{\1_{H \backslash E}(Y_{\llcorner t \lrcorner_\theta})
						V(\llcorner t \lrcorner_\theta, Y_{\llcorner t \lrcorner_\theta})}
						{
							e^{\rho \llcorner t \lrcorner_\theta}
						}
						+\int_{0}^{\llcorner t \lrcorner_{\theta}} 
							\1_{E}(Y_{\llcorner s \lrcorner_{\theta}})
							\tfrac{\overline{V}(s,Y_s)}{e^{\rho s}}
						\ud s 
				\Big) 
			\\ \quad &
				\cdot \E \Big[
				\exp \Big( 
						\tfrac
						{
							\1_{E}(Y_{\llcorner t \lrcorner_\theta}) 
							V(t,Y_{t})
						}
						{
							e^{\rho (t- \llcorner t \lrcorner_\theta))}
						}
						+\1_{E}(Y_{\llcorner t \lrcorner_{\theta}})
						\int_{\llcorner t \lrcorner_{\theta}}^t 
							\tfrac
								{\overline{V}(s,Y_s)}
								{
									e^{\rho (s- \llcorner t \lrcorner_\theta)}
								}
						\ud s 
				\Big) 
			~\Big | (Y_r)_{r \in [0, {\llcorner t \lrcorner_\theta}]} \Big]^{e^{-\rho \llcorner t \lrcorner_\theta}}
			\Big]
		\\ \leq{} &
			\E \Big[ 
				\exp \Big(
						\tfrac
						{\1_{H \backslash E}(Y_{\llcorner t \lrcorner_\theta})
						V(\llcorner t \lrcorner_\theta, Y_{\llcorner t \lrcorner_\theta})}
						{
							e^{\rho \llcorner t \lrcorner_\theta}
						}
						+\int_{0}^{\llcorner t \lrcorner_{\theta}} 
							\1_{E}(Y_{\llcorner s \lrcorner_{\theta}})
							\tfrac{\overline{V}(s,Y_s)}{e^{\rho s}}
						\ud s 
				\Big) 
			\\ \quad &
				\cdot \exp \Big( 
							\1_{E}(Y_{\llcorner t \lrcorner_\theta}) 
							V(\llcorner t \lrcorner_\theta,Y_{\llcorner t \lrcorner_\theta})
						+
						c (t-\llcorner t \lrcorner_{\theta})
				\Big)^{e^{-\rho \llcorner t \lrcorner_\theta}} 
			\Big]
		\\ \leq{} &
			\E \Big[ 
				\exp \Big(
						\tfrac
						{
						V(\llcorner t \lrcorner_\theta, Y_{\llcorner t \lrcorner_\theta})}
						{
							e^{\rho \llcorner t \lrcorner_\theta}
						}
						+\int_{0}^{\llcorner t \lrcorner_{\theta}} 
							\1_{E}(Y_{\llcorner s \lrcorner_{\theta}})
							\tfrac{\overline{V}(s,Y_s)}{e^{\rho s}}
						\ud s 
						+ c (t-\llcorner t \lrcorner_{\theta})
				\Big)
			\Big]
		\\ \leq {} & \ldots \leq 
			\E \Big[ 
				\exp \big(
						V(0, Y_{0})
						+ c t
				\big)
			\Big].
		\end{split}
	\end{align}
		This completes the proof of Lemma \ref{lem: Lemma 2.1}.
	\end{proof}
	In the next lemma we derive
	exponential moments estimates of
	solutions of perturbed
	SPDEs.
	\begin{lemma}
		\label{l: basic estimate exp momente}
		Assume Setting \ref{set: 1},
		let $c_1, c_2, c_3, c_4, c_5, c_6, c_7 \in [0,\infty)$, 
		$\alpha \in (-\infty, 0]$,
		$\gamma \in \R^7$,
		$h \in (0,1 \wedge T]$,
		let $E \in \mathcal{B}(H)$,
		let $V \in \C^{1,2}([0,T] \times H, [0,\infty))$,
		let
		$\overline{V} \colon [0,T] \times H \to \R$,
		$F \colon [0,T] \times H \to H$, and
		$B \colon [0,T] \times H \to HS(U,H)$ be measurable,
		let
		$X,Y \colon [0,T] \times \Omega \to H$
		be maesurable and adapted,
		let
		$Z \colon [0,T] \times \Omega \to HS(U,H)$
		be progressively measurable,
		assume that for all $t \in [0,T]$ it hold a.s. that
		$\int_0^T \|A X_s + Y_s\|_H + \|Z_s\|^2_{HS(U,H)} + |\overline{V} (s,X_s)|\ud s < \infty$ and
		\begin{equation}
			X_t = X_0 + \int_0^t A X_s+ Y_s \ud s + \int_0^t Z_s \ud W_s,
		\end{equation}
		assume that for all $t \in [0,T]$
		it holds that
		\begin{equation}
		\label{eq: Lyapunov equation}
			\begin{split}
					&(\tfrac{\partial}{\partial t} V) (t,X_t)
					+\langle \nabla_x V(t,X_t), F(t,X_t)+A(X_t) \rangle_H
					+ \tfrac 12 \| B^*(t,X_t) \, \nabla_x V(t,X_t) \|^2_U
			\\ & \qquad
					+\tfrac 12 \langle B(t,X_t), (\Hess_x V)(t,X_t) B(t,X_t)\rangle_{HS(U,H)} 
					+\overline{V}(t,X_t)
				\leq 0,
			\end{split}
		\end{equation}
		assume that for all $s \in (0,h]$ and all $x,y \in H$
		it holds that
			\begin{align}
		\label{eq: Y F diff bound}
			&\|\1_{E}(X_0) (Y_s -F(s,X_s))\|_{L^4(\P;H_{\alpha})} \leq c_1 h^{\gamma_1}, \\
		\label{eq: Z B diff bound}
			&\| \1_{E}(X_0) (Z_s - B(s,X_s)) \|_{L^8(\P;HS(U,H))} \leq c_1 h^{\gamma_1}, \\
		\label{eq: 1st V derivative bound}
				&\| \1_{E}(X_0) ((\tfrac{\partial}{\partial x}V)(s,X_s)) \|_{L^{8}(\P; L(H_{\alpha},\R))}
			\leq 
				c_2 h^{\gamma_2},\\
		\label{eq: 2nd V derivative bound}
			&\| \1_{E}(X_0) ((\Hess_x V) (s,X_s)) \|_{L^4(\P;L(H,H))}  \leq c_2 h^{\gamma_2},\\
		\label{eq: overline V estimate}
				&\Big\| 
					\1_{E}(X_0) \cdot
						\exp \big(\int_0^s \overline{V}(r,X_r) \ud r \big)
				\Big \|_{L^4(\P;\R)}
			\leq 
				e^{c_3 s^{\gamma_3}}, \\
		\label{eq: Z bound}
			&\| \1_{E}(X_0) (Z_s) \|_{L^8(\P;HS(U,H))} \leq c_4 h^{\gamma_4}, \\
		\label{eq: X infty bound}
				&\|\1_{E}(X_0) (X_s -X_0)\|_{L^\infty(\P;H)} 
			\leq
				c_4 h^{\gamma_7}, \\
		\label{eq: V loc Lip cont}
				& \big| V(0,x) -V (s,y) \big| 
			\leq
				c_5 (\|x-y\|_{H} +s) (1 + V(0,x) + \|x-y\|_H^{c_7}),
		\\
			\label{eq: V_0 in E bound}
				 &\1_{E}(X_0) V(0,X_0) \leq c_1 h^{\gamma_6},
		\end{align}
		and assume that for all $s \in (0,h]$ and all $k \in (0,\infty)$
		it holds a.s. that
			\begin{equation}
			\label{eq: cond exp bound}
			\E \Big [ \1_{E}(X_0) \Big(
					\exp \big(
						 k \cdot (\|X_s-X_0\|_H +s)
					\big)
				\Big)
				~\Big | ~ X_0
				 \Big ] 
			\leq
				e^{c_6 ( k^2 h^{2 \gamma_5} + k h^{\gamma_5})}. 
		\end{equation}
		Then it holds for all $t \in (0,h]$ that
		\begin{equation}
			\begin{split}
					&\E \Big[ \exp 
						\big(\1_{E}(X_0) V(t,X_t)+\int_0^t \1_{E}(X_0) \overline{V}(s,X_s) \ud s \big) 
					\Big] \\
				\leq{} 
						&\E \big[e^{\1_{E}(X_0) V(0,X_0)} \big]
					\\ & 
						+
						\int_0^t
							\Big( \E \Bigl[ 
							e^{
								4 \cdot \1_{E}(X_0) \, V(0,X_0) 
								+c_6 (
									16 c^2_5 
									(1+c_1 h^{\gamma_6} +c_4^{c_7} h^{ c_7 \cdot \gamma_7})^2 h^{2\gamma_5}
									+4 c_5 
									(1+c_1 h^{\gamma_6} +c_4^{c_7} h^{ c_7 \cdot \gamma_7}) h^{\gamma_5}
								)
							}
						\Bigr ]  \Big)^{\nicefrac 14}
					\\ & \qquad \qquad
						e^{c_3 s^{\gamma_3}}
						\cdot c_1 c_2 h^{\gamma_1 +\gamma_2} 
						\Big(
								1
								+\tfrac 12
								\big(
									c_1 h^{\gamma_1} + 2c_4 h^{\gamma_4}
								\big)
								\big(
									(\inf_{i\in \mathcal{I}} |\lambda_i|)^{2\alpha} c_2 h^{\gamma_2} + 1
								\big)
						\Big) \ud s. 
			\end{split}
		\end{equation}
	\end{lemma}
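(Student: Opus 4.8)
The plan is to apply Lemma \ref{lem: Lemma 2.1} with the choices $S_t \curvearrowleft e^{tA}$, the same $E$, $V$, $\overline V$, and with $Y \curvearrowleft X$, $\rho \curvearrowleft 0$, and a suitable constant $c$ extracted below, so that the asserted bound is exactly the conclusion of Lemma \ref{lem: Lemma 2.1} (note $\theta$ here is the trivial partition $\{0,h\}$, so $\llcorner t \lrcorner_\theta = 0$ for $t \in (0,h]$ and the conclusion collapses to the one-step statement). The key hypothesis to verify is the one-step exponential moment bound \eqref{eq: 11}, i.e. that, conditionally on $X_0$, on the event $\{X_0 \in E\}$ we have $\E[\exp(V(t,X_t)+\int_0^t \overline V(s,X_s)\ud s)\mid X_0] \leq e^{V(0,X_0)} e^{c t}$ up to the additive error term displayed in the statement. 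The monotonicity hypothesis $V(r,e^{tA}x)\le V(r-t,x)$ is not literally assumed here, so instead of applying Lemma \ref{lem: Lemma 2.1} verbatim I would reprove the one-step estimate directly; the structure is the same but cleaner.

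First I would introduce the auxiliary process $\tilde X_t \curvearrowleft X_0 + \int_0^t (AX_s + F(s,X_s))\ud s + \int_0^t B(s,X_s)\ud W_s$, i.e. the "exact" SPDE driven by the same noise, started from $X_0$; by the Lyapunov inequality \eqref{eq: Lyapunov equation} and Itô's formula applied to $s\mapsto \exp(V(s,\tilde X_s) + \int_0^s \overline V(r,\tilde X_r)\ud r)$, the process $s\mapsto \exp(V(s,\tilde X_s)+\int_0^s\overline V(r,\tilde X_r)\ud r)$ is a nonnegative local supermartingale, hence a supermartingale, giving $\E[\exp(V(t,\tilde X_t)+\int_0^t\overline V(r,\tilde X_r)\ud r)\mid X_0]\le e^{V(0,X_0)}$ on $\{X_0\in E\}$. (One must check that the drift of this exponential is nonpositive: the $\partial_t V$, the $\langle \nabla_x V, F+AX\rangle$, the $\tfrac12\|B^*\nabla_x V\|^2$, and the $\tfrac12\langle B,(\Hess_x V)B\rangle$ terms combine with $\overline V$ to give exactly the left side of \eqref{eq: Lyapunov equation}.) Second, I would write the true quantity of interest as the supermartingale quantity plus a remainder:
\begin{equation}
\begin{split}
\E\Big[e^{V(t,X_t)+\int_0^t\overline V(s,X_s)\ud s}\mid X_0\Big]
&= \E\Big[e^{V(t,\tilde X_t)+\int_0^t\overline V(s,\tilde X_s)\ud s}\mid X_0\Big] \\
&\quad + \E\Big[\big(e^{V(t,X_t)+\int_0^t\overline V(s,X_s)\ud s} - e^{V(t,\tilde X_t)+\int_0^t\overline V(s,\tilde X_s)\ud s}\big)\mid X_0\Big],
\end{split}
\end{equation}
and estimate the remainder, on $\{X_0\in E\}$, using the elementary bound $|e^a - e^b| \le |a-b|(e^a+e^b)$ together with the local Lipschitz estimate \eqref{eq: V loc Lip cont} for $V$ and a Gronwall/Burkholder estimate for $\E\|X_t - \tilde X_t\|_H$ driven by the perturbation sizes $\|Y_s - F(s,X_s)\|_{H_\alpha}$ and $\|Z_s - B(s,X_s)\|_{HS}$ bounded by $c_1 h^{\gamma_1}$ in \eqref{eq: Y F diff bound}–\eqref{eq: Z B diff bound}. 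Here $X_t - \tilde X_t = \int_0^t e^{(t-s)A}(Y_s - F(s,X_s))\ud s + \int_0^t e^{(t-s)A}(Z_s - B(s,X_s))\ud W_s$ in mild form, so Lemma \ref{l: F estimate} (with the smoothing exponent compensating the $H_\alpha$-loss) and the Burkholder–Davis–Gundy inequality bound its $L^4(\P;H)$-norm by a constant times $c_1 h^{\gamma_1}$; the factor $\big((\inf_i|\lambda_i|)^{2\alpha}c_2 h^{\gamma_2}+1\big)$ and the $\tfrac12(c_1 h^{\gamma_1}+2c_4 h^{\gamma_4})$ prefactors in the statement come from linearizing $V(t,X_t)-V(t,\tilde X_t)$ via its first and second derivatives \eqref{eq: 1st V derivative bound}–\eqref{eq: 2nd V derivative bound} along the segment between $\tilde X_t$ and $X_t$, plus the bound $\|Z_s\|_{L^8(HS)}\le c_4 h^{\gamma_4}$ from \eqref{eq: Z bound} for the Itô-isometry part.

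The final bookkeeping step is to control the exponential weights appearing when I apply Hölder's inequality to the remainder: I would split off a factor $e^{c_3 s^{\gamma_3}}$ from $\exp(\int_0^s\overline V(r,X_r)\ud r)$ using \eqref{eq: overline V estimate}, a factor bounded via \eqref{eq: V_0 in E bound} and \eqref{eq: X infty bound} (to replace $V(t,X_t)$ by $V(0,X_0)$ plus a controlled increment, using \eqref{eq: V loc Lip cont} and the a.s.\ bound $\|X_s-X_0\|_H\le c_4 h^{\gamma_7}$), and then absorb the resulting $\exp(k(\|X_s-X_0\|_H+s))$-type terms via the conditional Gaussian-tail estimate \eqref{eq: cond exp bound}, which produces precisely the $e^{4\cdot\1_E(X_0)V(0,X_0)+c_6(16c_5^2(1+\cdots)^2 h^{2\gamma_5}+4c_5(1+\cdots)h^{\gamma_5})}$ factor (the $16 = 4^2$ and $4$ arising from the $L^4$–Hölder exponent, and $(1+c_1 h^{\gamma_6}+c_4^{c_7}h^{c_7\gamma_7})$ from the bound on $1+V(0,X_0)+\|X_s-X_0\|_H^{c_7}$). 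The main obstacle, and where the proof is genuinely delicate rather than routine, is the remainder estimate: one must linearize $V$ correctly in an infinite-dimensional Hilbert space (so that the $H_\alpha$–$H$ duality in \eqref{eq: 1st V derivative bound} matches the smoothing produced by $e^{(t-s)A}$ in Lemma \ref{l: F estimate}), keep the Hölder exponents consistent across the $L^4$, $L^8$, and $L^\infty$ norms in hypotheses \eqref{eq: Y F diff bound}–\eqref{eq: cond exp bound}, and make sure every exponential moment that is raised to a power in Hölder's inequality is controlled by one of the standing assumptions — in particular verifying that the product of the linearization prefactors reproduces exactly the displayed constant rather than merely a constant of the same order.
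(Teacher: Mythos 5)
Your proposal takes a genuinely different route from the paper's, and that route has a gap. The paper's actual proof applies It\^o's formula directly to the exponential functional $\xi_t=\exp\bigl(\1_E(X_0)V(t,X_t)+\int_0^t\1_E(X_0)\overline V(s,X_s)\,\ud s\bigr)$ of the \emph{given} process $X$; the Lyapunov inequality \eqref{eq: Lyapunov equation} (which is only assumed to hold at $X_t$) then cancels the ``ideal'' part of the It\^o drift, leaving a remainder proportional to $\xi_s$ times terms in $Y_s-F(s,X_s)$, $Z_s-B(s,X_s)$, $\Hess_x V$, which are estimated by H\"older using \eqref{eq: Y F diff bound}--\eqref{eq: cond exp bound}. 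Your plan instead introduces an auxiliary process $\tilde X$ and asserts that $s\mapsto\exp(V(s,\tilde X_s)+\int_0^s\overline V(r,\tilde X_r)\,\ud r)$ is a local supermartingale ``by the Lyapunov inequality.'' But \eqref{eq: Lyapunov equation} is assumed \emph{along the actual path $X_t$}, not along $\tilde X_t$, so that drift estimate simply does not apply: with $\tilde X$ defined literally as you wrote it (coefficients $AX_s+F(s,X_s)$, $B(s,X_s)$), the It\^o drift of $\exp(V(s,\tilde X_s))$ mixes $\nabla_x V(s,\tilde X_s)$ and $\Hess_x V(s,\tilde X_s)$ with coefficients evaluated at $X_s$, which is not the expression bounded in \eqref{eq: Lyapunov equation}; and if you instead close the SDE and put $\tilde X_s$ inside $F$ and $B$, then \eqref{eq: Lyapunov equation} still only controls the drift at $X_t$, and you additionally need existence of $\tilde X$, which is not available under the bare measurability hypotheses.

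A second, more mechanical error compounds this. You claim $X_t-\tilde X_t=\int_0^t e^{(t-s)A}(Y_s-F(s,X_s))\,\ud s+\int_0^t e^{(t-s)A}(Z_s-B(s,X_s))\,\ud W_s$. With $\tilde X$ as you wrote it, the $\int_0^t AX_s\,\ud s$ terms cancel exactly, so the difference is just $\int_0^t(Y_s-F(s,X_s))\,\ud s+\int_0^t(Z_s-B(s,X_s))\,\ud W_s$ with no semigroup; if instead $\tilde X$ solves the closed SPDE so that a semigroup does appear, the integrands become $Y_s-F(s,\tilde X_s)$ and $Z_s-B(s,\tilde X_s)$, and the hypotheses \eqref{eq: Y F diff bound}, \eqref{eq: Z B diff bound} only bound the differences against $F(s,X_s)$ and $B(s,X_s)$, not against $F(s,\tilde X_s)$, $B(s,\tilde X_s)$. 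The $|e^a-e^b|\le|a-b|(e^a+e^b)$ decomposition would also require an exponential moment for $e^{V(t,\tilde X_t)}$ separately, which the supermartingale bound alone does not furnish unless $\int_0^t\overline V\ge 0$ (which is not assumed). The fix is to abandon the auxiliary process altogether and argue as the paper does: apply It\^o to $\xi_t$ itself, use \eqref{eq: Lyapunov equation} at $X_t$ to remove the ideal drift, localize with stopping times, pass to the limit with Fatou, and then H\"older the surviving integrand against \eqref{eq: overline V estimate}, \eqref{eq: V loc Lip cont}, \eqref{eq: V_0 in E bound}, \eqref{eq: X infty bound}, \eqref{eq: cond exp bound}; this yields the additive integral remainder in the statement directly, which your multiplicative $e^a-e^b$ split would not reproduce without extra work.
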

		\begin{proof}
			W.l.o.g.\@ we assume that $X$ has continuous sample paths.
			First, denote by $\xi \colon [0,T] \times \Omega \to \R$
			the function satisfying for all $t \in [0,T]$ that
			\begin{equation}
					\xi_t 
				= 
					\exp \Bigl( 
						\1_{E}(X_0)
						V(t,X_t) + \int_0^t \1_{E}(X_0) \overline{V}(s,X_s) \ud s 
					\Bigr)
			\end{equation}
			and by $\tau_n \colon \Omega \to [0,T]$, $n \in \N$,
			the stopping times satisfying for all
			$n \in \N$ that
			\begin{equation}
				\tau_n
				= 
					\inf \Bigl(
						\Big\{
							t \in [0,T] \colon
								\Big \|	
									\int_0^t 
										\xi_s \cdot (\tfrac{\partial}{\partial x} V) (s,X_s) \, Z_s
									\dWs
								\Big \|_H
							> n
						\Big \} 
						\cup \{T\}
					\Bigr).
			\end{equation}
			Then Ito's formula implies
			for all $t \in [0,T]$ 
			and all $n \in \N$ that
			\begin{equation}
				\begin{split}
						&\xi_{(t \wedge \tau_n)} 
					\\ ={} & 
						\xi_0 
						+ \int_0^{(t \wedge \tau_n)}  
							\1_{E}(X_0) \, \xi_s \cdot (\tfrac{\partial}{\partial x} V) (s,X_s) \, Z_s
						\dWs \\
						&+\int_0^{(t \wedge \tau_n)}  
							\1_{E}(X_0)  \cdot \xi_s \big(
								(\tfrac{\partial}{\partial s} V) (s,X_s)
								+\overline{V}(s,X_s)
								+(\tfrac{\partial}{\partial x} V) (s,X_s) (A(X_s) + F(s,X_s))
					\\ & \qquad
								+\tfrac 12
								\langle 
									B(s,X_s), 
									(\Hess_x V) (s,X_s) B(s,X_s) 
								\rangle_{HS(U,H)}
								+\tfrac 12
								\| B^*(s,X_s) \, \nabla_x V(s,X_s) \|^2_U
							\big)
						\ud s \\
						&+\int_0^{(t \wedge \tau_n)}  
							\1_{E}(X_0)  \cdot \xi_s 
							 \big(
								(\tfrac{\partial}{\partial x} V) (s,X_s)
								(Y_s - F(s,X_s)) 
							-\tfrac 12 
									\| B^*(s,X_s) \, \nabla_x V(s,X_s) \|^2_U
					\\ & \qquad
								-\tfrac 12 
									\langle 
										B(s,X_s), 
										(\Hess_x V) (s,X_s) B(s,X_s) 
									\rangle_{HS(U,H)}
					\\ & \qquad
								+\tfrac 12 
									\langle 
										Z_s, 
										(\Hess_x V) (s,X_s)  Z_s 
									\rangle_{HS(U,H)}
								+\tfrac 12 
									\| Z_s^* \, \nabla_x V(s,X_s) \|^2_U
							\big)
						\ud s.
				\end{split}
			\end{equation}
			This and \eqref{eq: Lyapunov equation}
			show for all $t \in [0,T]$ 
			and all $n \in \N$ that
			\begin{equation}
				\begin{split}
						&\E[\xi_{(t \wedge \tau_n)}]
					\\ \leq{} &
						\E[\xi_0]
						+\E \bigg[
							\int_0^{(t \wedge \tau_n)}  \1_{E}(X_0)  \, \xi_s 
							\Big(
								(\tfrac{\partial}{\partial x} V) (s,X_s)
								(Y_s - F(s,X_s)) 
						+\tfrac 12 
									\| Z_s^* \, \nabla_x V(s,X_s) \|^2_U
					\\ & \qquad
								-\tfrac 12 
									\langle 
										B(s,X_s), 
										(\Hess_x V) (s,X_s) B(s,X_s) 
									\rangle_{HS(U,H)}
					\\ & \qquad
								+\tfrac 12 
									\langle 
										Z_s, 
										(\Hess_x V) (s,X_s)  Z_s 
									\rangle_{HS(U,H)}
								-\tfrac 12 
									\| B^*(s,X_s) \, \nabla_x V(s,X_s) \|^2_U
							\Big)
							\ud s
						\bigg].
				\end{split}
			\end{equation}
			Thus, Fatou's lemma 
			and Hölder's inequality
			establish for all $t \in [0,T]$ that
			\begin{equation}
			\label{eq: xi basic estimate}
				\begin{split}
						\E[\xi_{t}]
					\leq{}
						&\liminf_{n \to \infty} \E[\xi_{(t \wedge \tau_n)}] \\
					\leq{}
						&\E[\xi_0]
						+\E \bigg[
							\int_0^{t}  \bigg |
								\1_{E}(X_0)  \, \xi_s 
								\Big(
									(\tfrac{\partial}{\partial x} V) (s,X_s)
									(Y_s - F(s,X_s)) 
						\\ & 
									-\tfrac 12 
										\langle 
											B(s,X_s), 
											(\Hess_x V) (s,X_s) B(s,X_s) 
										\rangle_{HS(U,H)}
									+\tfrac 12  
										\| Z_s^* \, \nabla_x V(s,X_s) \|^2_U
						\\ & 
									+\tfrac 12 
										\langle 
											Z_s, 
											(\Hess_x V) (s,X_s)  Z_s 
										\rangle_{HS(U,H)}
									-\tfrac 12 
										\| B^*(s,X_s) \, \nabla_x V(s,X_s) \|^2_U
								\Big)
							\bigg | \ud s
						\bigg] \\
					\leq{}
						&\E[\xi_0]
						+\int_0^{t }
							\|\1_{E}(X_0) \, \xi_s\|_{L^2(\P;\R)} \Big(
								\big \|
									\1_{E}(X_0)  \cdot
									(\tfrac{\partial}{\partial x} V) (s,X_s)
									(Y_s - F(s,X_s))
								\big \|_{L^2(\P;\R)} 
						\\ & 
								+\tfrac 12 
								\big \|
										\1_{E}(X_0)  \cdot
										\langle 
											B(s,X_s), 
											(\Hess_x V) (s,X_s) B(s,X_s) 
										\rangle_{HS(U,H)}
							\\ & \qquad
									-\1_{E}(X_0)  \cdot
									\langle 
											Z_s, 
											(\Hess_x V) (s,X_s)  Z_s 
										\rangle_{HS(U,H)}
								\big \|_{L^2(\P;\R)} 
						\\ &
									+\tfrac 12 
									\big \|
										\1_{E}(X_0)  \cdot \| B^*(s,X_s) \, \nabla_x V(s,X_s) \|^2_U
									-\1_{E}(X_0)  \cdot \| Z^*_s \, \nabla_x V(s,X_s) \|^2_U
									\big \|_{L^2(\P;\R)} 
							\Big)
						\ud s.
				\end{split}
			\end{equation}
			Next we will estimate each term separately.
			Therefore, note that it follows from 
			Hölder's inequality,
			\eqref{eq: overline V estimate},
			\eqref{eq: V loc Lip cont}, \eqref{eq: X infty bound},
			\eqref{eq: V_0 in E bound} and from
			\eqref{eq: cond exp bound}
			that for all
			$s \in (0,h]$
			it holds that
			\begin{equation}
			\label{eq: xi estimate}
				\begin{split}
						&\| \1_{E}(X_0)  \, \xi_s\|^4_{L^2(\P;\R)}
					=
						\Big \| 
							\1_{E}(X_0)  \,
							\exp \Bigl( 
								V(s,X_s) 
								+ \int_0^s \overline{V}(r,X_r) \ud r \Bigr)
						\Big \|^4_{L^2(\P;\R)} \\
					\leq{}
						&\Big \| 
							\1_{E}(X_0) \, \exp \bigl( V(s,X_s) \bigr)
						\Big \| ^4_{L^4(\P;\R)}
						\Big \| 
							\1_{E}(X_0) \, \exp \Big(
								 \int_0^s \overline{V}(r,X_r) \ud r
							\Big)
						\Big \|^4_{L^4(\P;\R)} \\
					\leq{}
						&
						\E \Bigl[ 
							 e^{4 V(0,X_0)} \,\1_{E}(X_0) \,
							\exp \Bigl(
								4 |V(s,X_s) - V(0,X_0)|
							\Bigr)
						\Bigr ]
						e^{4c_3 s^{\gamma_3}}\\
					\leq{}
						&\E \Bigl[ e^{4 \cdot \1_{E}(X_0) \, V(0,X_0)} \E \Bigl[ 
							\1_{E}(X_0) \,
					\\ & \qquad \cdot
							\exp \Bigl(
								4c_5 (1+V(0,X_0) +\|X_s-X_0\|_H^{c_7}) (\|X_s-X_0\|_H +s)
							\Bigr)
						\Bigr ] \Big | ~ X_0
						\Bigr ] 
						e^{4c_3 s^{\gamma_3}}\\
					\leq{}
						&\E \Bigl[ e^{4 \cdot \1_{E}(X_0) \, V(0,X_0)} \E \Bigl[ 
							\1_{E}(X_0) \,
					\\ & \qquad \cdot
							\exp \Bigl(
								4c_5 (1+c_1 h^{\gamma_6} +c_4^{c_7}h^{c_7 \cdot \gamma_7}) (\|X_s-X_0\|_H +s)
							\Bigr)
						\Bigr ] \Big | ~ X_0
						\Bigr ] 
						e^{4c_3 s^{\gamma_3}}\\
					\leq{}
						&\E \Bigl[ 
							e^{
								4 \cdot \1_{E}(X_0) \, V(0,X_0) 
								+c_6 (
									16 c^2_5 
									(1+c_1 h^{\gamma_6} +c_4^{c_7} h^{ c_7 \cdot \gamma_7})^2 h^{2\gamma_5}
									+4 c_5 
									(1+c_1 h^{\gamma_6} +c_4^{c_7} h^{ c_7 \cdot \gamma_7}) h^{\gamma_5}
								)
							}
						\Bigr ] 
						e^{4c_3 s^{\gamma_3}}.
				\end{split}
			\end{equation}
			Moreover, Hölder's inequality, \eqref{eq: 1st V derivative bound}, and
			\eqref{eq: Y F diff bound} 
			imply for all
			$s \in (0,h]$ that
			\begin{equation}
			\label{eq: 1st derivative estimate}
				\begin{split}
						&\|
							 \1_{E}(X_0) \cdot
							(\tfrac{\partial}{\partial x} V) (s,X_s)
							(Y_s - F(s,X_s))
						\|_{L^2(\P;\R)} 
					\\ \leq{} &
						\big \|
							\| 
								\1_{E}(X_0) (\tfrac{\partial}{\partial x} V) (s,X_s) (-A)^{-\alpha}
							\|_{L(H,\R)}
							\|  \1_{E}(X_0) (-A)^{\alpha} (Y_s - F(s,X_s))\|_H
						\big \|_{L^2(\P;\R)} \\
					\leq{}
						&\|
							\1_{E}(X_0) (\tfrac{\partial}{\partial x} V) (s,X_s) 
						\|_{L^4(\P;L(H_{\alpha},\R))}
						\|  \1_{E}(X_0) (Y_s - F(s,X_s))\|_{L^4(\P;H_{\alpha})}\\
					\leq{}
						&c_1 c_2 h^{\gamma_1 +\gamma_2}.
				\end{split}
			\end{equation}
			Furthermore, 
			Hölder's inequality,
			\eqref{eq: Z B diff bound},
			\eqref{eq: 2nd V derivative bound}, and
			\eqref{eq: Z bound}
			 show for all
			$s\in (0,h]$ that
			\begin{equation}
			\label{eq: 2nd derivative estimate part 1}
				\begin{split}
					 &\|
							 \1_{E}(X_0) 
							\langle 
								B(s,X_s), 
								(\Hess_x V)(s,X_s) B(s,X_s) 
							\rangle_{HS(U,H)} 
					\\& \quad
							- \1_{E}(X_0) 
							\langle 
								Z_s, 
								(\Hess_x V)(s,X_s) Z_s 
							\rangle_{HS(U,H)}
						\|_{L^2(\P;\R)} \\
					={}
						&\|
							 \1_{E}(X_0) \,
							\langle 
								B(s,X_s)-Z_s, 
								(\Hess_x V)(s,X_s) (B(s,X_s) +Z_s)
							\rangle_{HS(U,H)} 
					\\ \leq{} &
						\|
							 \1_{E}(X_0) 
							(B(s,X_s)-Z_s)
						\|_{L^8(\P;HS(U,H))} 
						\|
							 \1_{E}(X_0) \, (\Hess_x V)(s,X_s)  
						\|_{L^4(\P;L(H,H))} 
					\\ & \quad \cdot
						\big(
							\|
								 \1_{E}(X_0) (B(s,X_s)-Z_s)
							\|_{L^8(\P;HS(U,H))}  
							+2\|
							 \1_{E}(X_0) \, Z_s
						\|_{L^8(\P;HS(U,H))}
						\big)\\
					\leq{}
						&c_1 h^{\gamma_1} c_2h^{\gamma_2}
						(c_1 h^{\gamma_1} +2c_4 h^{\gamma_4})
					=
						c_1 c_2 h^{\gamma_1+\gamma_2}
						(c_1 h^{\gamma_1} +2c_4 h^{\gamma_4}).
				\end{split}
			\end{equation}
			In addition, 
			Hölder's inequality,
			\eqref{eq: 1st V derivative bound},
			\eqref{eq: Z B diff bound}, and \eqref{eq: Z bound}
			verify for all $s \in (0,h]$ that
			\begin{equation}
			\label{eq: 2nd derivative estimate part 2}
				\begin{split}
						&\big \|
							\1_{E}(X_0) \| B^*(s,X_s) \, \nabla_x V(s,X_s) \|^2_U
							-\1_{E}(X_0) \| Z^*_s\, \nabla_x V(s,X_s) \|^2_U
						\big \|_{L^2(\P;\R)} \\
					\leq{}
						&\Big\| 
								\1_{E}(X_0) \, \big \| 
									B^*(s,X_s) \, \nabla_x V(s,X_s)  
									- Z^*_s \, \nabla_x V(s,X_s) 
								\big \|_U 
					\\& \quad	\cdot
							\big\| 
								B^*(s,X_s) \, \nabla_x V(s,X_s) 
								+ Z^*_s \, \nabla_x V(s,X_s) 
							\big \|_U
						\Big \|_{L^2(\P;\R)} \\
					={}
						&\Big\|
							\1_{E}(X_0) \,
							\big \| (B^*(s,X_s)- Z^*_s) \, \nabla_x V(s,X_s) \big \|_U
								\|
									(B^*(s,X_s)+Z^*_s)
									\, \nabla_x V(s,X_s) 
								\|_U
							\Big\|_{L^2(\P;\R)} \\
					\leq{}
						&\Big \|
							\1_{E}(X_0) \,
								\| 
									\nabla_x V(s,X_s) 
								\|_{H_{-\alpha}}
								\|
									B^*(s,X_s) -Z^*_s 
								\|_{L(H_{-\alpha},U)}\\
							&\cdot
								\| 
									\nabla_x V(s,X_s)
								\|_{H_{-\alpha}}
								\big(
									\|
										B^*(s,X_s) 
									+
										Z^*_s 
									\|_{L(H_{-\alpha},U)}
								\big)
						\Big\|_{L^2(\P;\R)} \\
					\leq{}
						&\Big \|
							\1_{E}(X_0) \,
								\| 
									\nabla_x V(s,X_s) 
								\|_{H_{-\alpha}}
								\|(-A)^{\alpha}\|_{L(H,H)}
								\|
									B^*(s,X_s) -Z^*_s 
								\|_{L(H,U)}\\
							&\cdot
								\| 
									\nabla_x V(s,X_s)
								\|_{H_{-\alpha}}
								\big(
									\|(-A)^{\alpha}\|_{L(H,H)}
									\|
										B^*(s,X_s) 
									+
										Z^*_s 
									\|_{L(H,U)}
								\big)
						\Big\|_{L^2(\P;\R)} \\
					\leq{}
						&\Big \|
							\1_{E}(X_0) \,
								\| 
									(\tfrac{\partial}{\partial x}V)(s,X_s) 
								\|^2_{L(H_{\alpha},\R)}
								(\inf_{i\in \mathcal{I}} |\lambda_i|)^{2\alpha}
								\|
									B^*(s,X_s) -Z^*_s 
								\|_{HS(H,U)}
						\\ & \quad
								\big(
									\|
										B^*(s,X_s) 
									+ 
										Z^*_s 
									\|_{HS(H,U)}
								\big)
						\Big\|_{L^2(\P;\R)} \\
					\leq{}
					&(\inf_{i\in \mathcal{I}} |\lambda_i|)^{2\alpha}
					\| 
							\1_{E}(X_0) \, (\tfrac{\partial}{\partial x} V)(s,X_s) 
						\|^2_{L^8(\P;L(H_{\alpha},\R))}
						\|
							\1_{E}(X_0) (B(s,X_s) -Z_s) 
						\|_{L^8(\P;HS(U,H))}
				\\ & \quad \cdot
						\big(
							\|
								\1_{E}(X_0) \, B(s,X_s) 
							\|_{L^8(\P;HS(U,H))}
							+\| 
								\1_{E}(X_0) \, Z_s 
							\|_{L^8(\P;HS(U,H))}
						\big) \\
					\leq{}
						&(\inf_{i\in \mathcal{I}} |\lambda_i|)^{2\alpha}
						c_2^2 h^{2\gamma_2}
						c_1 h^{\gamma_1}
						\big(
							c_1 h^{\gamma_1} + c_4 h^{\gamma_4}
							+c_4 h^{\gamma_4}
						\big) 
					=
						(\inf_{i\in \mathcal{I}} |\lambda_i|)^{2\alpha}
						c_1 c_2^2 h^{2\gamma_2+\gamma_1}
						\big(
							c_1 h^{\gamma_1} + 2c_4 h^{\gamma_4}
						\big).
				\end{split}
			\end{equation}
			Combining \eqref{eq: xi basic estimate},
			\eqref{eq: xi estimate},
			\eqref{eq: 1st derivative estimate}
			\eqref{eq: 2nd derivative estimate part 1}, 
			and \eqref{eq: 2nd derivative estimate part 2}
			proves for all
			$t \in (0,h]$ that
			\begin{equation}
				\begin{split}
						&\E[\xi_t]
					\leq{}
						\E \big[e^{\1_{E}(X_0) V(0,X_0)} \big]
					\\ & 
						+
						\int_0^t
							\Big( \E \Bigl[ 
							e^{
								4 \cdot \1_{E}(X_0) \, V(0,X_0) 
								+c_6 (
									16 c^2_5 
									(1+c_1 h^{\gamma_6} +c_4^{c_7} h^{ c_7 \cdot \gamma_7})^2 h^{2\gamma_5}
									+4 c_5 
									(1+c_1 h^{\gamma_6} +c_4^{c_7} h^{ c_7 \cdot \gamma_7}) h^{\gamma_5}
								)
							}
						\Bigr ]  \Big)^{\nicefrac 14}
					\\ & \qquad \qquad
						\cdot
						e^{c_3 s^{\gamma_3}} c_1 c_2 h^{\gamma_1 +\gamma_2} 
						\Big(
								1
								+\tfrac 12
								\big(
									c_1 h^{\gamma_1} + 2c_4 h^{\gamma_4}
								\big)
								\big(
									(\inf_{i\in \mathcal{I}} |\lambda_i|)^{2\alpha} c_2 h^{\gamma_2} + 1
								\big)
						\Big) \ud s. 
				\end{split}
			\end{equation}
			This finishes the proof of Lemma \ref{l: basic estimate exp momente}.
		\end{proof}
	\section{Moment estimates for tamed exponential Euler approximations}
         \label{sec:moments}

	In this section we derive moment estimates for tamed exponential Euler approximations.
  \Cref{l: X estimate}
  and \cref{l: X 1/2 estimate} show well-known moment estimates
  for stochastic integrals.
	Proposition \ref{prop: basic exp bound}
  is the main result of this section
  and establishes
  exponential moments bounds
	for tamed exponential Euler approximations. 
	This is an essential step to prove convergence rates for
	tamed exponential Euler approximations.
	In the next lemmas we verify the assumptions of Proposition \ref{prop: basic exp bound}.
	First we show in Lemma \ref{l: Lp estimate Y} that all moments 
	are bounded. 
	Lemma \ref{l: Lp H 1/2 estimate Y} then establishes
	a lifting result, which 
	lifts moment bounds to moment bounds with more regularity.
	Moreover, Lemma  \ref{l: Lp holder estimate Y} shows that
	tamed exponential Euler approximations are H\"older continuous which we
	need in
	Lemma \ref{l: Lp H 1/2 estimate Y}.
	Finally, we derive in Lemma \ref{prop: H12 exp bound} 
	one step exponential moment
	estimates
	of integrated tamed exponential Euler approximations
	with respect to a more regular norm.
	
	Throughout this section the following setting is frequently used.
	\begin{sett}
		\label{sett 2}
		Assume Setting \ref{set: 1},
		let $\eta \in (0,\infty)$,                          
		let
		$F \colon [0,T] \times H \to H$ and
		$B \colon [0,T] \times H \to HS(U,H)$
		be measurable satisfying that
		\begin{equation}
		\label{eq: global bound B sett}
			\sup_{(t,x) \in [0,T] \times H} \|B(t,x)\|_{HS(U,H)} \leq \eta,
		\end{equation}
		let $D \in \mathcal{B}(H)$,
		let $\Pi \in \C^2(H,H)$,
		let $\kappa \colon [0,T] \to [0,T]$, be increasing
		and satisfy
		$\kappa(0) =0$,
		that $\sup_{t\in [0,T]} (t-\kappa(t)) \leq |\theta|$,
		and for all $t \in [0,T]$ that $\kappa(t) \leq t$,
		let $Y \colon [0,T] \times \Omega \to H$, 
		be an adapted stochastic process with continuous sample paths
		satisfying that for all $t \in [0,T]$ it holds a.s.\@ that
		\begin{equation}
		\label{eq: def Y}
			\begin{split}
					Y_t 
			={}
				&e^{(t-s)A}Y_{s}
					+	
					\int^t_{s}
						e^{(t-u)A} \1_{D}(Y_{\kappa(u)})
							F(\kappa(u),Y_{\kappa(u)})
					\ud u 
				\\ &
				+\int_{s}^t 
						e^{(t-u)A} \1_{D}(Y_{\kappa(u)}) \big(
								(D \Pi)(X_{\kappa(u),u}) (A X_{\kappa(u),u})
								-A \Pi(X_{\kappa(u),u})
						\big)
					\ud u
			\\ &
					+
					\int_{s}^t 
						e^{(t-u)A} \1_{D}(Y_{\kappa(u)})
							(D \Pi)(X_{\kappa(u),u}) (
								B(
									\kappa(u),
									Y_{\kappa(u)}
								)
							)
					\ud W_u 
			\\ &
					+
					\tfrac 12 \int_{s}^t 
						e^{(t-u)A} \1_{D}(Y_{\kappa(u)})
						\sum_{i \in \mathcal{J}}
							(D^2 \Pi)(X_{\kappa(u),u}) \Big(
								B(
									\kappa(u),
									Y_{\kappa(u)}
								) \tilde{e}_i,
								B(
									\kappa(u),
									Y_{\kappa(u)}
								) \tilde{e}_i
							\Big)
					\ud u.
			\end{split}
		\end{equation}
		and for every $s\in[0,T]$ let
		$X_s \colon [s,T] \times \Omega \to H$ be an 
		$(\mathbb{F}_t)_{t \in [s,T]}$-adapted stochastic process with continuous sample paths
		satisfying that for all $t\in [s,T]$ it holds a.s.\@ that
		\begin{equation}
		\label{eq: def of X}
				X_{s,t}
			=
				\int_s^t e^{(t-u)A} B(\kappa(u), Y_{\kappa(u)}) \ud W_u.
		\end{equation}
		\end{sett}
		The next lemma derives another representations of the mild It\^o processes
		in 
		\cref{eq: def Y}.
		\begin{lemma}
		\label{l: Y representation}
			Assume Setting \ref{sett 2}
			and let 
			$
					\kappa 
				= ([0,T] \ni t \to \llcorner t \lrcorner_\theta \in [0,T])
			$. 
			Then
			it holds for all $t \in [0,T]$
			a.s. that
		\begin{align}
		\label{eq: Y representation}
			\begin{split}
				Y_t 
				={} &
					e^{(t - \llcorner t \lrcorner_{\theta}) A}
					Y_{\llcorner t \lrcorner_{\theta}}
					+\1_{D}(Y_{\llcorner t \lrcorner_{\theta}})
					\int^t_{\llcorner t \lrcorner_{\theta}}
						e^{(t-s)A}
						F(\llcorner t \lrcorner_{\theta},Y_{\llcorner t \lrcorner_{\theta}})
					\ud s
				\\&
					+\1_{D}(Y_{\llcorner t \lrcorner_{\theta}})
					\Pi \Big(
							\int^t_{\llcorner t \lrcorner_{\theta}}
							e^{(t-s)A}
							B(\llcorner t \lrcorner_{\theta},Y_{\llcorner t \lrcorner_{\theta}})
							\dWs
					\Big).
			\end{split}
		\end{align}
		\end{lemma}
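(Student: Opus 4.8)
The plan is to fix $t\in[0,T]$, to abbreviate $s_0:=\llcorner t\lrcorner_{\theta}$, and to work on the grid interval $(s_0,t]$. The first observation is that $\kappa=\llcorner\cdot\lrcorner_{\theta}$ is constant equal to $s_0$ on $(s_0,t]$: indeed $s_0\in\theta$, $s_0<u$, and $\theta\cap(s_0,t)=\emptyset$ by the definition of $\llcorner t\lrcorner_{\theta}$, hence $\llcorner u\lrcorner_{\theta}=s_0$ for every $u\in(s_0,t]$. Consequently, evaluating \eqref{eq: def Y} at $s=s_0$, each frozen quantity $\1_{D}(Y_{\kappa(u)})$, $F(\kappa(u),Y_{\kappa(u)})$, $B(\kappa(u),Y_{\kappa(u)})$ equals the $\mathbb{F}_{s_0}$-measurable random variable $\1_{D}(Y_{s_0})$, $F(s_0,Y_{s_0})$, $b:=B(s_0,Y_{s_0})$, respectively, and $X_{\kappa(u),u}=X_{s_0,u}=\int_{s_0}^{u}e^{(u-r)A}b\ud W_r$. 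Therefore the first two summands of \eqref{eq: def Y} already coincide with the first two summands of \eqref{eq: Y representation}, and it remains only to identify the sum of the three $\Pi$-dependent summands of \eqref{eq: def Y} with $\1_{D}(Y_{s_0})\,\Pi(X_{s_0,t})$.

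This identification is a mild It\^o formula. On the event $\{Y_{s_0}\notin D\}$ both expressions vanish, so there is nothing to prove; on $\{Y_{s_0}\in D\}$ the claim reduces, after conditioning on $\mathbb{F}_{s_0}$ (so that $b$ becomes a fixed Hilbert--Schmidt operator), to the pathwise identity
\begin{equation*}
	\Pi(X_{s_0,t})
	=
	\int_{s_0}^{t} e^{(t-u)A}\Big(
		(D\Pi)(X_{s_0,u})(A X_{s_0,u})
		- A\,\Pi(X_{s_0,u})
		+ \tfrac12 \sum_{i\in\mathcal{J}}(D^2\Pi)(X_{s_0,u})(b\tilde{e}_i,b\tilde{e}_i)
	\Big)\ud u
	+ \int_{s_0}^{t} e^{(t-u)A}(D\Pi)(X_{s_0,u})(b)\ud W_u ,
\end{equation*}
in which the boundary term $e^{(t-s_0)A}\Pi(0)$ has been dropped using $\Pi(0)=0$. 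This is precisely a mild It\^o formula for the time-independent $\C^2$ map $\Pi$ applied to the mild It\^o process $u\mapsto X_{s_0,u}$, which has vanishing drift, diffusion $b$, and initial value $X_{s_0,s_0}=0$.

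For a self-contained derivation of the displayed identity I would spectrally truncate: put $A_n:=AP_n$ (a bounded operator), so that $X^{n}_{s_0,u}:=\int_{s_0}^{u}e^{(u-r)A_n}P_n b\ud W_r$ takes values in $P_n H$ and solves the genuine stochastic differential equation $\ud Z_u=A_n Z_u\ud u+P_n b\ud W_u$ with $Z_{s_0}=0$. Applying the classical It\^o formula to $u\mapsto e^{(t-u)A_n}\Pi(X^{n}_{s_0,u})$ on $[s_0,t]$ --- in which the factor $\partial_u e^{(t-u)A_n}=-A_n e^{(t-u)A_n}$ produces the $-A\,\Pi$ term, the drift $A_n Z$ produces the $(D\Pi)(\cdot)(A\cdot)$ term, and the It\^o correction produces the $(D^2\Pi)$ term --- and integrating from $s_0$ to $t$ yields the displayed identity with $A,b$ replaced by $A_n,P_n b$ and with an extra boundary term $e^{(t-s_0)A_n}\Pi(0)$. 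Letting $n\to\infty$ along the projections $P_n$, and using $\Pi(0)=0$, the convergence of $e^{rA_n}P_n$ to $e^{rA}$ in the strong operator topology, the uniform bound \eqref{eq: global bound B sett}, and the continuity of $\Pi,D\Pi,D^2\Pi$, one passes to the limit in the Bochner and It\^o integrals and recovers the displayed identity; together with the first paragraph this yields \eqref{eq: Y representation} a.s.\ for every $t\in[0,T]$.

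The one delicate point is this passage to the limit: the stochastic convolutions converge only in $H_{1/2}$, not in $D(A)$, so the two $A$-dependent summands are individually unbounded and must be handled as a single object throughout --- concretely, one works with the continuous extension of $x\mapsto(D\Pi)(x)(A x)-A\Pi(x)$ to $H_{1/2}$ (the space in which $X^{n}_{s_0,u}\to X_{s_0,u}$), which for the relevant taming maps $\Pi$ depends on $x$ only through $\langle x,A x\rangle_H=-\|x\|_{H_{1/2}}^2$ and through bounded expressions in $x$. Everything else --- the constancy of $\kappa$ on grid cells, the case distinction over $D$, and the bookkeeping of the three $\Pi$-terms --- is routine.
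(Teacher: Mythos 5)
Your argument is correct and takes essentially the same route as the paper's: use that $\kappa = \llcorner\cdot\lrcorner_\theta$ is constant on the grid cell $(\llcorner t\lrcorner_\theta,t]$, so the three $\Pi$-dependent integrals in \eqref{eq: def Y} reduce to a mild It\^o expansion of $\Pi(X_{\llcorner t\lrcorner_\theta,t})$, and then collect terms. The paper simply quotes It\^o's formula for this step; you give a spectral-truncation derivation.

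That extra machinery is unnecessary, and the ``one delicate point'' you flag does not arise here. In \cref{set: 1} the eigenvalues $(\lambda_i)_{i\in\mathcal I}$ are assumed bounded away from both $0$ and $-\infty$, so $A$ is a \emph{bounded} self-adjoint operator on $H$. Consequently $D(A)=H$, all the norms $\|\cdot\|_{H_r}$ are equivalent, $X_{s_0,\cdot}$ is an honest strong solution of $dZ_u=AZ_u\ud u+b\ud W_u$ with $Z_{s_0}=0$, and $(D\Pi)(X_{s_0,u})(AX_{s_0,u})$ and $A\Pi(X_{s_0,u})$ are separately well-defined $H$-valued processes. Applying the classical It\^o formula to $u\mapsto e^{(t-u)A}\Pi(X_{s_0,u})$ --- or, as the paper does, to $\Pi(X_{s_0,\cdot})$ and then rewriting in mild form --- already gives the required identity; the truncation $A_n=AP_n$, the passage to the limit, and the claim that ``the stochastic convolutions converge only in $H_{1/2}$, not in $D(A)$'' rest on a false premise in this setting. (The regularity issue you describe is genuine for the unbounded Laplacian of \cref{sec:rate.Burgers}, but the paper applies this lemma only with the truncated operators $A_N$, which is precisely why the lemma is set up with bounded $A$.) Your reliance on $\Pi(0)=0$ matches the paper's implicit use of the same fact.
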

		\begin{proof}
			 Ito's formula ensures that for all $s\in[0,T]$ and all $t\in [s,T]$ it holds a.s. that
		\begin{equation}
			\begin{split}
					\Pi(X_{s,t})
				=
					&\int_s^t 
						(D \Pi)(X_{s,u}) (A X_{s,u})
					\ud u 
					+\int_s^t 
							(D \Pi)(X_{s,u}) (
								B(
									\llcorner u \lrcorner_\theta,
									Y_{\llcorner u \lrcorner_\theta }
								)
							)
					\ud W_u 
			\\ &
					+\tfrac 12 \int_s^t 
						\sum_{i \in \mathcal{J}}
							(D^2 \Pi)(X_{s,u}) \Big(
								B(
									\llcorner u \lrcorner_\theta,
									Y_{\llcorner u \lrcorner_\theta }
								) \tilde{e}_i,
								B(
									\llcorner u \lrcorner_\theta,
									Y_{\llcorner u \lrcorner_\theta }
								) \tilde{e}_i
							\Big)
					\ud u \\
				=
					&\int_s^t 
						e^{(t-u)A} \big(
							(D \Pi)(X_{s,u}) (A X_{s,u})
							-A \Pi(X_{s,u})
						\big)
					\ud u 
				\\ &
					+\int_s^t 
							e^{(t-u)A}
								(D \Pi)(X_{s,u}) (
									B(
										\llcorner u \lrcorner_\theta,
										Y_{\llcorner u \lrcorner_\theta }
									)
								)
					\ud W_u 
			\\ &
					+\tfrac 12 \int_s^t 
						e^{(t-u)A}
							\sum_{i \in \mathcal{J}}
								(D^2 \Pi)(X_{s,u}) \Big(
									B(
										\llcorner u \lrcorner_\theta,
										Y_{\llcorner u \lrcorner_\theta }
									) \tilde{e}_i,
									B(
										\llcorner u \lrcorner_\theta,
										Y_{\llcorner u \lrcorner_\theta }
									) \tilde{e}_i
								\Big)
					\ud u.
			\end{split}
		\end{equation}
	Thus, \eqref{eq: def Y} implies for all $t \in [0,T]$ it holds a.s. that
	\begin{equation}
			\begin{split}
					Y_t 
			={}
				&e^{(t-\llcorner t \lrcorner_\theta)A}Y_{\llcorner t \lrcorner_\theta}
					+	\1_{D}(Y_{\llcorner t \lrcorner_\theta})
					\int^t_{\llcorner t \lrcorner_\theta}
						e^{(t-u)A} 
							F(\llcorner u \lrcorner_\theta,Y_{\llcorner u \lrcorner_\theta})
					\ud u 
				\\ &
				+\1_{D}(Y_{\llcorner t \lrcorner_\theta}) 
					\Pi \Big(
							\int^t_{\llcorner t \lrcorner_{\theta}}
							e^{(t-s)A}
							B(\llcorner t \lrcorner_{\theta},Y_{\llcorner t \lrcorner_{\theta}})
							\dWs
					\Big),
			\end{split}
		\end{equation}
		which completes the proof of Lemma \ref{l: Y representation}.
		\end{proof}
    The next three lemmas derive 
		moment estimates of stochastic integrals.
		\begin{lemma}
		\label{l: X estimate}
		Assume Setting \ref{sett 2},
		let $p \in [2,\infty)$, $\delta \in [0,\nicefrac 12)$,
		$\gamma \in [\delta, \nicefrac 12)$.
		Then it holds for all $u\in[0,T]$ and all $t \in [u,T]$ that
		\begin{equation}
					\|
						X_{u,t}
					\|^2_{L^p(\P; H_\delta)}\\
				\leq{}
					\eta^2 \, p^2 (\inf_{i\in \mathcal{I}} |\lambda_i|)^{2(\delta-\gamma)} (\tfrac{\gamma}{e})^{2\gamma}
					\tfrac{(t- u)^{1-2\gamma}}{1-2\gamma}.
		\end{equation}
	\end{lemma}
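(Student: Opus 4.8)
The plan is to apply the Burkholder--Davis--Gundy inequality for Hilbert-space-valued stochastic integrals and to reduce the resulting bound to a deterministic estimate, using the uniform bound \eqref{eq: global bound B sett} on $B$ together with standard smoothing estimates for the analytic semigroup $(e^{tA})_{t \in [0,T]}$. First I would recall from \eqref{eq: def of X} that $X_{u,t} = \int_u^t e^{(t-v)A} B(\kappa(v), Y_{\kappa(v)}) \, \ud W_v$. Since $(-A)^{\delta} e^{(t-v)A} \in L(H,H)$ for every $v \in [u,t)$ and, by the estimates below, $v \mapsto (-A)^{\delta} e^{(t-v)A} B(\kappa(v), Y_{\kappa(v)})$ is square-integrable on $(u,t)$ as an $HS(U,H)$-valued process, the closed operator $(-A)^{\delta}$ may be pulled inside the stochastic integral, so that a.s.\ $(-A)^{\delta} X_{u,t} = \int_u^t (-A)^{\delta} e^{(t-v)A} B(\kappa(v), Y_{\kappa(v)}) \, \ud W_v$ and hence $\| X_{u,t} \|_{L^p(\P;H_{\delta})}$ equals the $L^p(\P;H)$-norm of this stochastic integral.

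Next I would bound the integrand pointwise and deterministically. Factorizing $(-A)^{\delta} e^{(t-v)A} = (-A)^{\delta-\gamma} \big( (-A)^{\gamma} e^{(t-v)A} \big)$ and using submultiplicativity of the Hilbert--Schmidt norm, the identity $\| (-A)^{\delta-\gamma} \|_{L(H,H)} = (\inf_{i \in \mathcal{I}} |\lambda_i|)^{\delta-\gamma}$ (which holds since $\delta - \gamma \le 0$), the elementary bound $\| (-A)^{\gamma} e^{(t-v)A} \|_{L(H,H)} \le \sup_{\lambda \in (0,\infty)} \lambda^{\gamma} e^{-\lambda(t-v)} = (\tfrac{\gamma}{e})^{\gamma} (t-v)^{-\gamma}$, and \eqref{eq: global bound B sett}, I obtain for a.e.\ $v \in (u,t)$ that $\| (-A)^{\delta} e^{(t-v)A} B(\kappa(v), Y_{\kappa(v)}) \|_{HS(U,H)} \le \eta \, (\inf_{i \in \mathcal{I}} |\lambda_i|)^{\delta-\gamma} (\tfrac{\gamma}{e})^{\gamma} (t-v)^{-\gamma}$. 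As $2\gamma < 1$, integrating the square over $(u,t)$ gives the deterministic bound $\int_u^t \| (-A)^{\delta} e^{(t-v)A} B(\kappa(v), Y_{\kappa(v)}) \|^2_{HS(U,H)} \, \ud v \le \eta^2 (\inf_{i \in \mathcal{I}} |\lambda_i|)^{2(\delta-\gamma)} (\tfrac{\gamma}{e})^{2\gamma} \tfrac{(t-u)^{1-2\gamma}}{1-2\gamma}$.

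Finally I would invoke the Burkholder--Davis--Gundy inequality for Hilbert-space-valued stochastic integrals (e.g.\ Theorem A in \cite{CarlenKree1991}), which for $p \in [2,\infty)$ yields a constant bounded by $p$, i.e.\ $\big\| \int_u^t \Psi_v \, \ud W_v \big\|_{L^p(\P;H)} \le p \, \big\| \big( \int_u^t \| \Psi_v \|^2_{HS(U,H)} \, \ud v \big)^{1/2} \big\|_{L^p(\P;\R)}$; applying this with $\Psi_v = (-A)^{\delta} e^{(t-v)A} B(\kappa(v), Y_{\kappa(v)})$ and inserting the deterministic bound from the previous step gives $\| X_{u,t} \|_{L^p(\P;H_{\delta})} \le p \, \eta \, (\inf_{i \in \mathcal{I}} |\lambda_i|)^{\delta-\gamma} (\tfrac{\gamma}{e})^{\gamma} \big( \tfrac{(t-u)^{1-2\gamma}}{1-2\gamma} \big)^{1/2}$, and squaring yields the assertion. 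This is a routine stochastic-convolution moment estimate with no genuine obstacle; the only points deserving attention are the use of a sufficiently sharp BDG constant (so that the bound holds already for $p = 2$, rather than merely for $p \ge 4$ as a $2\sqrt{p}$-type constant would give) and a quick check that the boundary cases $\gamma = 0$ and $\delta = \gamma$ are consistent with the conventions $0^0 = 1$ and $\infty^0 = 1$.
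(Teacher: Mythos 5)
Your argument follows the same route as the paper's proof (BDG inequality $+$ semigroup smoothing $+$ uniform bound on $B$), and your pointwise factorization $(-A)^{\delta}e^{(t-v)A} = (-A)^{\delta-\gamma}\big((-A)^{\gamma}e^{(t-v)A}\big)$ together with $\|(-A)^{\delta-\gamma}\|_{L(H,H)} = (\inf_{i\in\mathcal I}|\lambda_i|)^{\delta-\gamma}$ is exactly what the paper does (just phrased via the $HS(U,H_\delta)$-norm rather than pulling $(-A)^\delta$ inside the integral; that is fine either way).

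The one genuine issue is the Burkholder--Davis--Gundy constant. You assert that Theorem~A of \cite{CarlenKree1991} yields a constant bounded by $p$, and you rightly note that a $2\sqrt{p}$-type constant would only give the stated $p^2$ prefactor for $p\geq 4$. But the paper's own invocations of that very reference (see the proofs of \cref{l: X 1/2 estimate}, \cref{l: exp 12 bound}, and \cref{l: exp H bound}) all use the constant $2\sqrt{p}$, and $2\sqrt{p} > p$ for $p \in [2,4)$. So the constant you claim from \cite{CarlenKree1991} does not appear to be what that theorem gives, and for $p \in [2,4)$ your argument as written does not yield the prefactor $p^2$. The paper avoids this by citing Lemma~7.7 of \cite{DaPratoZabczyk1992} instead, which gives
\begin{equation*}
  \|X_{u,t}\|^2_{L^p(\P;H_\delta)}
  \;\leq\;
  \tfrac{p(p-1)}{2}\int_u^t \big\|e^{(t-s)A}B(\kappa(s),Y_{\kappa(s)})\big\|^2_{L^p(\P;HS(U,H_\delta))}\,\ud s,
\end{equation*}
and then uses the elementary bound $\tfrac{p(p-1)}{2}\leq p^2$, valid for every $p\geq 2$. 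Substituting that reference (or otherwise substantiating a BDG constant $\leq p$ on all of $[2,\infty)$) repairs the argument; the remainder of your proof is correct and agrees with the paper's.
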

	\begin{proof}
		Note that
		the Burkholder-Davis-Gundy inequality 
		(see e.g.\@ Lemma 7.7 in \cite{DaPratoZabczyk1992}),
		the fact that 
		$
			\forall \lambda \in (0,\infty) \colon
				\lambda^{\gamma} e^{-\lambda} \leq (\tfrac{\gamma}{e})^\gamma
		$,
		and \eqref{eq: global bound B sett}
		verify
		that for all  
		$u \in [0,T]$ and all $t \in [u,T]$ it holds that
		\begin{equation}
			\begin{split}
					&\|
						X_{u,t}
					\|^2_{L^p(\P; H_\delta)}\\
				=
					&\Big \|
						\int^t_{u}
							e^{(t- s)A} 
							B(\kappa(s), Y_{\kappa(s)})
						\dWs
					\Big \|^2_{L^p(\P; H_\delta)}\\
				\leq{}
					&\tfrac{p(p-1)}{2}
					\int^t_{u}
						\big \|
							e^{(t- s)A} 
							B(\kappa(s), Y_{\kappa(s)})
						\big \|^2_{L^p(\P; HS(U,H_\delta))}
					\ud s \\
				\leq{}
					&\tfrac{p(p-1)}{2}
					\int^t_{u}
						\big \|
							(-A)^{\gamma} e^{(t- s)A} 
						\big\|^2_{L(H_\delta,H_\delta)}
						\|
							B(\kappa(s), Y_{\kappa(s)})
						\|^2_{L^p(\P; HS(U,H_{\delta-\gamma}))}
					\ud s\\
				\leq{}
					&\tfrac{p(p-1)}{2}
					\int^t_{u}
						\big( \sup_{\lambda \in (0,\infty)}
							\lambda^{\gamma} e^{-(t- s) \lambda}
						\big)^2
						\|
							B(\kappa(s), Y_{\kappa(s)})
						\|^2_{L^p(\P; HS(U,H_{\delta-\gamma}))} 
					\ud s \\
				\leq{}
					&p^2
					\int^t_{u}
						(\tfrac{\gamma}{e})^{2\gamma} (t- s)^{-2\gamma}
						(\inf_{i\in \mathcal{I}} |\lambda_i|)^{2(\delta-\gamma)}
						\|
							B(\kappa(s), Y_{\kappa(s)})
						\|^2_{L^p(\P; HS(U,H))} 
					\ud s \\
				\leq{}
					&p^2 (\inf_{i\in \mathcal{I}} |\lambda_i|)^{2(\delta-\gamma)}
					(\tfrac{\gamma}{e})^{2\gamma}
					\int^t_{u}
						 (t- s)^{-2\gamma}
					\ud s 
					\, \eta^2\\
				={}
					&\eta^2 \, p^2 \, (\inf_{i\in \mathcal{I}} |\lambda_i|)^{2(\delta-\gamma)}
					(\tfrac{\gamma}{e})^{2\gamma} 
					\tfrac{(t- u)^{1-2\gamma}}{1-2\gamma}.
			\end{split}
		\end{equation}
		This finishes the proof of Lemma \ref{l: X estimate}.
	\end{proof}
		\begin{lemma}
		\label{l: X 1/2 estimate}
		Assume Setting \ref{sett 2}, 
		let 
		$
				\kappa 
			= ([0,T] \ni t \to \llcorner t \lrcorner_\theta \in [0,T])
		$, and
		let $p \in [2,\infty)$.
		Then it holds for all $u\in[0,T]$ that
		\begin{equation}
					\|
						X_{\llcorner u \lrcorner_{\theta},u}
					\|_{L^p(\P; H_{1/2})}\\
				\leq{}
					\sqrt{2 p} \, \eta.
		\end{equation}
		\end{lemma}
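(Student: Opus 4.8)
The estimate is the endpoint case $\delta=\tfrac12$ of \Cref{l: X estimate}, which that lemma does not cover (its bound contains the factor $\tfrac1{1-2\gamma}$, which explodes as $\gamma\uparrow\tfrac12$), so the proof has to exploit the specific choice $\kappa=\llcorner\cdot\lrcorner_\theta$. First I would note that for every $u\in(0,T]$ the set $\theta$ contains no point of $(\llcorner u \lrcorner_\theta,u)$, so that $\llcorner v \lrcorner_\theta=\llcorner u \lrcorner_\theta$ for all $v\in(\llcorner u \lrcorner_\theta,u]$; hence the integrand in \eqref{eq: def of X} equals the fixed $\mathbb F_{\llcorner u \lrcorner_\theta}$-measurable operator $\Psi:=B(\llcorner u \lrcorner_\theta,Y_{\llcorner u \lrcorner_\theta})$ on that interval, and therefore $X_{\llcorner u \lrcorner_\theta,u}=\int_{\llcorner u \lrcorner_\theta}^u e^{(u-v)A}\Psi\,\ud W_v$. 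Since $(-A)^{1/2}$ is a closed operator commuting with $e^{(u-v)A}$, this yields $(-A)^{1/2}X_{\llcorner u \lrcorner_\theta,u}=\int_{\llcorner u \lrcorner_\theta}^u(-A)^{1/2}e^{(u-v)A}\Psi\,\ud W_v$ (the case $u=0$ being trivial).

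Next I would apply the Burkholder--Davis--Gundy inequality in the form used in the proofs of \Cref{l: exp 12 bound} and \Cref{l: exp H bound} (Theorem~A in \cite{CarlenKree1991}), which — crucially for the endpoint exponent — keeps the quadratic variation inside the $L^p(\P)$-norm, to obtain
\[
  \|X_{\llcorner u \lrcorner_\theta,u}\|_{L^p(\P;H_{1/2})}
  \le 2\sqrt{p}\,\Big\|\big(\textstyle\int_{\llcorner u \lrcorner_\theta}^{u}\|(-A)^{1/2}e^{(u-v)A}\Psi\|_{HS(U,H)}^2\,\ud v\big)^{1/2}\Big\|_{L^p(\P;\R)}.
\]
The decisive point is that the inner $\ud v$-integral can now be computed pathwise: expanding in the eigenbasis $(e_i)_{i\in\mathcal I}$ of $A$ gives $\|(-A)^{1/2}e^{(u-v)A}\Psi\|_{HS(U,H)}^2=\sum_{i\in\mathcal I}|\lambda_i|\,e^{2\lambda_i(u-v)}\,\|\Psi^*e_i\|_U^2$, and since $\Psi$ does not depend on $v$ one may integrate term by term, using $\int_{\llcorner u \lrcorner_\theta}^{u}|\lambda_i|\,e^{2\lambda_i(u-v)}\,\ud v=\tfrac12(1-e^{2\lambda_i(u-\llcorner u \lrcorner_\theta)})\le\tfrac12$, to get $\int_{\llcorner u \lrcorner_\theta}^{u}\|(-A)^{1/2}e^{(u-v)A}\Psi\|_{HS(U,H)}^2\,\ud v\le\tfrac12\|\Psi\|_{HS(U,H)}^2\le\tfrac12\eta^2$, the last step being \eqref{eq: global bound B sett}. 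Plugging this deterministic bound back in gives $\|X_{\llcorner u \lrcorner_\theta,u}\|_{L^p(\P;H_{1/2})}\le 2\sqrt p\cdot\eta/\sqrt2=\sqrt{2p}\,\eta$, which is the claim.

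The only step that needs care is the evaluation of that integral: one must resist bounding $\|(-A)^{1/2}e^{(u-v)A}\|_{L(H,H)}$ and $\|\Psi\|_{HS(U,H)}$ separately, since that produces the non-integrable singularity $(u-v)^{-1}$ and is exactly why \Cref{l: X estimate} is restricted to $\delta<\tfrac12$; instead the $i$-dependent exponential weight must be kept under the sum over $i$, which is legitimate only because the constancy of $\Psi$ on $(\llcorner u \lrcorner_\theta,u]$ lets one pull $\|\Psi^*e_i\|_U^2$ out of the $\ud v$-integral before summing. (Equivalently, one may condition on $\mathbb F_{\llcorner u \lrcorner_\theta}$, note that $(-A)^{1/2}X_{\llcorner u \lrcorner_\theta,u}$ is then a centered Gaussian $H$-valued random variable whose covariance has trace $\le\tfrac12\eta^2$, and invoke the Gaussian moment bound $(\E\|G\|_H^p)^{1/p}\le 2\sqrt p\,(\E\|G\|_H^2)^{1/2}$ valid for $p\ge2$; this yields the same constant.)
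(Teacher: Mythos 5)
Your proof is correct and follows essentially the same route as the paper's: Burkholder--Davis--Gundy in the form of Theorem~A in \cite{CarlenKree1991}, expansion of the Hilbert--Schmidt norm in the eigenbasis of $A$, and pathwise evaluation of the $\ud v$-integral using that $B(\kappa(\cdot),Y_{\kappa(\cdot)})$ is constant on $(\llcorner u\lrcorner_\theta,u]$, yielding $\int_{\llcorner u\lrcorner_\theta}^u(-\lambda_i)e^{2\lambda_i(u-v)}\,\ud v=\tfrac12(1-e^{2\lambda_i(u-\llcorner u\lrcorner_\theta)})\le\tfrac12$ and hence the constant $2\sqrt p\cdot\eta/\sqrt2=\sqrt{2p}\,\eta$. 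You merely make explicit the observation about $\kappa$ being piecewise constant (which the paper uses silently when it replaces $\llcorner s\lrcorner_\theta$ by $\llcorner u\lrcorner_\theta$) and add a parenthetical Gaussian-conditioning variant; these are cosmetic, not a different argument.
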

		\begin{proof}
		The
		Burkholder-Davis-Gundy inequality (see, e.g., Theorem A in \cite{CarlenKree1991}) 
		and \eqref{eq: global bound B sett}
		verify for all $u \in [0,T]$ that
		\begin{align}
		\label{eq: X Lp norm estimate}
			\begin{split}
					&\|X_{\llcorner u \lrcorner_{\theta},u}\|_{L^{p}(\P;H_{1/2})} 
				 ={} 
					\Big \|
						\int_{\llcorner u \lrcorner_{\theta}}^u 
							e^{(u-s)A} B(\llcorner s \lrcorner_{\theta}, Y_{\llcorner s \lrcorner_{\theta}})
						\ud W_s
					\Big \|_{L^{p}(\P;H_{1/2})}
				\\ \leq{} &
					2 \sqrt{p}
					\Big \|
						\Big(
							\int_{\llcorner u \lrcorner_{\theta}}^u 
								\|
									e^{(u-s)A} (-A)^{\nicefrac 12} 
										B(\llcorner s \lrcorner_{\theta},Y_{\llcorner s \lrcorner_{\theta} })
								\|^2_{HS(U,H)}
							\ud s
						\Big)^{\nicefrac 12}
					\Big \|_{L^{p}(\P;\R)}
				\\ ={} &
					2 \sqrt{p}
					\Big \|
						\Big(
							\int_{\llcorner u \lrcorner_{\theta}}^u 
								\sum_{i \in \mathcal{I}}
									-e^{2(u-s)\lambda_i} \lambda_i 
									\|B^*(\llcorner u \lrcorner_{\theta}, Y_{\llcorner u \lrcorner_{\theta} }) e_i \|_U^2
							\ud s
						\Big)^{\nicefrac 12}
					\Big \|_{L^{p}(\P;\R)}
				\\ ={} &
					2 \sqrt{p}
					\Big \|
						\Big( 
								\tfrac 12
								\sum_{i \in \mathcal{I}}
									(1-e^{2(u-\llcorner u \lrcorner_{\theta})\lambda_i})
									\|B^*(\llcorner u \lrcorner_{\theta}, Y_{\llcorner u \lrcorner_{\theta} }) e_i \|_U^2
						\Big)^{\nicefrac 12}
					\Big \|_{L^{p}(\P;\R)}
				\\ \leq{} &
					\sqrt{2 p}
					\Big \|
						\Big(
							\|B^*(\llcorner u \lrcorner_{\theta}, Y_{\llcorner u \lrcorner_{\theta} })\|^2_{HS(H,U)}
						\Big)^{\nicefrac 12}
					\Big \|_{L^{p}(\P;\R)}
				 \leq{}
					\sqrt{2 p} \, \eta.
			\end{split}
		\end{align}
		This finishes the proof of Lemma \ref{l: X 1/2 estimate}.
	\end{proof}
	\begin{lemma}
	\label{l: e B noise H1/2 bound}
			Assume Setting \ref{sett 2},
		let $c, c_1, \varsigma \in (0,\infty]$, 
		$\gamma_2 \in [0,\nicefrac 12]$,
		$p \in [2,\infty)$,
		$\alpha \in [\nicefrac 12, 1) $, 
		assume that either $\kappa = \id$ or 
		$\kappa = ([0,T] \ni t \to \llcorner t \lrcorner_{\theta} \in [0,T])$,
		let $L \colon H \to L(H,H)$ be measurable
		such that for all $x\in H$ and all $i \in \mathcal{I}$ it holds that
		\begin{equation}
		\label{eq: L global bound newLem1}
			\|L(x)\|_{L(H,H)} \leq c_1 
		\end{equation}
		and that
		\begin{equation}
		\label{eq: L is diagonal newLem1}
			L(x) e_i = \langle L(x) e_i, e_i\rangle_H e_i,
		\end{equation}
		and assume that for all $t,s \in [0,T]$, and all $x,y \in H$ it holds that 
		\begin{align}
		\label{eq: Y holder cont newLem1}
				&\|
					Y_{t}
					-Y_{ s}
				\|_{L^{p}(\P;H)}
			\leq
				c \, |t-s|^{1-\alpha}, \\
		\label{eq: Lipschitz cont B newLem1}
			&\|B(t,x) -B(s,y)\|^2_{HS(U,H)} \leq \varsigma^2 (\|x-y\|^2_H +|t-s|^{2-2\alpha}), \\
			\label{eq: D Pi - L bound newLem1}
			&\|(-A)^{\gamma_2} ((D \Pi)(x)- L(x))\|_{L(H,H)}
			\leq
				c \|x\|_{H_{\gamma_2}}.
		\end{align}
		Then it holds for all $t\in[0,T]$ that
			\begin{align}
			\begin{split}
				&\Big \|
					\int_0^t 
						e^{(t-u )A} \big(
							(D \Pi)(X_{\kappa(u),u}) (
									B(
										\kappa(u),
										Y_{\kappa(u)}
									)
								)
						\big)
					\ud W_u 
				\Big \|_{L^p(\P; H_{1/2})}
			\\ \leq{} &
					2\sqrt{p} 
						\Big(
								c_1\eta+c_1 \varsigma 
								(c+1)
								\tfrac{t^{1-\alpha}}{\sqrt{2-2\alpha}}
								+(\tfrac{1-2\gamma_2 }{2e})^{\nicefrac 12- \gamma_2} c
									\eta^2 \, p (\tfrac{\gamma_2}{e})^{\gamma_2}
									t^{\nicefrac 12}
									\tfrac{1}{\sqrt{1-2\gamma_2}}
									\tfrac{1}{\sqrt{2\gamma_2}}
						\Big).
			\end{split}
		\end{align}
	\end{lemma}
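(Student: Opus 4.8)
The plan is to use the Burkholder--Davis--Gundy inequality to pass to a deterministic time integral, and then to split the integrand according to the decomposition $(D\Pi)(x) = L(x) + \big((D\Pi)(x) - L(x)\big)$ of the taming map. Concretely, exactly as in the proofs of \cref{l: X estimate,l: X 1/2 estimate}, I would apply the Burkholder--Davis--Gundy inequality (Theorem~A in \cite{CarlenKree1991}) to bound the $L^p(\P;H_{1/2})$-norm in the statement by $2\sqrt p$ times the $L^p(\P;\R)$-norm of $\big(\int_0^t \|(-A)^{1/2}e^{(t-u)A}(D\Pi)(X_{\kappa(u),u})B(\kappa(u),Y_{\kappa(u)})\|_{HS(U,H)}^2\,\ud u\big)^{1/2}$, and then estimate the latter via the triangle inequality in $L^2((0,t);\ud u)$ followed by Minkowski's integral inequality in $L^p(\P)$, after writing $(D\Pi)(X_{\kappa(u),u}) = L(X_{\kappa(u),u}) + \big((D\Pi)(X_{\kappa(u),u}) - L(X_{\kappa(u),u})\big)$.

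For the contribution of $(D\Pi)(X_{\kappa(u),u}) - L(X_{\kappa(u),u})$ I would factor $(-A)^{1/2}e^{(t-u)A}\big((D\Pi)(x)-L(x)\big)B = \big((-A)^{1/2-\gamma_2}e^{(t-u)A}\big)\big((-A)^{\gamma_2}((D\Pi)(x)-L(x))\big)B$, use the semigroup smoothing bound $\|(-A)^{1/2-\gamma_2}e^{(t-u)A}\|_{L(H,H)}\le\big(\tfrac{1-2\gamma_2}{2e}\big)^{1/2-\gamma_2}(t-u)^{-(1/2-\gamma_2)}$ together with \eqref{eq: D Pi - L bound newLem1} and the global bound \eqref{eq: global bound B sett} on $B$, and then --- after Minkowski --- insert \cref{l: X estimate} with $\delta=\gamma=\gamma_2$ to estimate $\|X_{\kappa(u),u}\|_{L^p(\P;H_{\gamma_2})}$ by (a constant times) $(u-\kappa(u))^{1/2-\gamma_2}\le t^{1/2-\gamma_2}$. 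The residual deterministic integral $\int_0^t(t-u)^{-(1-2\gamma_2)}\,\ud u = t^{2\gamma_2}/(2\gamma_2)$ then produces exactly the third summand of the asserted bound. (One may assume $\gamma_2\in(0,\tfrac12)$, since for $\gamma_2\in\{0,\tfrac12\}$ that summand is $+\infty$, and when $\kappa=\id$ this whole term vanishes because $X_{u,u}=0$.)

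For the contribution of $L(X_{\kappa(u),u})$ I would use that $L(x)$ is diagonal in the eigenbasis $(e_i)_{i\in\mathcal I}$ by \eqref{eq: L is diagonal newLem1}, hence commutes with $(-A)^{1/2}e^{(t-u)A}$, so that by \eqref{eq: L global bound newLem1} this term is at most $c_1$ times the same expression with $B(\kappa(u),Y_{\kappa(u)})$ in place of $(D\Pi)(X_{\kappa(u),u})B(\kappa(u),Y_{\kappa(u)})$; then I would split $B(\kappa(u),Y_{\kappa(u)}) = B(t,Y_t) + \big(B(\kappa(u),Y_{\kappa(u)})-B(t,Y_t)\big)$. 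For the frozen part the trace-class structure and $\int_0^t(-\lambda_i)e^{2(t-u)\lambda_i}\,\ud u\le\tfrac12$ give $\int_0^t\|(-A)^{1/2}e^{(t-u)A}B(t,Y_t)\|_{HS(U,H)}^2\,\ud u\le\tfrac12\eta^2$, i.e.\ the summand $c_1\eta$; for the increment I would use $\|(-A)^{1/2}e^{(t-u)A}\|_{L(H,H)}\le(2e(t-u))^{-1/2}$ together with \eqref{eq: Lipschitz cont B newLem1} and the Hölder bound \eqref{eq: Y holder cont newLem1} to get $\|B(\kappa(u),Y_{\kappa(u)})-B(t,Y_t)\|_{L^p(\P;HS(U,H))}\le\varsigma(c+1)|t-\kappa(u)|^{1-\alpha}$, which for $\kappa=\id$ equals $\varsigma(c+1)(t-u)^{1-\alpha}$ and, combined with $\int_0^t(t-u)^{-1}(t-u)^{2-2\alpha}\,\ud u = t^{2-2\alpha}/(2-2\alpha)$, yields the summand $c_1\varsigma(c+1)t^{1-\alpha}/\sqrt{2-2\alpha}$. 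For $\kappa=\llcorner\cdot\lrcorner_\theta$ I would additionally exploit that $B(\llcorner u\lrcorner_\theta,Y_{\llcorner u\lrcorner_\theta})$ is constant and equal to $B(\llcorner t\lrcorner_\theta,Y_{\llcorner t\lrcorner_\theta})$ on the last subinterval $(\llcorner t\lrcorner_\theta,t]$ --- so that there the trace-class estimate applies (with time gap at most $t$) --- and estimate the increments on $[0,\llcorner t\lrcorner_\theta]$ subinterval by subinterval using the Hölder modulus of $Y$ and \eqref{eq: Lipschitz cont B newLem1}.

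I expect the main obstacle to be precisely this last increment estimate in the $L$-term: the final constant must not depend on the mesh $\theta$. For $\kappa=\id$ this is immediate once the singular semigroup exponent is matched against the Hölder exponent (the power $-(1-2\alpha)$ is integrable since $\alpha<1$); for $\kappa=\llcorner\cdot\lrcorner_\theta$ the piecewise-constant integrand forces one to treat the terminal subinterval separately and to organize the per-subinterval errors so that the total collapses to a multiple of $t^{1-\alpha}$ rather than picking up spurious dependence on the partition.
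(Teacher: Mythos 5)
Your proposal follows essentially the same route as the paper's proof: Burkholder--Davis--Gundy to pass to the deterministic time integral, the decomposition $(D\Pi)=L+\big((D\Pi)-L\big)$, diagonality and boundedness of $L$ for the main term, the further split of $B(\kappa(u),Y_{\kappa(u)})$ into $B(t,Y_t)$ plus an increment, and the smoothing estimate together with \eqref{eq: D Pi - L bound newLem1} and \cref{l: X estimate} for the $(D\Pi)-L$ contribution. The three summands of the asserted bound are correctly attributed to exactly the same sources as in the paper.

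The one step you leave open -- and correctly flag as the main obstacle -- is the increment term in the case $\kappa=\llcorner\cdot\lrcorner_\theta$. Here the crude smoothing bound $\|(-A)^{1/2}e^{(t-r)A}\|_{L(H,H)}^2\le (2e(t-r))^{-1}$ that you use for $\kappa=\id$ does not suffice: on a subinterval $(\llcorner u\lrcorner_\theta,u]$ it produces $\int_{\llcorner u\lrcorner_\theta}^{u}(t-r)^{-1}\,\ud r=\ln\tfrac{t-\llcorner u\lrcorner_\theta}{t-u}$, which is not summable over the partition uniformly in $\theta$ (it diverges on the terminal subinterval and can be large on the penultimate one). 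The paper closes this step by integrating the semigroup factor exactly over each subinterval and invoking the elementary inequality $\sup_{\lambda>0}\big(e^{-2(t-u)\lambda}-e^{-2(t-\llcorner u\lrcorner_\theta)\lambda}\big)\le \tfrac{u-\llcorner u\lrcorner_\theta}{t-\llcorner u\lrcorner_\theta}$, so that each subinterval contributes at most $\tfrac{u-\llcorner u\lrcorner_\theta}{t-\llcorner u\lrcorner_\theta}\,\|B(t,Y_t)-B(\llcorner u\lrcorner_\theta,Y_{\llcorner u\lrcorner_\theta})\|_{HS(U,H)}^2$; after the H\"older bound on $Y$ the sum becomes $\int_0^t(t-\llcorner s\lrcorner_\theta)^{1-2\alpha}\,\ud s\le\int_0^t(t-s)^{1-2\alpha}\,\ud s=\tfrac{t^{2-2\alpha}}{2-2\alpha}$, which is the partition-free total you were aiming for. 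With this inequality inserted, your argument coincides with the paper's; no special treatment of the terminal subinterval is then needed, since the same bound applies there (with value at most $1$).
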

	\begin{proof}
		First note, that get from the
		Burkholder-Davis-Gundy inequality (see, e.g., Theorem A in \cite{CarlenKree1991}) 
		for all $t \in [0,T]$ that
		\begin{align}
		\label{eq: DPi additional noise term first estimate newLem1}
			\begin{split}
				&\Big \|
					\int_0^t 
						e^{(t-u )A} \big(
							(D \Pi)(X_{\kappa(u),u}) (
									B(
										\kappa(u),
										Y_{\kappa(u)}
									)
								)
						\big)
					\ud W_u 
				\Big \|_{L^p(\P; H_{1/2})}
			\\ \leq{} &
					\Big \|
						2\sqrt{p} \,
						\Big(
							\int_0^t 
									\big \|
										e^{(t-u )A} (-A)^{\nicefrac 12} \big(
											(D \Pi)(X_{\kappa(u),u}) (
												B(
													\kappa(u),
													Y_{\kappa(u)}
												)
											)
										\big)
									\big \|^2_{HS(U,H)}
							\ud u \Big)^{\nicefrac 12}
					\Big \|_{L^p(\P; \R)}
				\\ ={} & 
					\Big \|
						2\sqrt{p} \,
						\Big(
							\int_0^t 
								\sum_{i \in \mathcal{I}}
									e^{2(t-u )\lambda_i} (-\lambda_i)
									\big \|
										B^*(
												\kappa(u),
												Y_{\kappa(u)}
											) 
											((D \Pi)(X_{\kappa(u),u}))^* e_i
								\big \|^2_{U}
							\ud u \Big)^{\nicefrac 12}
					\Big \|_{L^p(\P; \R)}.
			\end{split}
		\end{align}
		Moreover, the triangle inequality
		implies for all $t \in [0,T]$ that
		\begin{align}
		\label{eq: DPi HS(U,H 1/2) estimate}
			\begin{split}
				&\Big(
						\int_0^t 
							\sum_{i \in \mathcal{I}}
								e^{2(t-u )\lambda_i} (-\lambda_i)
								\big \|
									B^*(
											\kappa(u),
											Y_{\kappa(u)}
										) 
										((D \Pi)(X_{\kappa(u),u}))^* e_i
							\big \|^2_{U}
						\ud u
					\Big)^{\nicefrac 12}
				\\ \leq{} & 
					\Big(\int_0^t 
						\sum_{i \in \mathcal{I}}
							e^{2(t-u )\lambda_i} (-\lambda_i)
								\big \|
									B^*(
											t,
											Y_{t}
										) 
									\, 
									L^{*}(X_{\kappa(u),u}) e_i
								\big \|^2_{U}
					\ud u
					\Big)^{\nicefrac 12}
				\\ &
					+\Big(\int_0^t 
						\sum_{i \in \mathcal{I}}
							e^{2(t-u )\lambda_i} (-\lambda_i)
							\big \|
								(B^*(
										t,
										Y_{t}
									) 
								-B^*(
										\kappa(u),
										Y_{\kappa(u)}
									) 
								)
								L^{*}(X_{\kappa(u),u}) e_i
						\big \|^2_{U}
					\ud u
					\Big)^{\nicefrac 12}
				\\ &
					+\Big(\int_0^t 
						\sum_{i \in \mathcal{I}}
							e^{2(t-u )\lambda_i} (-\lambda_i)
							\big \|
								B^*(
										\kappa(u),
										Y_{\kappa(u)}
									) 
									(
										(D \Pi)(X_{\kappa(u),u}) 
										- L(X_{\kappa(u),u})
									)^* e_i
						\big \|^2_{U}
					\ud u
					\Big)^{\nicefrac 12}.
			\end{split}
		\end{align}
		We now estimate each term separately.
		For the first term we get from \eqref{eq: L is diagonal newLem1},
		from \eqref{eq: L global bound newLem1},
		and from \eqref{eq: global bound B sett}
		that fot all $t \in [0,T]$ it holds that
		\begin{align}
		\label{eq: first term estimate}
			\begin{split}
						&\int_0^t 
						\sum_{i \in \mathcal{I}}
							e^{2(t-u )\lambda_i} (-\lambda_i)
							\big \|
								B^*(
										t,
										Y_{t}
									) \, 
									L^*(X_{\kappa(u),u})
									e_i
						\big \|^2_{U}
					\ud u
				\\ ={} &
					\int_0^t 
						\sum_{i \in \mathcal{I}}
							e^{2(t-u )\lambda_i} (-\lambda_i)
							\big \|
								B^*(
										t,
										Y_{t}
									)
									e_i
						\big \|^2_{U} \,
						\langle L(X_{\kappa(u),u}) e_i, e_i \rangle^2_H
					\ud u
				\\ \leq{} &
					\int_0^t 
						\sum_{i \in \mathcal{I}}
							e^{2(t-u )\lambda_i} (-\lambda_i)
							\big \|
								B^*(
										t,
										Y_{t}
									)
									e_i
						\big \|^2_{U} \,
						c_1^2
					\ud u
				={}  
						\tfrac{c_1^2}{2}
						\sum_{i \in \mathcal{I}}
							( 1 -e^{2t\lambda_i})
							(-\lambda_i)
							\big \|
								B^*(
										t,
										Y_{t }
									) 
									e_i
						\big \|^2_{U}
				\\ \leq{} &
					c_1^2
						\sum_{i \in \mathcal{I}}
							\big \|
								B^*(
									t,
									Y_{t}
								)
								e_i
							\big \|_{U}^2
					={} 
						c_1^2
							\big \|
								B(
									t,
									Y_{t}
								)
							\big \|_{HS(U,H)}^2
					\leq
						c_1^2 \eta^2.
			\end{split}
		\end{align}
	Next note, that
	the fact that
		$
			\forall \lambda \in (0,\infty) \colon
				\lambda^{\gamma-\gamma_2 } e^{-\lambda} \leq (\tfrac{\gamma-\gamma_2 }{e})^{\gamma-\gamma_2}
		$,
		\eqref{eq: global bound B sett},
		and \eqref{eq: D Pi - L bound newLem1}
		yield for all $t \in [0,T]$ that
	\begin{align}
	\label{eq: 3rd term estimate without Lp}
		\begin{split}
					&\int_0^t 
						\sum_{i \in \mathcal{I}}
							e^{2(t-u )\lambda_i} (-\lambda_i)
							\big \|
								B^*(
										\kappa(u),
										Y_{\kappa(u)}
									) 
									(
										(D \Pi)(X_{\kappa(u),u}) 
										- L(X_{\kappa(u),u})
									)^* e_i
						\big \|^2_{U}
					\ud u
			\\ \leq{} &
				\int_0^t 
							\sup_{\lambda \in (0,\infty)}
								(e^{-(t-u )\lambda} \lambda^{\nicefrac 12-\gamma_2})^2
							\sum_{i \in \mathcal{I}}
							\big \|
								B^*(
										\kappa(u),
										Y_{\kappa(u)}
									) 
									(
										(D \Pi)(X_{\kappa(u),u}) 
										- L(X_{\kappa(u),u})
									)^* (-A)^{\gamma_2}e_i
						\big \|^2_{U}
					\ud u
			\\ \leq{} &
				\int_0^t 
							(\tfrac{1-2\gamma_2 }{2e})^{1- 2\gamma_2}
							(t-u )^{2\gamma_2-1}
							\big \|
								B(
										\kappa(u),
										Y_{\kappa(u)}
									) 
							\big \|^2_{HS(U,H)}
							\big \|
									(-A)^{\gamma_2}
									(
										(D \Pi)(X_{\kappa(u),u}) 
										- L(X_{\kappa(u),u})
									) 
						\big \|^2_{L(H,H)}
					\ud u
			\\ \leq{} &
				(\tfrac{1-2\gamma_2 }{2e})^{1- 2\gamma_2}
				\eta^2 c^2
				\int_0^t 
						(t-u )^{2\gamma_2-1}
						\big \|
							X_{\kappa(u),u}
						\big \|^2_{H_{\gamma_2}}
				\ud u.
		\end{split}
	\end{align}
		In addition, Lemma \ref{l: X estimate}
		(with $\gamma \curvearrowleft \gamma_2$
			and with $\delta \curvearrowleft \gamma_2$)
		implies for all $t \in [0,T]$ that
	\begin{align}
	\label{eq: 3rd term estimate with Lp}
		\begin{split}
				&\Big \| 
					\Big(
						\int_0^t 
								(t-u )^{2\gamma_2-1}
								\big \|
									X_{\kappa(u),u}
								\big \|^2_{H_{\gamma_2}}
						\ud u
					\Big)^{\nicefrac 12}
				\Big \|_{L^p(\P;\R)}
			={} 
				\Big \| 
					\int_0^t 
						(t-u )^{2\gamma_2-1}
						\big \|
							X_{\kappa(u),u}
						\big \|^2_{H_{\gamma_2}}
					\ud u
				\Big \|^{\nicefrac 12}_{L^{p/2}(\P;\R)}
			\\ \leq{} &
				\Big ( 
					\int_0^t 
						(t-u )^{2\gamma_2-1}
						\big \|
							X_{\kappa(u),u}
						\big \|^2_{L^{p}(\P;H_{\gamma_2})}
					\ud u
				\Big )^{\nicefrac 12}
			\leq{} 
				\Big ( 
					\int_0^t 
						(t-u )^{2\gamma_2-1}
						\eta^2 \, p^2  (\tfrac{\gamma_2}{e})^{2\gamma_2}
					\tfrac{(u- \kappa(u))^{1-2\gamma_2}}{1-2\gamma_2}
					\ud u
				\Big )^{\nicefrac 12}
			\\ \leq{} &
				\eta \, p (\tfrac{\gamma_2}{e})^{\gamma_2}
				t^{\nicefrac 12-\gamma_2}
				\tfrac{1}{\sqrt{1-2\gamma_2}}
				\Big ( 
					\int_0^t 
						(t-u )^{2\gamma_2-1}
					\ud u
				\Big )^{\nicefrac 12}
			={} 
				\eta \, p (\tfrac{\gamma_2}{e})^{\gamma_2}
				t^{\nicefrac 12-\gamma_2}
				\tfrac{1}{\sqrt{1-2\gamma_2}}
				\tfrac{1}{\sqrt{2\gamma_2}}
				t^{\gamma_2}.
		\end{split}
	\end{align}
		In addition, it holds for all
		$t \in (0,T]$ and all $u \in [0,t]$ that
		\begin{align}
			\begin{split}
					&\sup_{\lambda \in (0,\infty)}
						\big( e^{-2(t-u)\lambda} -e^{-2(t-\llcorner u \lrcorner_\theta)\lambda} \big)
				=
					\sup_{\lambda \in (0,\infty)}
						\big(
							e^{-2(t-u)\lambda} (1-e^{-2(u-\llcorner u \lrcorner_\theta)\lambda})
						\big)
				\\ ={} &
					e^{
						-2(t-u) (\ln(t-u)-\ln(t-\llcorner u \lrcorner_\theta)) /(2u-2\llcorner u \lrcorner_\theta)
					} (1-\tfrac{t-u}{t-\llcorner u \lrcorner_\theta})
				\leq
					(1-\tfrac{t-u}{t-\llcorner u \lrcorner_\theta})
				=
					\tfrac{u-\llcorner u \lrcorner_\theta}{t-\llcorner u \lrcorner_\theta}.
			\end{split}
		\end{align}
		Therefore,
		\eqref{eq: L is diagonal newLem1},
		\eqref{eq: L global bound newLem1},
		and \eqref{eq: Lipschitz cont B newLem1} 
		show for all
		$t\in (0,T]$ that
		\begin{align*}
		\label{eq: 2nd term estimate without Lp}
				&\int_0^t 
						\sum_{i \in \mathcal{I}}
							e^{2(t-u )\lambda_i} (-\lambda_i)
							\big \|
								(
									B^*(
										t,
										Y_{t}
									) e_i
								-B^*(
										\llcorner u \lrcorner_\theta,
										Y_{\llcorner u \lrcorner_\theta }
									) 
								) \,
									L^*(X_{\llcorner u \lrcorner_\theta,u})e_i
						\big \|^2_{U}
					\ud u
				\\ ={} &
					\sum_{u \in \theta \cup \{t\}}
					\1_{[0,t]}(u)
					\int_{\llcorner u \lrcorner_\theta}^u 
						\sum_{i \in \mathcal{I}}
							e^{2(t-r )\lambda_i} (-\lambda_i)
							\big \|
								B^*(
										t,
										Y_{t}
									) e_i
								-B^*(
										\llcorner u \lrcorner_\theta,
										Y_{\llcorner u \lrcorner_\theta }
									) 
									 e_i
						\big \|^2_{U}
						\langle L(X_{\llcorner u \lrcorner_\theta,r}) e_i, e_i \rangle^2_H
					\ud r
				\\ \leq{} &
					\sum_{u \in \theta \cup \{t\}}
					\1_{[0,t]}(u)
					\int_{\llcorner u \lrcorner_\theta}^u 
						\sum_{i \in \mathcal{I}}
							e^{2(t-r )\lambda_i} (-\lambda_i)
							\big \|
								B^*(
										t,
										Y_{t}
									) e_i
								-B^*(
										\llcorner u \lrcorner_\theta,
										Y_{\llcorner u \lrcorner_\theta }
									) 
									 e_i
						\big \|^2_{U}
						c_1^2
					\ud r
				\\ ={} &
					\tfrac {c_1^2}2
					\sum_{u \in \theta \cup \{t\}}
						\1_{[0,t]}(u)
						\sum_{i \in \mathcal{I}}
							(
								e^{2(t-u )\lambda_i}
								-e^{2(t-\llcorner u \lrcorner_\theta )\lambda_i} 
							)
							\big \|
								B^*(
										t,
										Y_{t}
									) e_i
								-B^*(
										\llcorner u \lrcorner_\theta,
										Y_{\llcorner u \lrcorner_\theta }
									) 
									 e_i
						\big \|^2_{U}
				\\ \leq{} &
					c_1^2
					\sum_{u \in \theta \cup \{t\}}
						\1_{[0,t]}(u)
							\tfrac{u-\llcorner u \lrcorner_\theta}{t-\llcorner u \lrcorner_\theta}
							\big \|
								B(
										t,
										Y_{t}
								)
								-B(
										\llcorner u \lrcorner_\theta,
										Y_{\llcorner u \lrcorner_\theta }
									) 
						\big \|^2_{HS(U,H)}
				\\ \leq{} & \numberthis
					c_1^2 \varsigma^2
					\sum_{u \in \theta \cup \{t\}}
						\1_{[0,t]}(u)
							\tfrac{u-\llcorner u \lrcorner_\theta}{t-\llcorner u \lrcorner_\theta}
							\big(
								\big \|
									Y_{t} 
									-Y_{\llcorner u \lrcorner_\theta }
								\big \|^2_{H}
								+|t-\llcorner u \lrcorner_\theta|^{2-2\alpha}
							\big).
		\end{align*}
		Moreover, \eqref{eq: Y holder cont newLem1} assures
		for all $t \in (0,T]$ that
		\begin{align}
		\label{eq: 2nd term estimate with Lp}
			\begin{split}
				&\Big \| \Big(
					c_1^2 \varsigma^2
					\sum_{u \in \theta \cup \{t\}}
						\1_{[0,t]}(u)
							\tfrac{u-\llcorner u \lrcorner_\theta}{t-\llcorner u \lrcorner_\theta}
							\big(
								\big \|
									Y_{t} 
									-Y_{\llcorner u \lrcorner_\theta }
								\big \|^2_{H}
								+|t-\llcorner u \lrcorner_\theta|^{2-2\alpha}
							\big)
					\Big)^{\nicefrac 12}
					\Big \|_{L^p(\P;\R)}
				\\ ={} &
					c_1 \varsigma 
					\Big \| 
						\sum_{u \in \theta \cup \{t\}}
						\1_{[0,t]}(u)
							\tfrac{u-\llcorner u \lrcorner_\theta}{t-\llcorner u \lrcorner_\theta}
							\big(
								\big \|
									Y_{t} 
									-Y_{\llcorner u \lrcorner_\theta }
								\big \|^2_{H}
								+|t-\llcorner u \lrcorner_\theta|^{2-2\alpha}
							\big)
					\Big \|^{\nicefrac 12}_{L^{p/2}(\P;\R)}
				\\ \leq{} &
					c_1 \varsigma 
					\Big ( 
					\sum_{u \in \theta \cup \{t\}}
						\1_{[0,t]}(u)
							\tfrac{u-\llcorner u \lrcorner_\theta}{t-\llcorner u \lrcorner_\theta}
							\big(
								\big \|
									Y_{t} 
									-Y_{\llcorner u \lrcorner_\theta }
								\big \|^2_{L^{p}(\P;H)}
								+|t-\llcorner u \lrcorner_\theta|^{2-2\alpha}
							\big)
					\Big )^{\nicefrac 12}
				\\ \leq{} &
					c_1 \varsigma 
					\Big ( 
					\sum_{u \in \theta \cup \{t\}}
						\1_{[0,t]}(u)
							\tfrac{u-\llcorner u \lrcorner_\theta}{t-\llcorner u \lrcorner_\theta}
								|t-\llcorner u \lrcorner_\theta|^{2-2\alpha}
								(c^2+1)
					\Big )^{\nicefrac 12}
				\\ ={} &
					c_1 \varsigma 
					\Big ( 
						\int_0^t
							\tfrac{1}{(t-\llcorner s \lrcorner_\theta)^{2\alpha-1}}
								(c^2+1)
						\ud s
					\Big )^{\nicefrac 12}
				\\ \leq{} &
					c_1 \varsigma 
					(c+1)
					\Big ( 
						\int_0^t
							\tfrac{1}{(t-s)^{2\alpha-1}}
						\ud s
					\Big )^{\nicefrac 12}
				={} 
					c_1 \varsigma 
					(c+1)
						\tfrac{t^{1-\alpha}}{\sqrt{2-2\alpha}}.
			\end{split}
		\end{align}
		Furthermore,
		\eqref{eq: L is diagonal newLem1},
		\eqref{eq: L global bound newLem1},
		the fact that
		$
			\forall \lambda \in (0,\infty) \colon
				\lambda^{\gamma} e^{-\lambda} \leq (\tfrac{\gamma}{e})^{\gamma}
		$,
		and \eqref{eq: Lipschitz cont B newLem1} 
		verify for all
		$t\in (0,T]$ that
		\begin{align}
		\label{eq: 2nd term estimate without Lp id}
			\begin{split}
				&\int_0^t 
						\sum_{i \in \mathcal{I}}
							e^{2(t-u )\lambda_i} (-\lambda_i)
							\big \|
								(
									B^*(
										t,
										Y_{t}
									) e_i
								-B^*(
										u,
										Y_{u}
									) 
								) \,
									L^*(X_{u,u})e_i
						\big \|^2_{U}
					\ud u
				\\ ={} &
					\int_{0}^t 
						\sum_{i \in \mathcal{I}}
							\big(
									\sup_{\lambda \in (0,\infty)}
										e^{-(t-u )\lambda} \lambda^{\nicefrac 12} 
								\big)^2
							\big \|
								B^*(
										t,
										Y_{t}
									) e_i
								-B^*(
										u,
										Y_{u}
									) 
									 e_i
						\big \|^2_{U}
						\langle L(X_{u,r}) e_i, e_i \rangle^2_H
					\ud u
				\\ \leq{} &
					\int_{0}^t 
							(t-u )^{-1}
								(\tfrac{1}{2e})
							\big \|
								B^*(
										t,
										Y_{t}
									)
								-B^*(
										u,
										Y_{u}
									) 
						\big \|^2_{HS(U,H)}
						c_1^2
					\ud u
				\\ \leq{} &
					(\tfrac{1}{2e}) c_1^2 \, \varsigma^2
					\int_{0}^t 
							(t-u )^{-1}
							\big(
								\|Y_{t} -Y_{u} \|^2_{HS(U,H)}
								+|t-u|^{2-2\alpha}
						\big)
					\ud u.
			\end{split}
		\end{align}
		In addition, \eqref{eq: Y holder cont newLem1} implies
		for all $t \in (0,T]$ that
		\begin{align}
		\label{eq: 2nd term estimate with Lp id}
			\begin{split}
				&\Big \| \Big(
					(\tfrac{1}{2e}) c_1^2 \, \varsigma^2
					\int_{0}^t 
							(t-u )^{-1}
								\big(
								\|Y_{t} -Y_{u} \|^2_{HS(U,H)}
								+(t-u )^{2-2\alpha}
						\big)
					\ud u
					\Big)^{\nicefrac 12}
					\Big \|_{L^p(\P;\R)}
				\\ ={} &
					(\tfrac{1}{2e})^{\nicefrac 12} c_1 \, \varsigma
					\Big \| 
						\int_{0}^t 
							(t-u )^{-1}
								\big(
								\|Y_{t} -Y_{u} \|^2_{HS(U,H)}
								+(t-u )^{2-2\alpha}
						\big)
						\ud u
					\Big \|^{\nicefrac 12}_{L^{p/2}(\P;\R)}
				\\ \leq{} &
					c_1 \, \varsigma
					\Big ( 
					\int_{0}^t 
							(t-u )^{-1} 
							\big(
								\|
									Y_{t}-Y_{u}
								\|^2_{L^{p}(\P;HS(U,H))}
								+(t-u )^{2-2\alpha}
							\big)
						\ud u
					\Big )^{\nicefrac 12}
				\\ \leq{} &
					c_1 \, \varsigma \, c
					\Big ( 
					\int_{0}^t 
							(t-u )^{-1} 
							(t-u )^{2-2\alpha}
							(c^2+1)
						\ud u
					\Big )^{\nicefrac 12}
				\leq{} 
					 c_1 \, \varsigma \, (c+1)
					\tfrac{t^{1-\alpha}}{\sqrt{2-2\alpha}}.				
			\end{split}
		\end{align}
	Combining 
	\eqref{eq: DPi additional noise term first estimate newLem1},
	\eqref{eq: DPi HS(U,H 1/2) estimate},
	\eqref{eq: first term estimate},
	\eqref{eq: 3rd term estimate without Lp},
	\eqref{eq: 3rd term estimate with Lp}
	\eqref{eq: 2nd term estimate without Lp},
	\eqref{eq: 2nd term estimate with Lp},
	\eqref{eq: 2nd term estimate without Lp id},
	\eqref{eq: 2nd term estimate with Lp id},
	demonstrates for all 
	$t\in [0,T]$ that
	\begin{align}
			\begin{split}
				&\Big \|
					\int_0^t 
						e^{(t-u )A} \big(
							(D \Pi)(X_{\kappa(u),u}) (
									B(
										\kappa(u),
										Y_{\kappa(u)}
									)
								)
						\big)
					\ud W_u 
				\Big \|_{L^p(\P; H_{1/2})}
			\\ \leq{} &
					2\sqrt{p} 
							\Big(
								c_1\eta+c_1 \varsigma 
								(c+1)
								\tfrac{t^{1-\alpha}}{\sqrt{2-2\alpha}}
								+(\tfrac{1-2\gamma_2 }{2e})^{\nicefrac 12- \gamma_2} c
									\eta^2 \, p (\tfrac{\gamma_2}{e})^{\gamma_2}
									t^{\nicefrac 12}
									\tfrac{1}{\sqrt{1-2\gamma_2}}
									\tfrac{1}{\sqrt{2\gamma_2}}
						\Big).
			\end{split}
		\end{align}
		This finishes the proof of Lemma \ref{l: e B noise H1/2 bound}.
	\end{proof}
  The following lemma allows us to quantify how close $\Pi$ and the identity function are.
		\begin{lemma}
		\label{l: DPi A estimate}
		Assume Setting \ref{sett 2},
		let 
		$
				\kappa 
			= ([0,T] \ni t \to \llcorner t \lrcorner_\theta \in [0,T])
		$,
		let $p \in [2,\infty)$,
		$c_1 \in [0,\infty)$,
		$c_2 \in [1,\infty)$,
		and assume for all $x \in H$ that
		\begin{align}
		\label{eq: A assumption on Pi}
				\|
					(D \Pi)(x) (A x)-A \Pi(x)
				\|_H
			\leq
				c_1 \|x\|_{H_{1/2}} (\|x\|_{H_{1/2}} +1) (\|x\|_{H} +1)^{c_2}.
		\end{align}
		Then it holds for all $u\in[0,T]$ that
		\begin{equation}
				\Big\|
					(D \Pi)(X_{\llcorner u \lrcorner_{\theta},u}) (A X_{\llcorner u \lrcorner_{\theta},u})
					-A \Pi(X_{\llcorner u \lrcorner_{\theta},u})
				\Big\|_{L^p(\P;H)}
			\leq
				c_1 \, \sqrt{6p} \, \eta  
						(\sqrt{6p} \, \eta  +1)
						(1+3p \, \eta \, c_2 |\theta|^{\nicefrac 12})^{c_2}.
		\end{equation}
	\end{lemma}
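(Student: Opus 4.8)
The plan is to apply the pointwise bound~\eqref{eq: A assumption on Pi} at the random point $X_{\llcorner u \lrcorner_\theta, u}$, to take $L^p$-norms, and to control the resulting product of three random factors by H\"older's inequality. Fix $u \in [0,T]$ and abbreviate $\chi \coloneqq X_{\llcorner u \lrcorner_\theta, u}$. Then~\eqref{eq: A assumption on Pi} gives $\P$-a.s.\ that
\begin{equation*}
\big\|(D\Pi)(\chi)(A\chi) - A\Pi(\chi)\big\|_H \le c_1 \, \|\chi\|_{H_{1/2}}\big(\|\chi\|_{H_{1/2}}+1\big)\big(\|\chi\|_H+1\big)^{c_2},
\end{equation*}
and, since $\tfrac1p = \tfrac1{3p} + \tfrac1{3p} + \tfrac1{3p}$, H\"older's inequality then yields
\begin{equation*}
\big\|(D\Pi)(\chi)(A\chi) - A\Pi(\chi)\big\|_{L^p(\P;H)} \le c_1 \, \|\chi\|_{L^{3p}(\P;H_{1/2})}\, \big\|\|\chi\|_{H_{1/2}}+1\big\|_{L^{3p}(\P;\R)}\, \big\|(\|\chi\|_H+1)^{c_2}\big\|_{L^{3p}(\P;\R)}.
\end{equation*}

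Next I would bound the three factors separately. Since $3p \ge 6 \ge 2$, Lemma~\ref{l: X 1/2 estimate} (with $p \curvearrowleft 3p$) gives $\|\chi\|_{L^{3p}(\P;H_{1/2})} \le \sqrt{6p}\,\eta$, and the triangle inequality in $L^{3p}(\P;\R)$ then also gives $\|\|\chi\|_{H_{1/2}}+1\|_{L^{3p}(\P;\R)} \le \sqrt{6p}\,\eta + 1$. For the last factor I would use $\|(\|\chi\|_H+1)^{c_2}\|_{L^{3p}(\P;\R)} = \|\|\chi\|_H+1\|_{L^{3pc_2}(\P;\R)}^{c_2} \le (\|\chi\|_{L^{3pc_2}(\P;H)}+1)^{c_2}$ and then invoke Lemma~\ref{l: X estimate} with $p \curvearrowleft 3pc_2$, $\delta \curvearrowleft 0$, and $\gamma \curvearrowleft 0$, using the convention $0^0 = 1$ and the fact that $u - \llcorner u \lrcorner_\theta \le |\theta|$ (which holds by Setting~\ref{sett 2}), to get $\|\chi\|_{L^{3pc_2}(\P;H)}^2 \le \eta^2 (3pc_2)^2 (u-\llcorner u \lrcorner_\theta) \le 9 p^2 c_2^2 \eta^2 |\theta|$, hence $\|\chi\|_{L^{3pc_2}(\P;H)} \le 3 p c_2 \eta |\theta|^{1/2}$ and the last factor is at most $(1 + 3 p \eta c_2 |\theta|^{1/2})^{c_2}$.

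Multiplying the three estimates produces precisely the asserted bound $c_1 \sqrt{6p}\,\eta\,(\sqrt{6p}\,\eta+1)(1+3p\eta c_2|\theta|^{1/2})^{c_2}$. I do not expect a genuine obstacle here: the argument is entirely an application of the two stochastic-integral moment bounds from Lemma~\ref{l: X estimate} and Lemma~\ref{l: X 1/2 estimate} together with H\"older's inequality; the only real decision is the choice of the H\"older exponent triple $(3p,3p,3p)$, which is forced by the $\sqrt{6p}$ in the first two target factors and by the power $c_2$ of the $\|\chi\|_H$-term, and everything else is routine bookkeeping of constants.
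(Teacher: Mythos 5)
Your proposal is correct and follows essentially the same route as the paper's proof: the pointwise hypothesis is applied at $X_{\llcorner u \lrcorner_\theta,u}$, H\"older's inequality with the exponent triple $(3p,3p,3p)$ splits the product, Lemma~\ref{l: X 1/2 estimate} (with $p \curvearrowleft 3p$) handles the two $H_{1/2}$-factors, and Lemma~\ref{l: X estimate} (with $p \curvearrowleft 3pc_2$, $\delta=\gamma=0$) together with $u-\llcorner u\lrcorner_\theta \le |\theta|$ handles the last factor. No gaps.
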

	\begin{proof}
	First note that \eqref{eq: A assumption on Pi} implies
	for all $u \in [0,T]$ that
		\begin{align}
		\label{eq: f1 estimate}
			\begin{split}
					&\Big\|
						(D \Pi)(X_{\llcorner u \lrcorner_{\theta},u}) (A X_{\llcorner u \lrcorner_{\theta},u})
						-A \Pi(X_{\llcorner u \lrcorner_{\theta},u})
					\Big\|_{L^p(\P;H)}
				\\ \leq{} &
					\big(\E \big[
						c_1^p
						\|X_{\llcorner u \lrcorner_{\theta},u}\|^p_{H_{1/2}} 
						(\|X_{\llcorner u \lrcorner_{\theta},u}\|_{H_{1/2}} +1)^p 
						(\|X_{\llcorner u \lrcorner_{\theta},u}\|_{H} +1)^{c_2 p} 
					\big] \big)^{\nicefrac 1p}
				\\ \leq{} &
						c_1 \|X_{\llcorner u \lrcorner_{\theta},u}\|_{L^{3p}(\P;H_{1/2})} 
						(\|X_{\llcorner u \lrcorner_{\theta},u}\|_{L^{3p}(\P;H_{1/2})}  +1)
						(\|X_{\llcorner u \lrcorner_{\theta},u}\|^{c_2}_{L^{3p c_2}(\P;H)}  +1).
			\end{split}
		\end{align}
		Moreover, 
		Lemma \ref{l: X 1/2 estimate} (with $p \curvearrowleft 3p$)
		verify for all $u \in [0,T]$ that
		\begin{equation}
					\|X_{\llcorner u \lrcorner_{\theta},u}\|_{L^{3p}(\P;H_{1/2})} 
				\leq{} 
					\sqrt{6p} \, \eta.
		\end{equation}
		Thus, \eqref{eq: f1 estimate}, 
		Lemma \ref{l: X estimate} (with $p \curvearrowleft 3p c_2$, $\delta \curvearrowleft 0$ and 
		$\gamma \curvearrowleft 0$),
		and the fact that for all $a,b,c \in [0,\infty)$ it holds that
		$(a+b+c)^{1/p} \leq a^{1/p} + b^{1/p}+c^{1/p}$
		show for all $u \in [0,T]$ that
		\begin{align}
			\begin{split}
					&\Big\|
						(D \Pi)(X_{\llcorner u \lrcorner_{\theta},u}) (A X_{\llcorner u \lrcorner_{\theta},u})
						-A \Pi(X_{\llcorner u \lrcorner_{\theta},u})
					\Big\|_{L^p(\P;H)}
				\\ \leq{} &
						c_1 \, \sqrt{6p} \, \eta  
						(\sqrt{6p} \, \eta  +1)
						(1+3p \, \eta \, c_2 |\theta|^{\nicefrac 12})^{c_2}.
			\end{split}
		\end{align}
		This finishes the proof of Lemma \ref{l: DPi A estimate}.
	\end{proof}
	In the next lemma 
	we approximate the stepsize of tamed exponential Euler approximations
	\begin{lemma}
	\label{l: Lp H1/2- estimate Y -approxY}
	Assume Setting \ref{sett 2}, 
	let $c_1,c_2 \in (0,\infty)$,
	$p \in [2,\infty)$,
	let $\alpha \in [0,\nicefrac 12)$,
	$\beta \in [-\alpha,1-\alpha)$,
	$\gamma_1 \in \R$,
	assume that 
	$\kappa = ([0,T] \ni t \to \llcorner t \lrcorner_{\theta} \in [0,T])$,
	assume for all $t \in [0,T]$ and all $x \in D$ that
		\begin{align}
		\label{eq: x in D bound newLem3}
				\|x\|_{H_{1/2}}
			&\leq
				c_2 |\theta|^{\gamma_1}, 
				\\
			\label{eq: F in D bound newLem3}
				\|
					F(t, x)
				\|_{H_{-\beta}}
			&\leq
				c_2 |\theta|^{\gamma_1}, 
		\end{align}
	and assume for all $x \in H$, and all $z \in HS(U,H)$ that
	\begin{align}
			\label{eq: Pi H bound newLem3}
			&\| \Pi(x) \|_H 
				\leq c_1  \|x\|_H.
		\end{align}
	Then it holds for all $t \in [0,T]$ that
	\begin{align}
					\|
						\1_{D}(Y_{\llcorner t \lrcorner_{\theta}})(Y_t-Y_{\llcorner t \lrcorner_{\theta}})
					\|_{L^p(\P, H_{\alpha})} 
				\leq{} 
					c_2 |\theta|^{\nicefrac 12 -\alpha+\gamma_1}
					+c_2 (\tfrac{\alpha+\beta}{e})^{\alpha+\beta}
						\tfrac{|\theta|^{1 -\alpha-\beta+\gamma_1}}{1-\alpha-\beta}
					+c_1 \eta \, p (\tfrac{\alpha}{e})^{\alpha}
					\tfrac{|\theta|^{\nicefrac 12-\alpha}}{\sqrt{1-2\alpha}}.
		\end{align}
	\end{lemma}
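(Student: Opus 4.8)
The plan is to use the representation of $Y$ from \cref{l: Y representation} (which applies since $\kappa=([0,T]\ni t\mapsto\llcorner t\lrcorner_\theta\in[0,T])$) together with \eqref{eq: def of X}, so that for every $t\in[0,T]$ it holds a.s.\ that
\begin{align*}
\1_D(Y_{\llcorner t\lrcorner_\theta})\big(Y_t-Y_{\llcorner t\lrcorner_\theta}\big)
={}&\1_D(Y_{\llcorner t\lrcorner_\theta})\big(e^{(t-\llcorner t\lrcorner_\theta)A}-\id_H\big)Y_{\llcorner t\lrcorner_\theta}
+\1_D(Y_{\llcorner t\lrcorner_\theta})\int_{\llcorner t\lrcorner_\theta}^t e^{(t-s)A}F(\llcorner t\lrcorner_\theta,Y_{\llcorner t\lrcorner_\theta})\ud s\\
&+\1_D(Y_{\llcorner t\lrcorner_\theta})\,\Pi(X_{\llcorner t\lrcorner_\theta,t}),
\end{align*}
and then to estimate the three summands separately in $L^p(\P;H_\alpha)$ by the triangle inequality. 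Throughout I would keep the factor $\1_D(Y_{\llcorner t\lrcorner_\theta})$ so that on the event $\{Y_{\llcorner t\lrcorner_\theta}\in D\}$ the domain bounds \eqref{eq: x in D bound newLem3} and \eqref{eq: F in D bound newLem3} are available pathwise, and I would repeatedly use $t-\llcorner t\lrcorner_\theta\le|\theta|$ from \cref{sett 2}.

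For the first summand I would apply \cref{l: Y0 estimate} with $\delta\curvearrowleft\alpha$ and $\gamma\curvearrowleft\tfrac12-\alpha\in(0,\tfrac12]$ to obtain $\|(e^{(t-\llcorner t\lrcorner_\theta)A}-\id_H)Y_{\llcorner t\lrcorner_\theta}\|_{H_\alpha}\le\|Y_{\llcorner t\lrcorner_\theta}\|_{H_{1/2}}(t-\llcorner t\lrcorner_\theta)^{\nicefrac12-\alpha}$; on $\{Y_{\llcorner t\lrcorner_\theta}\in D\}$ this is at most the deterministic quantity $c_2|\theta|^{\nicefrac12-\alpha+\gamma_1}$, which therefore also bounds the $L^p(\P;\R)$-norm of the (indicator-localised) random variable. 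For the second summand I would apply \cref{l: F estimate} with $\delta\curvearrowleft\alpha$ and $\gamma\curvearrowleft\alpha+\beta\in[0,1)$ (so that $\delta-\gamma=-\beta$) and then invoke \eqref{eq: F in D bound newLem3}, which yields the deterministic bound $c_2(\tfrac{\alpha+\beta}{e})^{\alpha+\beta}\tfrac{|\theta|^{1-\alpha-\beta+\gamma_1}}{1-\alpha-\beta}$.

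For the third summand I would estimate pathwise $\|\Pi(X_{\llcorner t\lrcorner_\theta,t})\|_{H_\alpha}\le c_1\|X_{\llcorner t\lrcorner_\theta,t}\|_{H_\alpha}$ via \eqref{eq: Pi H bound newLem3} (applied in the $H_\alpha$-norm), drop the indicator (which can only decrease the norm), and invoke \cref{l: X estimate} with $\delta\curvearrowleft\alpha$, $\gamma\curvearrowleft\alpha$, $u\curvearrowleft\llcorner t\lrcorner_\theta$ to get $\|X_{\llcorner t\lrcorner_\theta,t}\|_{L^p(\P;H_\alpha)}\le\eta\,p\,(\tfrac{\alpha}{e})^\alpha\tfrac{|\theta|^{\nicefrac12-\alpha}}{\sqrt{1-2\alpha}}$ (using $t-\llcorner t\lrcorner_\theta\le|\theta|$), whence this contribution is at most $c_1\eta\,p\,(\tfrac{\alpha}{e})^\alpha\tfrac{|\theta|^{\nicefrac12-\alpha}}{\sqrt{1-2\alpha}}$. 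Summing the three bounds gives the claim.

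I do not expect a genuine obstacle: the statement is essentially a bookkeeping exercise combining \cref{l: Y representation,l: Y0 estimate,l: F estimate,l: X estimate}. The only mild points requiring care are (i) retaining the indicator $\1_D(Y_{\llcorner t\lrcorner_\theta})$ throughout so that the ``$x\in D$'' bounds may be used, (ii) choosing the three smoothing exponents $\tfrac12-\alpha$, $\alpha+\beta$, and $\alpha$ so that the standing hypotheses $\alpha\in[0,\tfrac12)$ and $\beta\in[-\alpha,1-\alpha)$ place them in the admissible ranges of \cref{l: Y0 estimate,l: F estimate,l: X estimate}, and (iii) observing that the conventions $0^0=1$ take care of the borderline cases $\alpha=0$ and $\alpha+\beta=0$.
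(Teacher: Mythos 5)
Your proposal matches the paper's own proof line for line: decompose via \cref{l: Y representation}, apply the triangle inequality in $L^p(\P;H_\alpha)$, and bound the three terms with \cref{l: Y0 estimate,l: F estimate,l: X estimate} using exactly the same exponent choices ($\gamma=\tfrac12-\alpha$, $\gamma=\alpha+\beta$, $\gamma=\delta=\alpha$), then invoke \eqref{eq: x in D bound newLem3}, \eqref{eq: F in D bound newLem3}, \eqref{eq: Pi H bound newLem3}, and $t-\llcorner t\lrcorner_\theta\le|\theta|$. Your parenthetical that \eqref{eq: Pi H bound newLem3} is ``applied in the $H_\alpha$-norm'' correctly flags the one delicate point; the paper takes the same implicit step (for $\alpha>0$ it really needs $\|\Pi(x)\|_{H_\alpha}\le c_1\|x\|_{H_\alpha}$, which is supplied as a separate hypothesis, e.g.\ \eqref{eq: Pi H alpha bound Lem1}, wherever the lemma is actually invoked).
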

	\begin{proof}
		Note that \eqref{eq: Pi H bound newLem3},
		Lemma \ref{l: Y0 estimate} 
			(with $x \defeq Y_{\llcorner t \lrcorner_{\theta}}$, 
			$\delta \defeq \alpha$,
			$\gamma \defeq \nicefrac 12 - \alpha$), 
		Lemma \ref{l: F estimate},
			(with $x \defeq F(\llcorner t \lrcorner_{\theta}, Y_{\llcorner t \lrcorner_{\theta}})$, 
			$\delta \defeq \alpha$,
			$\gamma \defeq \alpha + \beta$), 
		Lemma \ref{l: X estimate}
			(with $\delta \defeq \alpha$,
				$\gamma \defeq \alpha$),
		\eqref{eq: x in D bound newLem3}
		and \eqref{eq: F in D bound newLem3}
		verify for all $t \in [0,T]$ that
		\begin{align*}
			\label{eq: Y-Y approx 1/2+ Lq norm}
					&\|
						\1_{D}(Y_{\llcorner t \lrcorner_{\theta}})(Y_t-Y_{\llcorner t \lrcorner_{\theta}})
					\|_{L^p(\P, H_{\alpha})} \\
				={}
					&\Big \| \1_{D}(Y_{\llcorner t \lrcorner_{\theta}}) \Big(
						e^{(t - \llcorner t \lrcorner_{\theta}) A}
							Y_{\llcorner t \lrcorner_{\theta}}
						-Y_{\llcorner t \lrcorner_{\theta}}
						+
						\int^t_{\llcorner t \lrcorner_{\theta}}
							e^{(t-s)A}
								F(\llcorner t \lrcorner_{\theta}, Y_{\llcorner t \lrcorner_{\theta}})
						\ud s
				\\ & \qquad
						+\Pi \Big(
							\int^t_{\llcorner t \lrcorner_{\theta}}
								e^{(t-s)A}
								B(\llcorner t \lrcorner_{\theta}, Y_{\llcorner t \lrcorner_{\theta}})
								\dWs
							\Big)
						\Big) \Big \|_{L^p(\P, H_{\alpha})}\\
				\leq{}
					&\big \|
						\1_{D}(Y_{\llcorner t \lrcorner_{\theta}})
						(e^{(t- \llcorner t \lrcorner_{\theta})A} -\id_H) 
						Y_{\llcorner t \lrcorner_{\theta}}
					\big \|_{L^p(\P, H_{\alpha})}
					+\Big \|
						\1_{D}(Y_{\llcorner t \lrcorner_{\theta}})
						\int^t_{\llcorner t \lrcorner_{\theta}}
							e^{(t- s)A} 
								F(\llcorner t \lrcorner_{\theta}, Y_{\llcorner t \lrcorner_{\theta}})
						\ud s
					\Big \|_{L^p(\P, H_{\alpha})}
			\\ & \qquad
					+c_1\Big \|
						\int^t_{\llcorner t \lrcorner_{\theta}}
							e^{(t- s)A} 
							B(\llcorner t \lrcorner_{\theta}, Y_{\llcorner t \lrcorner_{\theta}})
						\dWs
					\Big \|_{L^p(\P, H_{\alpha})}
			\\ \leq{} &
					\|
						\1_{D}(Y_{\llcorner t \lrcorner_{\theta}})
							Y_{\llcorner t \lrcorner_{\theta}}
					\|_{L^p(\P, H_{1/2})}
						(t- \llcorner t \lrcorner_{\theta})^{\nicefrac 12 -\alpha}
					+(\tfrac{\alpha+\beta}{e})^{\alpha+\beta}
						\tfrac{(t-\llcorner t \lrcorner_\theta)^{1-\alpha-\beta}}{1-\alpha-\beta}
						\|
							\1_{D}(Y_{\llcorner t \lrcorner_{\theta}})
							F(\llcorner t \lrcorner_{\theta}, Y_{\llcorner t \lrcorner_{\theta}})
						\|_{H_{-\beta}}
				\\ & \qquad
					+\eta \, p (\tfrac{\alpha}{e})^{\alpha}
					\tfrac{(t- u)^{\nicefrac 12-\alpha}}{\sqrt{1-2\alpha}}
				\\ \leq{} & \numberthis
					c_2 |\theta|^{\nicefrac 12 -\alpha+\gamma_1}
					+c_2 (\tfrac{\alpha+\beta}{e})^{\alpha+\beta}
						\tfrac{|\theta|^{1 -\alpha-\beta+\gamma_1}}{1-\alpha-\beta}
					+c_1 \eta \, p (\tfrac{\alpha}{e})^{\alpha}
					\tfrac{|\theta|^{\nicefrac 12-\alpha}}{\sqrt{1-2\alpha}}.
		\end{align*}
		This finishes the proof of Lemma \ref{l: Lp H1/2- estimate Y -approxY}.
		\end{proof}

	In the next proposition, 
	we deduce exponential moment estimates for
	tamed exponential Euler approximations.
	For this, 
	we apply 	\ref{l: basic estimate exp momente}
	to get a one step exponential moment bounds for
	tamed exponential Euler approximations and 
	then derive from
	Lemma \ref{lem: Lemma 2.1}
	exponential moment bounds.
	\begin{prop}
	\label{prop: basic exp bound}
		Assume Setting \ref{sett 2},
		let 
		$c_1, c_2, c_3, c_4, c_7, c_9, \varsigma \in [0,\infty)$, 
		$c_5, c_8 \in [1,\infty)$,
		$q \in [8,\infty)$,
		$p \in [4,q/(c_9+1)]$,
		$\beta \in [0,\nicefrac 12)$,
		$\alpha \in (\beta-1, 0]$,
		$\gamma \in (\beta, 1]$,
		$\gamma_1 \in [-\nicefrac 14, 0)$,
		$\gamma_2 \in [-\nicefrac 14, 0]$, 
		$\gamma_3 \in [0, \infty)$,
		$\gamma_4 \in [\tfrac{-1-2\gamma_1}{2c_7} , \gamma_1+\nicefrac 12]$,
		$\gamma_5 \in [0, \infty)$,
		satisfy that
		\begin{equation}
		\label{eq: def of gamma}
				-2\gamma_2 
			\leq 
				(\gamma-\beta) 
				\wedge
				(1-\beta +\alpha) 
				\wedge \gamma_5,
		\end{equation}
		\begin{equation}
		\label{eq: def c_4}
				c_4 
			=
				\big(
					2c_2
						+c_5
				\big)
				\vee
				\big(
					8 c_5 \eta^2
				+\varsigma (
							3c_2 
							+8 c_5 \eta
							+1
						)
				+\eta
				\big),
		\end{equation}
		and that
		\begin{align}
	\label{eq: def c_1}
		\begin{split}
			c_1 
		\geq{}
				&c_2\big(
						c_2
						+\tfrac{2c_2}{1-2\beta} (\tfrac{1/2+\beta}{e})^{\beta}
						+\tfrac{4 c_5 \eta}{\sqrt{1-2\beta}} (\tfrac{\beta}{e})^{\beta}
						+1
					\big)
					(1+2c_2^{c_9})
			\\ & \quad
				+(\inf_{i\in \mathcal{I}} |\lambda_i|)^{\alpha}
					\big(
						2c_5 \, \eta^3
						+2\sqrt{6} \, c_5  \eta
							(2\sqrt{6} \, \eta  +1)
							(1+12 \, \eta \, c_8)^{c_8}
					\big)
			\\ & \quad
				+8 c_5 \eta^2
				+\varsigma (
							3c_2 
							+8 c_5 \eta
							+1
						)
		\end{split}
	\end{align}
		assume
		$|\theta| \leq 1$,
		let 
		$
				\kappa 
			= ([0,T] \ni t \to \llcorner t \lrcorner_\theta \in [0,T])
		$,
		let $V \in \C^{1,2}([0,T] \times H, [0,\infty))$,
		let
		$\overline{V} \colon [0,T] \times H \to \R$ be measurable,
		assume for all $t \in [0,T]$, $s \in [t,T]$, and all $x,y \in H$ that
		\begin{align}
		\label{eq: local Lipschitz F 2.13}
				\|F(s,x)-F(t,y)\|_{H_{\alpha}} 
			&\leq 
				c_2 (1+\|x\|^{c_9}_{H_{\gamma}} + \|y\|^{c_9}_{H_{\gamma}}) (\|x-y\|_{H_\beta}+|s-t|), \\
		\label{eq: Lipschitz B 2.13}
				\|B(s,x)-B(t,y)\|_{HS(U,H)} 
			&\leq 
				\varsigma (\|x-y\|_{H}+(s-t)^{\nicefrac 12}),
			\\
			\label{eq: S contraction}
			V(s,e^{tA} x) &\leq V(s-t,x),
		\end{align}
		assume for all $t \in [0,T]$, $x \in D$ that
		\begin{align}
		\label{eq: x in D bound 2.13}
				\|x\|_{H_{1/2}}
			&\leq
				c_2 |\theta|^{\gamma_1}, 
				\\
			\label{eq: F in D bound 2.13}
				\|
					F(t, x)
				\|_{H_{-1/2}}
			&\leq
				c_2 |\theta|^{\gamma_1}, 
			\\
			\label{eq: V in D bound 2.13}
				V(t, x)
			&\leq
				c_1 |\theta|^{\gamma_1}, 
		\end{align}
		assume for all $x \in H$  and all $z \in HS(U,H)$ that
		\begin{align}
			\label{eq: Pi H bound 2.13}
			&\| \Pi(x) \|_H \leq c_5 (|\theta|^{\gamma_4} \wedge \|x\|_H),\\
			\label{eq: Pi H beta bound 2.13}
			&\| \Pi(x) \|_{H_\beta} \leq c_5 \|x\|_{H_\beta},\\
			\label{eq: D Pi bound 2.13}
			&\|(D \Pi)(x)- \id_H\|_{L(H,H)}
			\leq
				c_5 \|x\|_{H}, \\
			\label{eq: D Pi - Pi A H bound 2.13}
			&\|(D \Pi)(x) (A x)-A \Pi(x)\|_H
			\leq
				c_5 |\theta|^{\gamma_5} \|x\|_{H_{1/2}} (\|x\|_{H_{1/2}} +1)
				(\|x\|_{H} +1)^{c_8}, \\
			&\label{eq: D^2 HS bound 2.13}
			\Big\|
				\sum_{i \in \mathcal{J}}
					(D^2 \Pi)(x)\big(
						z \tilde{e}_i,
						z \tilde{e}_i \big)
				\Big\|_H
			\leq 
				c_5 \|x\|_H \cdot \|z \|^2_{HS(U,H)},
		\end{align}
		assume for all
		$t \in [0,T]$
		that
		\begin{equation}
		\label{eq: Lyapunov equation 2.13}
			\begin{split}
					&(\tfrac{\partial}{\partial t} V) (t,Y_t)
					+\langle \nabla_x V(t,Y_t), F(t,Y_t)+A(Y_t) \rangle_H
					+ \tfrac 12 \| B^*(t,Y_t) \, \nabla_x V(t,Y_t) \|^2_U
			\\ & \qquad
					+\tfrac 12 \langle B(t,Y_t), (\Hess_x V)(t,Y_t) B(t,Y_t)\rangle_{HS(U,H)} 
					+\overline{V}(t,Y_t)
				\leq 0,
			\end{split}
		\end{equation}
		and assume for all $t \in [0,T]$, $s \in (\llcorner t \lrcorner_\theta, t]$
		and all $x,y \in H$
		that
		\begin{align}
		\label{eq: 1st V derivative bound 2.13}
				&\|
					\1_{D}(Y_{\llcorner t \lrcorner_{\theta}})
					(\tfrac{\partial}{\partial x}V)(s,Y_s)
				\|_{L^{8}(\P; L(H_{\alpha},\R))}
			\leq 
				c_2 |\theta|^{\gamma_2},\\
		\label{eq: 2nd V derivative bound 2.13}
				&\| 
					\1_{D}(Y_{\llcorner t \lrcorner_{\theta}}) 
					(\Hess_x V) (s,Y_s)
				\|_{L^4(\P;L(H,H))}  
			\leq 
				c_2 |\theta|^{\gamma_2},\\
		\label{eq: overline V estimate 2.13}
					&\big\| 
						\1_{D}(Y_{\llcorner t \lrcorner_{\theta}}) 
						\exp \big(
							\int_{\llcorner t \lrcorner_\theta}^s 
								\overline{V}(r,Y_r) 
						\big) \ud r 
					\big \|_{L^4(\P;\R)}
			\leq 
				e^{c_3 (s-\llcorner t \lrcorner_\theta)^{\gamma_3}}, \\
		\label{eq: Y 1/2 Lp estimate}
				&\|
					Y_t 
				\|_{L^q(\P; H_{\gamma})}
			\leq
				c_2, \\
		\label{eq: F H Lp estimate}
				&\|
						F(t,Y_t) 
				\|_{L^q(\P; H_{\alpha})}
			\leq
				c_2, \\
		\label{eq: V loc Lip cont 2.13}
				&|V(\llcorner t \lrcorner_\theta,x) -V (s,y)| 
			\leq
				c_5 (\|x-y\|_{H} +(s-\llcorner t \lrcorner_\theta)) 
					(1 + V(\llcorner t \lrcorner_\theta,x) + \|x-y\|_H^{c_7}).
		\end{align}
		Then it holds
		for all $t \in [0,T]$ that
		\begin{align}
			\begin{split}
			\label{eq: F approx -F estimate lem}
				&\bigg\|
					\1_{D}(Y_{\llcorner t \lrcorner_{\theta}}) \Big(
						F(\llcorner t \lrcorner_{\theta},Y_{\llcorner t \lrcorner_{\theta}})
						+(D \Pi)(X_{\llcorner t \lrcorner_{\theta},t}) (A X_{\llcorner t \lrcorner_{\theta},t})
							-A \Pi(X_{\llcorner t \lrcorner_{\theta},t})
				\\ &
					+\tfrac 12
						\sum_{i \in \mathcal{J}}
							(D^2 \Pi)(X_{\llcorner t \lrcorner_{\theta},t}) \Big(
								B(
									\llcorner t \lrcorner_\theta,
									Y_{\llcorner t \lrcorner_\theta }
								) \tilde{e}_i,
								B(
									\llcorner t \lrcorner_\theta,
									Y_{\llcorner t \lrcorner_\theta }
								) \tilde{e}_i
							\Big)
						- F(t,Y_t)
					\Big)
				\bigg\|_{L^p(\P;H_{\alpha})}
			\\ \leq{} &
					 |\theta|^{-2\gamma_2} 
					\Big(		
					c_2\big(
						c_2
						+\tfrac{2c_2}{1-2\beta} (\tfrac{1/2+\beta}{e})^{\beta}
						+\tfrac{p c_5 \eta}{\sqrt{1-2\beta}} (\tfrac{\beta}{e})^{\beta}
						+1
					\big)
					(1+2c_2^{c_9})
			\\ & \quad
					+(\inf_{i\in \mathcal{I}} |\lambda_i|)^{\alpha} 
					\Big(
						\tfrac{pc_5}{2} \, \eta^3
						+\sqrt{6p} \, c_5  \eta
							(\sqrt{6p} \, \eta  +1)
							(1+3p \, \eta \, c_8)^{c_8}
					\Big)
				\Big),
			\end{split}
		\end{align}
		that
		\begin{align}
		\label{eq: B approx - B estimate lem}
			\begin{split}
				&\Big\|
					\1_{D}(Y_{\llcorner t \lrcorner_{\theta}}) \Big(
						(D \Pi)(X_{\llcorner t \lrcorner_{\theta},t}) (
								B(
									\llcorner t \lrcorner_\theta,
									Y_{\llcorner t \lrcorner_\theta }
								)
						)
					-B(
							t,
							Y_{t}
						)
					\Big)
				\Big \|_{L^p(\P;HS(U,H))}
			\\ \leq{} &
				(
						p c_5 \eta^2 
						+\varsigma (
							3c_2 
							+p c_5 \eta
							+1
						)
					) 
					|\theta|^{\nicefrac 12},
			\end{split}
		\end{align} 
		and that
		\begin{equation}
			\begin{split}
					&\E \Big[ 
						\exp \Big(
							V(t,Y_t)
							+ \int_{0}^t
								\1_{E}(Y_{\llcorner s \lrcorner_{\theta}})
								\overline{V}(s,Y_s)
							\ud s 
						\Big) 
					\Big] 
			\\ \leq{} & 
				\E \bigg[ 
					\exp \bigg(
						V(0, Y_{0})
						+ t \Big( 
							e^{
								(
									2c_2+1
									+2e \eta^2
								) 
									5c^2_5 (1+c_1  +c_4^{c_7})^2
								+c_3
							}
					\\ & \qquad \qquad \qquad \qquad \qquad \qquad \cdot
						 c_1 c_2 
						\big(
								1
								+\tfrac 12
								(
									c_1 + 2c_4
								)
								(
									(\inf_{i\in I} |\lambda_i|)^{2\alpha}c_2+ 1
								)
						\big)
						\Big)
					\bigg)
				\bigg].
			\end{split}
		\end{equation}
	\end{prop}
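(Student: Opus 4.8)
The statement bundles three assertions: two one-step consistency bounds \eqref{eq: F approx -F estimate lem}, \eqref{eq: B approx - B estimate lem} comparing the effective drift and diffusion of the tamed exponential Euler scheme with $F$ and $B$, and an exponential moment bound. The plan is to establish the two consistency bounds first, by decomposing the relevant differences and applying H\"older's inequality together with the earlier estimates, and then to derive the exponential moment bound by inserting these consistency bounds and the remaining hypotheses into \Cref{l: basic estimate exp momente}, applied step by step to the mild It\^o representation of $Y$ from \Cref{l: Y representation}, so as to obtain a one-step exponential moment bound, and finally iterating that bound over the partition $\theta$ by means of \Cref{lem: Lemma 2.1}.

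For \eqref{eq: F approx -F estimate lem} I split the bracketed expression into $F(\llcorner t\lrcorner_\theta,Y_{\llcorner t\lrcorner_\theta})-F(t,Y_t)$, the term $(D\Pi)(X_{\llcorner t\lrcorner_\theta,t})(AX_{\llcorner t\lrcorner_\theta,t})-A\Pi(X_{\llcorner t\lrcorner_\theta,t})$, and the $D^2\Pi$-term, and estimate each in $L^p(\P;H_\alpha)$. The first summand is treated with the local Lipschitz bound \eqref{eq: local Lipschitz F 2.13} and H\"older's inequality, the exponents being tuned so that the polynomial factor $1+\|Y_{\llcorner t\lrcorner_\theta}\|_{H_\gamma}^{c_9}+\|Y_t\|_{H_\gamma}^{c_9}$ falls into $L^q$ (bounded by \eqref{eq: Y 1/2 Lp estimate}) and the increment into some $L^{p'}(\P;H_\beta)$, which \Cref{l: Lp H1/2- estimate Y -approxY}, applied with its ``$\alpha$''$\defeq\beta$ and ``$\beta$''$\defeq\tfrac12$ and drawing on \eqref{eq: x in D bound 2.13}, \eqref{eq: F in D bound 2.13}, \eqref{eq: Pi H beta bound 2.13}, bounds by a positive power of $|\theta|$; integrability of $F(t,Y_t)$ is guaranteed by \eqref{eq: F H Lp estimate}. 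Since $\alpha\le0$, the embedding $\|\cdot\|_{H_\alpha}\le(\inf_{i}|\lambda_i|)^{\alpha}\|\cdot\|_{H}$ reduces the other two summands to $H$-estimates: \Cref{l: DPi A estimate} (with \eqref{eq: D Pi - Pi A H bound 2.13} in place of \eqref{eq: A assumption on Pi}, which brings the extra factor $|\theta|^{\gamma_5}$) for the second, and \eqref{eq: D^2 HS bound 2.13}, \eqref{eq: global bound B sett} and \Cref{l: X estimate} (with $\delta=\gamma=0$, to control $\|X_{\llcorner t\lrcorner_\theta,t}\|_{L^p(\P;H)}$) for the third. Collecting the three contributions and using $|\theta|\le1$ and $\gamma_5\ge-2\gamma_2$ from \eqref{eq: def of gamma} gives \eqref{eq: F approx -F estimate lem}.

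For \eqref{eq: B approx - B estimate lem} I write the difference as $((D\Pi)(X_{\llcorner t\lrcorner_\theta,t})-\id_H)(B(\llcorner t\lrcorner_\theta,Y_{\llcorner t\lrcorner_\theta}))+(B(\llcorner t\lrcorner_\theta,Y_{\llcorner t\lrcorner_\theta})-B(t,Y_t))$. The first term is estimated in $L^p(\P;HS(U,H))$ by $\|(D\Pi)(X_{\llcorner t\lrcorner_\theta,t})-\id_H\|_{L(H,H)}\,\|B(\llcorner t\lrcorner_\theta,Y_{\llcorner t\lrcorner_\theta})\|_{HS(U,H)}$, hence via \eqref{eq: D Pi bound 2.13}, \eqref{eq: global bound B sett} and \Cref{l: X estimate}; the second via the Lipschitz bound \eqref{eq: Lipschitz B 2.13}, the bound on $\|\1_D(Y_{\llcorner t\lrcorner_\theta})(Y_t-Y_{\llcorner t\lrcorner_\theta})\|_{L^p(\P;H)}$ supplied by \Cref{l: Lp H1/2- estimate Y -approxY} (now with ``$\alpha$''$\defeq0$ and using \eqref{eq: Pi H bound 2.13}), and $(t-\llcorner t\lrcorner_\theta)^{1/2}\le|\theta|^{1/2}$.

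By \eqref{eq: def Y} and \Cref{l: Y representation}, on each interval $[\llcorner t\lrcorner_\theta,t]$ the process $Y$ is a mild It\^o process with effective drift $\1_D(Y_{\llcorner\cdot\lrcorner_\theta})[F(\llcorner\cdot\lrcorner_\theta,Y_{\llcorner\cdot\lrcorner_\theta})+(D\Pi)(X_{\llcorner\cdot\lrcorner_\theta,\cdot})(AX_{\llcorner\cdot\lrcorner_\theta,\cdot})-A\Pi(X_{\llcorner\cdot\lrcorner_\theta,\cdot})+\tfrac12\sum_i(D^2\Pi)(X_{\llcorner\cdot\lrcorner_\theta,\cdot})(B\tilde e_i,B\tilde e_i)]$ and effective diffusion $\1_D(Y_{\llcorner\cdot\lrcorner_\theta})(D\Pi)(X_{\llcorner\cdot\lrcorner_\theta,\cdot})(B(\llcorner\cdot\lrcorner_\theta,Y_{\llcorner\cdot\lrcorner_\theta}))$. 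I apply \Cref{l: basic estimate exp momente} conditionally on $\mathbb F_{\llcorner t\lrcorner_\theta}$ with ``$X$''$\defeq Y$, ``$X_0$''$\defeq Y_{\llcorner t\lrcorner_\theta}$, ``$E$''$\defeq D$, ``$F$''$\defeq F$, ``$B$''$\defeq B$, ``$h$''$\defeq|\theta|$: its hypotheses \eqref{eq: Y F diff bound}, \eqref{eq: Z B diff bound} are \eqref{eq: F approx -F estimate lem}, \eqref{eq: B approx - B estimate lem} with $p=4$ and $p=8$ (with ``$c_1$'', ``$c_4$'' chosen per \eqref{eq: def c_1}, \eqref{eq: def c_4}); \eqref{eq: 1st V derivative bound}, \eqref{eq: 2nd V derivative bound}, \eqref{eq: overline V estimate}, \eqref{eq: V loc Lip cont}, \eqref{eq: V_0 in E bound}, \eqref{eq: Lyapunov equation} are \eqref{eq: 1st V derivative bound 2.13}, \eqref{eq: 2nd V derivative bound 2.13}, \eqref{eq: overline V estimate 2.13}, \eqref{eq: V loc Lip cont 2.13}, \eqref{eq: V in D bound 2.13}, \eqref{eq: Lyapunov equation 2.13}; \eqref{eq: Z bound} follows from \eqref{eq: D Pi bound 2.13}, \eqref{eq: global bound B sett}, \Cref{l: X estimate}; and \eqref{eq: X infty bound} follows on $\{Y_{\llcorner t\lrcorner_\theta}\in D\}$ from \eqref{eq: x in D bound 2.13}, \eqref{eq: F in D bound 2.13} (for the semigroup and drift contributions, via \Cref{l: Y0 estimate}, \Cref{l: F estimate}) and the taming bound \eqref{eq: Pi H bound 2.13} (for the $\Pi$-part). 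The delicate hypothesis is \eqref{eq: cond exp bound}: on $\{Y_{\llcorner t\lrcorner_\theta}\in D\}$ I bound $\|Y_s-Y_{\llcorner t\lrcorner_\theta}\|_H$ by its $\mathbb F_{\llcorner t\lrcorner_\theta}$-measurable semigroup and drift parts plus $c_5\|\int_{\llcorner t\lrcorner_\theta}^s e^{(s-u)A}B(\llcorner u\lrcorner_\theta,Y_{\llcorner u\lrcorner_\theta})\ud W_u\|_H$, exploiting the $\wedge\|x\|_H$ alternative in \eqref{eq: Pi H bound 2.13}, and then invoke \Cref{l: exp H bound} (with ``$Q$''$\defeq kc_5B(\llcorner\cdot\lrcorner_\theta,Y_{\llcorner\cdot\lrcorner_\theta})$) together with \eqref{eq: global bound B sett} to obtain the Gaussian-type conditional estimate with the $c_6$, $\gamma_5$ allowed by \eqref{eq: def of gamma}. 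Combined with $\1_D(Y_{\llcorner t\lrcorner_\theta})V(0,Y_{\llcorner t\lrcorner_\theta})\le c_1|\theta|^{\gamma_1}$ and $1+x\le e^x$, the resulting one-step bound has the multiplicative form of hypothesis \eqref{eq: 11} of \Cref{lem: Lemma 2.1}; invoking that lemma with ``$S_t$''$\defeq e^{tA}$ (contractivity being \eqref{eq: S contraction}) and ``$\rho$''$\defeq0$ yields the displayed bound. The main obstacle is the exponent and constant bookkeeping: one must keep the indicators $\1_D(Y_{\llcorner\cdot\lrcorner_\theta})$ in the right places so that the only locally controlled quantities $\|Y_{\llcorner\cdot\lrcorner_\theta}\|_{H_{1/2}}$, $\|F(\llcorner\cdot\lrcorner_\theta,Y_{\llcorner\cdot\lrcorner_\theta})\|_{H_{-1/2}}$, $V(\llcorner\cdot\lrcorner_\theta,Y_{\llcorner\cdot\lrcorner_\theta})$ remain usable, and choose the exponents in \Cref{l: basic estimate exp momente} so that each error term is a nonnegative power of $|\theta|$ after the H\"older splits and the iterated constant collapses to the stated closed form -- which is exactly what the compatibility relation \eqref{eq: def of gamma} and the lower bounds \eqref{eq: def c_1}, \eqref{eq: def c_4} are engineered to guarantee.
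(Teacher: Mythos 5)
Your overall strategy matches the paper's exactly: establish the two one-step consistency bounds \eqref{eq: F approx -F estimate lem}--\eqref{eq: B approx - B estimate lem} first by splitting and applying H\"older together with the earlier semigroup and stochastic-integral lemmas, derive an $L^\infty$ bound on the one-step increment from the taming bound \eqref{eq: Pi H bound 2.13} and a conditional Gaussian-type bound \eqref{eq: cond exp bound} via \Cref{l: exp H bound}, feed all of this into \Cref{l: basic estimate exp momente} on each subinterval $(\llcorner t\lrcorner_\theta,t]$, and then iterate with \Cref{lem: Lemma 2.1}. The only genuine deviation is where you bound the one-step increment $Y_t-Y_{\llcorner t\lrcorner_\theta}$ in $L^r(\P;H_{\beta})$ (and in $L^r(\P;H)$ for the $B$-difference): you cite \Cref{l: Lp H1/2- estimate Y -approxY}, which works off the pointwise $D$-bounds \eqref{eq: x in D bound 2.13}, \eqref{eq: F in D bound 2.13} and produces exponents in $|\theta|^{\,1/2-\beta+\gamma_1}$ etc., whereas the paper derives an inline estimate \eqref{eq: Y diff H delta estimate} from \Cref{l: Y representation}, \Cref{l: Y0 estimate}, \Cref{l: F estimate}, \Cref{l: X estimate} and the $L^q$ moment bounds \eqref{eq: Y 1/2 Lp estimate}, \eqref{eq: F H Lp estimate}, producing the exponent $|\theta|^{-2\gamma_2+\beta-\beta^*}$. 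The latter is essential for landing exactly on the $|\theta|^{-2\gamma_2}$ power and the particular $c_2,c_5,\eta$ combination appearing in \eqref{eq: F approx -F estimate lem}; with your substitute lemma the $|\theta|$ power is controlled by $\gamma_1$, not $\gamma_2$, and the stated closed-form constant would not come out. You already flag "exponent and constant bookkeeping" as the obstacle, so this is not a gap in the idea but a point where your cited lemma is the wrong tool; replacing it with the moment-bound-based inline increment estimate restores the precise constants, and the rest of your plan (including the conditional exponential moment via \Cref{l: exp H bound}, the application of \Cref{l: basic estimate exp momente}, and the final iteration via \Cref{lem: Lemma 2.1} with $\rho=0$) coincides with the paper.
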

	\begin{proof}
		First note, that Lemma \ref{l: Y representation},
		\eqref{eq: Pi H bound 2.13},
		\eqref{eq: Pi H beta bound 2.13},
		Lemma \ref{l: Y0 estimate}
			(with $x \curvearrowleft Y_{\llcorner t \lrcorner_{\theta}}$,
			$\gamma \curvearrowleft \tfrac 12-\beta^*$, 
			$\delta \curvearrowleft \beta^*$,
			and with $t \curvearrowleft t-\llcorner t \lrcorner_{\theta}$),
		Lemma \ref{l: F estimate}
			(with $x \curvearrowleft F(\llcorner t \lrcorner_{\theta},
			Y_{\llcorner t \lrcorner_{\theta}})$, 
			$\gamma \curvearrowleft \beta^*-\alpha$,
			and with $\delta \curvearrowleft \beta^*$),
		Lemma \ref{l: X estimate}
			(with $p \curvearrowleft r$,
			$\delta \curvearrowleft \beta^*$,
			and with $\gamma \curvearrowleft \beta^*$),
			the fact that $\alpha \in (\beta-1,0]$,
			the fact that $|\theta|\leq 1$,
		\eqref{eq: Y 1/2 Lp estimate},
		\eqref{eq: F H Lp estimate},
		and the fact that
		$
				-2\gamma_2 
			\leq 
				(\gamma-\beta) 
				\wedge
				(1-\beta +\alpha)
		$
		demonstrate for all $t \in [0,T]$, $r \in [2,q]$ 
		and all $\beta^* \in \{0, \beta\}$ that
		\begin{equation}
		\label{eq: Y diff H delta estimate}
			\begin{split}
					&\|
						Y_t-Y_{\llcorner t \lrcorner_\theta}
					\|_{L^r(\P; H_{\beta^*})} \\
				={}
					&\Big \|
						e^{(t - \llcorner t \lrcorner_{\theta}) A}
							Y_{\llcorner t \lrcorner_{\theta}}
						-Y_{\llcorner t \lrcorner_{\theta}}
						+\1_{D}(Y_{\llcorner t \lrcorner_{\theta}})
						\int^t_{\llcorner t \lrcorner_{\theta}}
							e^{(t-s)A}
							F(\llcorner t \lrcorner_{\theta},Y_{\llcorner t \lrcorner_{\theta}})
						\ud s
				\\ & \qquad
						+\1_{D}(Y_{\llcorner t \lrcorner_{\theta}})
						\Pi \Big(
							\int^t_{\llcorner t \lrcorner_{\theta}}
								e^{(t-s)A}
								B(\llcorner t \lrcorner_{\theta},Y_{\llcorner t \lrcorner_{\theta}})
								\dWs
							\Big)
						\Big \|_{L^r(\P; H_{\beta^*})}\\
				\leq{}
					&\big \|
						(e^{(t- \llcorner t \lrcorner_\theta)A} -\id_H) 
						Y_{\llcorner t \lrcorner_\theta}
					\big \|_{L^r(\P; H_{\beta^*})}
					+\Big \|
						\1_{D}(Y_{\llcorner t \lrcorner_{\theta}})
						\int^t_{\llcorner t \lrcorner_\theta}
							e^{(t- s)A} 
							F(\llcorner t \lrcorner_\theta, Y_{\llcorner t \lrcorner_\theta})
						\ud s
					\Big \|_{L^r(\P; H_{\beta^*})}
				\\ & \qquad
					+c_5\Big \|
						\int^t_{\llcorner t \lrcorner_\theta}
							\1_{D}(Y_{\llcorner t \lrcorner_{\theta}})
							e^{(t- s)A} 
							B(\llcorner t \lrcorner_\theta, Y_{\llcorner t \lrcorner_\theta})
						\dWs
					\Big \|_{L^r(\P; H_{\beta^*})} \\
				\leq{}
					&\|Y_{\llcorner t \lrcorner_\theta}\|_{L^r(\P; H_{\gamma})}
						(t- \llcorner t \lrcorner_\theta)^{\gamma -\beta^*}
					+\tfrac{r c_5 \eta}{\sqrt{1-2\beta^*}} (\tfrac{\beta^*}{e})^{\beta^*}
						(t-\llcorner t \lrcorner_\theta)^{\nicefrac 12-\beta^*}
				\\ & \qquad
					+(\tfrac{\beta^*-\alpha}{e})^{\beta^*-\alpha}
						\tfrac{(t-\llcorner t \lrcorner_\theta)^{1-\beta^*+\alpha}}{1-\beta^*+\alpha}
						\|
							\1_{D}(Y_{\llcorner t \lrcorner_{\theta}})
							F(\llcorner t \lrcorner_\theta, Y_{\llcorner t \lrcorner_\theta})
						\|_{L^r(\P;H_{\alpha})}\\
				\leq{}
					&(t- \llcorner t \lrcorner_\theta)^{-2\gamma_2+\beta-\beta*} \big(
						c_2
						+\tfrac{r c_5 \eta}{\sqrt{1-2\beta^*}} (\tfrac{\beta^*}{e})^{\beta^*}
						+\tfrac{2 c_2}{1-2\beta^*} (\tfrac{1/2+\beta^*}{e})^{\beta^*}
					\big).
			\end{split}
		\end{equation}
		Moreover, 
		Lemma \ref{l: Y representation}, 
		Lemma \ref{l: Y0 estimate}
			(with $x \curvearrowleft Y_{\llcorner t \lrcorner_{\theta}}$, 
			$\gamma \curvearrowleft \tfrac 12$,
			$\delta \curvearrowleft 0$,
			and with
			$t \curvearrowleft t-\llcorner t \lrcorner_{\theta}$),
		Lemma \ref{l: F estimate}
		(with $x \curvearrowleft F(\llcorner t \lrcorner_{\theta},Y_{\llcorner t \lrcorner_{\theta}})$, 
			$\gamma \curvearrowleft \nicefrac 12$, 
			and with $\delta \curvearrowleft 0$),
		\eqref{eq: Pi H bound 2.13},
		\eqref{eq: x in D bound 2.13},
		\eqref{eq: F in D bound 2.13},
		\eqref{eq: def c_4},
		the fact that $\gamma_4 \leq \gamma_1+\tfrac 12$,
		and the fact that $|\theta| \leq 1$
		yield for all $t \in [0,T]$ that
		\begin{align}
			\begin{split}
					&\|
						\1_{D}(Y_{\llcorner t \lrcorner_{\theta}})
							(Y_t-Y_{\llcorner t \lrcorner_{\theta}})
						\|_{L^\infty(\P;H)} \\
				={}
					&\Big\|
						\1_{D}(Y_{\llcorner t \lrcorner_{\theta}})(
							e^{(t - \llcorner t \lrcorner_{\theta}) A}
							Y_{\llcorner t \lrcorner_{\theta}}
							-Y_{\llcorner t \lrcorner_{\theta}}
						)
				\\ &
						+\1_{D}(Y_{\llcorner t \lrcorner_{\theta}})
						\int^t_{\llcorner t \lrcorner_{\theta}}
							e^{(t-s)A}
							F(\llcorner t \lrcorner_{\theta},Y_{\llcorner t \lrcorner_{\theta}})
						\ud s 
						+\1_{D}(Y_{\llcorner t \lrcorner_{\theta}}) \Pi(X_{\llcorner t \lrcorner_{\theta},t})
				\Big\|_{L^\infty(\P;H)} 
			\\ \leq{} &
					\big\|
						(e^{(t - \llcorner t \lrcorner_{\theta}) A} -\id_H)
						\1_{D}(Y_{\llcorner t \lrcorner_{\theta}})
						Y_{\llcorner t \lrcorner_{\theta}}
					\big\|_{L^\infty(\P;H)}
				\\ &
					+\Big\|
						\int^t_{\llcorner t \lrcorner_{\theta}}
							e^{(t-s)A}
							\1_{D}(Y_{\llcorner t \lrcorner_{\theta}})
							F(\llcorner t \lrcorner_{\theta},Y_{\llcorner t \lrcorner_{\theta}})
						\ud s
					\Big\|_{L^\infty(\P;H)} 
					+\big\|
						\Pi(X_{\llcorner t \lrcorner_{\theta},t})
					\big\|_{L^\infty(\P;H)}
			\\ \leq{} &
					\|
						\1_{D}(Y_{\llcorner t \lrcorner_{\theta}})
						Y_{\llcorner t \lrcorner_\theta}\|_{L^\infty(\P; H_{1/2})}
						(t- \llcorner t \lrcorner_\theta)^{\nicefrac 12}
			\\ & 
						+2 (\tfrac{1}{2e})^{\nicefrac 12}
						(t-\llcorner t \lrcorner_\theta)^{\nicefrac 12}
						\|
							\1_{D}(Y_{\llcorner t \lrcorner_{\theta}})
							F(\llcorner t \lrcorner_\theta, Y_{\llcorner t \lrcorner_\theta})
						\|_{L^\infty(\P;H_{-1/2})}
						+c_5 |\theta|^{\gamma_4}
				\\ \leq{} &
						c_2 \, |\theta|^{\gamma_1+\nicefrac 12}
						+2c_2 (\tfrac{1}{2e})^{\nicefrac 12} |\theta|^{\gamma_1+\nicefrac 12}
						+c_5 |\theta|^{\gamma_4}
				\leq{} 
						\big(
							2c_2
							+c_5
						\big) |\theta|^{\gamma_4}
				\leq 
						c_4 |\theta|^{\gamma_4}.
			\end{split}
		\end{align}
		Next, note that \eqref{eq: local Lipschitz F 2.13},
		H\"older inequality,
		\eqref{eq: Y diff H delta estimate} 
			(with $r \curvearrowleft q$ and with $\beta^*\curvearrowleft \beta$),
		and \eqref{eq: Y 1/2 Lp estimate}
		establish 
		for all $t \in [0,T]$ 
		that 
		\begin{equation}
		\label{eq: Y - F estimate}
			\begin{split}
					&\|
						\1_{D}(Y_{\llcorner t \lrcorner_{\theta}}) 
						\big(
							F(\llcorner t \lrcorner_\theta,Y_{\llcorner t \lrcorner_\theta}) 
						- F(t,Y_t)
					\big)
					\|_{L^{q/(c_9+1)}(\P;H_{\alpha})}
				\\ \leq{} &
					\big \|
						c_2
						(1+\|Y_t\|^{c_9}_{H_{\gamma}}+\|Y_{\llcorner t \lrcorner_\theta}\|^{c_9}_{H_{\gamma}})
						(
							\|Y_t-Y_{\llcorner t \lrcorner_\theta}\|_{H_{\beta}}
							+|t-\llcorner t \lrcorner_\theta|
						)
					\big \|_{L^{q/(c_9+1)}(\P; \R)}
			\\ \leq{} &
				c_2
					\big \|
						1+\|Y_t\|^{c_9}_{H_{\gamma}}+\|Y_{\llcorner t \lrcorner_\theta}\|^{c_9}_{H_{\gamma}}
					\big \|_{L^{q/c_9}(\P; \R)}
					\big \|
						(
							\|Y_t-Y_{\llcorner t \lrcorner_\theta}\|_{H_{\beta}}
							+|t-\llcorner t \lrcorner_\theta|
						)
					\big \|_{L^{q}(\P; \R)}
				\\ \leq{} &
				c_2
					|\theta|^{-2\gamma_2} \big(
						c_2
						+\tfrac{2c_2}{1-2\beta} (\tfrac{1/2+\beta}{e})^{\beta}
						+\tfrac{q c_5 \eta}{\sqrt{1-2\beta}} (\tfrac{\beta}{e})^{\beta}
						+1
					\big) 
						(
							1+\|Y_t\|_{L^{q}(\P; H_{\gamma})}^{c_9}
							+\|Y_{\llcorner t \lrcorner_\theta}\|^{c_9}_{L^{q}(\P; H_{\gamma})}
						)
				\\ \leq{}
					&c_2 |\theta|^{-2\gamma_2} \big(
						c_2
						+\tfrac{2c_2}{1-2\beta} (\tfrac{1/2+\beta}{e})^{\beta}
						+\tfrac{q c_5 \eta}{\sqrt{1-2\beta}} (\tfrac{\beta}{e})^{\beta}
						+1
					\big) 
					(1+2c_2^{c_9}).
			\end{split}
		\end{equation}
		In addition,   
		Lemma \ref{l: DPi A estimate} 
		(with $c_1 \curvearrowleft c_5 |\theta|^{\gamma_5}$, 
		$c_2 \curvearrowleft c_8$ and with $p \curvearrowleft r$)  
		and
		\eqref{eq: D Pi - Pi A H bound  2.13} assure for all
		$t \in [0,T]$, $s \in [\llcorner t \lrcorner_{\theta},T]$ 
		and all $r \in [2,\infty)$ that
		\begin{align} \begin{split}
		\label{eq: F 1st bound}
					&\Big \|
								(D \Pi)(X_{\llcorner t \lrcorner_{\theta},s}) (A X_{\llcorner t \lrcorner_{\theta},s})
								-A \Pi(X_{\llcorner t \lrcorner_{\theta},s})
				\Big\|_{L^r(\P;H)} 
			\\ \leq{} & 
						c_5 \, \sqrt{6r} \, \eta  
						(\sqrt{6r} \, \eta  +1)
						(1+3r \, \eta \, c_8 |\theta|^{\nicefrac 12})^{c_8}
						\cdot |\theta|^{\gamma_5}.
		\end{split} \end{align}
		Moreover, \eqref{eq: D^2 HS bound 2.13}, \eqref{eq: global bound B sett}, 
		and
		Lemma \ref{l: X estimate} (with $p \curvearrowleft r$, $\delta \curvearrowleft 0$, 
		$\gamma \curvearrowleft 0$) prove 
		for all $t \in [0,T]$ and all $r\in [2,\infty)$
		that
		\begin{align} \begin{split}
			\label{eq: F 2nd bound}
					&\tfrac 12 \Big \|
					\1_{D}(Y_{\llcorner t \lrcorner_{\theta}}) \Big(
								\sum_{i \in \mathcal{J}}
							(D^2 \Pi)(X_{\llcorner t \lrcorner_{\theta},t}) \Big(
								B(
									\llcorner t \lrcorner_\theta,
									Y_{\llcorner t \lrcorner_\theta }
								) \tilde{e}_i,
								B(
									\llcorner t \lrcorner_\theta,
									Y_{\llcorner t \lrcorner_\theta }
								) \tilde{e}_i
							\Big)
				\Big\|_{L^r(\P;H)} 
			\\ \leq{} &
				\tfrac {c_5}{2}
				\Big \|
					\|X_{\llcorner t \lrcorner_{\theta},t}\|_H \cdot 
					\|
						B(
							\llcorner t \lrcorner_\theta,
							Y_{\llcorner t \lrcorner_\theta }
						)
					\|^2_{HS(U,H)}
				\Big\|_{L^r(\P;\R)}
			\leq{} 
				\tfrac {c_5}{2} \eta^2
				\|
					X_{\llcorner t \lrcorner_{\theta},t}
				\|_{L^r(\P;H)}
		 \\ \leq{} & 
				\tfrac {r c_5}{2} \, \eta^3
				(t-\llcorner t \lrcorner_{\theta})^{\nicefrac 12}.
		\end{split} \end{align}
		Furthermore, \eqref{eq: Lipschitz B 2.13},
		\eqref{eq: D Pi bound 2.13},
		\eqref{eq: Y diff H delta estimate}
			(with $\beta^* \curvearrowleft 0 $),
		Lemma \ref{l: X estimate}
		(with $\delta \curvearrowleft 0$, $\gamma \curvearrowleft 0$),
		the fact that $|\theta| \leq 1$,
		and \eqref{eq: def c_4}
		verify
		for all $t \in [0,T]$
		and all $r \in [2,q]$
		that
		\begin{align} \begin{split} 
			\label{eq: B approx - B estimate}
				&\Big\|
					\1_{D}(Y_{\llcorner t \lrcorner_{\theta}}) \Big(
						(D \Pi)(X_{\llcorner t \lrcorner_{\theta},t}) (
								B(
									\llcorner t \lrcorner_\theta,
									Y_{\llcorner t \lrcorner_\theta }
								)
						)
					-B(
							t,
							Y_{t}
						)
					\Big)
				\Big \|_{L^r(\P;HS(U,H))}
			\\ \leq{} &
				\Big\|
					\1_{D}(Y_{\llcorner t \lrcorner_{\theta}}) \Big(
						(D \Pi)(X_{\llcorner t \lrcorner_{\theta},t}) (
								B(
									\llcorner t \lrcorner_\theta,
									Y_{\llcorner t \lrcorner_\theta }
								)
						)
					-B(
							\llcorner t \lrcorner_\theta,
							Y_{\llcorner t \lrcorner_\theta }
						)
					\Big)
				\Big \|_{L^r(\P;HS(U,H))}
			\\ &
				+
				\Big\|
					\1_{D}(Y_{\llcorner t \lrcorner_{\theta}}) \Big(
						B(
							\llcorner t \lrcorner_\theta,
							Y_{\llcorner t \lrcorner_\theta }
						)
						-B(
							t,
							Y_{t}
						)
					\Big)
				\Big \|_{L^r(\P;HS(U,H))}
			\\ \leq{}	&
				\Big\|
						(D \Pi)(X_{\llcorner t \lrcorner_{\theta},t})
					-\id_H
				\|_{L(H,H)}
				\|
					B(
						\llcorner t \lrcorner_\theta,
						Y_{\llcorner t \lrcorner_\theta }
					)
				\|_{HS(U,H)}
				\Big \|_{L^r(\P;\R)}
			\\ &
				+\varsigma \big\|
					\1_{D}(Y_{\llcorner t \lrcorner_{\theta}}) \big(
						\|
							Y_{t}
							-Y_{\llcorner t \lrcorner_\theta }
						\|_H
						+(t-\llcorner t \lrcorner_\theta)^{\nicefrac 12}
					\big)
				\big \|_{L^r(\P;HS(U,H))}
			\\  \leq{} &
				c_5 \eta \,
				\|
						X_{\llcorner t \lrcorner_{\theta},t}
				\|_{L^r(\P;H)}
				+\varsigma \,
					(t- \llcorner t \lrcorner_\theta)^{(-2\gamma_2 +\beta)\wedge \nicefrac 12} \big(
						c_2
						+2 c_2 
						+r c_5 \eta
						+1
					\big)
			\\ \leq{} &
					(
						r c_5 \eta^2 
						+\varsigma (
							3c_2 
							+r c_5 \eta
							+1
						)
					)
					(t-\llcorner t \lrcorner_{\theta})^{(-2\gamma_2 +\beta)\wedge \nicefrac 12}
			\leq
				(
						r c_5 \eta^2 
						+\varsigma (
							3c_2 
							+r c_5 \eta
							+1
						)
					) 
					|\theta|^{(-2\gamma_2 +\beta)\wedge \nicefrac 12}
		\end{split} \end{align} 
		and that
		\begin{align} \begin{split}
				&\Big\|
					\1_{D}(Y_{\llcorner t \lrcorner_{\theta}}) 
						(D \Pi)(X_{\llcorner t \lrcorner_{\theta},t}) (
								B(
									\llcorner t \lrcorner_\theta,
									Y_{\llcorner t \lrcorner_\theta }
								)
							)
				\Big \|_{L^8(\P;HS(U,H))}
			\\ \leq{}	&
				\Big\|
						(D \Pi)(X_{\llcorner t \lrcorner_{\theta},t}) (
								B(
									\llcorner t \lrcorner_\theta,
									Y_{\llcorner t \lrcorner_\theta }
								)
							)
					-B(
									t,
									Y_{t}
								)
				\Big \|_{L^8(\P;HS(U,H))}
				+\|
					B(
									t,
									Y_{t}
								)
				\|_{L^8(\P;HS(U,H))}
			\\ \leq{}	&
				8 c_5 \eta^2
				+\varsigma (
							3c_2 
							+8 c_5 \eta
							+1
						)
				+\eta
			\leq c_4.
		\end{split} \end{align}
		Combining \eqref{eq: Y - F estimate} (with $p_1 \curvearrowleft r$),
		\eqref{eq: F 1st bound}, \eqref{eq: F 2nd bound},
		the fact that 
		$\forall x\in H \colon \|x\|_{H_\alpha} \leq (\inf_{i\in \mathcal{I}} |\lambda_i|)^{\alpha} \|x\|_H$,
		the fact that $-2\gamma_2 \leq \tfrac 12$,
		\eqref{eq: def of gamma}
		and the fact that $|\theta| \leq 1$
		assures
		for all $t \in [0,T]$,
		and all $r\in [2,q/(1+c_9)]$ that
		\begin{align}
			\begin{split}
			\label{eq: F approx -F estimate}
				&\bigg\|
					\1_{D}(Y_{\llcorner t \lrcorner_{\theta}}) \Big(
						F(\llcorner t \lrcorner_{\theta},Y_{\llcorner t \lrcorner_{\theta}})
						- F(t,Y_t)
						+(D \Pi)(X_{\llcorner t \lrcorner_{\theta},t}) (A X_{\llcorner t \lrcorner_{\theta},t})
							-A \Pi(X_{\llcorner t \lrcorner_{\theta},t})
				\\ &
					+\tfrac 12 \1_{D}(Y_{\llcorner t \lrcorner_{\theta}})
						\sum_{i \in \mathcal{J}}
							(D^2 \Pi)(X_{\llcorner t \lrcorner_{\theta},t}) \Big(
								B(
									\llcorner t \lrcorner_\theta,
									Y_{\llcorner t \lrcorner_\theta }
								) \tilde{e}_i,
								B(
									\llcorner t \lrcorner_\theta,
									Y_{\llcorner t \lrcorner_\theta }
								) \tilde{e}_i
							\Big)
					\Big)
				\bigg\|_{L^r(\P;H_{\alpha})}
			\\ \leq{} &
				c_2 |\theta|^{-2\gamma_2} \big(
						c_2
						+\tfrac{2c_2}{1-2\beta} (\tfrac{1/2+\beta}{e})^{\beta}
						+\tfrac{q c_5 \eta}{\sqrt{1-2\beta}} (\tfrac{\beta}{e})^{\beta}
						+1
					\big) 
					(1+2c_2^{c_9})
			\\ &
			+ (\inf_{i\in \mathcal{I}} |\lambda_i|)^{\alpha} 
			\Big(
				c_5 \, \sqrt{6r} \, \eta  
						(\sqrt{6r} \, \eta  +1)
						(1+3r \, \eta \, c_8 |\theta|^{\nicefrac 12})^{c_8}
						\cdot |\theta|^{\gamma_5}
				+\tfrac{rc_5}{2} \, \eta^3
				(t-\llcorner t \lrcorner_{\theta})^{\nicefrac 12} 
			\Big)
			\\ \leq{} &
					 |\theta|^{-2\gamma_2} 
					\Big(		
					c_2\big(
						c_2
						+\tfrac{2c_2}{1-2\beta} (\tfrac{1/2+\beta}{e})^{\beta}
						+\tfrac{q c_5 \eta}{\sqrt{1-2\beta}} (\tfrac{\beta}{e})^{\beta}
						+1
					\big)
					(1+2c_2^{c_9})
			\\ & \quad
				+(\inf_{i\in \mathcal{I}} |\lambda_i|)^{\alpha} 
					\Big(
						\tfrac{rc_5}{2} \, \eta^3
						+\sqrt{6r} \, c_5  \eta
							(\sqrt{6r} \, \eta  +1)
							(1+3r \, \eta \, c_8)^{c_8}
					\Big)
				\Big).
			\end{split}
		\end{align}
		This and \eqref{eq: B approx - B estimate} (with $r \curvearrowleft p$)
		prove \eqref{eq: F approx -F estimate lem}
		and \eqref{eq: B approx - B estimate lem}.
		Next note that 
		Lemma \ref{l: Y representation},
		Lemma \ref{l: Y0 estimate} 
			(with $x \curvearrowleft Y_{\llcorner t \lrcorner_\theta}$,
			$\gamma \curvearrowleft \tfrac 12$, $\delta \curvearrowleft 0$,
			and with $t \curvearrowleft s-\llcorner t \lrcorner_{\theta}$),
		Lemma \ref{l: F estimate}
			(with $x \curvearrowleft F(\llcorner t \lrcorner_{\theta},Y_{\llcorner t \lrcorner_{\theta}})$, 
			$\gamma \curvearrowleft \nicefrac 12$, 
			and with $\delta \curvearrowleft 0$),
		Lemma \ref{l: exp H bound}
			(with 
				$
					Q \curvearrowleft 
						k \cdot B(\llcorner t \lrcorner_{\theta}, Y_\llcorner t \lrcorner_{\theta})
						\1_{[\llcorner t \lrcorner_{\theta},t]}
				$),
		\eqref{eq: x in D bound 2.13},
		\eqref{eq: F in D bound 2.13},
		and the fact that $|\theta| \leq 1$
		imply that for all $t \in [0,T]$, $s \in (\llcorner t \lrcorner_{\theta},t]$,
		$k \in [0,\infty)$
		it holds a.s. that
		\begin{align}
		\begin{split}
				&\E \Big [
					\1_{D}(Y_{\llcorner t \lrcorner_{\theta}})
						\Big(
							\exp \big(
								k \cdot (\|Y_s-Y_{\llcorner t \lrcorner_{\theta}}\|_H +(s-\llcorner t \lrcorner_{\theta}))
							\big)
						\Big)
					~\Big | ~ Y_{\llcorner t \lrcorner_{\theta}}
				 \Big ] 
			\\ \leq{} &
							\exp \Big(
								\1_{D}(Y_{\llcorner t \lrcorner_{\theta}}) \Big(
								k \,
								\|
									e^{(s - \llcorner t \lrcorner_{\theta}) A}
									Y_{\llcorner t \lrcorner_{\theta}}
									-Y_{\llcorner t \lrcorner_{\theta}}
								\|_H
								+k 
									\Big\|
										\int^s_{\llcorner t \lrcorner_{\theta}}
											e^{(s-r)A}
											F(\llcorner t \lrcorner_{\theta},Y_{\llcorner t \lrcorner_{\theta}})
										\ud r
									\Big \|_H
				\\& \quad
								+k \cdot (s-\llcorner t \lrcorner_{\theta})
							\Big)
						\Big) \,
					\E \bigg [
							\exp \Big(
									k
									\Big \|
											\int^s_{\llcorner t \lrcorner_{\theta}}
												e^{(s-r)A}
												B(\llcorner t \lrcorner_{\theta},Y_{\llcorner t \lrcorner_{\theta}})
											\ud W_r
									\Big \|_H
							\Big)
					~\bigg | ~ Y_{\llcorner t \lrcorner_{\theta}}
				 \bigg ] 
			\\ \leq{} &
							\exp \Big(
								\1_{D}(Y_{\llcorner t \lrcorner_{\theta}})
								\big(
										k \,
										\|Y_{\llcorner t \lrcorner_\theta}\|_{H_{1/2}}
											(t- \llcorner t \lrcorner_\theta)^{\nicefrac 12}
										+2k 
											(\tfrac{1}{2e})^{\nicefrac 12}
											(t-\llcorner t \lrcorner_\theta)^{\nicefrac 12}
											\|
												F(\llcorner t \lrcorner_{\theta},Y_{\llcorner t \lrcorner_{\theta}})
											\|_{H_{-1/2}}
				\\& \quad
										+k \cdot (t-\llcorner t \lrcorner_{\theta})
							\big)
						\Big) \,
					2\E \bigg [
							\exp \Big(
									2 e k^2
											\int^t_{\llcorner t \lrcorner_{\theta}}
												\|
													B(\llcorner t \lrcorner_{\theta},Y_{\llcorner t \lrcorner_{\theta}})
												\|^2_{HS(U,H)}
											\ud r
							\Big)
					~\bigg | ~ Y_{\llcorner t \lrcorner_{\theta}}
				 \bigg ]
			\\ \leq{} &
							\exp \Big(
								k \,
								c_2 |\theta|^{\gamma_1}
									|\theta|^{\nicefrac 12}
								+2 k (\tfrac{1}{2e})^{\nicefrac 12}
						|\theta|^{\nicefrac 12}
						c_2 |\theta|^{\gamma_1}
								+k \cdot |\theta|
						\Big)
					2
							\exp \Big(
									2 e k^2 \, \eta^2 |\theta|
							\Big)
			\\ \leq{} &
							2 \exp \Big(
								 (
									|\theta|^{\gamma_1+\nicefrac 12} k+
									|\theta|^{1+2\gamma_1} k^2
								)
								(
									c_2+ 2c_2(\tfrac{1}{2e})^{\nicefrac 12}+1
									+2e \eta^2
								)
						\Big)
			\\ \leq{} &
							2 \exp \Big(
								 (
									|\theta|^{\gamma_1+\nicefrac 12} k+
									|\theta|^{1+2\gamma_1} k^2
								)
								(
									2c_2+1
									+2e \eta^2
								)
						\Big).
		\end{split}
		\end{align}
		Therefore, \eqref{eq: def Y},
		Lemma \ref{l: basic estimate exp momente} 
		(applied with 
		$
			T \curvearrowleft t-\llcorner t \lrcorner_{\theta}
		$,
		$
			E \curvearrowleft D
		$,
		$
			V
				\curvearrowleft 
				(V(\llcorner t \lrcorner_{\theta} + s, x))_{(s,x) \in [0, t-\llcorner t \lrcorner_{\theta}] \times H}
		$,
		$
			\overline{V}
				\curvearrowleft 
				(\overline{V}(\llcorner t \lrcorner_{\theta} + s, x))_{(s,x) \in [0, t-\llcorner t \lrcorner_{\theta}] \times H}
		$,
		$
			X	\curvearrowleft 
				(Y_{\llcorner t \lrcorner_{\theta} + s})_{s \in [0, t-\llcorner t \lrcorner_{\theta}]}
		$,
		$
			Z \curvearrowleft 
				\1_D(Y_{\llcorner t \lrcorner_{\theta}})
				\big(
					(D \Pi)(X_{\llcorner t \lrcorner_{\theta},\llcorner t \lrcorner_{\theta}+s}) 
		$
		$
							(
								B(
									\llcorner t \lrcorner_\theta,
									Y_{\llcorner t \lrcorner_\theta }
								)
							)
		${}
		$
				\big)_{s \in [0, t-\llcorner t \lrcorner_{\theta}]}
		$,
		$
			F \curvearrowleft 
				(F(\llcorner t \lrcorner_{\theta} + s, x))_{
					(s,x) \in [0, t-\llcorner t \lrcorner_{\theta}] \times H
				}
		$,
		$
			B \curvearrowleft (B(\llcorner t \lrcorner_{\theta} + s, x))_{
					(s,x) \in [0, t-\llcorner t \lrcorner_{\theta}] \times H
				}
		$,
		$
			W	\curvearrowleft 
				(W_{\llcorner t \lrcorner_{\theta} + s}
				-W_{\llcorner t \lrcorner_{\theta}})_{s \in [0, t-\llcorner t \lrcorner_{\theta}]}
		$,
		$
			c_6 \curvearrowleft 
				2c_2+1+2e \eta^2
		$, 
		$\gamma_1 \curvearrowleft -2\gamma_2$, 
		$\gamma_4 \curvearrowleft 0$, 
		$\gamma_5 \curvearrowleft \gamma_1+\tfrac {1}{2}$, 
		$\gamma_6 \curvearrowleft \gamma_1$, 
		$\gamma_7 \curvearrowleft \gamma_4$,
		$h \curvearrowleft |\theta|$),
		the fact that
		$\gamma_1 \geq -\tfrac 14$,
		the fact that $\gamma_4 \geq \tfrac{-1-2\gamma_1}{2 c_7}$,
		and the fact that $|\theta| \leq 1$
		verify 
		for all $t \in [0,T]$ and all $s \in (\llcorner t \lrcorner_{\theta},t]$ that
		\begin{align}
			\begin{split}
			&\E \Big[ 
				\exp \big(
					\1_{D}(Y_{\llcorner t \lrcorner_{\theta}}) V(t,Y_t)
					+\1_{D}(Y_{\llcorner t \lrcorner_{\theta}}) 
						\int_{\llcorner t \lrcorner_{\theta}}^t \overline{V}(s,Y_s) \ud s 
				\big) 
				~\Big | (Y_r)_{r \in [0, {\llcorner t \lrcorner_\theta}]}
			\Big] 
			\\ \leq{}  & 
						\E \Big[ 
							e^{ \1_{D}(Y_{\llcorner t \lrcorner_{\theta}})
								V(\llcorner t \lrcorner_{\theta},Y_{\llcorner t \lrcorner_{\theta}})} 
							~\Big | (Y_r)_{r \in [0, {\llcorner t \lrcorner_\theta}]}
						\Big]
					\\ & \quad 
						+
						\int_{\llcorner t \lrcorner_{\theta}}^t
							\Big( \E \Big [
								e^{4 \cdot \1_{D}(Y_{\llcorner t \lrcorner_{\theta}})
									V(\llcorner t \lrcorner_{\theta},Y_{\llcorner t \lrcorner_{\theta}}) 
									+(
										2c_2+1+2e \eta^2
									)
										16 c^2_5 (1+c_1 |\theta|^{\gamma_1} +c_4^{c_7} |\theta|^{c_7 \cdot \gamma_4})^2 
										|\theta|^{1 +2\gamma_1}
								}
						\\ & \quad\quad 
								\cdot e^{
									(
										2c_2+1+2e \eta^2
									) 
										4 c_5 (1+c_1 |\theta|^{\gamma_1} +c_4^{c_7} |\theta|^{c_7 \cdot \gamma_4})
											|\theta|^{\gamma_1+ \nicefrac 12}
								}
								~\Big | (Y_r)_{r \in [0, {\llcorner t \lrcorner_\theta}]}
							\Big ] \Big)^{\nicefrac 14}
							e^{c_3 (s-\llcorner t \lrcorner_{\theta})^{\gamma_3}}
					\\ & \quad \quad
						\cdot
						 c_1 c_2 |\theta|^{\gamma_2-2\gamma_2} 
						\Big(
								1
								+\tfrac 12
								\big(
									c_1 |\theta|^{-2\gamma_2}
									+ 2c_4
								\big)
								\big(
									(\inf_{i\in I} |\lambda_i|)^{2\alpha} c_2 |\theta|^{\gamma_2} + 1
								\big)
						\Big) \ud s
			\\ \leq{}  & 
							e^{ \1_{D}(Y_{\llcorner t \lrcorner_{\theta}})
								V(\llcorner t \lrcorner_{\theta},Y_{\llcorner t \lrcorner_{\theta}})} 
				\\ & \quad
			\cdot
						\Big(
							1+
							(t-\llcorner t \lrcorner_{\theta})
						\cdot
						e^{
							(
								2c_2+1+2e \eta^2
							) 
							(
								4c^2_5 (1+c_1  +c_4^{c_7})^2
								+c_5 (1+c_1  +c_4^{c_7})
							) +c_3
						} 
					\\ & \qquad\qquad  
						\cdot c_1 c_2  
						\big(
								1
								+\tfrac 12
								(
									c_1 + 2c_4
								)
								(
									(\inf_{i\in I} |\lambda_i|)^{2\alpha} c_2 + 1
								)
						\big) \Big)
					\\ \leq{}  & 
							e^{ \1_{D}(Y_{\llcorner t \lrcorner_{\theta}})
								V(\llcorner t \lrcorner_{\theta},Y_{\llcorner t \lrcorner_{\theta}})} 
						\exp \Big(
							(t-\llcorner t \lrcorner_{\theta})
					\\ & \quad
						\cdot
						e^{
							(
								2c_2+1+2e \eta^2
							) 
								5c^2_5 (1+c_1  +c_4^{c_7})^2
							 +c_3
						} 
						 c_1 c_2  
						\big(
								1
								+\tfrac 12
								(
									c_1 + 2c_4
								)
								(
									(\inf_{i\in I} |\lambda_i|)^{2\alpha} c_2 + 1
								)
						\big)
					\Big).
			\end{split}
		\end{align}
		Hence, \eqref{eq: S contraction} and
		Lemma \ref{lem: Lemma 2.1} 
		(with 
		$
			\rho \curvearrowleft 0
		$,
		$
			c \curvearrowleft
				e^{
						(
							2c_2+1+2e \eta^2
						) 
						5c^2_5 (1+c_1  +c_4^{c_7})^2
						+c_3
					}
						 c_1 c_2 
						(
								1
								+\tfrac 12
								(
									c_1 + 2c_4
								)
								(
									(\inf_{i\in I} |\lambda_i|)^{2\alpha} 
		$						
		$
									\cdot c_2
									+ 1
								)
						)
		$,
		$
			E \curvearrowleft D
		$,
		$S \curvearrowleft e^{\cdot A}$)
		proves for all $t \in [0,T]$ that
		\begin{equation}
			\begin{split}
					&\E \Big[ 
						\exp \Big(
							V(t,Y_t)
							+ \int_{0}^t
								\1_{D}(Y_{\llcorner s \lrcorner_{\theta}})
								\overline{V}(s,Y_s)
							\ud s 
						\Big) 
					\Big] 
			\\ \leq{} & 
				\E \Big[ 
					\exp \Big(
						V(0, Y_{0})
						+ t \big( 
							e^{
								(
									2c_2+1
									+2e \eta^2
								) 
								5c^2_5 (1+c_1  +c_4^{c_7})^2+c_3
							}
					\\ & \qquad \cdot
						 c_1 c_2 
						(
								1
								+\tfrac 12
								(
									c_1 + 2c_4
								)
								(
									(\inf_{i\in I} |\lambda_i|)^{2\alpha}c_2+ 1
								)
						)
						\big)
					\Big)
				\Big].
			\end{split}
		\end{equation}
			This completes the proof of Proposition \ref{prop: basic exp bound}.
		\end{proof}
		In the next two lemma we derive moment estimates for 
		tamed exponential Euler approximations.
	\begin{lemma}
	\label{l: Lp estimate Y}
		Assume Setting \ref{sett 2},
		let $c, c_1, c_2, c_3 \in (0,\infty)$, $c_4 \in [1,\infty)$,
		$\eps \in (0,1]$,
		$\alpha \in [0,\nicefrac 12)$,
		$\gamma_1 \in (-\infty,0]$,
		$\gamma_2, \gamma_3 \in \R$, $p \in [2,\infty)$ satisfy that
		\begin{equation}
		\label{eq: def of c}
		\begin{split}
				c
			\geq{} 
				&2p c_3\big(
						c_2
				|\theta|^{\nicefrac 12 + \gamma_1-\alpha}
				+c_2
					(1-2\alpha)^{-1} (\tfrac{2\alpha}{e})^{2\alpha}
						|\theta|^{1-2\alpha+\gamma_1}
				+c_1 \eta \, p (\tfrac{\alpha}{e})^{\alpha}
					\tfrac{|\theta|^{1/2-\alpha}}{\sqrt{1-2\alpha}}
				\big)^2
			\\ & 
				+p c_2
				\big(
					c_2
				|\theta|^{\nicefrac 12 -\alpha+ 2\gamma_1}
				+c_2
					(1-2\alpha)^{-1} (\tfrac{2\alpha}{e})^{2\alpha}
						|\theta|^{1-2\alpha+2\gamma_1}
				+c_1 \eta \, p (\tfrac{\alpha}{e})^{\alpha}
					\tfrac{|\theta|^{1/2+\gamma_1-\alpha}}{\sqrt{1-2\alpha}}
				\big)
			\\ & 
				+c_1 \,p \sqrt{6p} \, \eta  
						(\sqrt{6p} \, \eta  +1)
						(1+3p \, \eta \, c_4 |\theta|^{\nicefrac 12})^{c_4} |\theta|^{\gamma_3}
				+c_1 \, \eta^3 \, p^2 \, |\theta|^{\nicefrac 12}
			\\ & 
				+p^2 \eta^2 ( c_1 \, \eta \, p |\theta|^{\nicefrac 12} +1)^2
				+2c_3(4\alpha c_3 \eps^{-1})^{2\alpha/(1-2\alpha)}  
						(1-2\alpha),
		\end{split}
		\end{equation}
		let 
		$
				\kappa 
			= ([0,T] \ni t \to \llcorner t \lrcorner_\theta \in [0,T])
		$,
		assume that $\|Y_0\|_{L^\infty(\P;H)} < \infty$,
		assume for all $t \in [0,T]$ and all $x \in D$ that
		\begin{align}
		\label{eq: x in D bound Lem1}
				\|x\|_{H_{1/2}}
			&\leq
				c_2 |\theta|^{\gamma_1}, 
				\\
			\label{eq: F in D bound Lem1}
				\|
					F(t, x)
				\|_{H_{-\alpha}}
			&\leq
				c_2 |\theta|^{\gamma_1}, 
		\end{align}
		assume that for all  $u \in \theta$ it holds a.s. that
		\begin{equation}
			\label{eq: Lyp kind assumption Lem1}
			\langle Y_u, F(u, Y_u) \rangle_H \leq c_3 \|Y_u\|^2_{H_{\alpha}},
		\end{equation}
		and assume for all $x \in H$  and all $z \in HS(U,H)$ that
		\begin{align}
			\label{eq: Pi H bound Lem1}
			&\| \Pi(x) \|_H 
				\leq c_1  (|\theta|^{\gamma_2} \wedge \|x\|_H),\\
			\label{eq: Pi H alpha bound Lem1}
			&\| \Pi(x) \|_{H_{\alpha}} 
				\leq c_1 \|x\|_{H_{\alpha}},\\
			\label{eq: D Pi bound Lem1}
			&\|(D \Pi)(x)- \id_H\|_{L(H,H)}
			\leq
				c_1 \|x\|_{H}, \\
			\label{eq: D Pi - Pi A H bound Lem1}
			&\|(D \Pi)(x) (A x)-A \Pi(x)\|_H
			\leq
				c_1 |\theta|^{\gamma_3} \|x\|_{H_{1/2}} (\|x\|_{H_{1/2}} +1) (\|x\|_H+1)^{c_4}, \\
			&\label{eq: D^2 HS bound Lem1}
			\Big\|
				\sum_{i \in \mathcal{J}}
					(D^2 \Pi)(x)\big(
						z \tilde{e}_i,
						z \tilde{e}_i \big)
				\Big\|_H
			\leq 
				c_1 \|x\|_H \cdot \|z \|^2_{HS(U,H)}.
		\end{align}
		Then it holds for all $t\in[0,T]$ that
		\begin{align}
			\begin{split}
				&\E\Big[
					1+\|Y_{t}\|_{H}^p
					+p \, (1-\eps)
					\int_0^{t}
						\|Y_u\|^{p-2}_H 
						\| 
							Y_u
						\|^2_{H_{1/2}}
					\ud u
				\Big]
			\leq{} 
				\E\Big[
					1+\|Y_{0}\|_{H}^p
				\Big]
				e^{c t}.
			\end{split}
		\end{align}
	\end{lemma}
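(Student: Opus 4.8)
The plan is to run a Lyapunov-type energy estimate for $\|Y\|_H^p$ combined with Gronwall's inequality. First I would record that, by \eqref{eq: def Y} with $\kappa=\llcorner\cdot\lrcorner_\theta$, the process $Y$ is a mild It\^o process with ``strong'' dynamics $\ud Y_u=(AY_u+G_u)\,\ud u+Z_u\,\ud W_u$, where the drift remainder is
\begin{equation*}
\begin{split}
	G_u={}&\1_{D}(Y_{\kappa(u)})\Big(F(\kappa(u),Y_{\kappa(u)})+(D\Pi)(X_{\kappa(u),u})(AX_{\kappa(u),u})-A\Pi(X_{\kappa(u),u})
	\\&\qquad
	+\tfrac12\sum_{i\in\mathcal{J}}(D^2\Pi)(X_{\kappa(u),u})\big(B(\kappa(u),Y_{\kappa(u)})\tilde e_i,B(\kappa(u),Y_{\kappa(u)})\tilde e_i\big)\Big)
\end{split}
\end{equation*}
and the diffusion coefficient is $Z_u=\1_{D}(Y_{\kappa(u)})(D\Pi)(X_{\kappa(u),u})B(\kappa(u),Y_{\kappa(u)})$. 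The crucial structural ingredient is the coercivity identity $\langle x,Ax\rangle_H=-\|x\|_{H_{1/2}}^2$. Applying It\^o's formula to $u\mapsto\|Y_u\|_H^p$ (which is in $\C^2(H,\R)$ since $p\geq 2$), introducing stopping times that turn the local martingale $\int_0^\cdot p\|Y_u\|_H^{p-2}\langle Y_u,Z_u\,\ud W_u\rangle_H$ into a true martingale, taking expectations and passing to the limit by Fatou's lemma (exactly as in the proof of Lemma \ref{l: basic estimate exp momente}, and using $\|Y_0\|_{L^\infty(\P;H)}<\infty$), one obtains for every $t\in[0,T]$
\[
	\E\big[1+\|Y_t\|_H^p\big]+p(1-\eps)\,\E\!\Big[\int_0^t\|Y_u\|_H^{p-2}\|Y_u\|_{H_{1/2}}^2\,\ud u\Big]\leq\E\big[1+\|Y_0\|_H^p\big]+\int_0^t R_u\,\ud u,
\]
where, after bounding the It\^o correction, $R_u\leq p\,\E[\|Y_u\|_H^{p-2}\langle Y_u,G_u\rangle_H]+\tfrac12 p(p-1)\,\E[\|Y_u\|_H^{p-2}\|Z_u\|_{HS(U,H)}^2]-p\eps\,\E[\|Y_u\|_H^{p-2}\|Y_u\|_{H_{1/2}}^2]$.

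The heart of the argument is to show $R_u\leq c\,\E[1+\|Y_u\|_H^p]$ with $c$ as in \eqref{eq: def of c}. I would split $\langle Y_u,\1_{D}(Y_{\kappa(u)})F(\kappa(u),Y_{\kappa(u)})\rangle_H=\1_{D}(Y_{\kappa(u)})\langle Y_{\kappa(u)},F(\kappa(u),Y_{\kappa(u)})\rangle_H+\1_{D}(Y_{\kappa(u)})\langle Y_u-Y_{\kappa(u)},F(\kappa(u),Y_{\kappa(u)})\rangle_H$; the first summand is bounded by the one-sided estimate \eqref{eq: Lyp kind assumption Lem1} at the grid point $\kappa(u)\in\theta$, then transferred from $Y_{\kappa(u)}$ to $Y_u$ via the triangle inequality, and the leading piece is treated with the interpolation inequality $\|x\|_{H_\alpha}^2\leq\|x\|_H^{2-4\alpha}\|x\|_{H_{1/2}}^{4\alpha}$ and a weighted Young inequality, which converts it into $\eps p\|Y_u\|_H^{p-2}\|Y_u\|_{H_{1/2}}^2$ (absorbed on the left) plus a multiple of $\|Y_u\|_H^p$ --- this yields the term involving $(4\alpha c_3\eps^{-1})^{2\alpha/(1-2\alpha)}$ and, through H\"older's inequality in $\Omega$ and Lemma \ref{l: Lp H1/2- estimate Y -approxY} (applied with $\beta\curvearrowleft\alpha$, whose hypotheses are furnished by \eqref{eq: x in D bound Lem1}, \eqref{eq: F in D bound Lem1}, \eqref{eq: Pi H bound Lem1}), the bracket on the first line of \eqref{eq: def of c}. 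The second summand is handled by Cauchy--Schwarz, the a priori bound $\|\1_D F(\kappa(u),\cdot)\|_{H_{-\alpha}}\leq c_2|\theta|^{\gamma_1}$ of \eqref{eq: F in D bound Lem1} and Lemma \ref{l: Lp H1/2- estimate Y -approxY} again, giving the second line of \eqref{eq: def of c}. The remaining parts of $G_u$ are estimated in $L^p(\P;H)$: the term $(D\Pi)(X_{\kappa(u),u})(AX_{\kappa(u),u})-A\Pi(X_{\kappa(u),u})$ by Lemma \ref{l: DPi A estimate} (with $c_1\curvearrowleft c_1|\theta|^{\gamma_3}$, $c_2\curvearrowleft c_4$, using \eqref{eq: D Pi - Pi A H bound Lem1}), the Hessian term by \eqref{eq: D^2 HS bound Lem1}, \eqref{eq: global bound B sett} and the bound $\|X_{\kappa(u),u}\|_{L^p(\P;H)}\leq\eta p|\theta|^{1/2}$ coming from Lemma \ref{l: X estimate} (with $\delta\curvearrowleft0$, $\gamma\curvearrowleft0$), and the It\^o correction by $\|(D\Pi)(x)\|_{L(H,H)}\leq 1+c_1\|x\|_H$ (from \eqref{eq: D Pi bound Lem1}), \eqref{eq: global bound B sett} and Lemma \ref{l: X estimate} once more, so that $\|Z_u\|_{L^p(\P;HS(U,H))}\leq\eta(1+c_1\eta p|\theta|^{1/2})$; these account for the remaining lines of \eqref{eq: def of c}, each time a pointwise Young inequality being used to move products with $\|Y_u\|_H^{p-2}$ onto $\|Y_u\|_H^p$.

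Combining the two steps gives $\E[1+\|Y_t\|_H^p]+p(1-\eps)\E[\int_0^t\|Y_u\|_H^{p-2}\|Y_u\|_{H_{1/2}}^2\,\ud u]\leq\E[1+\|Y_0\|_H^p]+c\int_0^t\E[1+\|Y_u\|_H^p]\,\ud u$. Discarding the nonnegative integral on the left and applying Gronwall's lemma to $u\mapsto\E[1+\|Y_u\|_H^p]$ yields $\E[1+\|Y_u\|_H^p]\leq\E[1+\|Y_0\|_H^p]e^{cu}$; reinserting this into $c\int_0^t\E[1+\|Y_u\|_H^p]\,\ud u=\E[1+\|Y_0\|_H^p](e^{ct}-1)$ then recovers the full claim, coercive term included. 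I expect two points to be genuinely delicate. First, justifying It\^o's formula for $\|\cdot\|_H^p$ applied to the mild It\^o process $Y$, whose values need not lie in $D(A)$: I would resolve this either via the mild It\^o formula of Da Prato, Jentzen and R\"ockner, or, more elementarily, by first applying It\^o's formula to $\|e^{\varepsilon A}Y_u\|_H^p$ and letting $\varepsilon\downarrow0$, noting that $Y_u\in H_{1/2}$ for a.e.\ $u$ by Lemma \ref{l: X 1/2 estimate} and the smoothing of $(e^{rA})_{r>0}$, so that $\langle Y_u,AY_u\rangle_H$ is legitimately read as $-\|Y_u\|_{H_{1/2}}^2$. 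Second, the careful book-keeping needed so that every summand of $R_u$ is dominated by the single coercive reservoir $\eps p\|Y_u\|_H^{p-2}\|Y_u\|_{H_{1/2}}^2$ together with a multiple of $1+\|Y_u\|_H^p$ whose total constant is the $c$ prescribed by \eqref{eq: def of c} --- which is exactly the purpose of the tailored list of hypotheses \eqref{eq: x in D bound Lem1}--\eqref{eq: D^2 HS bound Lem1}.
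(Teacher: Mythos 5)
Your proposal is correct and follows essentially the same path as the paper's proof: It\^o's formula for $\|\cdot\|_H^p$ with stopping-time localization and Fatou, the split of $\langle Y_u,\1_D F(\kappa(u),Y_{\kappa(u)})\rangle_H$ at the grid point using \eqref{eq: Lyp kind assumption Lem1}, interpolation/Young to absorb the $\|Y_u\|_{H_\alpha}^2$ piece into the coercive $\|Y_u\|_{H_{1/2}}^2$ reservoir, Lemma \ref{l: Lp H1/2- estimate Y -approxY} for the one-step increments, Lemma \ref{l: DPi A estimate} and Lemma \ref{l: X estimate} for the $\Pi$-related remainders, and Gronwall. The only cosmetic difference is that the paper phrases the interpolation step as an optimization of the scalar function $r\mapsto r\eps/(2c_3)-r^{2\alpha}$, and its Gronwall step is run on the full quantity including the integral, whereas you discard and reinsert, which is a slightly cleaner way to recover the coercive term in the final bound.
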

	\begin{proof}
  First note that \eqref{eq: Lyp kind assumption Lem1} enures that for all $u \in [0,T]$ 
	it holds a.s. that
	\begin{align}
	\label{eq: Y F estimate}
		\begin{split}
				&\langle 
					Y_u, F(\llcorner u \lrcorner_{\theta},Y_{\llcorner u \lrcorner_{\theta}})
				\rangle_H
			\\ ={} &
				\langle 
					Y_u-Y_{\llcorner u \lrcorner_{\theta}}, F(\llcorner u \lrcorner_{\theta},
					Y_{\llcorner u \lrcorner_{\theta}})
				\rangle_H
				+\langle 
					Y_{\llcorner u \lrcorner_{\theta}}, 
					F(\llcorner u \lrcorner_{\theta},Y_{\llcorner u \lrcorner_{\theta}})
				\rangle_H
			\\ \leq{} &
				\| Y_u-Y_{\llcorner u \lrcorner_{\theta}} \|_{H_{\alpha}}
				\| F(\llcorner u \lrcorner_{\theta},Y_{\llcorner u \lrcorner_{\theta}})\|_{H_{-\alpha}}
				+c_3 \|Y_{\llcorner u \lrcorner_{\theta}}\|_{H_\alpha}^2
			\\ \leq{} &
				\| Y_u-Y_{\llcorner u \lrcorner_{\theta}} \|_{H_{\alpha}}
				\| F(\llcorner u \lrcorner_{\theta},Y_{\llcorner u \lrcorner_{\theta}})\|_{H_{-\alpha}}
				+2c_3 
					\|Y_{\llcorner u \lrcorner_{\theta}}-Y_u\|_{H_\alpha}^2
				+2c_3 
					\|Y_{u}\|_{H_\alpha}^2.
		\end{split}
	\end{align}
		In addition, Lemma \ref{l: Lp H1/2- estimate Y -approxY} 
			(with $\alpha \defeq r$, $\beta \defeq \alpha$)
		verify for all $t \in [0,T]$
		and all $r \in \{0, \alpha\}$ that
		\begin{align}
		\label{eq: diff of Y}
			\begin{split}
				&\|
					\1_{D}(Y_{\llcorner t \lrcorner_{\theta}}) 
					(Y_t-Y_{\llcorner t \lrcorner_{\theta}})
				\|_{L^p(\P;H_r)}
			\\ \leq{} &
					c_2
				|\theta|^{\nicefrac 12 + \gamma_1-r}
				+c_2
					(1-r-\alpha)^{-1} (\tfrac{r+\alpha}{e})^{r+\alpha}
						|\theta|^{1-r-\alpha+\gamma_1}
				+c_1 \eta \, p (\tfrac{r}{e})^{r}
					\tfrac{|\theta|^{1/2-r}}{\sqrt{1-2r}}.
			\end{split}
		\end{align}
		Moreover, \eqref{eq: def Y} and Ito's formula imply that for all $t\in [0,T]$ it holds  a.s.\@ that
		\begin{align}
		\label{eq: ito formula with dWu}
			\begin{split}
				&\|Y_t\|_H^p- \|Y_0\|_H^p	
			\\ ={} &
				\int_0^t
					p \|Y_u\|_H^{p-2} \langle Y_u, A Y_u \rangle_H
					+p \, \1_{D}(Y_{\llcorner u \lrcorner_{\theta}}) \Big(
				\\ & \quad
					\Big \langle 
						Y_u,
						F(\llcorner u \lrcorner_{\theta}, Y_{\llcorner u \lrcorner_{\theta}})
						+(D \Pi)(X_{\llcorner u \lrcorner_{\theta},u}) (A X_{\llcorner u \lrcorner_{\theta},u})
								-A \Pi(X_{\llcorner u \lrcorner_{\theta},u})
					\Big \rangle_H
			\\ & \quad
				+
					\tfrac 12 \,
						\|Y_u\|^{p-2}_H 
						\sum_{i \in \mathcal{J}}
						\Big \langle
							Y_u,
							(D^2 \Pi)(X_{\llcorner u \lrcorner_{\theta},u}) \Big(
								B(
									\llcorner u \lrcorner_\theta,
									Y_{\llcorner u \lrcorner_\theta }
								) \tilde{e}_i,
								B(
									\llcorner u \lrcorner_\theta,
									Y_{\llcorner u \lrcorner_\theta }
								) \tilde{e}_i
							\Big)
						\Big \rangle_H
			\\ & \quad
				+\tfrac{(p-2)}{2}
						\|Y_u\|^{p-4}_H 
						\sum_{i \in \mathcal{J}}
						\big \langle
							Y_u,
							(D \Pi)(X_{\llcorner u \lrcorner_{\theta},u}) (
								B(
									\llcorner u \lrcorner_\theta,
									Y_{\llcorner u \lrcorner_\theta }
								)
							) \tilde{e}_i
						\big \rangle_H^2
				\\ & \quad
				+\tfrac{1}{2}
						\|Y_u\|^{p-2}_H 
						\big \|
							(D \Pi)(X_{\llcorner u \lrcorner_{\theta},u}) (
								B(
									\llcorner u \lrcorner_\theta,
									Y_{\llcorner u \lrcorner_\theta }
								)
							)
						\big \|^2_{HS(U,H)}
					\Big)
					\ud u
				\\ & 
				+
					p\int_{0}^t 
						\1_{D}(Y_{\llcorner u \lrcorner_{\theta}})
						\|Y_u\|^{p-2}_H 
						\big \langle
							Y_u,
							(D \Pi)(X_{\llcorner u \lrcorner_{\theta},u}) (
								B(
									\llcorner u \lrcorner_\theta,
									Y_{\llcorner u \lrcorner_\theta }
								)
							)
					\big \rangle_H
					\ud W_u.
			\end{split}
		\end{align}
		Denote by	$\tau_n \colon \Omega \to [0,T]$, $n \in \N$,
			the stopping times satisfying for all
			$n \in \N$ that
			\begin{equation}
				\tau_n
				= 
					\inf \Bigl(
						\Big\{
							t \in [0,T] \colon
									\|Y_t\|^{p-2}_H 
						\big | \big \langle
							Y_t,
							(D \Pi)(X_{\llcorner t \lrcorner_{\theta},t}) (
								B(
									\llcorner t \lrcorner_\theta,
									Y_{\llcorner t \lrcorner_\theta }
								)
							)
						\big \rangle_H \big |
							> n
						\Big \} 
						\cup \{T\}
					\Bigr).
			\end{equation}
		Then we get from 
		paths continuity of $Y$,
		Fatou’s lemma, \eqref{eq: ito formula with dWu},
		and from \eqref{eq: Y F estimate}
		that for all $t \in [0,T]$ it holds that
		\begin{align}
		\label{eq: ito formula withou dWu}
			\begin{split}
				&\E \Big[
					\|Y_{t}\|_{H}^p
					+p(1-\eps)\int_0^{t}
						\|Y_u\|^{p-2}_H 
						\| 
							Y_u
						\|_{H_{1/2}}
					\ud u
				\Big]
			\\ \leq{} &
				\liminf_{n \to \infty} \E \Big[
					\|Y_{t \wedge \tau_n}\|_{H}^p
					+p(1-\eps)\int_0^{t \wedge \tau_n}
						\|Y_u\|^{p-2}_H 
						\langle 
							Y_u, (-A) Y_u
						\rangle_H
					\ud u
				\Big]
			\\ ={} &
				\liminf_{n \to \infty}
				\E \Bigg [
				\| Y_{0}\|_H^p
				+p\int_0^{t \wedge \tau_n}
					\eps \|Y_u\|^{p-2}_H 
					\langle 
						Y_u, A Y_u
					\rangle_H
				+\1_{D}(Y_{\llcorner u \lrcorner_{\theta}}) \Big(
		\\ & \quad
				 \|Y_u\|^{p-2}_H 
					\Big \langle 
						Y_u,
						F(\llcorner u \lrcorner_{\theta}, Y_{\llcorner u \lrcorner_{\theta}})
						+(D \Pi)(X_{\llcorner u \lrcorner_{\theta},u}) (A X_{\llcorner u \lrcorner_{\theta},u})
								-A \Pi(X_{\llcorner u \lrcorner_{\theta},u})
					\Big \rangle_H 
			\\ &\quad
				+
					\tfrac 12 
						\|Y_u\|^{p-2}_H 
						\sum_{i \in \mathcal{J}}
						\Big \langle
							Y_u,
							(D^2 \Pi)(X_{\llcorner u \lrcorner_{\theta},u}) \Big(
								B(
									\llcorner u \lrcorner_\theta,
									Y_{\llcorner u \lrcorner_\theta }
								) \tilde{e}_i,
								B(
									\llcorner u \lrcorner_\theta,
									Y_{\llcorner u \lrcorner_\theta }
								) \tilde{e}_i
							\Big)
						\Big \rangle_H
			\\ &\quad
				+\tfrac{p-2}{2}
						\|Y_u\|^{p-4}_H 
						\sum_{i \in \mathcal{J}}
						\big \langle
							Y_u,
							(D \Pi)(X_{\llcorner u \lrcorner_{\theta},u}) (
								B(
									\llcorner u \lrcorner_\theta,
									Y_{\llcorner u \lrcorner_\theta }
								)
							) \tilde{e}_i
						\big \rangle_H^2
				\\ & \quad
				+\tfrac{1}{2}
						\|Y_u\|^{p-2}_H 
						\big \|
							(D \Pi)(X_{\llcorner u \lrcorner_{\theta},u}) (
								B(
									\llcorner u \lrcorner_\theta,
									Y_{\llcorner u \lrcorner_\theta }
								)
							)
						\big \|^2_{HS(U,H)}
					\Big)
					\ud u
				\Bigg]
			\\ \leq{} &
				\E \Bigg [
				\| Y_{0}\|_H^p
				+p \int_0^{t}
					2c_3  \1_{D}(Y_{\llcorner u \lrcorner_{\theta}})
					\|Y_{\llcorner u \lrcorner_{\theta}}-Y_u\|_{H_\alpha}^2
					\|Y_u\|^{p-2}_H
					+2c_3 
					\|Y_{u}\|_{H_\alpha}^2 \|Y_{u}\|_{H}^{p-2}
			\\ & \quad
					-\eps \|Y_u\|^{p-2}_H 
						\| 
							Y_u
						\|^2_{H_{1/2}}
					+ \1_{D}(Y_{\llcorner u \lrcorner_{\theta}})
						\|Y_u\|^{p-2}_H
						\|Y_u-Y_{\llcorner u \lrcorner_{\theta}}\|_{H_{\alpha}}
						\|F(\llcorner u \lrcorner_{\theta},Y_{\llcorner u \lrcorner_{\theta}})\|_{H_{-\alpha}}
				\\ &  \quad
						+\|Y_u\|^{p-1}_H
						\|
							(D \Pi)(X_{\llcorner u \lrcorner_{\theta},u}) 
								(A X_{\llcorner u \lrcorner_{\theta},u})
							-A \Pi(X_{\llcorner u \lrcorner_{\theta},u})
						\|_H
			\\ & \quad
				+
					\tfrac 12
						\|Y_u\|^{p-1}_H 
						\Big \|
							\sum_{i \in \mathcal{J}}
							(D^2 \Pi)(X_{\llcorner u \lrcorner_{\theta},u}) \Big(
								B(
									\llcorner u \lrcorner_\theta,
									Y_{\llcorner u \lrcorner_\theta }
								) \tilde{e}_i,
								B(
									\llcorner u \lrcorner_\theta,
									Y_{\llcorner u \lrcorner_\theta }
								) \tilde{e}_i
							\Big)
						\Big \|_H
			\\ & \quad
				+\tfrac{p-1}{2}
						\|Y_u\|^{p-2}_H 
						\big \|
							(D \Pi)(X_{\llcorner u \lrcorner_{\theta},u}) (
								B(
									\llcorner u \lrcorner_\theta,
									Y_{\llcorner u \lrcorner_\theta }
								)
							)
						\big \|^2_{HS(U,H)}
					\ud u
				\Bigg].
			\end{split}
		\end{align}
		Next note, that the 
		function $[0,\infty) \ni r \to r \tfrac{\eps}{2c_3} -r^{2\alpha} \in \R$
		has a global minimum at $(4 \alpha c_3 \eps^{-1})^{1/(1-2\alpha)}$
		and thus it holds for all $r \in [0,\infty)$ that
		\begin{align}
			r^{2\alpha}
			\leq 
				 \tfrac{\eps}{2c_3} r 
				+ (4\alpha c_3 \eps^{-1})^{2\alpha/(1-2\alpha)}  
				(1-2\alpha).
		\end{align}
		This 
		implies for all $t \in [0,T]$ that
		\begin{align}
		\label{eq: Y alpha norm}
			\begin{split}
				&\E\Big[\int_0^t
					2c_3\|Y_u\|^{p-2}_H  
							\|Y_u \|^2_{H_\alpha}
					-\eps \|Y_u\|^{p-2}_H 
						\| 
							Y_u
						\|^2_{H_{1/2}}
					\ud u\Big]
				\\ \leq{}& 
					2c_3(4\alpha c_3 \eps^{-1})^{2\alpha/(1-2\alpha)}  
						(1-2\alpha) \,
					\E\Big[\int_0^t
						\|Y_u\|^{p}_H 
					\ud u\Big].
			\end{split}
		\end{align}	
		Furthermore, 
		H\"older's inequality
		and \eqref{eq: diff of Y} (with $r \curvearrowleft \alpha$)
		prove for all $t \in [0,T]$ that
		\begin{align}
		\label{eq: Y Ydiff estimate}
			\begin{split}
				&\E\Big[\int_0^t
					\1_{D}(Y_{\llcorner u \lrcorner_{\theta}}) \|Y_u\|^{p-2}_H  
						\|Y_u-Y_{\llcorner u \lrcorner_{\theta}} \|^2_{H_\alpha}
				\ud u\Big]
			\\ \leq{}& 
				\int_0^t
					\|
						Y_u  
					\|^{p-2}_{L^p(\P;H)}
						\big \|
							\1_{D}(Y_{\llcorner u \lrcorner_{\theta}})
							(Y_u-Y_{\llcorner u \lrcorner_{\theta}})
						\big\|^2_{L^p(\P;H_\alpha)}
				\ud u
			\\ \leq{}&
				\Big(
				c_2
				|\theta|^{\nicefrac 12 + \gamma_1-\alpha}
				+c_2
					(1-2\alpha)^{-1} (\tfrac{2\alpha}{e})^{2\alpha}
						|\theta|^{1-2\alpha+\gamma_1}
				+c_1 \eta \, p (\tfrac{\alpha}{e})^{\alpha}
					\tfrac{|\theta|^{1/2-\alpha}}{\sqrt{1-2\alpha}}
				\Big)^2
				\int_0^t
					1+
					\|
						Y_u  
					\|^{p}_{L^p(\P;H)}
				\ud u.
			\end{split}
		\end{align}
		Moreover,
		H\"older's inequality, 
		\eqref{eq: diff of Y} (with $r \curvearrowleft \alpha$), and \eqref{eq: F in D bound Lem1}
		establish for all $t \in [0,T]$ that
		\begin{align}
		\label{eq: 1st estimate}
			\begin{split}
				&\E\Big[\int_0^t
					\1_{D}(Y_{\llcorner u \lrcorner_{\theta}}) \|Y_u\|^{p-2}_H  
						\|Y_u-Y_{\llcorner u \lrcorner_{\theta}} \|_{H_{\alpha}}
						\|F(\llcorner u \lrcorner_{\theta},Y_{\llcorner u \lrcorner_{\theta}}) \|_{H_{-\alpha}}
				\ud u\Big]
			\\ \leq{}& 
				\int_0^t
					\|
						Y_u  
					\|^{p-2}_{L^p(\P;H)}
						\big \|
							\1_{D}(Y_{\llcorner u \lrcorner_{\theta}})
							(Y_u-Y_{\llcorner u \lrcorner_{\theta}})
						\big\|_{L^p(\P;H_{\alpha})}
						\big \|
							\1_{D}(Y_{\llcorner u \lrcorner_{\theta}})
							F(\llcorner u \lrcorner_{\theta},Y_{\llcorner u \lrcorner_{\theta}}) 
						\big\|_{L^p(\P;H_{-\alpha})}
				\ud u
			\\ \leq{}& 
				c_2
				\big(
					c_2
				|\theta|^{\nicefrac 12 -\alpha+ 2\gamma_1}
				+c_2
					(1-2\alpha)^{-1} (\tfrac{2\alpha}{e})^{2\alpha}
						|\theta|^{1-2\alpha+2\gamma_1}
				+c_1 \eta \, p (\tfrac{\alpha}{e})^{\alpha}
					\tfrac{|\theta|^{1/2+\gamma_1-\alpha}}{\sqrt{1-2\alpha}}
				\big)
		\\ &
				\int_0^t
					1+
					\|
						Y_u  
					\|^{p}_{L^p(\P;H)}
				\ud u.
			\end{split}
		\end{align}
		In addition,  
		H\"older's inequality and 
		Lemma \ref{l: DPi A estimate} 
		(with $c_1 \curvearrowleft  c_1 |\theta|^{\gamma_3}$, $c_2 \curvearrowleft c_4$) 
		show for all $t \in [0,T]$ that
		\begin{align}
		\label{eq: 2nd estimate}
			\begin{split}
				&\E\Big[ \int_0^t
					\1_{D}(Y_{\llcorner u \lrcorner_{\theta}}) \|Y_u\|^{p-1}_H 
					\big \|
						(D \Pi)(X_{\llcorner u \lrcorner_{\theta},u}) (A X_{\llcorner u \lrcorner_{\theta},u})
								-A \Pi(X_{\llcorner u \lrcorner_{\theta},u})
					\big \|_H
				\ud u \Big]
			\\ \leq{} &
				\int_0^t
					\|Y_u\|^{p-1}_{L^p(\P;H)} 
					\big \|
						(D \Pi)(X_{\llcorner u \lrcorner_{\theta},u}) (A X_{\llcorner u \lrcorner_{\theta},u})
								-A \Pi(X_{\llcorner u \lrcorner_{\theta},u})
					\big \|_{L^p(\P;H)}
				\ud u 
			\\ \leq{} &
				c_1 \, \sqrt{6p} \, \eta  
						(\sqrt{6p} \, \eta  +1)
						(1+3p \, \eta \, c_4 |\theta|^{\nicefrac 12})^{c_4} |\theta|^{\gamma_3}
				\int_0^t
					1+\|Y_u\|^{p}_{L^p(\P; H)}
				\ud u.
			\end{split}
		\end{align}
		Furthermore, \eqref{eq: D^2 HS bound Lem1},
		H\"older's inequality,
		\eqref{eq: global bound B sett},
		and 
		Lemma \ref{l: X estimate} 
		(with $\gamma \curvearrowleft 0$
			and with $\delta \curvearrowleft 0$)
		imply 
		for all $t \in [0,T]$ that
		\begin{align}
		\label{eq: 3rd estimate}
			\begin{split}
				&\E \bigg[ \int_{0}^t 
						\|Y_u\|^{p-1}_H 
						\Big  \|
							\sum_{i \in \mathcal{J}}
							(D^2 \Pi)(X_{\llcorner u \lrcorner_{\theta},u}) \Big(
								B(
									\llcorner u \lrcorner_\theta,
									Y_{\llcorner u \lrcorner_\theta }
								) \tilde{e}_i,
								B(
									\llcorner u \lrcorner_\theta,
									Y_{\llcorner u \lrcorner_\theta }
								) \tilde{e}_i
							\Big)
						\Big \|_H
					\ud u
				\bigg]
			\\ \leq{} &
				c_1 \, \E \bigg[ \int_{0}^t 
						\|Y_u\|^{p-1}_H 
							\|X_{\llcorner u \lrcorner_{\theta},u}\|_H 
							\big\|
								B(
									\llcorner u \lrcorner_\theta,
									Y_{\llcorner u \lrcorner_\theta }
								)
						\big \|^2_{HS(U,H)}
					\ud u
				\bigg]
			\\ \leq{} &
				c_1 \int_{0}^t 
						\|Y_u\|^{p-1}_{L^p(\P;H)} 
							\|X_{\llcorner u \lrcorner_{\theta},u}\|_{L^{p}(\P;H)} 
							\eta^2
					\ud u
			\\ \leq{} &
				c_1 \, \eta^3 \, p \,
				\int_{0}^t 
						(1+\|Y_u\|^{p}_{L^p(\P;H)})
						(u- \llcorner u \lrcorner_{\theta})^{\nicefrac 12}
					\ud u
			\\ \leq{} &
				c_1 \, \eta^3 \, p \, |\theta|^{\nicefrac 12}
				\int_{0}^t 
						(1+\|Y_u\|^{p}_{L^p(\P;H)})
					\ud u.
			\end{split}
		\end{align}
		Finally \eqref{eq: D Pi bound Lem1},
		H\"older's inequality,
		\eqref{eq: global bound B sett},
		and Lemma \ref{l: X estimate}
		(with $\gamma \curvearrowleft 0$
			and with $\delta \curvearrowleft 0$)
		assure that
		\begin{align}
		\label{eq: 4th estimate}
			\begin{split}
				&\tfrac{p (p-1)}{2}
					\E \bigg[ \int_{0}^t 
						\|Y_u\|^{p-2}_H 
						\big \|
							(D \Pi)(X_{\llcorner u \lrcorner_{\theta},u}) (
								B(
									\llcorner u \lrcorner_\theta,
									Y_{\llcorner u \lrcorner_\theta }
								)
							)
						\big \|^2_{HS(U,H)}
					\ud u \bigg]
			\\ \leq{} &
				p^2 \,
					\E \bigg[ \int_{0}^t 
						\|Y_u\|^{p-2}_H 
						(c_1\|X_{\llcorner u \lrcorner_{\theta},u}\|_H +1)^2
						\big \|
								B(
									\llcorner u \lrcorner_\theta,
									Y_{\llcorner u \lrcorner_\theta }
								)
						\big \|^2_{HS(U,H)}
					\ud u  \bigg]
			\\ \leq{} &
				p^2 \eta^2 
					\int_{0}^t 
						\|Y_u\|^{p-2}_{L^p(\P; H)} 
						\| c_1 \|X_{\llcorner u \lrcorner_{\theta},u}\|_{H} +1\|^2_{L^p(\P; \R)} 
					\ud u 
			\\ \leq{} &
				p^2 \eta^2 
					\int_{0}^t 
						\|Y_u\|^{p-2}_{L^p(\P; H)} 
						(c_1 \|X_{\llcorner u \lrcorner_{\theta},u}\|_{L^p(\P; H)} +1)^2 
					\ud u 
			\\ \leq{} &
				p^2 \eta^2 
					\int_{0}^t 
						\|Y_u\|^{p-2}_{L^p(\P; H)} 
						( c_1\, \eta \, p (u- \llcorner u \lrcorner_{\theta})^{\nicefrac 12} +1)^2 
					\ud u 
			\\ \leq{} &
				p^2 \eta^2 (c_1 \, \eta \, p |\theta|^{\nicefrac 12} +1)^2
					\int_{0}^t 
						1+\|Y_u\|^{p}_{L^p(\P; H)}
					\ud u.
			\end{split}
		\end{align}
		Combining 
		\eqref{eq: ito formula withou dWu},
		\eqref{eq: Y alpha norm},
		\eqref{eq: Y Ydiff estimate},
		\eqref{eq: 1st estimate}, \eqref{eq: 2nd estimate}, \eqref{eq: 3rd estimate}
		\eqref{eq: 4th estimate} and \eqref{eq: def of c} proves for all
		$t \in [0,T]$ that
		\begin{align}
		\label{eq: gronwall inequality}
			\begin{split}
					&\E \Big[
						1+\|Y_{t}\|_{H}^p
						+\int_0^{t}
							\|Y_u\|^{p-2}_H 
							\| 
								Y_u
							\|^2_{H_{1/2}}
						\ud u
					\Big]
				\leq{} 
					\E \Big[
						1+\|Y_{0}\|_{H}^p
						+c
						\int_0^t
							1+
							\|
								Y_u  
							\|^{p}_{H}
						\ud u
					\Big]
				\\ \leq{} &
					\E \Big[
						1+\|Y_{0}\|_{H}^p
					\Big]
					+c
					\int_0^t
						\E \Big[
							1+
							\|
								Y_u  
							\|^{p}_{H}
							+\int_0^{u}
								\|Y_s\|^{p-2}_H 
								\| 
									Y_s
								\|^2_{H_{1/2}}
							\ud s
						\Big]
					\ud u.
			\end{split}
		\end{align}
		Moreover, Lemma \ref{l: Y representation},
		\eqref{eq: Pi H bound Lem1}, 
		and \eqref{eq: F in D bound Lem1}
		imply for all $t \in [0,T]$ that
		\begin{align}
			\begin{split}
					&\|Y_t\|_{L^\infty(\P;H)}
				\\ \leq{} &
						\|
							e^{(t-\llcorner t \lrcorner_{\theta}) A}
							Y_{\llcorner t \lrcorner_{\theta}}
						\|_{L^\infty(\P;H)}
						+
						\Big \|
							\1_{D}(Y_{\llcorner t \lrcorner_{\theta}})
							\int^t_{\llcorner t \lrcorner_{\theta}}
								e^{(t-s)A}
								F(\llcorner t \lrcorner_{\theta},Y_{\llcorner t \lrcorner_{\theta}})
							\ud s
						\Big \|_{L^\infty(\P;H)}
					\\&
						+\Big \|
							\1_{D}(Y_{\llcorner t \lrcorner_{\theta}})
							\Pi \Big(
									\int^t_{\llcorner t \lrcorner_{\theta}}
									e^{(t-s)A}
									B(\llcorner t \lrcorner_{\theta},Y_{\llcorner t \lrcorner_{\theta}})
									\dWs
							\Big)
						\Big \|_{L^\infty(\P;H)}
					\\ \leq{} &
						\|
							Y_{\llcorner t \lrcorner_{\theta}}
						\|_{L^\infty(\P;H)}
						+
						\int^t_{\llcorner t \lrcorner_{\theta}}
								\| 
									\1_{D}(Y_{\llcorner t \lrcorner_{\theta}})
									F(\llcorner t \lrcorner_{\theta},Y_\llcorner t \lrcorner_{\theta})
								\|_{L^\infty(\P;H_{-\alpha})}
								\|(-A)\|^\alpha_{L(H,H)}
						\ud s
						+c_1 |\theta|^{\gamma_2}
					\\ \leq{} &
						\|
							Y_{\llcorner t \lrcorner_{\theta}}
						\|_{L^\infty(\P;H)}
						+
						c_2 |\theta|^{1+\gamma_1}
							\|(-A)\|^\alpha_{L(H,H)}
						+c_1 |\theta|^{\gamma_2}.
			\end{split}
		\end{align}
		Hence,
		the fact that $\|Y_0\|_{L^\infty(\P,H)} < \infty$ and the fact that
		$A$ is continuous ensure that
		$
			\sup_{t\in [0,T]} (\|Y_t\|_{L^\infty(\P;H)}
		$
		$
			+\|Y_t\|_{L^\infty(\P;H_{1/2})}) < \infty.
		$
		Thus, it follows from \eqref{eq: gronwall inequality}
		and from the Gronwall inequality that for all
		$t \in [0,T]$ it holds that
		\begin{align}
			\begin{split}
				&\E\Big[
					1+\|Y_{t}\|_{H}^p
					+\int_0^{t}
							\|Y_u\|^{p-2}_H 
							\| 
								Y_u
							\|^2_{H_{1/2}}
						\ud u
				\Big]
			\leq{} 
				\E \big[1+\|Y_{0}\|_{H}^p \big] \, e^{c t}.
			\end{split}
		\end{align}
		This finishes the proof of Lemma \ref{l: Lp estimate Y}.
	\end{proof}
	\begin{lemma}
	\label{l: Lp H1/2 estimate Navier}
		Assume Setting \ref{sett 2}, 
		let $p, q_1, q_2 \in [2,\infty)$, satisfy that
		$\tfrac{1}{q_1}+\tfrac{1}{q_2} =\tfrac{2}{p}$, 
		let
		$c, \hat{c}, c_1, c_2, c_3, c_4, c_5 \in (0,\infty]$, 
		$a_1 \in[0,2)$,
		$a_2, a_3 \in [0,\infty)$,
		$\alpha \in [0, \nicefrac 12]$,
		$\beta \in (-\infty, \nicefrac 12]$,
		$\gamma \in (-\infty, \nicefrac 12]$,
		$\gamma_2, \gamma_3 \in \R$,
		let $L \colon H \to L(H,H)$ be measurable
		such that for all $x\in H$ and all $i \in \mathcal{I}$ it holds that
		\begin{equation}
		\label{eq: L global bound newLem2}
			\|L(x)\|_{L(H,H)} \leq c_3 
		\end{equation}
		and that
		\begin{equation}
		\label{eq: L is diagonal newLem2}
			L(x) e_i = \langle L(x) e_i, e_i\rangle_H e_i,
		\end{equation}
		let $l \colon [0,T]^{[0,T]} \to \R$
		satisfy for all $w \in [0,T]^{[0,T]}$ that
		\begin{align}
			l(w) =
			\begin{cases}
				c_3 \, \varsigma \, (c+1) 
					\tfrac{t^{1-\alpha}}{\sqrt{2-2\alpha}}
				& \textrm{if } w=\id , \textrm{or } w=([0,T] \ni t \to \llcorner t \lrcorner_{\theta} \in [0,T]),\\
				\infty & \textrm{else},
			\end{cases}
		\end{align}
		that
		\begin{align*}
		\label{eq: def of hat c}
			\hat{c}
			=
				\bigg(
					\min \Big\{
						&((2a_3q_2/(2-a_1)) \vee 2) \eta \tfrac{t^{\nicefrac 12-\alpha}}{\sqrt{1-2\alpha}}		
							(c_1 \eta \, ((2a_3q_2/(2-a_1)) \vee 2) \, |\theta|^{\nicefrac 12} +1),
				\\ & \numberthis
						2(\inf_{i \in \mathcal{I}} |\lambda_i|)^{\alpha-\nicefrac 12}  
						\sqrt{((2a_3q_2/(2-a_1)) \vee 2)} 
							\Big(
								c_3\eta+l(\kappa)
				\\ & \qquad
								+(\tfrac{1-2\gamma_2 }{2e})^{\nicefrac 12- \gamma_2} c
									\eta^2 \, ((2a_3q_2/(2-a_1)) \vee 2) (\tfrac{\gamma_2}{e})^{\gamma_2}
									t^{\nicefrac 12}
									\tfrac{1}{\sqrt{1-2\gamma_2}}
									\tfrac{1}{\sqrt{2\gamma_2}}
						\Big)
					\Big \}
				\bigg)^{2a_3q_2/(2-a_1)},
		\end{align*}
		and that
		\begin{align}
			\begin{split}
					c
				=
					&\tfrac {p^2 c^2_1}{4} \, \eta^6 (\inf_{i \in \N} \lambda_i)^{2\gamma- 1}
					|\theta|
					+c_1^2
					+8^{a_1 /(4-2a_1)}  4^{a_2/(2-a_1)} 9^{a_3/(2-a_1)} c_4^{2/(2-a_1)} (c_5+1) 
					(c_5 + \hat{c} + 1),
			\end{split}
		\end{align}
		for every $s\in [0,T]$
		let
		 $X_s \colon [s,T] \times \Omega \to H$, $s \in [0,T]$
		be an adapted stochastic process with continuous sample paths
		satisfying 
		for all $t \in [s,T]$ that a.s.\@ it holds that
		\begin{equation}
			\label{eq: def of X newLem2}
					X_{s,t}
				=
					\int_s^t e^{(t-u)A} 
						B(\kappa(u), Y_{\kappa(u) }) 
					\ud W_u,
			\end{equation}
			let 
			$Z \colon [0,T] \times \Omega \to H$
			be an adapted stochastic process with continuous sample paths
			satisfying 
			for all $t \in [0,T]$ that a.s.\@ it holds that
			\begin{equation}
			\label{eq: def of Z newLem2}
				\begin{split}
						Z_t
					={} &
						\int_{0}^t
							A Z_u
						\ud u
						+\int_{0}^t 
							\1_{D}(Y_{\kappa(u)})
								(D \Pi)(X_{\kappa(u),u}) (
									B(
										\kappa(u), Y_{\kappa(u) }
									)
								)
						\ud W_u,
				\end{split}
			\end{equation}
			assume that for all $u \in [0,T]$ it holds a.s. that
			\begin{align}
			\label{eq: F assumption newLem2}
				\begin{split}
						&\big \langle 
							(-A)^{2\gamma}
								(Y_u-Z_u),
								\1_{D}(Y_{\kappa(u)}) F(\kappa(u), Y_{\kappa(u)})
						\big \rangle_{H} 
					\leq{} 
						c_4
						(\|Y_u-Z_u\|_{H_{1/2 + \gamma}} +1)^{a_1}
					\\ & \qquad	\cdot (\|\1_{D}(Y_{\kappa(u)})(Y_u-Z_u)\|_{H_\beta} +1)^{a_2}
						( \|\1_{D}(Y_{\kappa(u)})(Y_u -Y_{\kappa(u)})\|_{H_{\alpha}} + \|Z_u\|_{H_{\alpha}} +1)^{a_3},
				\end{split}
			\end{align}
		assume that
			\begin{align}
			\label{eq: Y -approxY and Y - Z estimate}
					\E\Big[
						\|\1_{D}(Y_{\kappa(u)})(Y_u -Y_{\kappa(u)})\|^{2a_3q_2/(2-a_1)}_{H_{\alpha}} 
					\Big]^{1/q_2}
					+	\E\Big[
						\|\1_{D}(Y_{\kappa(u)})(Y_u-Z_u)\|^{2a_2q_1/(2-a_1)}_{H_\beta}
					\Big]^{\nicefrac{1}{q_1}}
				\leq
					c_5,
			\end{align}
		assume that 
		for all $s,t \in [0,T]$ it holds that
		\begin{equation}
				\|
					Y_{t}
					-Y_{ s}
				\|_{L^{p}(\P;H)}
			\leq
				c_2 \, |t-s|^{1-\alpha},
		\end{equation}
		assume that for all $t,s \in [0,T]$, and all $x,y \in H$ it holds that
		\begin{equation}
		\label{eq: Lipschitz cont B newLem2}
			\|B(t,x) -B(s,y)\|^2_{HS(U,H)} \leq \varsigma^2 (\|x-y\|^2_H +|t-s|^{2-2\alpha}),
		\end{equation}
		assume for all $x \in H$, and all $z \in HS(U,H)$ that
		\begin{align}
			\label{eq: Pi H bound newLem2}
			&\| \Pi(x) \|_H 
				\leq c_1  \|x\|_H,\\
			\label{eq: D Pi bound newLem2}
			&\|(D \Pi)(x)- \id_H\|_{L(H,H)}
			\leq
				c_1 \|x\|_{H}, \\
			\label{eq: D Pi - L bound newLem2}
			&\|(-A)^{\gamma_2} ((D \Pi)(x)- L(x))\|_{L(H,H)}
			\leq
				c_2 \|x\|_{H_{\gamma_2}}, \\
			&\label{eq: D^2 HS bound newLem2}
			\Big\|
				\sum_{i \in \N}
					(D^2 \Pi)(x)\big(
						z \tilde{e}_i,
						z \tilde{e}_i \big)
				\Big\|_H
			\leq 
				c_1 \|x\|_H \cdot \|z \|^2_{HS(U,H)},
		\end{align}
		and assume for all $t \in [0,T]$ that
		\begin{align}
		\label{eq: DPi - APi lp bound}
				\big \|
					(D \Pi)(X_{\kappa(t),t}) 
						(A X_{\kappa(t),t})
						-A \Pi(X_{\kappa(t),t})
				\big \|_{L^p(\P,H_{\gamma - \nicefrac 12})}
			\leq
				c_1.
		\end{align}
			Then it holds for all $t\in[0,T]$ that
			\begin{align}
				\begin{split}
					&\E\Big[
					1+2^{1-p}\|Y_t\|_{H_{\gamma}}^p
					+\tfrac 14
						\int_0^{t}
							\|Y_t-Z_t\|^{p-2}_{H_{\gamma}} 
							\| 
								Y_t-Z_t
							\|^2_{H_{\gamma+1/2}}
						\ud u
				\Big]
			\\ \leq{} & 
				\E \big[1+\|Y_0\|_{H_{\gamma}}^p \big] \, e^{c t}
				+\min\Big\{
					p^p \eta^p  (1-2\gamma)^{- \nicefrac p2} \,
						t^{p(1/2-\gamma)}
							(c_1 \eta \, p \, |\theta|^{\nicefrac 12} +1)^p,
				\\ & \qquad
					(2\sqrt{p})^p
						(\inf_{i \in \mathcal{I}} |\lambda_i|)^{p(\gamma -\nicefrac 12)}
							\Big(
								c_3\eta+c_3 \varsigma 
								(c_2+1)
								\tfrac{t^{1-\alpha}}{\sqrt{2-2\alpha}}
								+(\tfrac{1-2\gamma_2 }{2e})^{\nicefrac 12- \gamma_2} c_2
									\eta^2 \, p (\tfrac{\gamma_2}{e})^{\gamma_2}
									t^{\nicefrac 12}
									\tfrac{1}{\sqrt{1-2\gamma_2}}
									\tfrac{1}{\sqrt{2\gamma_2}}
						\Big)^p
				\Big\}.
				\end{split}
			\end{align}
		\end{lemma}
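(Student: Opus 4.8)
The plan is to exploit that the difference process $Y-Z$ has vanishing martingale part. Comparing the mild reformulation of \eqref{eq: def Y} (with the free time variable chosen equal to $0$) with the mild reformulation of \eqref{eq: def of Z newLem2}, one sees that the stochastic integrals coincide, so that for all $t\in[0,T]$ it holds a.s.\ that
\[
	Y_t-Z_t
	=
		e^{tA}Y_0
		+\int_0^t e^{(t-u)A}\,\1_{D}(Y_{\kappa(u)})\,G_u\,\ud u,
\]
where for $u\in[0,T]$ we set $G_u=F(\kappa(u),Y_{\kappa(u)})+(D\Pi)(X_{\kappa(u),u})(AX_{\kappa(u),u})-A\Pi(X_{\kappa(u),u})+\tfrac12\sum_{i\in\mathcal{J}}(D^2\Pi)(X_{\kappa(u),u})(B(\kappa(u),Y_{\kappa(u)})\tilde{e}_i,B(\kappa(u),Y_{\kappa(u)})\tilde{e}_i)$. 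Hence $Y-Z$ is a mild solution of the random linear evolution equation $\tfrac{\ud}{\ud t}(Y_t-Z_t)=A(Y_t-Z_t)+\1_{D}(Y_{\kappa(t)})G_t$ with initial value $Y_0$, and the assertion will follow from an energy estimate for $\|Y_t-Z_t\|_{H_\gamma}^p$ together with a separate bound for $\|Z_t\|_{H_\gamma}$.

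First I would apply It\^o's formula to $H\ni x\mapsto\|x\|_{H_\gamma}^p$ along $Y-Z$ (rigorous since $A$ is bounded in Setting \ref{set: 1}, after a stopping-time localisation to control the arising $L^p$-moments as in the proof of Lemma \ref{l: Lp estimate Y}). Using the identity $\langle(-A)^{2\gamma}v,Av\rangle_H=-\|v\|_{H_{\gamma+1/2}}^2$ and taking expectations yields, for all $t\in[0,T]$,
\begin{align*}
	&\E\big[\|Y_t-Z_t\|_{H_\gamma}^p\big]
	+p\,\E\Big[\int_0^t\|Y_u-Z_u\|_{H_\gamma}^{p-2}\|Y_u-Z_u\|_{H_{\gamma+1/2}}^2\,\ud u\Big]
	\\ \leq{} &
	\E\big[\|Y_0\|_{H_\gamma}^p\big]
	+p\,\E\Big[\int_0^t\|Y_u-Z_u\|_{H_\gamma}^{p-2}\big\langle(-A)^{2\gamma}(Y_u-Z_u),\,\1_{D}(Y_{\kappa(u)})G_u\big\rangle_H\,\ud u\Big].
\end{align*}
I would then split the inner product according to the three summands of $G_u$. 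For the $F$-contribution I would use the structural hypothesis \eqref{eq: F assumption newLem2}: since $a_1<2$, Young's inequality lets one absorb the factor $(\|Y_u-Z_u\|_{H_{1/2+\gamma}}+1)^{a_1}$ (times $\|Y_u-Z_u\|_{H_\gamma}^{p-2}$) into a small multiple of $\|Y_u-Z_u\|_{H_\gamma}^{p-2}\|Y_u-Z_u\|_{H_{\gamma+1/2}}^2$ plus a remainder free of spatial derivatives, and then H\"older's inequality with exponents $\tfrac{p}{p-2},q_1,q_2$ (note $\tfrac1{q_1}+\tfrac1{q_2}=\tfrac2p$) together with \eqref{eq: Y -approxY and Y - Z estimate} and the $L^r$-bound for $\|Z_u\|_{H_\alpha}$ underlying $\hat c$ controls the remaining $H_\beta$- and $H_\alpha$-powers. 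For the $(D\Pi)(AX)-A\Pi(X)$-contribution I would write $\langle(-A)^{2\gamma}v,w\rangle_H\leq\|v\|_{H_{\gamma+1/2}}\|w\|_{H_{\gamma-1/2}}$, invoke \eqref{eq: DPi - APi lp bound} and Young's inequality; for the Hessian-contribution I would use \eqref{eq: D^2 HS bound newLem2}, \eqref{eq: global bound B sett}, the fact that $\gamma\leq\tfrac12$ (whence $\|w\|_{H_{\gamma-1/2}}\leq(\inf_{i\in\mathcal{I}}|\lambda_i|)^{\gamma-1/2}\|w\|_H$), Lemma \ref{l: X estimate} and Young's inequality. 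Together these account for all summands of the constant $c$ in the statement.

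Collecting these bounds, adding $1$ on both sides to absorb the additive constants, and applying the Gronwall inequality (finiteness of the relevant moments being argued exactly as in Lemma \ref{l: Lp estimate Y}) gives
\[
	\E\Big[1+\|Y_t-Z_t\|_{H_\gamma}^p+\tfrac14\int_0^t\|Y_u-Z_u\|_{H_\gamma}^{p-2}\|Y_u-Z_u\|_{H_{\gamma+1/2}}^2\,\ud u\Big]
	\leq\E\big[1+\|Y_0\|_{H_\gamma}^p\big]\,e^{ct}.
\]
It then remains to pass from $Y-Z$ to $Y$ via $2^{1-p}\|Y_t\|_{H_\gamma}^p\leq\|Y_t-Z_t\|_{H_\gamma}^p+\|Z_t\|_{H_\gamma}^p$ and to bound $\E[\|Z_t\|_{H_\gamma}^p]$ in two ways. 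On the one hand, the Burkholder--Davis--Gundy inequality, the estimate $\|(-A)^\gamma e^{(t-u)A}\|_{L(H,H)}\leq(\gamma/e)^\gamma(t-u)^{-\gamma}$, \eqref{eq: D Pi bound newLem2}, \eqref{eq: global bound B sett} and Lemma \ref{l: X estimate} (with $\delta\curvearrowleft0$, $\gamma\curvearrowleft0$) give the first term of the minimum in the assertion. On the other hand, $\|Z_t\|_{H_\gamma}\leq(\inf_{i\in\mathcal{I}}|\lambda_i|)^{\gamma-1/2}\|Z_t\|_{H_{1/2}}$ together with Lemma \ref{l: e B noise H1/2 bound} applied to the stochastic integral defining $Z$ (with $c_1\curvearrowleft c_3$, $c\curvearrowleft c_2$, $\gamma_2\curvearrowleft\gamma_2$, and either $\kappa=\id$ or $\kappa=\llcorner\cdot\lrcorner_\theta$) gives the second term. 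Taking the minimum and combining with the energy estimate completes the proof.

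I expect the main obstacle to be the Young/H\"older bookkeeping for the $F$-contribution: one must split $(\|Y_u-Z_u\|_{H_{1/2+\gamma}}+1)^{a_1}(\|\1_{D}(Y_{\kappa(u)})(Y_u-Z_u)\|_{H_\beta}+1)^{a_2}(\|\1_{D}(Y_{\kappa(u)})(Y_u-Y_{\kappa(u)})\|_{H_\alpha}+\|Z_u\|_{H_\alpha}+1)^{a_3}$ so that exactly a $\tfrac14$-multiple of $\|Y_u-Z_u\|_{H_\gamma}^{p-2}\|Y_u-Z_u\|_{H_{\gamma+1/2}}^2$ is absorbed on the left while the conjugate exponents line up with $q_1,q_2$ and with the moment hypotheses; the constant $\hat c$ in the statement is precisely the quantity that emerges from bounding $\|Z_u\|_{H_\alpha}$ in the required $L^r$-norm, via the same two routes that produce the final $\min$-term. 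A minor additional point is to justify that $u\mapsto\|Y_u-Z_u\|_{H_{\gamma+1/2}}$ is square-integrable, which is part of the assertion and is obtained by first running the above argument with the left-hand integral dropped and then feeding the resulting a priori bound back in.
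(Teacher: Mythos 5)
Your proposal follows the paper's own proof step for step: Itô's formula applied to $\|\cdot\|_{H_\gamma}^p$ along $Y-Z$ (whose martingale part vanishes because the diffusion terms of $Y$ and $Z$ coincide), absorption of the dissipative term $-p\|Y_u-Z_u\|_{H_\gamma}^{p-2}\|Y_u-Z_u\|_{H_{\gamma+1/2}}^2$, the Young/H\"older decomposition of the $F$-contribution with the moment hypothesis and the $L^r$-bound on $\|Z_u\|_{H_\alpha}$ producing $\hat c$, the $H_{\gamma+1/2}$--$H_{\gamma-1/2}$ duality for the $(D\Pi)(AX)-A\Pi(X)$ and Hessian contributions, Gronwall, and finally $2^{1-p}\|Y_t\|_{H_\gamma}^p\leq\|Y_t-Z_t\|_{H_\gamma}^p+\|Z_t\|_{H_\gamma}^p$ with the two alternative bounds on $\E[\|Z_t\|_{H_\gamma}^p]$ giving the $\min$ in the conclusion. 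This is exactly the argument in the paper, so nothing further needs to be checked.
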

	\begin{proof}
	First note, that \eqref{eq: def Y} and Ito's formula verify that for all $t\in[0,T]$
	it holds a.s.\@ that
	\begin{align}
		\label{eq: itos formula}
			\begin{split}
				&\|Y_t-Z_t\|^p_{H_{\gamma}}-\| Y_{0}\|^p_{H_{\gamma}}
		\\ ={} &
				p\int^t_{0}
						\|Y_t-Z_t\|^{p-2}_{H_{\gamma}}
						\bigg \langle 
							(-A)^{2\gamma}(Y_u-Z_u),
							A(Y_u-Z_u)
			\\ &
					+\1_{D} (Y_{\kappa(u)}) 
					\Big(
						F(\kappa(u), Y_{\kappa(u)})
				+(D \Pi)(X_{\kappa(u),u}) 
					(A X_{\kappa(u),u})
					-A \Pi(X_{\kappa(u),u})
			\\ &
					+\sum_{i \in \N}
						(D^2 \Pi)(X_{\kappa(u),u}) \Big(
							B(
								\kappa(u),
								Y_{\kappa(u)}
							) \tilde{e}_i,
							B(
								\kappa(u),
								Y_{\kappa(u)}
							) \tilde{e}_i
						\Big)
				\Big)
				\bigg \rangle_H 
				\ud u.
			\end{split}
		\end{align}
	Moreover, \eqref{eq: F assumption newLem2} and Young's inequality imply for 
	all $u \in [0,T]$ that 
	\begin{align}
		\begin{split}
				&\big \langle 
					(-A)^{2\gamma} (Y_u-Z_u),
						\1_{D}(Y_{\kappa(u)}) F(\kappa(u), Y_{\kappa(u)})
				\big \rangle_H
			\\ \leq{} &
				c_4
						(\|Y_u-Z_u\|_{H_{1/2 + \gamma}} +1)^{a_1}
						(\|\1_{D}(Y_{\kappa(u)})(Y_u-Z_u)\|_{H_\beta} +1)^{a_2}
						( \|\1_{D}(Y_{\kappa(u)})(Y_u -Y_{\kappa(u)})\|_{H_{\alpha}} + \|Z_u\|_{H_{\alpha}} +1)^{a_3}
			\\ \leq{} &
						\tfrac 18 (\|Y_u-Z_u\|_{H_{1/2 + \gamma}} +1)^2 
						+8^{a_1 /(2-a_1)} c_4^{2/(2-a_1)}
						(\|\1_{D}(Y_{\kappa(u)})(Y_u-Z_u)\|_{H_\beta}+1)^{2a_2/(2-a_1)}
				\\ &
						( 
							\|\1_{D}(Y_{\kappa(u)})(Y_u -Y_{\kappa(u)})\|_{H_{\alpha}} + \|Z_u\|_{H_{\alpha}} +1
						)^{2a_3/(2-a_1)}
				\\ \leq{} &
						\tfrac 14 \|Y_u-Z_u\|^2_{H_{1/2 + \gamma}} + \tfrac 14
						+8^{a_1 /(2-a_1)} c_4^{2/(2-a_1)}
						(\|\1_{D}(Y_{\kappa(u)})(Y_u-Z_u)\|_{H_\beta}+1)^{2a_2/(2-a_1)}
				\\ &
						( 
							\|\1_{D}(Y_{\kappa(u)})(Y_u -Y_{\kappa(u)})\|_{H_{\alpha}} + \|Z_u\|_{H_{\alpha}} +1
						)^{2a_3/(2-a_1)}
			\end{split}
		\end{align}
		Therefore, H\"older's inequality shows for
	all $u \in [0,T]$ that 
		\begin{align*}
		\label{eq: A F term}
				&\bigg(\E\Big[ \Big( \Big(
					\big \langle 
						(-A)^{2\gamma} (Y_u-Z_u),
							\1_{D}(Y_{\kappa(u)}) F(\kappa(u), Y_{\kappa(u)})
					\big \rangle_H
					-\tfrac 14 \|Y_u-Z_u\|^{2}_{H_{1/2 + \gamma}}
				\Big)^{+} \Big)^{\nicefrac p2} \Big] \bigg)^{\nicefrac 2p}
			\\ \leq{} &
					\bigg( \E\Big[ \Big(
						8^{a_1 /(2-a_1)} c_4^{2/(2-a_1)}
						(\|\1_{D}(Y_{\kappa(u)})(Y_u-Z_u)\|_{H_\beta} +1)^{2a_2/(2-a_1)}
				\\ & \qquad
						( 
							\|\1_{D}(Y_{\kappa(u)})(Y_u -Y_{\kappa(u)})\|_{H_{\alpha}} + \|Z_u\|_{H_{\alpha}} +1
						)^{2 a_3/(2-a_1)}
						+\tfrac 14
					\Big)^{\nicefrac p2}
					\Big] \bigg)^{\nicefrac 2p}
				\\ \leq{} &
					\bigg(\E\Big[
						8^{p a_1 /(4-2a_1)} c_4^{p/(2-a_1)}
						(\|\1_{D}(Y_{\kappa(u)})(Y_u-Z_u)\|_{H_\beta} +1)^{p a_2/(2-a_1)}
				\\ &  \qquad
						( 
							\|\1_{D}(Y_{\kappa(u)})(Y_u -Y_{\kappa(u)})\|_{H_{\alpha}} + \|Z_u\|_{H_{\alpha}} +1
						)^{p a_3/(2-a_1)}
					\Big]\bigg)^{\nicefrac 2p}
					+\tfrac 14
				\\ \leq{} & \numberthis
					8^{a_1 /(2-a_1)} c_4^{2/(2-a_1)}
					\bigg( \E\Big[
						(\|\1_{D}(Y_{\kappa(u)})(Y_u-Z_u)\|_{H_\beta} +1)^{2a_2q_1/(2-a_1)}
					\Big] \bigg)^{\nicefrac {1}{q_1}}
				\\ & \cdot
					\bigg(\E\Big[
						( 
							\|\1_{D}(Y_{\kappa(u)})(Y_u -Y_{\kappa(u)})\|_{H_{\alpha}} + \|Z_u\|_{H_{\alpha}} +1
						)^{2a_3q_2/(2-a_1)}
					\Big] \bigg)^{\nicefrac{1}{q_2}}
					+\tfrac 14
				\\ \leq{} &
					8^{a_1 /(2-a_1)}  4^{a_2/(2-a_1)} 9^{a_3/(2-a_1)} c_4^{2/(2-a_1)} 
					\E\Big[
						(\|\1_{D}(Y_{\kappa(u)})(Y_u-Z_u)\|^{2a_2/(2-a_1)}_{H_\beta} +1)^{q_1}
					\Big]^{\nicefrac{1}{q_1}}
				\\ & \cdot
					\E\Big[
						\Big(
							\|\1_{D}(Y_{\kappa(u)})(Y_u -Y_{\kappa(u)})\|^{2a_3/(2-a_1)}_{H_{\alpha}} 
							+\|Z_u\|^{2a_3/(2-a_1)}_{H_{\alpha}} +1
						\Big)^{q_2}
					\Big]^{\nicefrac{1}{q_2}}
					+\tfrac 14
				\\ \leq{} &
					8^{a_1 /(2-a_1)}  4^{a_2/(2-a_1)} 9^{a_3/(2-a_1)} c_4^{2/(2-a_1)}
					\Big( \Big(
						\E\Big[
							\|\1_{D}(Y_{\kappa(u)})(Y_u-Z_u)\|^{2a_2q_1/(2-a_1)}_{H_\beta} 
						\Big] \Big)^{\nicefrac{1}{q_1}}
						+1
					\Big)
				\\ &  \cdot
					\Big( \Big(
						\E\Big[
							\|\1_{D}(Y_{\kappa(u)})(Y_u -Y_{\kappa(u)})\|^{2a_3q_2/(2-a_1)}_{H_{\alpha}} 
						\Big] \Big)^{\nicefrac{1}{q_2}}
						+\Big( \E\Big[
							\|Z_u\|^{2a_3q_2/(2-a_1)}_{H_{\alpha}} 
						\Big] \Big)^{\nicefrac{1}{q_2}}
					+1
				\Big)
				+\tfrac 14.
		\end{align*}
		Hence, the Burkholder-Davis-Gundy inequality 
		(see, e.g., Lemma 7.7 in \cite{DaPratoZabczyk1992}),
		the fact that 
		$
			\forall \lambda \in (0,\infty), \gamma \in (0,1) \colon
				\lambda^{\gamma} e^{-\lambda} \leq (\tfrac{\gamma}{e})^\gamma \leq 1
		$,
		\eqref{eq: D Pi bound newLem2},
		\eqref{eq: global bound B sett},
		and  Lemma \ref{l: X estimate}
		(with $p \defeq q$, $\delta \defeq 0$,
		$\gamma \defeq 0$, $u \defeq \kappa(u)$, $t \defeq u$)
		prove for all $t \in [0,T]$
		and for all $q \in [2,\infty)$ that
		\begin{align*}
			\label{eq: Z 1/2- Lq norm}
				&\|Z_t\|^2_{L^q(\P,H_{\alpha})} 
			\\ ={} & 
				\Big \|
					\int_{0}^t 
						\1_{D}(Y_{\kappa(u)})
						e^{(t-u)A} 
							(D \Pi)(X_{\kappa(u),u}) (
								B(
									\kappa(u),
									Y_{\kappa(u) }
								)
							)
					\ud W_u
				\Big\|^2_{L^q(\P,H_{\alpha})} 
			\\ \leq{} &
				\tfrac{q(q-1)}{2}
					\int_{0}^t 
						\big \|
							e^{(t-u)A} 
								(D \Pi)(X_{\kappa(u),u}) (
									B(
										\kappa(u),
										Y_{\kappa(u)}
									)
								)
						\big\|^2_{L^q(\P,HS(U,H_{\alpha}))} 
					\ud u
			\\ \numberthis
			\begin{split} \leq{} &
				q^2
				\int_{0}^t 
					\big(
						\sup_{\lambda \in (0,\infty)}
							e^{2(t-u)\lambda} \lambda^{2\alpha}
					\big)
						\big \|
								(D \Pi)(X_{\kappa(u),u}) (
									B(
										\kappa(u),
										Y_{\kappa(u)}
									)
								)
						\big\|^2_{L^q(\P,HS(U,H))} 
					\ud u
			\\ \leq{} &
				q^2
				\int_{0}^t
					(t-u)^{-2\alpha}
						\big\|
							(c_1 \|X^{N,n}_{\kappa(u),u}\|_{H} +1)
							\|	
								B_N(
									Y^{N,n}_{\kappa(u)}
								)
							\|_{HS(U,H)}		
						\big\|^2_{L^q(\P,\R)} 
					\ud u
			\end{split}
			\\ \leq{} &
				q^2 \eta^2
				\int_{0}^t
					(t-u)^{-2\alpha}
						(c_1\|X_{\kappa(u),u}\|_{L^q(\P,H)} +1)^2
					\ud u
			\\ \leq{} &
				q^2 \eta^2
				\int_{0}^t
					(t-u)^{-2\alpha}
						(c_1 \eta \, q (u- \kappa(u))^{\nicefrac 12} +1)^2
					\ud u
			\\ \leq{} &
				q^2 \eta^2 \tfrac{1}{1-2\alpha}
					t^{1-2\alpha}
						(c_1 \eta \, q \, |\theta|^{\nicefrac 12} +1)^2.
		\end{align*}
		Combining \eqref{eq: Z 1/2- Lq norm} (with $q \defeq (2a_3q_2/(2-a_1)) \vee 2$),
		Lemma \ref{l: e B noise H1/2 bound} (with $p \defeq (2a_3q_2/(2-a_1)) \vee 2$, $c_1 \defeq c_3$),
		the fact that 
		$
			\forall x\in H \colon 
					\|x\|_{H_{\alpha}} 
				\leq 
					(\inf_{i \in \mathcal{I}} |\lambda_i|)^{\alpha- \nicefrac 12} \|x\|_{H_{1/2}}
		$,
		and \eqref{eq: def of hat c} proves for all $u\in [0,T]$ that
		\begin{align}
		\label{eq: Z estimate}
				&\E\big[\|Z_u\|^{2a_3q_2/(2-a_1)}_{H_{\alpha}} \big]
			\leq
				\big \|Z_u \big\|^{2a_3q_2/(2-a_1)}_{L^{(2a_3q_2/(2-a_1)) \vee 2}(\P,H_{\alpha})}
			\leq{} 
				\hat{c}.
		\end{align}
		Thus, H\"older's inequality,
		\eqref{eq: A F term},
		\eqref{eq: Y -approxY and Y - Z estimate},
		and \eqref{eq: Z estimate}
		imply
		for all $t \in [0,T]$
		that
		\begin{align*}
		\label{eq: F term}
				&\E\Big [
					\|Y_t-Z_t\|^{p-2}_{H_{\gamma}} \big(
						\big \langle 
							(-A)^{-2\gamma}(Y_t-Z_t),
								F(\kappa(t), Y_{\kappa(t)})
						\big \rangle_H
						- \tfrac 14 \|Y_t-Z_t\|^2_{H_{1/2+\gamma}}
					\big)^+
				\Big ] 
			\\ \leq{} & \numberthis
					\|Y_t-Z_t\|_{L^p(\P,H_{\gamma})}^{p-2}
					\Big(\E\Big [
						\big( \big(
						\big \langle 
							(-A)(Y_t-Z_t),
								F(Y_{\kappa(t)})
						\big \rangle_H
						- \tfrac 14 \|Y_t-Z_t\|^2_{H_{1/2+\gamma}}
					\big)^+ \big)^{p/2}
				\Big ] \Big)^{2/p}
			\\ \leq{} &
				8^{a_1 /(4-2a_1)}  4^{a_2/(2-a_1)} 9^{a_3/(2-a_1)} c_4^{2/(2-a_1)} (c_5+1) 
					(c_5 + \hat{c} + 1)
					\cdot (\|Y_t-Z_t\|_{L^p(\P,H_{\gamma})}^{p}+1).
		\end{align*}
		In addition, H\"older's inequality and Young's inequality proves for all 
		$V \in L^p(\P,H)$ and all $t \in [0,T]$ that
		\begin{align}
		\label{eq: Youngs inequality}
		\begin{split}
				&\E\Big[
					\|Y_t-Z_t\|^{p-2}_{H_{\gamma}}  
					\big(
						\big \langle 
							(-A)^{2\gamma}(Y_u-Z_u),
							V
						\big \rangle_H
						- \tfrac 14 \|Y_t-Z_t\|^2_{H_{1/2+\gamma}}
					\big)^+
				\Big]
			\\ \leq{}&
					\|Y_t-Z_t\|_{L^p(\P,H_{\gamma})}^{p-2}
					\Big(\E\Big [
						\big( \big(
						\big \langle 
							(-A)^{2\gamma} (Y_t-Z_t),
								V
						\big \rangle_H
						- \tfrac 14 \|Y_t-Z_t\|^2_{H_{1/2+\gamma}}
					\big)^+ \big)^{p/2}
				\Big ] \Big)^{2/p}
			\\ \leq{}&
					\|Y_t-Z_t\|_{L^p(\P,H_{\gamma})}^{p-2}
					\Big(\E\Big [
						\big( \big(
						\|
							V
						\|_{H_{\gamma -1/2}}^2
					\big)^+ \big)^{p/2}
				\Big ] \Big)^{2/p}
			\\ \leq{}&
					(\|Y_t-Z_t\|_{L^p(\P,H_{\gamma})}^{p}+1)
						\|
							V
						\|^2_{L^p(\P,H_{\gamma - 1/2})}.
			\end{split}
		\end{align}
		Furthermore, \eqref{eq: Youngs inequality}
		and
		\eqref{eq: DPi - APi lp bound} ensure 
		for all $t \in [0,T]$
		that
		\begin{align*}
		\label{eq: D Pi term}
				&\E\Big [
					\|Y_t-Z_t\|^{p-2}_{H_{\gamma}} \Big(
						\big \langle 
							(-A)^{2\gamma} (Y_t-Z_t),
								(D \Pi)(X_{\kappa(t),t}) 
									(A X_{\kappa(t),t})
								-A \Pi(X_{\kappa(t),t})
						\big \rangle_H
				\\ & \qquad \numberthis
						- \tfrac 14 \|Y_t-Z_t\|^2_{H_{1/2 +\gamma}}
					\Big)^+
				\Big ] 
			\\ \leq{} &
					c^2_1 (\|Y_t-Z_t\|_{L^p(\P,H_{\gamma})}^{p}+1).
		\end{align*}
		Moreover, \eqref{eq: D^2 HS bound newLem2}, \eqref{eq: global bound B sett}, 
		and
		Lemma \ref{l: X estimate}
			(with $\delta \curvearrowleft 0$, 
		$\gamma \curvearrowleft 0$) verify
		for all $t \in [0,T]$
		that
		\begin{align} \begin{split}
					&\tfrac 12 \Big \|
					\1_{D}(Y_{\kappa(t)}) \Big(
								\sum_{i \in \N}
							(D^2 \Pi)(X_{\kappa(t),t}) \Big(
								B(
									\kappa(t), Y_{\kappa(t) }
								) \tilde{e}_i,
								B(
									\kappa(t), Y_{\kappa(t) }
								) \tilde{e}_i
							\Big)
				\Big\|_{L^p(\P,H_{\gamma - 1/2})} 
			\\ \leq{} &
				\tfrac 12 (\inf_{i \in \N} \lambda_i)^{\gamma- 1/2} 
				\Big \|
					\1_{D}(Y_{\kappa(t)}) \Big(
								\sum_{i \in \N}
							(D^2 \Pi)(X_{\kappa(t),t}) \Big(
								B(
									\kappa(t), Y_{\kappa(t) }
								) \tilde{e}_i,
								B(
									\kappa(t), Y_{\kappa(t) }
								) \tilde{e}_i
							\Big)
				\Big\|_{L^p(\P,H)} 
			\\ \leq{} &
				\tfrac {c_1}{2}
				(\inf_{i \in \N} \lambda_i)^{\gamma- 1/2}
				\Big \|
					\|X_{\kappa(t),t}\|_H \cdot 
					\|
						B(
							\kappa(t), Y_{\kappa(t) }
						)
					\|^2_{HS(U,H)}
				\Big\|_{L^p(\P,\R)}
			\\ \leq{} & 
				\tfrac {c_1}{2} \eta^2 (\inf_{i \in \N} \lambda_i)^{\gamma- 1/2}
				\|
					X_{\kappa(t),t}
				\|_{L^p(\P,H)}
		 \leq{} 
				\tfrac {p c_1}{2} \, \eta^3 (\inf_{i \in \N} \lambda_i)^{\gamma- 1/2}
				(t-\kappa(t))^{\nicefrac 12}.
		\end{split} \end{align}
		Hence, \eqref{eq: Youngs inequality} establishes for all $t \in [0,T]$
		that
		\begin{align*}
		\label{eq: D^2 Pi term}
				&\E\Big [
					\|Y_t-Z_t\|^{p-2}_{H_{1/2}} \Big(
						\big \langle 
							(-A)(Y_t-Z_t),
								\1_{D}(Y_{\kappa(t)}) \Big(
				\\& \numberthis
								\sum_{i \in \N}
									(D^2 \Pi)(X_{\kappa(t),t}) \Big(
										B(
											\kappa(t),
											Y_{\kappa(t) }
										) \tilde{e}_i,
										B(
											\kappa(t),
											Y_{\kappa(t) }
										) \tilde{e}_i
									\Big)
						\big \rangle_H
						+ \tfrac 14 \|Y_t-Z_t\|^2_{H_{1/2+\gamma}}
					\Big)^+
				\Big ] 
			\\ \leq{} &
					\tfrac {p^2 c^2_1}{4} \, \eta^6 (\inf_{i \in \N} \lambda_i)^{2\gamma- 1}
					|\theta| \, (\|Y_t-Z_t\|_{L^p(\P,H_{\gamma})}^{p}+1).
		\end{align*}
		Combining \eqref{eq: itos formula}, \eqref{eq: F term}, 
		\eqref{eq: D Pi term}, and \eqref{eq: D^2 Pi term},
		demonstrates for all $t \in [0,T]$
		that
		\begin{align}
		\label{eq: gronwall inequality alt}
		\begin{split}
				&\E\Big[
					\|Y_t-Z_t\|^p_{H_{\gamma}}+1 +\tfrac 14
						\int_0^{t}
							\|Y_t-Z_t\|^{p-2}_{H_{\gamma}} 
							\| 
								Y_t-Z_t
							\|^2_{H_{1/2 +\gamma}}
						\ud u
					\Big]
			\\ \leq{} & 
				\E\big[ \| Y_{0}\|^p_{H_{\gamma}} \big]+1 
				+c \, \E\Big[ \int_0^t \|Y_u-Z_u\|_{L^p(\P,H_{\gamma})}^{p}+1 \ud u \Big].
		\end{split}
		\end{align}
		Therefore, \eqref{eq: gronwall inequality alt}
		and Gronwall's inequality ensure for all
		$t \in [0,T]$ that
		\begin{align}
		\label{eq: res for Y-Z}
			\begin{split}
				&\E\Big[
					1+\|Y_t-Z_t\|_{H_{\gamma}}^p
					+\tfrac 14
						\int_0^{t}
							\|Y_t-Z_t\|^{p-2}_{H_{\gamma}} 
							\| 
								Y_t-Z_t
							\|^2_{H_{\gamma + 1/2}}
						\ud u
				\Big]
			\\ \leq{} & 
				\E \big[1+\|Y_0\|_{H_{\gamma}}^p \big] \, e^{c t}.
			\end{split}
		\end{align}
		Combining \eqref{eq: res for Y-Z}, the fact that 
		$ \forall a,b\in [0,\infty) \colon (a+b)^p \leq 2^{p-1}(a^p+b^p)$,
		\eqref{eq: Z 1/2- Lq norm}, Lemma \ref{l: e B noise H1/2 bound}
		and 
		the fact that 
		$
			\forall x\in H \colon
					\|x\|_{H_{\gamma}} 
				\leq
					(\inf_{i \in \mathcal{I}} |\lambda_i|)^{\gamma -\nicefrac 12} \|x\|_{H_{1/2}}
		$ 
		shows for all $t \in [0,T]$
		that 
		\begin{align*}
		\label{eq: final lp 1/2 result}
				&\E\Big[
					1+2^{1-p}\|Y_t\|_{H_{\gamma}}^p
					+\tfrac 14
					\int_0^{t}
							\|Y_t-Z_t\|^{p-2}_{H_{\gamma}} 
							\| 
								Y_t-Z_t
							\|^2_{H_{1/2 +\gamma}}
						\ud u
				\Big]
			\\ \numberthis
			\begin{split} \leq{} & 
				\E\Big[
					1+\|Y_t-Z_t\|_{H_{\gamma}}^p
					+\tfrac 14
						\int_0^{t}
							\|Y_t-Z_t\|^{p-2}_{H_{\gamma}} 
							\| 
								Y_t-Z_t
							\|^2_{H_{\gamma + 1/2}}
						\ud u
				\Big]
				+\E\big[ \|Z_t\|_{H_{\gamma}}^p \big]
			\\ \leq{} & 
				\E \big[1+\|Y_0\|_{H_{\gamma}}^p \big] \, e^{c t}
				+\min\Big\{
					p^p \eta^p  (1-2\gamma)^{- \nicefrac p2} \,
						t^{p(1/2-\gamma)}
							(c_1 \eta \, p \, |\theta|^{\nicefrac 12} +1)^p,
			\end{split}
				\\ & \qquad
					(2\sqrt{p})^p
						(\inf_{i \in \mathcal{I}} |\lambda_i|)^{p(\gamma -\nicefrac 12)}
							\Big(
								c_3\eta+c_3 \varsigma 
								(c_2+1)
								\tfrac{t^{1-\alpha}}{\sqrt{2-2\alpha}}
								+(\tfrac{1-2\gamma_2 }{2e})^{\nicefrac 12- \gamma_2} c_2
									\eta^2 \, p (\tfrac{\gamma_2}{e})^{\gamma_2}
									t^{\nicefrac 12}
									\tfrac{1}{\sqrt{1-2\gamma_2}}
									\tfrac{1}{\sqrt{2\gamma_2}}
						\Big)^p
				\Big\}
		\end{align*}
		and this finishes the proof of Lemma \ref{l: Lp H1/2 estimate Navier}.
	\end{proof}
	The next lemma demonstrates that tamed exponential Euler approximations
	are H\"older continuous.
	\begin{lemma}
	\label{l: Lp holder estimate Y}
		Assume Setting \ref{sett 2},
		let $\alpha \in [\nicefrac 12,1)$,
		$\alpha_1 \in \R$,
		$c_1, c_2 \in (0,\infty)$, $q\in [1,\infty)$, 
		$p \in [2,\infty)$,
		let $c \in (0,\infty]$ satisfy that
		\begin{equation}
		\label{eq: def of C Lp holder}
		\begin{split}
				c
			\geq{} 
				&\sup_{u \in [0,T]} \| Y_{u}\|_{L^{p}(\P;H_{1-\alpha})}
						+c_2
						(\tfrac{\alpha}{e})^{\alpha}
						(1-\alpha)^{-1}
						(1+\sup_{u \in [0,T]} \| Y_{u}\|^q_{L^{p q}(\P;H_{\alpha_1})})
						+c_1 \eta^3 \, p
						|\theta|^{\nicefrac 12} T^{\alpha}
				\\ &
						+c_1
						+p\eta
						\big( 2 c_1 \eta \, p |\theta|^{\nicefrac 12} +1\big) T^{\alpha -\nicefrac 12},
		\end{split}
		\end{equation}
		and assume that for all $t\in [0,T]$, $x \in H$, and all $z \in HS(U,H)$ it holds that
		\begin{align}
		\label{eq: F has polynom growth}
			&\|F(t,x)\|_{H_{-\alpha}} \leq c_2 (\|x\|_{H_{\alpha_1}}^q+1), \\
			&\| \Pi(x) \|_H 
				\leq c_1   \|x\|_H,\\
			\label{eq: D Pi bound Lem3}
			&\|(D \Pi)(x)- \id_H\|_{L(H,H)}
			\leq
				c_1 \|x\|_{H}, \\
			\label{eq: D^2 HS bound Lem3}
			&\Big\|
				\sum_{i \in \mathcal{J}}
					(D^2 \Pi)(x)\big(
						z \tilde{e}_i,
						z \tilde{e}_i \big)
				\Big\|_H
			\leq 
				c_1 \|x\|_H \cdot \|z \|^2_{HS(U,H)},
		\end{align}
		and assume that
		\begin{equation}
			\label{eq: DPi A Lp etimate}
				\Big\|
					(D \Pi)(X_{\kappa(u),u}) (A X_{\llcorner u \lrcorner_{\theta},u})
					-A \Pi(X_{\kappa(u),u})
				\Big\|_{L^p(\P;H)}
			\leq
				c_1.
		\end{equation}
		Then it holds for all $s, t\in[0,T]$ that
		\begin{align}
			\begin{split}
				&\|Y_{t}-Y_s\|_{L^p(\P;H)}
			\leq{} 
				c \, |t-s|^{1-\alpha}.
			\end{split}
		\end{align}
	\end{lemma}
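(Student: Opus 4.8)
The plan is to apply the defining equation \eqref{eq: def Y} to the two time points $s$ and $t$ (assuming without loss of generality that $s\le t$; the case $s>t$ follows by interchanging $s$ and $t$) and to estimate, separately in $L^p(\P;H)$, each of the five summands in the resulting decomposition
\begin{align*}
	Y_t-Y_s ={}&\,(e^{(t-s)A}-\id_H)Y_s \\
	&+\int_s^t e^{(t-u)A}\1_D(Y_{\kappa(u)})F(\kappa(u),Y_{\kappa(u)})\,\ud u \\
	&+\int_s^t e^{(t-u)A}\1_D(Y_{\kappa(u)})\big((D\Pi)(X_{\kappa(u),u})(AX_{\kappa(u),u})-A\Pi(X_{\kappa(u),u})\big)\,\ud u \\
	&+\int_s^t e^{(t-u)A}\1_D(Y_{\kappa(u)})(D\Pi)(X_{\kappa(u),u})\big(B(\kappa(u),Y_{\kappa(u)})\big)\,\ud W_u \\
	&+\tfrac12\int_s^t e^{(t-u)A}\1_D(Y_{\kappa(u)})\textstyle\sum_{i\in\mathcal{J}}(D^2\Pi)(X_{\kappa(u),u})\big(B(\kappa(u),Y_{\kappa(u)})\tilde{e}_i,B(\kappa(u),Y_{\kappa(u)})\tilde{e}_i\big)\,\ud u,
\end{align*}
and to check that the sum of the five resulting constants is at most the constant $c$ from \eqref{eq: def of C Lp holder}. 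For the first summand I would invoke Lemma~\ref{l: Y0 estimate} (with $\delta\curvearrowleft 0$ and $\gamma\curvearrowleft 1-\alpha$, which is legitimate since $1-\alpha\in(0,\tfrac12]\subseteq[0,1]$), obtaining $\|(e^{(t-s)A}-\id_H)Y_s\|_{L^p(\P;H)}\le\|Y_s\|_{L^p(\P;H_{1-\alpha})}(t-s)^{1-\alpha}\le(\sup_{u\in[0,T]}\|Y_u\|_{L^p(\P;H_{1-\alpha})})(t-s)^{1-\alpha}$.

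For the first drift integral I would use $\|e^{(t-u)A}x\|_H\le(\tfrac{\alpha}{e})^{\alpha}(t-u)^{-\alpha}\|x\|_{H_{-\alpha}}$ (which follows from $\sup_{\lambda\in(0,\infty)}\lambda^{\alpha}e^{-\lambda}\le(\tfrac{\alpha}{e})^{\alpha}$), Minkowski's integral inequality, the identity $\int_s^t(t-u)^{-\alpha}\,\ud u=\tfrac{(t-s)^{1-\alpha}}{1-\alpha}$, Hölder's inequality, and the polynomial growth bound \eqref{eq: F has polynom growth} together with $\|Y_{\kappa(u)}\|_{L^{pq}(\P;H_{\alpha_1})}\le\sup_{u}\|Y_u\|_{L^{pq}(\P;H_{\alpha_1})}$; this gives a bound of order $c_2(\tfrac{\alpha}{e})^{\alpha}(1-\alpha)^{-1}(1+\sup_u\|Y_u\|^q_{L^{pq}(\P;H_{\alpha_1})})(t-s)^{1-\alpha}$. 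For the second drift integral I would use $\|e^{(t-u)A}\|_{L(H,H)}\le 1$ and assumption \eqref{eq: DPi A Lp etimate}, yielding $c_1(t-s)\le c_1T^{\alpha}(t-s)^{1-\alpha}$ (using $t-s\le T$); for the $(D^2\Pi)$-integral I would likewise use $\|e^{(t-u)A}\|_{L(H,H)}\le 1$, assumption \eqref{eq: D^2 HS bound Lem3}, the global bound \eqref{eq: global bound B sett} on $B$, and Lemma~\ref{l: X estimate} (with $\delta\curvearrowleft0$, $\gamma\curvearrowleft0$, so $\|X_{\kappa(u),u}\|_{L^p(\P;H)}\le\eta p(u-\kappa(u))^{1/2}\le\eta p|\theta|^{1/2}$), again producing a bound of order $(t-s)\le T^{\alpha}(t-s)^{1-\alpha}$. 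Finally, for the stochastic integral I would apply the Burkholder--Davis--Gundy inequality (Lemma~7.7 in \cite{DaPratoZabczyk1992}), estimate the Hilbert--Schmidt norm of the integrand via $\|(D\Pi)(X_{\kappa(u),u})B(\kappa(u),Y_{\kappa(u)})\|_{HS(U,H)}\le(\|(D\Pi)(X_{\kappa(u),u})-\id_H\|_{L(H,H)}+1)\|B(\kappa(u),Y_{\kappa(u)})\|_{HS(U,H)}\le(c_1\|X_{\kappa(u),u}\|_H+1)\eta$ using \eqref{eq: D Pi bound Lem3} and \eqref{eq: global bound B sett}, and invoke Lemma~\ref{l: X estimate} once more; this gives a bound of order $p\eta(2c_1\eta p|\theta|^{1/2}+1)(t-s)^{1/2}$, and since $\alpha\ge\tfrac12$ one has $(t-s)^{1/2}\le T^{\alpha-1/2}(t-s)^{1-\alpha}$. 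Summing the five estimates and comparing with \eqref{eq: def of C Lp holder} finishes the argument.

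Since the lemma is a standard regularity statement for the numerical scheme, I do not expect a genuine obstacle. The only points demanding care are: (i) keeping the factor $(D\Pi)(X_{\kappa(u),u})$ under control in the Hilbert--Schmidt norm via the ``$(D\Pi)-\id_H$'' bound \eqref{eq: D Pi bound Lem3}, so that the stochastic integrand is uniformly square-integrable and Burkholder--Davis--Gundy applies; (ii) collapsing the various powers of $(t-s)$ that appear ($(t-s)$, $(t-s)^{1/2}$, $(t-s)^{1-\alpha}$) onto the single exponent $1-\alpha$ by means of $t-s\le T$ and $\alpha\ge\tfrac12$; and (iii) the bookkeeping of constants so that their sum is dominated by $c$.
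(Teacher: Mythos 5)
Your proof is correct and follows essentially the same route as the paper: decompose $Y_t-Y_s$ via \eqref{eq: def Y}, bound the semigroup difference with Lemma~\ref{l: Y0 estimate}, the $F$-drift with the smoothing estimate $\lambda^{\alpha}e^{-\lambda}\le(\alpha/e)^{\alpha}$ and \eqref{eq: F has polynom growth}, the $(D\Pi)(A\cdot)-A\Pi(\cdot)$ integral trivially from \eqref{eq: DPi A Lp etimate}, the stochastic integral with Burkholder--Davis--Gundy, \eqref{eq: D Pi bound Lem3}, and Lemma~\ref{l: X estimate}, and the $D^2\Pi$ integral with \eqref{eq: D^2 HS bound Lem3}, \eqref{eq: global bound B sett}, and Lemma~\ref{l: X estimate}; then collapse powers of $(t-s)$ onto $(t-s)^{1-\alpha}$ using $t-s\le T$ and $\alpha\ge\tfrac12$. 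The only small difference is in the bookkeeping for the $(D\Pi)(A\cdot)-A\Pi(\cdot)$ term: you correctly get $c_1(t-s)\le c_1T^{\alpha}(t-s)^{1-\alpha}$, whereas the paper's constant list \eqref{eq: def of C Lp holder} records this contribution as just $c_1$ (without the $T^{\alpha}$ factor), so your constant is actually the more careful one.
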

	\begin{proof}
		First note, that Lemma \ref{l: Y0 estimate}
		(with $x \curvearrowleft Y_{s}$, $\gamma \curvearrowleft 1-\alpha$, 
		$\delta \curvearrowleft 0$, and with $t \curvearrowleft t-s$) ensures
		 for all $s \in [0,T]$ and all $t \in [s,T]$ that
		\begin{align}
		\label{eq: Y e^A term estimate}
			\begin{split}
					\|e^{(t-s)A} Y_{s} -Y_s \|_{L^{p}(\P;H)}
				\leq
					\|Y_{s}\|_{L^{p}(\P;H_{1-\alpha})} (t-s)^{1-\alpha}
				\leq
					(\sup_{u \in [0,T]} \| Y_{u}\|_{L^{p}(\P;H_{1-\alpha})})
						(t-s)^{1-\alpha}.
			\end{split}
		\end{align}
		In addition,
		\eqref{eq: F has polynom growth},
		the fact that
		$
			\forall \gamma \in [0,1], \forall \lambda \in (0,\infty) \colon
				\lambda^{\gamma} e^{-\lambda} \leq (\tfrac{\gamma}{e})^\gamma
		$,
		and the fact that 
		$\forall a,b\in [0,\infty) \colon (a +b)^{1/p} \leq a^{1/p} +b^{1/p}$
		imply for all $s \in [0,T]$ and all $t \in [s,T]$ that
		\begin{align}
		\label{eq: F term estimate}
			\begin{split}
				&\Big \|
					\int^t_{s}
						e^{(t-u)A} \1_{D}(Y_{\kappa(u)})
							F(\kappa(u),Y_{\kappa(u)})
					\ud u
				\Big \|_{L^{p}(\P;H)}
			\\ \leq{} &
						\int^t_{s}
							\|
								e^{(t-u)A} \1_{D}(Y_{\kappa(u)})
									F(\kappa(u),Y_{\kappa(u)})
							\|_{L^{p}(\P;H)}
						\ud u
			\\ \leq{} &
					\int^t_{s}
							\|
								e^{(t-u)A} (-A)^{\alpha}
							\|_{L(H,H)}
							\|
								(-A)^{-\alpha}F(\kappa(u),Y_{\kappa(u)})
							\|_{L^{p}(\P;H)}
					\ud u
			\\ \leq{} &
				c_2
					\int^t_{s}
						\big(
							\sup_{\lambda \in (0,\infty)}
								\lambda^{\alpha} e^{-(t-u) \lambda}
						\big)
							(1+\big \| \| Y_{\kappa(u)} \|_{H_{\alpha_1}}^q \big \|_{L^{p}(\P;\R)})
					\ud u
			\\ \leq{} &
				c_2
					\int^t_{s}
						(t-u)^{-\alpha}
						(\tfrac{\alpha}{e})^{\alpha}
						(1+\| Y_{\kappa(u)}\|^q_{L^{p q}(\P;H_{\alpha_1})})
					\ud u
			\\ \leq{} &
				c_2 (1+\sup_{u \in [0,T]} \| Y_{u}\|^q_{L^{p q}(\P;H_{\alpha_1})})
					\int^t_{s}
						(t-u)^{-\alpha}
						(\tfrac{\alpha}{e})^{\alpha}
					\ud u 
			\\ ={} &
				c_2
						(\tfrac{\alpha}{e})^{\alpha}
						(1-\alpha)^{-1}
						(1+\sup_{u \in [0,T]} \| Y_{u}\|^q_{L^{p q}(\P;H_{\alpha_1})})
						(t-s)^{1-\alpha}.
			\end{split}
		\end{align}
		Furthermore, 
		the Burkholder-Davis-Gundy inequality 
		(see e.g.\@ Lemma 7.7 in \cite{DaPratoZabczyk1992})
		\eqref{eq: D Pi bound Lem3}, \eqref{eq: global bound B sett}
		Lemma \ref{l: X estimate} (with $\delta \curvearrowleft 0$ and $\gamma \curvearrowleft 0$)
		and
		the fact that $\alpha \geq \tfrac 12$ verify 
		for all $s \in [0,T]$ and all $t \in [s,T]$ that
		\begin{align}
		\label{eq: dWu estimate}
			\begin{split}
					&\Big \|
						\int_{s}^t 
							e^{(t-u)A} \1_{D}(Y_{\kappa(u)})
								(D \Pi)(X_{\kappa(u),u}) (
									B(
										\kappa(u),
										Y_{\kappa(u) }
									)
								)
						\ud W_u 
					\Big \|^2_{L^{p}(\P;H)}
				\\ \leq{} &
					\tfrac{p(p-1)}{2}
						\int_{s}^t 
							\big \|
								e^{(t-u)A} \1_{D}(Y_{\kappa(u)})
								(D \Pi)(X_{\kappa(u),u}) (
									B(
										\kappa(u),
										Y_{\kappa(u) }
									)
								)
							\big \|^2_{L^{p}(\P;HS(U,H))}
						\ud u 
				\\ \leq{} &
					\tfrac{p(p-1)}{2}
						\int_{s}^t 
							\big \|
								\big( c_1 \|X_{\kappa(u),u}\|_H +1\big)^2
								\big\|
									B(
										\kappa(u),
										Y_{\kappa(u) }
									)
								\big\|^2_{HS(U,H)}
							\big \|_{L^{p}(\P;\R)}
						\ud u 
				\\ \leq{} &
					\tfrac{p(p-1)}{2} \eta^2
						\int_{s}^t 
								\big( c_1 \|X_{\kappa(u),u}\|_{L^{2p}(\P;H)} +1\big)^2
						\ud u 
				\leq{} 
					\tfrac{p(p-1)}{2} \eta^2
						\int_{s}^t 
								\big( 2 c_1 \eta \, p (u-\kappa(u))^{\nicefrac 12} +1\big)^2
						\ud u 
				\\ \leq{} &
					p^2\eta^2
						\big( 2 c_1\eta \, p |\theta|^{\nicefrac 12} +1\big)^2 (t-s)
				\leq
					p^2\eta^2 (t-s)^{2(1-\alpha)}
						\big( 2 c_1 \eta \, p |\theta|^{\nicefrac 12} +1\big)^2 T^{2\alpha-1}.
			\end{split}
		\end{align}
		In addition, 
		\eqref{eq: D^2 HS bound Lem3},
		\eqref{eq: global bound B sett},
		and Lemma \ref{l: X estimate} (with $\delta \curvearrowleft 0$ and $\gamma \curvearrowleft 0$)
		show 
		for all $s \in [0,T]$ and all $t \in [s,T]$ that
		\begin{align}
		\label{eq: D^2 Pi estimate}
			\begin{split}
					&\Big \|
						\int_{s}^t 
						e^{(t-u)A} \1_{D}(Y_{\kappa(u)})
						\sum_{i \in \mathcal{J}}
							(D^2 \Pi)(X_{\kappa(u),u}) \Big(
								B(
									\kappa(u),
									Y_{\kappa(u) }
								) \tilde{e}_i,
								B(
									\kappa(u),
									Y_{\kappa(u)}
								) \tilde{e}_i
							\Big)
						\ud u
					\Big\|_{L^{p}(\P;H)}
				\\ \leq{} &
					\int_{s}^t 
						\Big \|
						\sum_{i \in \mathcal{J}}
							(D^2 \Pi)(X_{\kappa(u),u}) \Big(
								B(
									\kappa(u),
									Y_{\kappa(u)}
								) \tilde{e}_i,
								B(
									\kappa(u),
									Y_{\kappa(u)}
								) \tilde{e}_i
							\Big)
						\Big\|_{L^{p}(\P;H)}
					\ud u
				\\ \leq{} &
					c_1 \int_{s}^t 
						\Big \|
							\| X_{\kappa(u),u}\|_H
							\|
								B(
									\kappa(u),
									Y_{\kappa(u) }
								)
							\|^2_{HS(U,H)}
						\Big\|_{L^{p}(\P;\R)}
					\ud u
				\\ \leq{} &
					c_1\eta^2
					\int_{s}^t 
						\| X_{\kappa(u),u}\|_{L^{p}(\P;H)}
					\ud u
				\leq{} 
					c_1 \eta^3 \, p
					\int_{s}^t 
						(u-\kappa(u))^{\nicefrac 12}
					\ud u
				\leq{} 
					c_1 \eta^3 \, p (t-s)^{1-\alpha}
						|\theta|^{\nicefrac 12} T^{\alpha}.
			\end{split}
		\end{align}
		Combining 
		\eqref{eq: def Y},
		\eqref{eq: Y e^A term estimate},
		\eqref{eq: F term estimate},
		\eqref{eq: DPi A Lp etimate},
		\eqref{eq: dWu estimate},
		\eqref{eq: D^2 Pi estimate}, and
		\eqref{eq: def of C Lp holder}
		proves
		for all $s \in [0,T]$ and all $t \in [s,T]$ that
		\begin{align}
			\begin{split}
					&\|Y_t - Y_s\|_{L^{p}(\P;H)}
				\leq{} 
					c (t-s)^{1-\alpha}.
			\end{split}
		\end{align}
		This finishes the proof of Lemma \ref{l: Lp holder estimate Y}.
	\end{proof}
	The next lemma improves the regularity of the moment estimates.
	\begin{lemma}
	\label{l: Lp H 1/2 estimate Y}
		Assume Setting \ref{sett 2},
		let $c, c_1, c_2, c_3, \varsigma \in (0,\infty]$, 
		$\gamma \in [0,\nicefrac 12]$, 
		$\gamma_2 \in [0,\gamma]$,
		$p \in [2,\infty)$,
		$\alpha \in [\nicefrac 12, 1) $, 
		$\alpha_1 \in (\gamma -1,\gamma)$,
		$\alpha_2 \in \R$,
		$q_2 \in [1,\infty)$,
		let $L \colon H \to L(H,H)$ be measurable
		such that for all $x\in H$ and all $i \in \mathcal{I}$ it holds that
		\begin{equation}
		\label{eq: L global bound}
			\|L(x)\|_{L(H,H)} \leq c_3 
		\end{equation}
		and that
		\begin{equation}
		\label{eq: L is diagonal}
			L(x) e_i = \langle L(x) e_i, e_i\rangle_H e_i,
		\end{equation}
		let $l \colon [0,T]^{[0,T]} \to \R$
		satisfy for all $w \in [0,T]^{[0,T]}$ that 
		\begin{align}
			l(w) =
			\begin{cases}
				c_3 \, \varsigma 
								(c+1)
								\tfrac{t^{1-\alpha}}{\sqrt{2-2\alpha}}
				& \textrm{if } w=\id \textrm{ or } w=([0,T] \ni t \to \llcorner t \lrcorner_{\theta} \in [0,T]), \\
				\infty & \textrm{else},
			\end{cases}
		\end{align}
		assume that 
		for all $s,t \in [0,T]$ it holds that 
		\begin{equation}
				\|
					Y_{t}
					-Y_{ s}
				\|_{L^{p}(\P;H)}
			\leq
				c \, |t-s|^{1-\alpha},
		\end{equation}
		assume that for all $t,s \in [0,T]$, and all $x,y \in H$ it holds that
		\begin{equation}
		\label{eq: Lipschitz cont B}
			\|B(t,x) -B(s,y)\|^2_{HS(U,H)} \leq \varsigma^2 (\|x-y\|^2_H +|t-s|^{2-2\alpha}),
		\end{equation}
		assume that for all $t \in [0,T]$, $x \in H$, and all $z \in HS(U,H)$ it holds that
		\begin{align}
			\label{eq: F has polynom growth 2}
			&\|F(t,x)\|_{H_{\alpha_1}} \leq c_2 (\|x\|_{H_{\alpha_2}}^{q_2}+1), \\
			&\| \Pi(x) \|_H 
				\leq c_1 \|x\|_H,\\
			\label{eq: D Pi bound Lem4}
			&\|(D \Pi)(x)- \id_H\|_{L(H,H)}
			\leq
				c_1 \|x\|_{H}, \\
			\label{eq: D Pi - L bound Lem4}
			&\|(-A)^{\gamma_2} ((D \Pi)(x)- L(x))\|_{L(H,H)}
			\leq
				c \|x\|_{H_{\gamma_2}}, \\
			\label{eq: D^2 HS bound Lem4}
			&\Big\|
				\sum_{i \in \mathcal{J}}
					(D^2 \Pi)(x)\big(
						z \tilde{e}_i,
						z \tilde{e}_i \big)
				\Big\|_H
			\leq 
				c_1 \|x\|_H \cdot \|z \|^2_{HS(U,H)},
		\end{align}
		and assume that
		\begin{equation}
			\label{eq: DPi A Lp etimate Lem4}
				\Big\|
					(D \Pi)(X_{\kappa(u),u}) (A X_{\llcorner u \lrcorner_{\theta},u})
					-A \Pi(X_{\kappa(u),u})
				\Big\|_{L^p(\P;H)}
			\leq
				c_1.
		\end{equation}
		Then it holds for all $t\in[0,T]$ that
		\begin{align}
			\begin{split}
				\|Y_t\|_{L^{p}(\P; H_\gamma)} 
			\leq{}
					&\|Y_{0}\|_{L^{p}(\P; H_\gamma)} 
					+	
					\tfrac{c_2}{1-(\gamma -\alpha_1)}
						 (\tfrac{\gamma -\alpha_1}{e})^{\gamma -\alpha_1}
						t^{1-(\gamma -\alpha_1)}
						(1+ \sup_{u \in \theta}\|Y_{\llcorner u \lrcorner_{\theta}}\|^{q_2}_{L^{p q_2}(\P;H_{\alpha_2})})
			\\ &
				+c_1
					+\tfrac{c_1 \eta^3 p}{1-\gamma} \,
						(\tfrac{\gamma}{e})^\gamma
						t^{1-\gamma}
						|\theta|^{\nicefrac 12}
			\\ &
					+2\sqrt{p} 
					\min\Big\{
						t^{\nicefrac 12-\gamma}
						\tfrac{(c_1 \eta \, p \, |\theta|^{1/2}+1) \eta}{\sqrt{1-2\gamma}}
						(\tfrac{\gamma}{e})^{\gamma},
						c_3 (\inf_{i \in \mathcal{I}} |\lambda_i|)^{\gamma-\nicefrac 12} 
							\eta
							+l(\kappa)
				\\ & \qquad
							+(\tfrac{\gamma-\gamma_2 }{e})^{\gamma- \gamma_2} c
								\eta^2 \, p (\tfrac{\gamma_2}{e})^{\gamma_2}
								t^{1-\gamma}
								\tfrac{1}{\sqrt{1-2\gamma_2}}
								\tfrac{1}{\sqrt{1+2\gamma_2-2\gamma}}
					\Big\}.
			\end{split}
		\end{align}
	\end{lemma}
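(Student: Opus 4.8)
The plan is to start from the mild-It\^o representation \eqref{eq: def Y} evaluated at $s=0$, which writes $Y_t$ as the sum of the five summands $e^{tA}Y_0$, the $F$-drift $\int_0^t e^{(t-u)A}\1_D(Y_{\kappa(u)})F(\kappa(u),Y_{\kappa(u)})\ud u$, the taming-correction drift $\int_0^t e^{(t-u)A}\1_D(Y_{\kappa(u)})\big((D\Pi)(X_{\kappa(u),u})(AX_{\kappa(u),u})-A\Pi(X_{\kappa(u),u})\big)\ud u$, the stochastic integral $\int_0^t e^{(t-u)A}\1_D(Y_{\kappa(u)})(D\Pi)(X_{\kappa(u),u})(B(\kappa(u),Y_{\kappa(u)}))\ud W_u$, and the second-order drift $\tfrac12\int_0^t e^{(t-u)A}\1_D(Y_{\kappa(u)})\sum_{i\in\mathcal J}(D^2\Pi)(X_{\kappa(u),u})\big(B(\kappa(u),Y_{\kappa(u)})\tilde e_i,B(\kappa(u),Y_{\kappa(u)})\tilde e_i\big)\ud u$. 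I would bound each of these in $L^p(\P;H_\gamma)$ and add the five bounds via the triangle inequality; the first summand is controlled by the contractivity of $e^{tA}$ on $H_\gamma$, giving $\|Y_0\|_{L^p(\P;H_\gamma)}$.

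For the $F$-drift I would factorize $(-A)^\gamma e^{(t-u)A}=(-A)^{\gamma-\alpha_1}e^{(t-u)A}(-A)^{\alpha_1}$ and use $\sup_{\lambda>0}\lambda^{\gamma-\alpha_1}e^{-(t-u)\lambda}\le(\tfrac{\gamma-\alpha_1}{e})^{\gamma-\alpha_1}(t-u)^{-(\gamma-\alpha_1)}$, which is admissible since $\gamma-\alpha_1\in(0,1)$ by the hypothesis $\alpha_1\in(\gamma-1,\gamma)$; then Minkowski's integral inequality, the polynomial growth bound \eqref{eq: F has polynom growth 2} together with $\big\|\|v\|_{H_{\alpha_2}}^{q_2}\big\|_{L^p(\P;\R)}=\|v\|_{L^{pq_2}(\P;H_{\alpha_2})}^{q_2}$, and $\int_0^t(t-u)^{-(\gamma-\alpha_1)}\ud u=\tfrac{t^{1-(\gamma-\alpha_1)}}{1-(\gamma-\alpha_1)}$ yield the $c_2$-term, with the mesh values of $Y$ replaced by their supremum $\sup_{v\in\theta}\|Y_{\llcorner v\lrcorner_\theta}\|_{L^{pq_2}(\P;H_{\alpha_2})}$. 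The taming-correction drift is estimated analogously, smoothing by $(-A)^\gamma e^{(t-u)A}$ and invoking the assumed bound \eqref{eq: DPi A Lp etimate Lem4}, which produces the $c_1$-contribution. For the second-order drift I would again smooth by $(-A)^\gamma e^{(t-u)A}$, bound the It\^o correction pointwise by $c_1\eta^2\|X_{\kappa(u),u}\|_H$ using \eqref{eq: D^2 HS bound Lem4} and \eqref{eq: global bound B sett}, control $\|X_{\kappa(u),u}\|_{L^p(\P;H)}\le\eta p(u-\kappa(u))^{1/2}\le\eta p|\theta|^{1/2}$ by Lemma \ref{l: X estimate} (applied with $\delta=\gamma=0$), and integrate the singularity $(t-u)^{-\gamma}$; this gives the $\tfrac{c_1\eta^3 p}{1-\gamma}(\tfrac\gamma e)^\gamma t^{1-\gamma}|\theta|^{1/2}$-term.

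The decisive term is the stochastic integral, which I would estimate in two independent ways, the minimum of which appears in the claim. The first way is direct: by the Burkholder--Davis--Gundy inequality (Lemma~7.7 in \cite{DaPratoZabczyk1992}, or Theorem~A in \cite{CarlenKree1991}), Minkowski's inequality in $L^{p/2}(\P;\R)$, the smoothing bound $\|(-A)^\gamma e^{(t-u)A}\|_{L(H,H)}\le(\tfrac\gamma e)^\gamma(t-u)^{-\gamma}$, the splitting $(D\Pi)(x)=\id_H+((D\Pi)(x)-\id_H)$ combined with \eqref{eq: D Pi bound Lem4} and \eqref{eq: global bound B sett}, and Lemma \ref{l: X estimate}, one obtains the first entry $2\sqrt p\, t^{1/2-\gamma}(1-2\gamma)^{-1/2}(\tfrac\gamma e)^\gamma(c_1\eta p|\theta|^{1/2}+1)\eta$ of the minimum; this entry is finite only for $\gamma<\tfrac12$ and is read as $\infty$ at $\gamma=\tfrac12$. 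The second way keeps the bound finite up to $\gamma=\tfrac12$ by following the Burkholder--Davis--Gundy and triangle-inequality argument in the proof of Lemma \ref{l: e B noise H1/2 bound} with its exponent $1/2$ replaced by $\gamma$ (using $1+2\gamma_2-2\gamma>0$, which holds as $\gamma\le\tfrac12$ and $\gamma_2\ge0$): one separates $(D\Pi)(X_{\kappa(u),u})$ into its diagonal part $L$ and a remainder, and the noise coefficient into its value at time $t$ and a Lipschitz increment, exploiting the diagonal structure \eqref{eq: L is diagonal}, the bound \eqref{eq: L global bound}, the Lipschitz estimate \eqref{eq: Lipschitz cont B}, the near-identity bound \eqref{eq: D Pi - L bound Lem4}, and the assumed H\"older continuity of $Y$; this yields the second entry of the minimum, in which the term $l(\kappa)$ is exactly the contribution of the increments of $B$. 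Summing the five bounds then gives the assertion.

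I expect the main obstacle to be precisely this stochastic-integral term as $\gamma$ approaches $\tfrac12$: the naive Burkholder--Davis--Gundy estimate generates a non-integrable $(t-u)^{-1}$ singularity, so one has to reuse the finer spectral decomposition underlying Lemma \ref{l: e B noise H1/2 bound} to obtain a bound that does not blow up, and the minimum in the statement merely records that one picks whichever of the two estimates is smaller.
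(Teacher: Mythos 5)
Your proposal is correct and follows essentially the same route as the paper: decompose $Y_t$ via \eqref{eq: def Y} at $s=0$ into the five mild-It\^o summands, bound each in $L^p(\P;H_\gamma)$ by semigroup smoothing $\|(-A)^\beta e^{(t-u)A}\|_{L(H,H)}\le(\beta/e)^\beta(t-u)^{-\beta}$, invoke \cref{l: X estimate} for the It\^o-correction drift, and estimate the stochastic integral by the Burkholder--Davis--Gundy inequality in two ways whose minimum appears in the claim. The only noticeable deviation is how you obtain the second entry of the minimum: the paper's written proof applies \cref{l: e B noise H1/2 bound} (an $H_{1/2}$ bound) followed by the embedding $\|\cdot\|_{H_\gamma}\le(\inf_i|\lambda_i|)^{\gamma-1/2}\|\cdot\|_{H_{1/2}}$, which produces the $1/2$-based constants of \eqref{eq: DPi additional noise term}, whereas you propose to rerun the spectral decomposition of \cref{l: e B noise H1/2 bound} with exponent $\gamma$ in place of $1/2$; the latter actually matches the $\gamma$-based constants that appear in the lemma statement (and in the paper's final displayed inequality), so your route is marginally cleaner than the paper's own text at this step. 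Two small caveats: your parenthetical ``$1+2\gamma_2-2\gamma>0$, which holds as $\gamma\le1/2$ and $\gamma_2\ge0$'' is not quite right --- equality can occur when $\gamma=1/2$ and $\gamma_2=0$, in which case the integral diverges and the second entry of the minimum reads $\infty$, which is harmless since the constants live in $(0,\infty]$; and smoothing the taming-correction drift by $(-A)^\gamma e^{(t-u)A}$ and using \eqref{eq: DPi A Lp etimate Lem4} actually yields $c_1\cdot(\gamma/e)^\gamma t^{1-\gamma}/(1-\gamma)$ rather than the bare $c_1$, but the paper makes the same simplification.
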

	\begin{proof}
	We estimate the terms on the right-hand side of \eqref{eq: Y representation}.
	First note that
		the fact that
		$
			\forall \lambda \in (0,\infty) \colon
				\lambda^{\gamma-\alpha_1} e^{-\lambda} \leq (\tfrac{\gamma-\alpha_1}{e})^{\gamma-\alpha_1}
		$,
		\eqref{eq: F has polynom growth 2},
		and the fact that $\gamma -\alpha_1 < 1$
		prove for all $t \in [0,T]$ that
	\begin{align}
	\label{eq: estimate F term}
		\begin{split}
				&\Big\|
						\int^t_{0}
							e^{(t-u)A} \1_{D}(Y_{\kappa(u)})
								F(\kappa(u),Y_{\kappa(u)})
						\ud u 
					\Big \|_{L^p(\P;H_{\gamma})}
			\\ \leq{} &
					\int^t_{0}
						\|
							e^{(t-u)A} \1_{D}(Y_{\kappa(u)})
							F(\kappa(u),Y_{\kappa(u)})
						\|_{L^p(\P;H_{\gamma})}
					\ud u 
			\\ \leq{} &
				\int^t_{0}
					\big\|
						\|
							e^{(t-u)A} 
							(-A)^{\gamma -\alpha_1}
						\|_{L(H,H)}
						\|
							(-A)^{\alpha_1}
							F(\kappa(u),Y_{\kappa(u)})
						\|_{H}
					\big \|_{L^p(\P;\R)}
				\ud u 
			\\ \leq{} &
				c_2 \int^t_{0}
					\big\|
						\big(
							\sup_{\lambda \in (0,\infty)}
								\lambda^{\gamma -\alpha_1} e^{-(t-u)\lambda} 
						\big)	
							(1+ \|Y_{\kappa(u)}\|^{q_2}_{H_{\alpha_2}})
					\big\|_{L^p(\P;\R)}
				\ud u 	
			\\ \leq{} &
				c_2
				\int^t_{0}
						 (\tfrac{\gamma -\alpha_1}{e})^{\gamma -\alpha_1}
						(t-u)^{-(\gamma -\alpha_1)}
						(1+ \|Y_{\kappa(u)}\|^{q_2}_{L^{p q_2}(\P;H_{\alpha_2})})
				\ud u 
			\\ \leq{} &
				\tfrac{c_2}{1-(\gamma -\alpha_1)}
						 (\tfrac{\gamma-\alpha_1}{e})^{\gamma -\alpha_1}
						t^{1-(\gamma -\alpha_1)}
						(1+ \sup_{u \in [0,T]}\|Y_{\kappa(u)}\|^{q_2}_{L^{p q_2}(\P;H_{\alpha_2})}).
			\end{split}
		\end{align}
		In addition, 
			the fact that
		$
			\forall \lambda \in (0,\infty) \colon
				\lambda^{\gamma } e^{-\lambda} \leq (\tfrac{\gamma}{e})^{\gamma}
		$,
		\eqref{eq: global bound B sett},
		\eqref{eq: D^2 HS bound Lem4},
		and
		Lemma \ref{l: X estimate}
		(with $\gamma \curvearrowleft 0$
			and with $\delta \curvearrowleft 0$)
		verify for all $t \in [0,T]$ that
		\begin{align}
		\label{eq: D^2Pi additional term}
			\begin{split}
					&\Big \|
						\int_{0}^t 
							e^{(t-u)A} \1_{D}(Y_{\kappa(u)})
							\sum_{i \in \mathcal{J}}
								(D^2 \Pi)(X_{\kappa(u),u}) \Big(
									B(
										\kappa(u),
										Y_{\kappa(u) }
									) \tilde{e}_i,
									B(
										\kappa(u),
										Y_{\kappa(u)}
									) \tilde{e}_i
								\Big)
						\ud u
					\Big \|_{L^p(\P;H_{\gamma})}
				\\ \leq{} &
					\int_{0}^t 
						\Big \|
							e^{(t-u)A} 
							\sum_{i \in \mathcal{J}}
								(D^2 \Pi)(X_{\kappa(u),u}) \Big(
									B(
										\kappa(u),
										Y_{\kappa(u)}
									) \tilde{e}_i,
									B(
										\kappa(u),
										Y_{\kappa(u)}
									) \tilde{e}_i
								\Big)
						\Big\|_{L^p(\P;H_{\gamma})}
					\ud u
				\\ \leq{} &
					\int_{0}^t 
						\big \|
							e^{(t-u)A} (-A)^{\gamma}
						\big \|_{L(H,H)}
						\Big \|
							\sum_{i \in \mathcal{J}}
								(D^2 \Pi)(X_{\kappa(u),u}) \Big(
									B(
										\kappa(u),
										Y_{\kappa(u)}
									) \tilde{e}_i,
									B(
										\kappa(u),
										Y_{\kappa(u)}
									) \tilde{e}_i
								\Big)
						\Big\|_{L^p(\P;H)}
					\ud u
				\\ \leq{} &
					c_1\int_{0}^t 
						\big (
							\sup_{\lambda \in (0, \infty)} e^{(t-u)\lambda} \lambda^{\gamma}
						\big )
						\big \|
							\|X_{\kappa(u),u}\|_H
							\|
								B(
										\kappa(u),
										Y_{\kappa(u)}
									)
							\|^2_{HS(U,H)}
						\big\|_{L^p(\P;\R)}
					\ud u
				\\ \leq{} &
					c_1 \eta^2
					\int_{0}^t 
						(\tfrac{\gamma}{e})^{\gamma}
						(t-u)^{-\gamma}
						\|
							X_{\kappa(u),u}
						\|_{L^p(\P;H)}
					\ud u
				\leq{} 
					\tfrac{c_1 \eta^3 p}{1-\gamma} \,
						(\tfrac{\gamma}{e})^{\gamma}
						t^{1-\gamma}
						|\theta|^{\nicefrac 12}.
			\end{split}
		\end{align}
		Furthermore, we get from the
		Burkholder-Davis-Gundy inequality (see, e.g., Theorem A in \cite{CarlenKree1991}) 
		for all $t \in [0,T]$ that
		\begin{align}
		\label{eq: DPi additional noise term first estimate}
			\begin{split}
				&\Big \|
					\int_0^t 
						e^{(t-u )A} \big(
							(D \Pi)(X_{\kappa(u),u}) (
									B(
										\kappa(u),
										Y_{\kappa(u)}
									)
								)
						\big)
					\ud W_u 
				\Big \|_{L^p(\P; H_{\gamma})}
			\\ \leq{} &
					\Big \|
						2\sqrt{p} \,
						\Big(
							\int_0^t 
									\big \|
										e^{(t-u )A} (-A)^{\gamma} \big(
											(D \Pi)(X_{\kappa(u),u}) (
												B(
													\kappa(u),
													Y_{\kappa(u)}
												)
											)
										\big)
									\big \|^2_{HS(U,H)}
							\ud u \Big)^{\nicefrac 12}
					\Big \|_{L^p(\P; \R)}
				\\ ={} & 
					\Big \|
						2\sqrt{p} \,
						\Big(
							\int_0^t 
								\sum_{i \in \mathcal{I}}
									e^{2(t-u )\lambda_i} (-\lambda_i)^{2\gamma}
									\big \|
										B^*(
												\kappa(u),
												Y_{\kappa(u)}
											) 
											((D \Pi)(X_{\kappa(u),u}))^* e_i
								\big \|^2_{U}
							\ud u \Big)^{\nicefrac 12}
					\Big \|_{L^p(\P; \R)}.
			\end{split}
		\end{align}
		Next note that
		\eqref{eq: D Pi bound Lem4},
		\eqref{eq: global bound B sett},
		the fact that
		$
			\forall \lambda \in (0,\infty) \colon
				\lambda^{\gamma} e^{-\lambda} \leq (\tfrac{\gamma }{e})^{\gamma}
		$,
		and Lemma \ref{l: X estimate}
		(with $\gamma \curvearrowleft 0$
			and with $\delta \curvearrowleft 0$)
		imply for all $t \in [0,T]$ that
		\begin{align}
		\label{eq: DPi additional noise term gamma < 1/2}
			\begin{split}
					&\Big \|
							\int_0^t 
								\sum_{i \in \mathcal{I}}
									e^{2(t-u )\lambda_i} (-\lambda_i)^{2\gamma}
									\big \|
										B^*(
												\kappa(u),
												Y_{\kappa(u)}
											) 
											((D \Pi)(X_{\kappa(u),u}))^* e_i
								\big \|^2_{U}
							\ud u
						\Big \|_{L^{p/2}(\P; \R)}
				\\ \leq{} &
					\Big \|
							\int_0^t 
								\big(
									\sup_{\lambda \in (0,\infty)}
										e^{-(t-u )\lambda} \lambda^{\gamma} 
								\big)^2
									\big \|
										B(
												\kappa(u),
												Y_{\kappa(u)}
											) 
											((D \Pi)(X_{\kappa(u),u}))
								\big \|^2_{HS(U,H)}
							\ud u
						\Big \|_{L^{p/2}(\P; \R)}
				\\ \leq{} &
					\Big \|
						\int_0^t
								(t-u )^{-2\gamma}
								(\tfrac{\gamma}{e})^{2\gamma}
							\big \|
								(D \Pi)(X_{\kappa(u),u})
							\big \|^2_{L(H,H)}
							\big \|
								B(
									\kappa(u),
									Y_{\kappa(u)}
								)
							\big \|^2_{HS(U,H)}
						\ud u
					\Big \|_{L^{p/2}(\P; \R)}
				\\ \leq{} &
					\int_0^t
						(t-u )^{-2\gamma}
						(c_1 \|X_{\kappa(u),u}\|_{L^{p}(\P; H)}+1)^2 \eta^2
						(\tfrac{\gamma}{e})^{2\gamma}
					\ud u
				\\ \leq{} &
					\int_0^t
						(t-u )^{-2\gamma}
						(c_1 \eta \, p \, |\theta|^{1/2}+1)^2 \eta^2
						(\tfrac{\gamma}{e})^{2\gamma}
					\ud u
				={} 
						t^{1-2\gamma}
						\tfrac{(c_1 \eta \, p \, |\theta|^{1/2}+1)^2 \eta^2}{1-2\gamma}
						(\tfrac{\gamma}{e})^{2\gamma}.
			\end{split}
		\end{align}
	Combining  
	\eqref{eq: DPi additional noise term first estimate},
	\eqref{eq: DPi additional noise term gamma < 1/2},
	Lemma \ref{l: e B noise H1/2 bound} (with $c_1 \defeq c_3$),
	and the fact that 
	$\forall x \in H \colon \|x\|_H \leq (\inf_{i \in \mathcal{I}} |\lambda_i|)^{\gamma-\nicefrac 12}$
	demonstrates for all 
	$t\in [0,T]$ that
	\begin{align}
	\label{eq: DPi additional noise term}
			\begin{split}
				&\Big \|
					\int_0^t 
						e^{(t-u )A} \big(
							(D \Pi)(X_{\llcorner u \lrcorner_\theta,u}) (
									B(
										\llcorner u \lrcorner_\theta,
										Y_{\llcorner u \lrcorner_\theta }
									)
								)
						\big)
					\ud W_u 
				\Big \|_{L^p(\P; H_{\gamma})}
				\\ \leq{} &
					2\sqrt{p} 
					\min\Big\{
						t^{\nicefrac 12-\gamma}
						\tfrac{(c_1+1) \eta}{\sqrt{1-2\gamma}}
						(\tfrac{\gamma}{e})^{\gamma},
						(\inf_{i \in \mathcal{I}} |\lambda_i|)^{\gamma-\nicefrac 12} \Big(c_3 
							\eta
							+l(\kappa)
				\\ & \qquad
							+(\tfrac{1-2\gamma_2 }{e})^{\nicefrac 12- \gamma_2} c
								\eta^2 \, p (\tfrac{\gamma_2}{e})^{\gamma_2}
								t^{\nicefrac 12}
								\tfrac{1}{\sqrt{1-2\gamma_2}}
								\tfrac{1}{\sqrt{2\gamma_2}}
						\Big)
					\Big\}.
			\end{split}
		\end{align}
	Thus,
	\eqref{eq: def Y},
	\eqref{eq: estimate F term}, \eqref{eq: DPi A Lp etimate Lem4},
	\eqref{eq: D^2Pi additional term}, and
	\eqref{eq: DPi additional noise term}
	prove for all $t \in [0,T]$ that
	\begin{align}
			\begin{split}
				\|Y_t\|_{L^{p}(\P; H_{\gamma})} 
			\leq{}
				&\|Y_{0}\|_{L^{p}(\P; H_{\gamma})} 
					+	
					\tfrac{c_2}{1-(\gamma -\alpha_1)}
						 (\tfrac{\gamma -\alpha_1}{e})^{\gamma -\alpha_1}
						t^{1-(\gamma -\alpha_1)}
						(1+ \sup_{u \in \theta}\|Y_{\llcorner u \lrcorner_{\theta}}\|^{q_2}_{L^{p q_2}(\P;H_{\alpha_2})})
			\\ &
				+c_1
					+\tfrac{c_1 \eta^3 p}{1-\gamma} \,
						(\tfrac{\gamma}{e})^\gamma
						t^{1-\gamma}
						|\theta|^{\nicefrac 12}
			\\ &
					+2\sqrt{p} 
					\min\Big\{
						t^{\nicefrac 12-\gamma}
						\tfrac{(c_1+1) \eta}{\sqrt{1-2\gamma}}
						(\tfrac{\gamma}{e})^{\gamma},
						c_3 (\inf_{i \in \mathcal{I}} |\lambda_i|)^{\gamma-\nicefrac 12} 
							\eta
							+l(\kappa)
				\\ & \qquad
							+(\tfrac{\gamma-\gamma_2 }{e})^{\gamma- \gamma_2} c_1
								\eta^2 \, p (\tfrac{\gamma_2}{e})^{\gamma_2}
								t^{1-\gamma}
								\tfrac{1}{\sqrt{1-2\gamma_2}}
								\tfrac{1}{\sqrt{1+2\gamma_2-2\gamma}}
					\Big\}.
			\end{split}
		\end{align}
		This finishes the proof of Lemma \ref{l: Lp H 1/2 estimate Y}.
	\end{proof}
	The next lemma establishes one step
	exponential moment bounds for
	integrated tamed exponential Euler approximations
	with respect to the $\| \cdot \|_{H_{1/2}}$-norm.
	\begin{lemma}
		\label{prop: H12 exp bound}
		Assume Setting \ref{sett 2},
		let $c_1, c_2, \eps \in (0,\infty)$, $\alpha \in [0,\nicefrac 12)$,
		$\gamma \in (-\infty, 0]$ satisfy that
		\begin{equation}
		\label{eq: eps small}
			48 \, \eps \, e \, c_1 |\theta| \eta^2 \leq \tfrac 12,
		\end{equation}
		let $\kappa = ([0,T] \ni t \to \llcorner t \lrcorner_{\theta} \in [0,T])$,
		assume for all $t \in [0,T]$ and all $x,y \in D$ that
		\begin{align}
		\label{eq: x in D bound exp H12}
				\|x\|^2_{H_{1/2}}
			&\leq
				c_2 |\theta|^{\gamma}, 
				\\
			\label{eq: F in D bound exp H12}
				\|
					F(t, x)
				\|_{H_{-\alpha}}
			&\leq
				c_2 |\theta|^{\gamma}, 
		\end{align}
		and assume for all $x \in H$ that
		\begin{align}
			\label{eq: Pi H bound H12}
			&\| \Pi(x) \|_{H_{1/2}} \leq c_1 \|x\|_{H_{1/2}}.
		\end{align}
		Then it holds
		for all $t \in [0,T]$ and all $s \in (\llcorner t \lrcorner_{\theta}, t]$
		that
		\begin{align}
			\begin{split}
							&\Big\|  
								\1_{D}(Y_{\llcorner t \lrcorner_{\theta}}) 
								\exp \Big(
									\int_{\llcorner t \lrcorner_\theta}^s 
										\eps \|Y_r\|^2_{H_{1/2}} 
								\ud r \Big)  
							\Big \|^4_{L^4(\P;\R)}
					\\ \leq{} &
						\exp \Big( (s-\llcorner t \lrcorner_\theta)^{1+\gamma} 12 \eps \, \big(
								c_2
								+(\tfrac{1+2\alpha}{2e})^{2\alpha+1}
									\tfrac{4c_2^2}{(1-2\alpha)^2}
										|\theta|^{1-2\alpha+\gamma}
								+8 e \, c_1 |\theta|^{-\gamma} \eta^2
						\big)
						\Big).
			\end{split}
		\end{align}
	\end{lemma}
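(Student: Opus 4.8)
The plan is to pass to the fourth power, insert the one-step representation of $Y_r$ from \cref{l: Y representation}, decompose $\|Y_r\|_{H_{1/2}}^2$ into a semigroup, a drift and a $\Pi$-of-stochastic-convolution contribution, estimate the first two pathwise on the event $\{Y_{\llcorner t \lrcorner_{\theta}}\in D\}$, and control the last one by \cref{l: exp 12 bound}. First, since $\|\cdot\|_{L^4(\P;\R)}^4=\E[(\cdot)^4]$, one has $\big\|\1_D(Y_{\llcorner t \lrcorner_{\theta}})\exp(\int_{\llcorner t \lrcorner_{\theta}}^s\eps\|Y_r\|_{H_{1/2}}^2\ud r)\big\|_{L^4(\P;\R)}^4=\E[\1_D(Y_{\llcorner t \lrcorner_{\theta}})\exp(4\eps\int_{\llcorner t \lrcorner_{\theta}}^s\|Y_r\|_{H_{1/2}}^2\ud r)]$, and for $r\in(\llcorner t \lrcorner_{\theta},t]$ (so $\llcorner r \lrcorner_{\theta}=\llcorner t \lrcorner_{\theta}$) \cref{l: Y representation} writes $Y_r=e^{(r-\llcorner t \lrcorner_{\theta})A}Y_{\llcorner t \lrcorner_{\theta}}+\1_D(Y_{\llcorner t \lrcorner_{\theta}})\int_{\llcorner t \lrcorner_{\theta}}^r e^{(r-u)A}F(\llcorner t \lrcorner_{\theta},Y_{\llcorner t \lrcorner_{\theta}})\ud u+\1_D(Y_{\llcorner t \lrcorner_{\theta}})\Pi(\mathfrak{X}_r)$, where $\mathfrak{X}_r$ denotes $\int_{\llcorner t \lrcorner_{\theta}}^r e^{(r-u)A}B(\llcorner t \lrcorner_{\theta},Y_{\llcorner t \lrcorner_{\theta}})\ud W_u$. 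By $(a_1+a_2+a_3)^2\le 3(a_1^2+a_2^2+a_3^2)$ the expectation above is then at most $\E[\1_D(Y_{\llcorner t \lrcorner_{\theta}})\exp(12\eps\int_{\llcorner t \lrcorner_{\theta}}^s(\|e^{(r-\llcorner t \lrcorner_{\theta})A}Y_{\llcorner t \lrcorner_{\theta}}\|_{H_{1/2}}^2+\1_D(Y_{\llcorner t \lrcorner_{\theta}})\|\int_{\llcorner t \lrcorner_{\theta}}^r e^{(r-u)A}F(\llcorner t \lrcorner_{\theta},Y_{\llcorner t \lrcorner_{\theta}})\ud u\|_{H_{1/2}}^2+\1_D(Y_{\llcorner t \lrcorner_{\theta}})\|\Pi(\mathfrak{X}_r)\|_{H_{1/2}}^2)\ud r)]$.

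On $\{Y_{\llcorner t \lrcorner_{\theta}}\in D\}$ I would estimate the three pieces as follows: contractivity of $(e^{\sigma A})_{\sigma\ge 0}$ on $H_{1/2}$ together with \eqref{eq: x in D bound exp H12} gives $\|e^{(r-\llcorner t \lrcorner_{\theta})A}Y_{\llcorner t \lrcorner_{\theta}}\|_{H_{1/2}}^2\le c_2|\theta|^{\gamma}$; \cref{l: F estimate} applied to the integral over $[\llcorner t \lrcorner_{\theta},r]$ with $\delta\curvearrowleft\nicefrac12$ and $\gamma\curvearrowleft\nicefrac12+\alpha$ (admissible since $\alpha<\nicefrac12$), together with \eqref{eq: F in D bound exp H12}, gives $\|\int_{\llcorner t \lrcorner_{\theta}}^r e^{(r-u)A}F(\llcorner t \lrcorner_{\theta},Y_{\llcorner t \lrcorner_{\theta}})\ud u\|_{H_{1/2}}^2\le(\tfrac{1+2\alpha}{2e})^{2\alpha+1}\tfrac{4c_2^2}{(1-2\alpha)^2}(r-\llcorner t \lrcorner_{\theta})^{1-2\alpha}|\theta|^{2\gamma}$; and \eqref{eq: Pi H bound H12} gives $\|\Pi(\mathfrak{X}_r)\|_{H_{1/2}}^2\le c_1\|(-A)^{1/2}\mathfrak{X}_r\|_H^2$. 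The first two bounds are deterministic, so, using $\exp(a+b)=\exp(a)\exp(b)$ with $a$ deterministic, they split off as a deterministic prefactor; integrating them in $r$ over $[\llcorner t \lrcorner_{\theta},s]$ and using $r-\llcorner t \lrcorner_{\theta}\le s-\llcorner t \lrcorner_{\theta}\le|\theta|$ together with $\gamma\le 0$ and $\alpha<\nicefrac12$ — which yield $(s-\llcorner t \lrcorner_{\theta})|\theta|^{\gamma}\le(s-\llcorner t \lrcorner_{\theta})^{1+\gamma}$ and $(s-\llcorner t \lrcorner_{\theta})^{2-2\alpha}|\theta|^{2\gamma}\le(s-\llcorner t \lrcorner_{\theta})^{1+\gamma}|\theta|^{1-2\alpha+\gamma}$ (the latter since $1-2\alpha-\gamma\ge 0$, after discarding the harmless factor $(2-2\alpha)^{-1}\le 1$) — produces exactly the prefactor $\exp\big((s-\llcorner t \lrcorner_{\theta})^{1+\gamma}12\eps(c_2+(\tfrac{1+2\alpha}{2e})^{2\alpha+1}\tfrac{4c_2^2}{(1-2\alpha)^2}|\theta|^{1-2\alpha+\gamma})\big)$, i.e.\ the first two summands of the claimed bound.

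It then remains to bound $\E[\1_D(Y_{\llcorner t \lrcorner_{\theta}})\exp(12\eps c_1\int_{\llcorner t \lrcorner_{\theta}}^s\|(-A)^{1/2}\mathfrak{X}_r\|_H^2\ud r)]$ by $\exp((s-\llcorner t \lrcorner_{\theta})^{1+\gamma}12\eps\cdot 8e c_1|\theta|^{-\gamma}\eta^2)$. Dropping the indicator (which only enlarges the expectation, the exponent being nonnegative) and re-basing the Wiener process at the deterministic time $\llcorner t \lrcorner_{\theta}$ — so that $(W_{\llcorner t \lrcorner_{\theta}+u}-W_{\llcorner t \lrcorner_{\theta}})_{u\ge 0}$ is an $\operatorname{Id}_U$-cylindrical Wiener process for the filtration $u\mapsto\mathbb{F}_{\llcorner t \lrcorner_{\theta}+u}$ — turns the remaining expectation into one of the form appearing on the left-hand side of \cref{l: exp 12 bound} with $T\curvearrowleft s-\llcorner t \lrcorner_{\theta}$ and $Q$ the time-constant, $\mathbb{F}_{\llcorner t \lrcorner_{\theta}}$-measurable (hence progressively measurable) coefficient $\sqrt{12\eps c_1}\,B(\llcorner t \lrcorner_{\theta},Y_{\llcorner t \lrcorner_{\theta}})$. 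Since $Q$ is constant in time the increment term involving $\|Q(u)-Q(s)\|_{HS(U,H)}$ in \cref{l: exp 12 bound} vanishes, and \cref{l: exp 12 bound} bounds the expectation by $\E[(1-4e(s-\llcorner t \lrcorner_{\theta})\cdot 12\eps c_1\|B(\llcorner t \lrcorner_{\theta},Y_{\llcorner t \lrcorner_{\theta}})\|_{HS(U,H)}^2)^{-1}]$; using the global bound $\|B\|_{HS(U,H)}\le\eta$ from \eqref{eq: global bound B sett}, $s-\llcorner t \lrcorner_{\theta}\le|\theta|$, and the smallness hypothesis \eqref{eq: eps small} (so that $4e(s-\llcorner t \lrcorner_{\theta})\cdot 12\eps c_1\eta^2\le 48\eps e c_1|\theta|\eta^2\le\nicefrac12$), followed by $\tfrac{1}{1-x}\le e^{2x}$ for $x\in[0,\nicefrac12]$, this is at most $\exp(96e(s-\llcorner t \lrcorner_{\theta})\eps c_1\eta^2)\le\exp((s-\llcorner t \lrcorner_{\theta})^{1+\gamma}12\eps\cdot 8e c_1|\theta|^{-\gamma}\eta^2)$, where the final inequality again uses $s-\llcorner t \lrcorner_{\theta}\le|\theta|$ and $\gamma\le 0$ (hence $(s-\llcorner t \lrcorner_{\theta})\le(s-\llcorner t \lrcorner_{\theta})^{1+\gamma}|\theta|^{-\gamma}$). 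Multiplying the three exponential factors gives the assertion. The main obstacle is the stochastic term: one has to recognise that \cref{l: exp 12 bound} applies essentially verbatim once the diffusion coefficient is frozen at the grid point and the Wiener process is re-based, and then carry out the elementary but somewhat fiddly bookkeeping of the powers of $|\theta|$ and $s-\llcorner t \lrcorner_{\theta}$ needed to land exactly on the constants in the statement.
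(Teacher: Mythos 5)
Your proposal is correct and follows essentially the same route as the paper's proof: the same application of \cref{l: Y representation}, the same $(a_1+a_2+a_3)^2\le 3(a_1^2+a_2^2+a_3^2)$ decomposition, the same pathwise bounds via \eqref{eq: x in D bound exp H12}, \cref{l: F estimate} and \eqref{eq: Pi H bound H12} on $\{Y_{\llcorner t \lrcorner_{\theta}}\in D\}$, and the same use of \cref{l: exp 12 bound} (with the time-frozen, re-based coefficient $Q=\sqrt{12\eps c_1}\,B(\llcorner t \lrcorner_{\theta},Y_{\llcorner t \lrcorner_{\theta}})$, so the increment term vanishes) for the stochastic contribution. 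The only cosmetic differences are that you apply $\tfrac{1}{1-x}\le e^{2x}$ in one step where the paper uses $\tfrac{1}{1-x}\le 1+2x$ followed by $1+x\le e^x$, and that you integrate $(r-\llcorner t \lrcorner_{\theta})^{1-2\alpha}$ exactly rather than first majorizing it by $|\theta|^{1-2\alpha}$; both variants land on the same constants.
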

	\begin{proof}
	First note that Lemma \ref{l: Y representation},
	and \eqref{eq: Pi H bound H12} imply for all
	$t \in [0,T]$ and all $s \in (\llcorner t \lrcorner_{\theta}, t]$ that
	\begin{align}
	\label{eq: exp H12 basic estimate}
		\begin{split}
				&\Big\|
						\1_{D}(Y_{\llcorner t \lrcorner_{\theta}}) 
						\exp \Big(
							 \int_{\llcorner t \lrcorner_\theta}^s 
								\eps \|Y_r\|^2_{H_{1/2}} 
						\ud r \Big)  
					\Big \|^4_{L^4(\P;\R)}
			\\ ={}  &
				\E \bigg[ 
						\1_{D}(Y_{\llcorner t \lrcorner_{\theta}}) 
						\exp \Big(
							4 \int_{\llcorner t \lrcorner_\theta}^s 
								\eps \Big \|
									e^{(r - \llcorner r \lrcorner_{\theta}) A}
									Y_{\llcorner r \lrcorner_{\theta}}
									+\int^r_{\llcorner r \lrcorner_{\theta}}
											e^{(r-u)A}
											F(\llcorner r \lrcorner_{\theta},Y_{\llcorner r \lrcorner_{\theta}})
										\ud u
			\\& \quad
									+\Pi \Big(
										\int^r_{\llcorner r \lrcorner_{\theta}}
											e^{(r-u)A}
											B(\llcorner r \lrcorner_{\theta},Y_{\llcorner r \lrcorner_{\theta}})
										\ud W_u
									\Big)
								\Big \|^2_{H_{1/2}} 
						\ud r \Big)  
					\bigg ]
			\\ \leq{}  &
				\E \bigg[ 
						\1_{D}(Y_{\llcorner t \lrcorner_{\theta}}) 
						\exp \Big(
							12 \eps \int_{\llcorner t \lrcorner_\theta}^s 
								\Big \|
									e^{(r - \llcorner t \lrcorner_{\theta}) A}
									Y_{\llcorner t \lrcorner_{\theta}}
								\Big \|^2_{H_{1/2}} 
								+12 \eps \Big \|
										\int^r_{\llcorner r \lrcorner_{\theta}}
											e^{(r-u)A}
											F(\llcorner t \lrcorner_{\theta},Y_{\llcorner t \lrcorner_{\theta}})
										\ud u
								\Big \|^2_{H_{1/2}} 
			\\& \quad
									+12 \eps c_1 \Big \|
										\int^r_{\llcorner t \lrcorner_{\theta}}
											e^{(r-u)A}
											B(\llcorner t \lrcorner_{\theta},Y_{\llcorner t \lrcorner_{\theta}})
										\ud W_u
								\Big \|^2_{H_{1/2}} 
						\ud r \Big)  
					\bigg ].
		\end{split}
		\end{align}			
		Next note that
		Lemma \ref{l: F estimate}
			(with 
				$x \curvearrowleft F(\llcorner t \lrcorner_{\theta},Y_{\llcorner t \lrcorner_{\theta}})$,
				$\gamma \curvearrowleft \tfrac 12+\alpha$ and with $\delta \curvearrowleft \tfrac 12$)
			ensures for all $t \in [0,T]$ 
		and all $s \in[t,T]$ that
		\begin{align}
		\label{eq: exp H12 Y0 and F estimate}
			\begin{split}
						&\int_{\llcorner t \lrcorner_\theta}^s 
							\Big \|
								e^{(r - \llcorner t \lrcorner_{\theta}) A}
								Y_{\llcorner t \lrcorner_{\theta}}
							\Big \|^2_{H_{1/2}} 
							+\Big \|
									\int^r_{\llcorner r \lrcorner_{\theta}}
										e^{(r-u)A}
										F(\llcorner t \lrcorner_{\theta},Y_{\llcorner t \lrcorner_{\theta}})
									\ud u
							\Big \|^2_{H_{1/2}}
						\ud r
				\\ \leq{} & 
								(s-\llcorner t \lrcorner_\theta)
								\|
									Y_{\llcorner t \lrcorner_{\theta}}
								\|^2_{H_{1/2}}  
								+(\tfrac{1+2\alpha}{2e})^{2\alpha+1}
									\tfrac{4}{(1-2\alpha)^2}
									\int_{\llcorner t \lrcorner_\theta}^s
										(r-\llcorner r \lrcorner_\theta)^{1-2\alpha}
										\|
											F(\llcorner t \lrcorner_\theta, Y_{\llcorner t \lrcorner_\theta})
										\|^2_{H_{-\alpha}}
									\ud r
				\\ \leq{} & 
								(s-\llcorner t \lrcorner_\theta)
								\|
									Y_{\llcorner t \lrcorner_{\theta}}
								\|^2_{H_{1/2}}  
								+(\tfrac{1+2\alpha}{2e})^{2\alpha+1}
									\tfrac{4}{(1-2\alpha)^2}
										(s-\llcorner t \lrcorner_\theta)
										|\theta|^{1-2\alpha}
										\|
											F(\llcorner t \lrcorner_\theta, Y_{\llcorner t \lrcorner_\theta})
										\|^2_{H_{-\alpha}}.
			\end{split}
		\end{align}
		Moreover, Lemma \ref{l: exp 12 bound}
			(with 
				$
					Q \curvearrowleft
						\sqrt{12 \eps} \, B(\llcorner t \lrcorner_{\theta},Y_{\llcorner t \lrcorner_{\theta}})
				$),
		\eqref{eq: global bound B sett},
		\eqref{eq: eps small},
		the fact that for all ${x \in [0,\nicefrac 12]}$
		it holds that $\tfrac{1}{1-x} \leq 1+2x$ verify for all
		$t \in [0,T]$ and all $s \in (\llcorner t \lrcorner_{\theta}, t]$ that
		\begin{align}
		\label{eq: exp H12 B estimate}
			\begin{split}
					&\E \Big[ \exp \big(
								12 \eps c_1 \int_{\llcorner t \lrcorner_\theta}^s
									\Big \|
										\int^r_{\llcorner t \lrcorner_{\theta}}
											e^{(r-u)A}
											B(\llcorner t \lrcorner_{\theta},Y_{\llcorner t \lrcorner_{\theta}})
										\ud W_u
								\Big \|^2_{H_{1/2}} 
						\ud r \Big)  
						~\Big | ~Y_{\llcorner t \lrcorner_{\theta}}
						\Big]
			\\ \leq{}  & 
			\tfrac {1}{s-\llcorner t \lrcorner_\theta} \int_{\llcorner t \lrcorner_\theta}^s \E \bigg [
				\frac
					{1}
					{
						\big(
							1-48 \eps \, e \, c_1 (s-\llcorner t \lrcorner_\theta) 
							\|B(\llcorner t \lrcorner_{\theta},Y_{\llcorner t \lrcorner_{\theta}})\|^2_{HS(U,H)}
					\big)^+
					}
				~\Big | ~ ~Y_{\llcorner t \lrcorner_{\theta}}
			\bigg ] \ud r
			\\ \leq{}  &
				\tfrac {1}{s-\llcorner t \lrcorner_\theta} \int_{\llcorner t \lrcorner_\theta}^s
							1+96 \eps \, e \, c_1 (s-\llcorner t \lrcorner_\theta) \eta^2
				\ud r
			\\ ={}  &
				1+96 \eps \, e \, c_1 (s-\llcorner t \lrcorner_\theta) \eta^2.
			\end{split}
		\end{align}
		Thus, \eqref{eq: exp H12 basic estimate},
		\eqref{eq: exp H12 Y0 and F estimate},
		\eqref{eq: exp H12 B estimate},
		the fact that for all $x \in \R$ it holds that $1+x \leq e^x$,
		\eqref{eq: x in D bound exp H12}, and \eqref{eq: F in D bound exp H12}
		prove for all $t \in [0,T]$ and all $s \in (\llcorner t \lrcorner_{\theta}, t]$ that
		\begin{align}
		\begin{split}
				&\Big\|
						\1_{D}(Y_{\llcorner t \lrcorner_{\theta}}) 
						\exp \Big(
							 \int_{\llcorner t \lrcorner_\theta}^s 
								\eps \|Y_r\|^2_{H_{1/2}} 
						\ud r \Big)  
					\Big \|^4_{L^4(\P;\R)}
			\\ \leq{}  &
					\E\Big[
						\1_{D}(Y_{\llcorner t \lrcorner_{\theta}})
						\exp \Big(
							12 \eps
							(s-\llcorner t \lrcorner_\theta)
								\|
									Y_{\llcorner t \lrcorner_{\theta}}
								\|^2_{H_{1/2}}  
								+(\tfrac{1+2\alpha}{2e})^{2\alpha+1}
									\tfrac{48 \eps}{(1-2\alpha)^2}
										(s-\llcorner t \lrcorner_\theta)
										|\theta|^{1-2\alpha}
			\\& \quad \cdot 
										\|
											F(\llcorner t \lrcorner_\theta, Y_{\llcorner t \lrcorner_\theta})
										\|^2_{H_{-\alpha}}
						\Big)
				(1+96 \eps \, e \, c_1 (s-\llcorner t \lrcorner_\theta) \eta^2)
				\Big]
			\\ \leq{}  &
						\exp \Big(
							12 \eps
							(s-\llcorner t \lrcorner_\theta)
								c_2 |\theta|^{\gamma}
								+(\tfrac{1+2\alpha}{2e})^{2\alpha+1}
									\tfrac{48 \eps}{(1-2\alpha)^2}
										(s-\llcorner t \lrcorner_\theta)
										|\theta|^{1-2\alpha}
										c_2^2 |\theta|^{2\gamma}
						\Big) 
		\\& \quad
			\cdot 
				\exp (96 \eps \, e\, c_1 (s-\llcorner t \lrcorner_\theta) \eta^2)
			\\ \leq{}  &
						\exp \Big( (s-\llcorner t \lrcorner_\theta)^{1+\gamma} 12 \eps \, \big(
								c_2
								+(\tfrac{1+2\alpha}{2e})^{2\alpha+1}
									\tfrac{4c_2^2}{(1-2\alpha)^2}
										|\theta|^{1-2\alpha+\gamma}
								+8 e \, c_1 |\theta|^{-\gamma} \eta^2
						\big)
						\Big).
		\end{split}
		\end{align}
		This completes the proof of Lemma \ref{prop: H12 exp bound}.
	\end{proof}

	\section{A generalized perturbation estimate for SDEs}
  \label{sec:perturbation}

The next lemma is a generalization of Lemma 2.10 in Hutzenthaler $\&$ Jentzen 
	\cite{HutzenthalerJentzen2020}. With $\psi = (H^2 \ni (x,y) \mapsto \mu(y) \in H)$
	and $\varphi = (H^2 \ni (x,y) \mapsto \sigma(y) \in HS(U,H))$ we obtain
	Lemma 2.10 in Hutzenthaler $\&$ Jentzen 
	\cite{HutzenthalerJentzen2020}.
\begin{lemma}
	\label{l: 2.10 in Hutz Jen}
	Let $T \in (0,\infty)$, $\eps \in [0,\infty]$, $p \in [2,\infty)$,
		let
		$(H, \langle \cdot, \cdot \rangle_H, \| \cdot \|_{H})$ and
		$(U, \langle \cdot, \cdot \rangle_U, \| \cdot \|_{U})$
		be separable $\R$-Hilbert spaces with $\# H \wedge \# U> 1$,
		let 
		$ ( \Omega, \mathcal{F}, \P ) $
		be a probability space with a normal filtration 
		$ \mathbb{F} =  (\mathbb{F}_t)_{t \in [0,T]}$, 
		let 
		$
			( W_t )_{ t \in [0,T] } 
		$ 
		be an $ \operatorname{Id}_U $-cylindrical
		$ \mathbb{F} $-Wiener
		process with continuous sample paths,
	let $\sigma \colon H \to HS(U,H)$, $\mu \colon H \to H$
	$\psi \colon H^2 \to H$, $\varphi \colon H^2 \to HS(U,H)$
	be measurable,
	let $\tau \colon \Omega \to [0,T]$
	be a stopping time, let $X,Y \colon [0,T] \times \Omega \to H$ be adapted stochastic processes with 
	continuous sample paths, let $a \colon [0,T] \times \Omega \to H$,
	$b \colon [0,T] \times \Omega \to HS(U,H)$,
	$\chi \colon [0,T] \times \Omega \to \R$ be predictale stochastic processes and assume that for all
	$t \in [0,T]$ it holds a.s.\@ that
	$
			\int_0^T 
				\|a_s\|_H + \|b_s\|^2_{HS(U,H)}
				+\|\mu(X_s)\|_H +\|\sigma(X_s)\|^2_{HS(U,H)}
				+\|\psi(X_s,Y_s)\|_H+\|\varphi(X_s,Y_s)\|^2_{HS(U,H)}
			\ud s
		< \infty 
	$,
	$
			X_t
		=
			X_0
			+\int_0^t 
				\mu(X_s)
			\ud s
			+\int_0^t 
				\sigma(X_s)
			\ud W_s
		< \infty 
	$,
	$
			Y_t
		=
			Y_0
			+\int_0^t 
				a_s
			\ud s
			+\int_0^t 
				b_s
			\ud W_s
		< \infty 
	$,
	and
	\begin{equation}
			\int_0^\tau
				\bigg[
					\frac
						{
							\langle X_s-Y_s, \mu(X_s)-\psi(X_s,Y_s) \rangle_H
							+\frac{(p-1)(1+\eps)}{2}
								\|\sigma(X_s) -  \varphi(X_s,Y_s) \|^2_{HS(U,H)}
						}
						{\|X_s-Y_s\|^2_{H}}
				+\chi_s
				\bigg]^+
			\ud s
		<\infty.
	\end{equation}
	Then for all $r,q \in (0,\infty]$ with $\tfrac 1p + \tfrac 1q = \tfrac 1r$
	it holds that
	\begin{align}
		\begin{split}
				&\|X_\tau-Y_\tau\|_{L^r(\Omega;H)}
			\\ \leq{} &
				\bigg\|
					\exp\Big(
							\int_0^{\tau} \Big[
								\frac
									{
										\langle
											X_s-Y_s ,
											\mu(X_s) -\psi(X_s,Y_s)
										\rangle_H
										+\tfrac{(p-1)(1+\eps)}{2}
										\|
											\sigma (X_s) -\varphi(X_s,Y_s)
										\|^2_{HS(U,H)}
									}
									{\| X_s -Y_s\|^2_{H}}
			\\ & \qquad\qquad
								+\chi_s
							\Big]^+ \ud s
						\Big)
				\bigg\|_{L^q(\Omega;\R)}
				\cdot \bigg[
					\|X_0-Y_0\|_{L^p(\Omega;H)}
					+\Big(\E\Big [ \int_0^\tau
						p \|X_s-Y_s\|_H^{p-2}
			\\ & \qquad
						\Big[
							\big \langle
									X_s-Y_s,
								\psi(X_s,Y_s)-a_s
							\big \rangle_H
							+\tfrac{(p-1)(1+1/\eps)}{2}
							\|
								\varphi(X_s,Y_s)-b_s
							\|_{HS(U,H)}^2
			\\ & \qquad\qquad
							-\chi_s \|X_s-Y_s\|_H^2	
						\Big]^+
					\ud s
				\Big] \Big)^{\nicefrac 1p}
				\bigg].
		\end{split}
	\end{align}
	\end{lemma}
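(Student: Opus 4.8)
The plan is to follow the scheme of Lemma~2.10 in Hutzenthaler \& Jentzen \cite{HutzenthalerJentzen2020}: apply It\^o's formula to an exponentially reweighted power of the difference process, arrange the drift so that a ``good'' part is exactly cancelled by the weight, and identify the remaining ``bad'' part with the second factor in the claim. Concretely, I would set $D_t := X_t - Y_t$, so that $\ud D_t = (\mu(X_t)-a_t)\,\ud t + (\sigma(X_t)-b_t)\,\ud W_t$, abbreviate
\begin{equation*}
	\beta_t := \big\langle D_t,\, \mu(X_t)-\psi(X_t,Y_t) \big\rangle_H + \tfrac{(p-1)(1+\eps)}{2}\,\|\sigma(X_t)-\varphi(X_t,Y_t)\|^2_{HS(U,H)},
\end{equation*}
let $A_t := \int_0^t \big[ \beta_s \|D_s\|_H^{-2} + \chi_s \big]^+ \ud s$ (with the paper's conventions on $\{D_s=0\}$, so that $A_\tau$ is exactly the exponent occurring in the statement), and put
\begin{equation*}
	R_t := \big\langle D_t,\, \psi(X_t,Y_t)-a_t \big\rangle_H + \tfrac{(p-1)(1+1/\eps)}{2}\,\|\varphi(X_t,Y_t)-b_t\|^2_{HS(U,H)} - \chi_t\,\|D_t\|_H^2,
\end{equation*}
which is the integrand (without the factor $p\|D_s\|_H^{p-2}$) appearing in the asserted bound.

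First I would apply It\^o's formula to $t \mapsto e^{-pA_t} f_\delta(D_t)$ with $f_\delta(x) := (\delta^2+\|x\|_H^2)^{p/2}$ for $\delta\in(0,1]$ (the regularisation makes $f_\delta$ of class $\C^2$ for every $p\in[2,\infty)$), stopped at $\tau\wedge\sigma_n$, where $(\sigma_n)_{n\in\N}$ is the standard localising sequence turning the stochastic integral into a martingale. Using $\nabla f_\delta(x) = p(\delta^2+\|x\|_H^2)^{p/2-1}x$, the explicit Hessian of $f_\delta$, and the elementary estimate $\|(\sigma(X_s)-b_s)^*D_s\|_U^2 \le \|\sigma(X_s)-b_s\|^2_{HS(U,H)}(\delta^2+\|D_s\|_H^2)$ together with $p\ge2$, the drift of $f_\delta(D_t)$ is bounded by $p(\delta^2+\|D_t\|_H^2)^{p/2-1}\big(\langle D_t,\mu(X_t)-a_t\rangle_H + \tfrac{p-1}{2}\|\sigma(X_t)-b_t\|^2_{HS(U,H)}\big)$. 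Adding and subtracting $\psi(X_t,Y_t)$ and $\varphi(X_t,Y_t)$ and applying Young's inequality with weights $1+\eps$ and $1+1/\eps$ splits the bracket into $\beta_t + \chi_t\|D_t\|_H^2 + R_t$. The crucial elementary inequality is then $\beta_t + \chi_t\|D_t\|_H^2 \le \big[\beta_t\|D_t\|_H^{-2}+\chi_t\big]^+(\delta^2+\|D_t\|_H^2)$ (checked by distinguishing the sign of $\beta_t\|D_t\|_H^{-2}+\chi_t$); multiplying by $p(\delta^2+\|D_t\|_H^2)^{p/2-1}e^{-pA_t}$ shows that the corresponding part of the drift of $e^{-pA_t}f_\delta(D_t)$ is dominated by $p[\beta_t\|D_t\|_H^{-2}+\chi_t]^+ e^{-pA_t}f_\delta(D_t)$, which exactly cancels the drift produced by differentiating $e^{-pA_t}$. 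Hence the drift of $e^{-pA_t}f_\delta(D_t)$ is at most $p\,e^{-pA_t}(\delta^2+\|D_t\|_H^2)^{p/2-1}[R_t]^+ \le p(\delta^2+\|D_t\|_H^2)^{p/2-1}[R_t]^+$, since $A_t\ge0$.

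Then I would take expectations at time $\tau\wedge\sigma_n$ (the martingale term drops out), let $n\to\infty$ (Fatou on the left, monotone convergence on the right) and $\delta\downarrow0$ (monotone convergence, since $(\delta^2+\|D_s\|_H^2)^{p/2-1}\downarrow\|D_s\|_H^{p-2}$ for $p\ge2$) to obtain $\E\big[e^{-pA_\tau}\|D_\tau\|_H^p\big] \le \E\big[\|D_0\|_H^p\big] + p\,\E\big[\int_0^\tau\|D_s\|_H^{p-2}[R_s]^+\,\ud s\big]$; the elementary bound $(a+b)^{1/p}\le a^{1/p}+b^{1/p}$ turns this into the bracketed factor of the claim. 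Finally, H\"older's inequality with $\tfrac1p+\tfrac1q=\tfrac1r$ gives $\|X_\tau-Y_\tau\|_{L^r(\Omega;H)} = \big\|\,e^{-A_\tau}\|D_\tau\|_H\cdot e^{A_\tau}\,\big\|_{L^r(\Omega;\R)} \le \big(\E[e^{-pA_\tau}\|D_\tau\|_H^p]\big)^{1/p}\,\|e^{A_\tau}\|_{L^q(\Omega;\R)}$, and substituting the previous display proves the assertion. The only genuinely delicate point is the accounting on $\{D_s=0\}$: for $p>2$ the prefactor $\|D_s\|_H^{p-2}$ annihilates everything there, while for $p=2$ one must observe that finiteness of $\int_0^\tau[\beta_s\|D_s\|_H^{-2}+\chi_s]^+\,\ud s$ forces $\sigma(X_s)=\varphi(X_s,Y_s)$, hence $\beta_s=0$ and $[R_s]^+=R_s$, for a.e.\ such $s$, which makes the conventions $0^0=1$ and $0\cdot\infty=0$ consistent with the computation; the $\delta$-regularisation is precisely what lets this discussion be deferred to the end.
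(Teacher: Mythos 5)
Your proposal is correct and follows essentially the same route as the paper: the same exponential reweighting by the positive part $[\beta_s\|D_s\|_H^{-2}+\chi_s]^+$, the same Young-inequality splitting of $\mu(X_s)-a_s$ and $\sigma(X_s)-b_s$ through $\psi$ and $\varphi$ with weights $(1+\eps)$ and $(1+1/\eps)$, and the same concluding H\"older step with $\tfrac1p+\tfrac1q=\tfrac1r$. The only difference is that the paper outsources the It\^o identity for $\|X_t-Y_t\|_H^p\exp(-\int_0^t\hat\chi_u\,\ud u)$ (and the bookkeeping on $\{X_s=Y_s\}$) to Proposition~2.5 of \cite{HutzenthalerJentzen2020} and Remark~2.14/Example~2.15 of \cite{CoxHutzenthalerJentzen2013}, whereas you derive it directly via the $\delta$-regularisation $f_\delta$, which is a legitimate self-contained substitute.
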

	\begin{proof}
	First denote by $ \hat{\chi} \colon [0,T] \times \Omega \to [0,\infty)$
	the process satisfying for all $t \in [0,T]$ that
	\begin{align}
	\label{eq: def of chi hat}
		\begin{split}
			\hat{\chi}_t
		={} &
			p \1_{(t,T]}(\tau)
			\bigg[
				\frac
					{
						2 \langle X_t-Y_t, \mu(X_t) - \psi(X_t,Y_t) \rangle_H 
					}
					{2\|X_t -Y_t \|^2_{H}}
		\\ & 
			+\frac
					{
						(p-1)(1+\eps) \|\sigma(X_t) -\varphi(X_t,Y_t) \|^2_{HS(U,H)}
					}
					{2\|X_t -Y_t \|^2_{H}}
					+\chi_t
			\bigg]^+.
		\end{split}
	\end{align}
	We note that $\hat{\chi}$ is predictable.
	Then Proposition 2.5 in Hutzenthaler $\&$ Jentzen  
	\cite{HutzenthalerJentzen2020}
	(with $V \curvearrowleft (H^2 \ni (x,y) \mapsto \|x-y\|_H^p \in \R)$ and with
	$\chi \curvearrowleft \hat{\chi}$),
	Remark 2.14 in Cox, Hutzenthaler $\&$ Jentzen \cite{CoxHutzenthalerJentzen2013}
	and a straightforward generalization of Example 2.15 
	in Cox, Hutzenthaler $\&$ Jentzen \cite{CoxHutzenthalerJentzen2013}
	imply
	that a.s.\@ it holds that
	\begin{align}
	\label{eq: X-Y Lp norm}
		\begin{split}
				&\frac{\|X_{\tau}-Y_\tau\|_H^p}{\exp(\int_0^\tau \hat{\chi}_s \ud s)}
			=
				\|X_0-Y_0\|_H^p
				+\int_0^\tau
					\tfrac{p \|X_s-Y_s\|_H^{p-2}}{\exp(\int_0^s \hat{\chi}_u \ud u)}
					\big \langle
						X_s-Y_s,
						(\sigma(X_s)-b_s) \ud W_s
					\big \rangle
			\\ &
				+\int_0^\tau
					\tfrac{p(p-2) \|X_s-Y_s\|_H^{p-4}}{2\exp(\int_0^s \hat{\chi}_u \ud u)}
						\|
							(\sigma(X_s)-b_s)^*
							(X_s-Y_s)
						\|_U^2
				\ud s
			\\ &
				+\int_0^\tau
					\tfrac{p \|X_s-Y_s\|_H^{p-2}}{\exp(\int_0^s \hat{\chi}_u \ud u)}
					\Big(
						\big \langle
							X_s-Y_s,
							\mu(X_s)-a_s
						\big \rangle_H
						+\tfrac 12 
						\|\sigma(X_s)-b_s\|^2_{HS(U,H)}
			\\ & \qquad\qquad\qquad
						-\hat{\chi}_s \|X_s -Y_s\|^2_H
					\Big)
				\ud s.
		\end{split}
	\end{align}
	Moreover, note that Young's inequality ensures for all
	$s \in [0,T]$ that
	\begin{align}
	\label{eq: HS estimate}
		\begin{split}
				&
					\tfrac{p(p-2)}{2} \|X_s-Y_s\|_H^{p-4}
						\|
							(\sigma(X_s)-b_s)^*
							(X_s-Y_s)
						\|_U^2
			\\ &
				+p \|X_s-Y_s\|_H^{p-2}
					\Big(
						\big \langle
							X_s-Y_s,
							\mu(X_s)-a_s
						\big \rangle_H
						+\tfrac 12 
						\|\sigma(X_s)-b_s\|^2_{HS(U,H)}
					\Big)
			\\ ={} &
					\tfrac{p(p-2)}{2} \|X_s-Y_s\|_H^{p-4}
					\big(
						\|
							(\sigma(X_s)-\varphi(X_s,Y_s))^*
							(X_s-Y_s)
						\|_U^2
						+\|
							(\varphi(X_s,Y_s)-b_s)^*
							(X_s-Y_s)
						\|_U^2
					\big)
			\\&
				+
					p(p-2) \|X_s-Y_s\|_H^{p-4}
						\big \langle \big(
							(\sigma(X_s)-\varphi(X_s,Y_s))^*
							(X_s-Y_s),
							(\varphi(X_s,Y_s)-b_s)^* (X_s-Y_s)
						\big \rangle_{U}
			\\ &
				+\tfrac p2 \|X_s-Y_s\|_H^{p-2}
					\big(
						\|\sigma(X_s)-\varphi(X_s,Y_s)\|^2_{HS(U,H)}
						+\|\varphi(X_s,Y_s)-b_s\|^2_{HS(U,H)}
					\big)
			\\ &
				+p \|X_s-Y_s\|_H^{p-2}
					\langle
						\sigma(X_s)-\varphi(X_s,Y_s),
						\varphi(X_s,Y_s)-b_s
					\rangle_{HS(U,H)}
			\\ &
				+p \|X_s-Y_s\|_H^{p-2}
					\big(
						\langle
							X_s-Y_s,
							\mu(X_s)-\psi(X_s,Y_s)
						\rangle_H
						+\langle
							X_s-Y_s,
							\psi(X_s,Y_s)-a_s
						\rangle_H
					\big)
			\\ \leq{} &
				\tfrac{p(p-1)}{2} \|X_s-Y_s\|_H^{p-2}
					\big(
						\|
							\sigma(X_s)-\varphi(X_s,Y_s)
						\|_{HS(U,H)}^2
						+\|
							\varphi(X_s,Y_s)-b_s
						\|_{HS(U,H)}^2
				\\ & \qquad\qquad
						+2\|
							\sigma(X_s)-\varphi(X_s,Y_s)
						\|_{HS(U,H)}
						\|
							\varphi(X_s,Y_s)-b_s
						\|_{HS(U,H)}
					\big)
				\\ &
					+p \|X_s-Y_s\|_H^{p-2}
					\big(
						\langle
							X_s-Y_s,
							\mu(X_s)-\psi(X_s,Y_s)
						\rangle_H
						+\langle
							X_s-Y_s,
							\psi(X_s,Y_s)-a_s
						\rangle_H
					\big)
			\\ \leq{} &
				\tfrac{p(p-1)}{2} \|X_s-Y_s\|_H^{p-2}
					\big(
						(1+\eps)
						\|
							\sigma(X_s)-\varphi(X_s,Y_s)
						\|_{HS(U,H)}^2
						+(1+\tfrac {1}{\eps})
						\|
							\varphi(X_s,Y_s)-b_s
						\|_{HS(U,H)}^2
					\big)
			\\ &
				+p \|X_s-Y_s\|_H^{p-2}
					\big(
						\langle
							X_s-Y_s,
							\mu(X_s)-\psi(X_s,Y_s)
						\rangle_H
						+\langle
							X_s-Y_s,
							\psi(X_s,Y_s)-a_s
						\rangle_H
					\big).
		\end{split}
	\end{align}
	Combining \eqref{eq: X-Y Lp norm}, 
	\eqref{eq: HS estimate}, and 
	\eqref{eq: def of chi hat} proves that
	a.s.\@ it holds that
	\begin{align}
		\begin{split}
				&\frac{\|X_{\tau}-Y_\tau\|_H^p}{\exp(\int_0^\tau \hat{\chi}_s \ud s)}
			\leq
				\|X_0-Y_0\|_H^p
				+\int_0^\tau
					\tfrac{p \|X_s-Y_s\|_H^{p-2}}{\exp(\int_0^s \hat{\chi}_u \ud u)}
					\big \langle
						X_s-Y_s,
						(\sigma(X_s)-b_s) \ud W_s
					\big \rangle
			\\ &
				+\int_0^\tau
					\tfrac{p \|X_s-Y_s\|_H^{p-2}}{2\exp(\int_0^s \hat{\chi}_u \ud u)}
					\Big[
						2\big \langle
								X_s-Y_s,
							\psi(X_s,Y_s)-a_s
						\big \rangle_H
						+(p-1)(1+\tfrac {1}{\eps})
						\|
							\varphi(X_s,Y_s)-b_s
						\|_{HS(U,H)}^2
			\\ & \qquad
						-\chi_s \|X_s-Y_s\|_H^{2}
					\Big]^+
				\ud s.
		\end{split}
	\end{align}
	Thus a localization of the stochastic integral and Fatou's lemma verify that
	\begin{align}
		\begin{split}
				&\E\bigg[
					\frac{\|X_{\tau}-Y_\tau\|_H^p}{\exp(\int_0^\tau \hat{\chi}_s \ud s)}
				\bigg]
			\\ \leq{} & 
				\E[ \|X_0-Y_0\|_H^p]
				+\E\bigg[
					\int_0^\tau
						\tfrac{p \|X_s-Y_s\|_H^{p-2}}{2\exp(\int_0^s \hat{\chi}_u \ud u)}
						\Big[
							2\big \langle
									X_s-Y_s,
								\psi(X_s,Y_s)-a_s
							\big \rangle_H
				\\ & \qquad
							+(p-1)(1+\tfrac {1}{\eps})
							\|
								\varphi(X_s,Y_s)-b_s
							\|_{HS(U,H)}^2
							-\chi_s \|X_s-Y_s\|_H^2	
						\Big]^+
					\ud s
				\bigg].
		\end{split}
	\end{align}
	Therefore H\"older's inequality and the fact that $\hat{\chi} \geq 0$
	show for all $q, r \in (0,\infty]$ with 
	$\tfrac 1p +\tfrac 1q = \tfrac 1r$ that
	\begin{align}
		\begin{split}
				&\|
					X_{\tau}-Y_\tau
				\|^p_{L^r(\P;H)}
			\leq
				\Big\|
					\exp \Big(\tfrac 1p \int_0^\tau \hat{\chi}_s \ud s \Big)
				\Big \|^p_{L^q(\P;\R)}
				\bigg \|
					\frac{\|X_{\tau}-Y_\tau\|_H}{\exp( \frac 1p \int_0^\tau \hat{\chi}_s \ud s)}
				\bigg \|^p_{L^p(\P;\R)}
			\\ \leq{} & 
				\Big\|
					\exp \Big(\tfrac 1p \int_0^\tau \hat{\chi}_s \ud s \Big)
				\Big \|^p_{L^q(\P;\R)}
				\E\bigg [ \|X_0-Y_0\|_H^p
					+\int_0^\tau
						p \|X_s-Y_s\|_H^{p-2}
			\\ & \qquad
						\Big[
							\big \langle
									X_s-Y_s,
								\psi(X_s,Y_s)-a_s
							\big \rangle_H
							+\tfrac{(p-1)(1+1/\eps)}{2}
							\|
								\varphi(X_s,Y_s)-b_s
							\|_{HS(U,H)}^2
			\\ & \qquad
							-\chi_s \|X_s-Y_s\|_H^2	
						\Big]^+
					\ud s
				\bigg].
		\end{split}
	\end{align}
	This completes the prove of Lemma \ref{l: 2.10 in Hutz Jen}.
\end{proof}

	\section{Strong convergence rates for stochastic Burgers equations}
  \label{sec:rate.Burgers}

In this section we apply our results to stochastic Burgers equations with
zero Dirichlet boundary conditions and
multiplicative noise.
More precisely we apply Proposition \ref{prop: basic exp bound}
to prove in Proposition \ref{prop: expo Momente Burger}
exponential moment bounds for  
tamed exponential Euler approximations
of stochastic Burgers equations.
This together with Lemma \ref{l: 2.10 in Hutz Jen}
is used in Proposition \ref{prop: Konv result} to derive 
strong temporal convergence rate $1/2$ of tamed exponential Euler approximations
of stochastic Burgers equations.
In Lemma \ref{l: suff cond Pi} we prove
sufficient conditions for the taming functions $\Pi^n$, $n\in\N$.
Finally, we combine these results with Corollary 3.11 in
Hutzenthaler \& Jentzen \cite{HutzenthalerJentzen2020}
to establish in Corollary \ref{cor:comb_res} strong temporal convergence rate $1/2$
and strong spatial convergence rate $1-$.

Throughout this section the following setting is frequently used.
	\begin{sett}
		\label{sett 3}
		Let $T, \eta, \varsigma, c_2 \in (0,\infty)$, $c_1 \in \R$,
		$\gamma_1 \in [-\nicefrac 14,0)$, $\eps \in (0, (96 T e c_2 \eta^2)^{-1} \wedge 1]$,
		for every $n \in \N$ denote by
		$\llcorner \cdot \lrcorner_n \colon [0,T] \to [0,T]$ the function 
		satisfying for all $t \in (0,T]$ that
		$\llcorner t \lrcorner_n = \max\{\tfrac{Tk}{n} \colon k \in \N_0, ~ \tfrac{Tk}{n} < t \}$
		and that $\llcorner 0 \lrcorner_n=0$,
		let $H =L^2((0,1);\R)$,
		let $(e_i)_{i \in \N} \subseteq H$ 
		satisfy for all $i \in \N$ and 
		Lebesgue-a.a.\@ $x\in (0,1)$ that
		$
			e_i(x)= \sqrt{2} \sin(i \pi x),
		$
		let $(\lambda_i)_{i \in \N} \subseteq (-\infty, 0)$
		satisfy that
		$(\lambda_i)_{i \in \N} = (-\pi^2 i^2)_{i \in \N}$,
		let $P_N \colon H \to H$, $N \in \N_0$, 
		satisfy for all $x \in H$ and all $N \in \N$ that
		$
				P_N(x) 
			=
				\sum^{N}_{ i = 1 }  
					\langle e_i, x \rangle_H e_i,
		$
		and that
		$
				P_0(x) 
			=
				\id_H,
		$
		let
		$
			A \colon D(A) \subseteq H \rightarrow H
		$ 
		be a linear operator satisfying that
		\begin{equation}
			D(A) 
			= 
				\biggl\{ 
					x \in H 
					\colon
					\sum^{\infty}_{ i = 1 } 
			\left| 
				\lambda_i 
				\langle e_i, x \rangle_H 
			\right|^2
			< \infty
				\biggr\}
		\end{equation}
		and that for all $ v \in D(A) $ it holds that
		\begin{equation}
				Av
			 =
				\sum^{\infty}_{ i =1 } 
				\lambda_i \langle e_i, v \rangle_H \, e_i,
		\end{equation}
		let $A_N \colon H \to H$, $N \in \N_0$, be
		linear operators
		satisfying for all
		$x \in H$ and all $N \in \N_0$ that
		$
				A_N (x)
			= 
				\sum_{i \in \N} \lambda_{i \wedge N} \langle x, e_i \rangle_{H} e_i
		$,
		let 
		$ 
				( H_r , \left< \cdot , \cdot \right>_{ H_r }, \left\| \cdot \right\|_{ H_r } ) 
		$,
		$ r \in \R $,
		be a family of interpolation spaces associated with
		$ - A $
		(see, e.g., Definition~3.6.30 in Jentzen \cite{Jentzen2015}),
		for every $r \in \R$
		extend the norm $\| \cdot \|_{H_r}$
		to a semi-norm
		$
			\| \cdot \|_{H_r} \colon \bigcup_{i=1}^{\infty} H_{-i} \to [0, \infty]
		$,
		let
		$F \in \C(H, H_{-1})$ be a function satisfying for all
		$x \in H_1$ that
		$F(x) = -c_1 x'x$, let $F_N \colon H \to H$, $N \in \N_0$, 
		satisfy for all
		$x \in H$ and all $N \in \N_0$ that
		$F_N(x) = P_N F( P_N x)$,
		let
		$B \colon H \to HS(H,H)$
		satisfy that
		\begin{equation}
		\label{eq: global bound B sett 3}
			\sup_{x \in H} \|B(x)\|_{HS(H,H)} \leq \eta
		\end{equation}
		and that for all $x,y \in H$ it holds that
		\begin{equation}
		\label{eq: Lipschitz cont B sett 3}
			\|B(x) -B(y)\|_{HS(H,H)} \leq \varsigma \|x-y\|_H,
		\end{equation}
		 let $B_N \colon H \to HS(H,H)$, $N \in \N_0$,
		satisfy for all
		$x \in H$ and all $N \in \N_0$ that
		$B_N(x) = P_N B( P_N x) P_N$,
		let $V \in \C^{1,2}([0,T] \times H, [0,\infty))$
		satisfy for all $(t,x) \in [0,T] \times H$ that
		$V(t,x) = e^{-2\eta^2 t} (\|x\|^2_H+1)$,
		let $\overline{V} \colon H_{1/2} \to \R$
		satisfy for all $x \in H_1$ that
		$\overline{V}(x) = e^{-2\eta^2 T} \eps \|x\|^2_{H_{1/2}}$, 
		let 
		$ ( \Omega, \mathcal{F}, \P ) $
		be a probability space with a normal filtration 
		$ \mathbb{F} =  (\mathbb{F}_t)_{t \in [0,T]}$, 
		let 
		$
			( W_t )_{ t \in [0,T] } 
		$ 
		be an $ \operatorname{Id}_H $-cylindrical
		$ \mathbb{F} $-Wiener
		process with continuous sample paths,
		let $D \colon \N \to \mathcal{B}(H)$ satisfy for all $n \in \N$ that
		\begin{equation}
		\label{eq: def of D}
					D_n
				=
					\{
						x \in H \colon 
								\|x\|_{H_{1/2}} \vee 
								\|x\|^2_{H_{1/2}} 
							\leq 
								\big(\tfrac{T}{n}\big)^{\gamma_1}
					\},
		\end{equation}
		let $\Pi^n \in \C^2(H,H)$, $n \in \N$,
		let $L^n \colon H \to L(H,H)$, $n \in \N$, be measurable
		and satisfy that
		\begin{equation}
		\label{eq: Ln global bound}
			\sup_{n \in \N} \|L^n(x)\|_{L(H,H)} < \infty
		\end{equation}
		and that
		for all $x\in H$ and all $i, n \in \N$ it holds that
		\begin{equation}
		\label{eq: Ln is diagonal}
			 L^n(x) e_i = \langle L^n(x) e_i, e_i\rangle_H e_i,
		\end{equation}
		and let $Y^{N,n} \colon [0,T] \times \Omega \to H$, $N,n \in \N$,
		be adapted stochastic processes with continuous sample paths
		satisfying for all $N \in \N$ and all $n \in \N$ that
		$\sup_{N,n \in \N} \|Y_0^{N,n}\|_{L^\infty(\P;H_{1/2})}< \infty$ and
		 for all $N \in \N$, $n \in \N$,
		and all $t \in [0,T]$ that a.s.\@ it holds that
		\begin{equation}
		\label{eq: def Y sett 3}
			\begin{split}
					Y_t^{N,n}
				={} &
					e^{(t - \llcorner t \lrcorner_{n}) A_N}
					Y^{N,n}_{\llcorner t \lrcorner_{n}}
					+\1_{D_{n}}(Y^{N,n}_{\llcorner t \lrcorner_{n}})
					\int^t_{\llcorner t \lrcorner_{n}}
						e^{(t-s)A_N}
						F_N(Y^{N,n}_{\llcorner t \lrcorner_{n}})
					\ud s
				\\&
					+\1_{D_{n}}(Y^{N,n}_{\llcorner t \lrcorner_{n}})
					\Pi^n \Big(
							\int^t_{\llcorner t \lrcorner_{n}}
							e^{(t-s)A_N}
							B_N(Y^{N,n}_{\llcorner t \lrcorner_{n}})
							\dWs
					\Big).
			\end{split}
		\end{equation}
		\end{sett}
		In the next lemma we apply Lemma \ref{l: Lp H 1/2 estimate Y} to get 
		moment estimate with respect to the $\| \cdot \|_{H_{1/2}}$-norm.
		of the galerkin approximation.
	\begin{lemma}	
	\label{l: X lp H1/2 bound}
		Assume Setting \ref{sett 3},
		let $p \in [2,\infty)$,
		let $X^N \colon [0,T] \times \Omega \to H$, $N \in \N$,
		be adapted stochastic processes with continuous sample paths
		satisfying for all $N \in \N$
		and all $t \in [0,T]$ a.s.\@ that
		\begin{equation}
		\label{eq: def X 1}
					X_t^{N}
				={} 
					e^{t A_N}
					P_N(X^{N}_{0})
					+
					\int^t_{0}
						e^{(t-s)A_N}
						F_N(X^{N}_{s})
					\ud s
					+
						\int^t_{0}
							e^{(t-s)A_N}
							B_N(X^{N}_{s})
						\dWs.
		\end{equation}
		Then it holds that
		\begin{equation}
					\sup_{t \in [0,T]} \sup_{N \in \N} 
						\| X_t^{N} \|_{L^p(\P,H_{1/2})}
				<\infty.
		\end{equation}
	\end{lemma}
	\begin{proof}
	First note, that Corollary 2.4 in in Cox, Hutzenthaler \& Jentzen \cite{CoxHutzenthalerJentzen2013}
	and Lemma 3.1-3.2 in Jentzen \& Pu\v{s}nik \cite{JentzenPusnik2018} prove that
	\begin{equation}
	\label{eq: X Lp 4 15 bound}
					\sup_{t \in [0,T]} \sup_{N \in \N} 
						\| X_t^{N} \|_{L^p(\P,H_{4/15})}
				<\infty.
		\end{equation}
		Furthermore, Corollary 4.22 (with $\alpha_1 \curvearrowleft \tfrac{91}{120}$) 
		in Jentzen, Lindner \& Pu\u{s}nik 
		\cite{JentzenLindnerPusnik2019}
		ensure
		that there exists a $C \in (0,\infty)$ such that for all $x \in H$ and all $N \in \N$
		it holds that
		\begin{align}
		\label{eq: F estimates in H norm2}
				\|F_N(x)\|_{H_{-91/120}}
			\leq
				\|F(P_N(x))\|_{H_{-91/120}}
			\leq
				C \|P_N(x)\|^2_{H}
			\leq
				C \|x\|^2_{H}.
		\end{align}
		Moreover, 
		Lemma 4.16 in Jentzen, Lindner \& Pu\u{s}nik 
		\cite{JentzenLindnerPusnik2019}
			(with $\gamma \curvearrowleft \tfrac{4}{15}$, $\vartheta \curvearrowleft \tfrac{1}{3}$,
			$v \curvearrowleft x$, $w \curvearrowleft 0$)
		verifies that there exists a $C \in (0,\infty)$ 
		such that for all $x \in H$ and all $N \in \N$
		it holds that
		\begin{align}
		\label{eq: F estimates in 4/15 norm}
				\|F_N(x)\|_{H_{-1/3}}
			\leq
				C \|P_N(x)\|_{H_{4/15}} (1+\|P_N(x)\|_{H_{4/15}})
			\leq
				2C (1+\|x\|^2_{H_{4/15}}).
		\end{align}
	 Therefore, Lemma \ref{l: Lp holder estimate Y} 
			(with $\Pi \defeq \id_H$, $\kappa \defeq \id_\R$,
			$\alpha \curvearrowleft 91/120$, $\alpha_1 \defeq 0$,
			$q \curvearrowleft 2$),
		\eqref{eq: F estimates in H norm2},
		\eqref{eq: X Lp 4 15 bound},
		and the fact that $\forall x\in H \colon \|x\|_{29/120} \leq \|x\|_{H_{4/15}}$
		demonstrate
		for all $q \in [2,\infty)$ that
		\begin{equation}
		\label{eq: X Lp H holder estimate}
					\sup_{s,t \in [0,T]}
					\sup_{N \in \N}
						\tfrac{\|X^{N}_t -X^{N}_s\|_{L^q(\P;H)}}{|t-s|^{29/120}} 
			< 
				\infty.
		\end{equation}
		In addition,
		Lemma \ref{l: Lp H 1/2 estimate Y}
		(with $\Pi \defeq \id_H$, $\kappa \defeq \id_\R$,
			$\alpha \curvearrowleft 91/120$, $\alpha_1 \curvearrowleft -1/3$, 
			$\alpha_2 \curvearrowleft 4/15$,
			$\gamma \curvearrowleft 1/2$, 
			$q_1 \curvearrowleft 2$, $q_2 \curvearrowleft 2$),
		Lemma \ref{l: DPi A estimate},
		\eqref{eq: X Lp H holder estimate},
		\eqref{eq: Lipschitz cont B sett 3},
		\eqref{eq: F estimates in H norm2},
		\eqref{eq: F estimates in 4/15 norm},
		\eqref{eq: X Lp 4 15 bound} 
	  imply that 
		\begin{equation}
					\sup_{t \in [0,T]}
					\sup_{N \in \N}
					\|X^{N}_t\|_{L^p(\P;H_{1/2})} 
			< 
				\infty.
		\end{equation}
		This completes the proof of Lemma \ref{l: X lp H1/2 bound}.
	\end{proof}
	 In the next lemma we establish spatial convergence rate $1$.
	It is a direct consequence of Lemma \ref{l: X lp H1/2 bound}
	and well known results.
	\begin{corollary}
		\label{cor: spatial conv}
		Assume Setting \ref{sett 3},
		let $p \in [2,\infty)$,
		let $X^N \colon [0,T] \times \Omega \to H$, $N \in \N_0$,
		be adapted stochastic processes with continuous sample paths
		satisfying for all $N \in \N_0$
		and all $t \in [0,T]$ a.s.\@ that
		\begin{equation}
		\label{eq: def X 2}
					X_t^{N}
				={} 
					e^{t A_N}
					P_N(X^{N}_{0})
					+
					\int^t_{0}
						e^{(t-s)A_N}
						F_N(X^{N}_{s})
					\ud s
					+
						\int^t_{0}
							e^{(t-s)A_N}
							B_N(X^{N}_{s})
						\dWs.
		\end{equation}
		Then there exists a $C \in (0,\infty)$ such that for all $N \in \N_0$ it holds that
		\begin{equation}
					\sup_{t \in [0,T]} 
						\| X_t^{0} - X_t^{N} \|_{L^p(\P,H)}
				\leq C \, N^{-1}
		\end{equation}
	\end{corollary}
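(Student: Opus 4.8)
The plan is to recognise $X^0$ as the (unique) mild solution of the stochastic Burgers equation and $X^N$, $N\in\N$, as its spectral Galerkin approximations, and to quantify the error by combining the uniform $H_{1/2}$-regularity supplied by Lemma \ref{l: X lp H1/2 bound} with the perturbation estimate of Lemma \ref{l: 2.10 in Hutz Jen} and the eigenvalue asymptotics $\lambda_i=-\pi^2i^2$. First I would record the elementary spectral estimate $\|(\id_H-P_N)x\|_H\le(\pi N)^{-1}\|x\|_{H_{1/2}}$ for all $x\in H_{1/2}$ and all $N\in\N$, together with the identities $e^{tA_N}P_N=e^{tA}P_N$, $F_N(x)=P_NF(P_Nx)$, $B_N(x)=P_NB(P_Nx)P_N$, which show that each $X^N$, $N\in\N$, takes values in $P_NH$ and solves a genuine (finite-dimensional, since $A_N\in L(H)$) It\^o equation in $H$. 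Next I would observe that Lemma \ref{l: X lp H1/2 bound} gives $\sup_{t\in[0,T]}\sup_{N\in\N}\|X^N_t\|_{L^p(\P,H_{1/2})}<\infty$, that the same bound holds for $N=0$ (either by the regularity results already invoked in the proof of Lemma \ref{l: X lp H1/2 bound}, or by lower semicontinuity once the convergence $X^N\to X^0$ is known), and that $X^N_0=X^0_0\in H_{1/2}$, so the contribution of the initial conditions to the error is $O(N^{-1})$ in $L^p(\P,H)$.

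I would then decompose $X^0_t-X^N_t=(\id_H-P_N)X^0_t+(P_NX^0_t-X^N_t)$. The first summand is bounded in $L^p(\P,H)$ by $(\pi N)^{-1}\sup_{t\in[0,T]}\|X^0_t\|_{L^p(\P,H_{1/2})}$ and hence is $O(N^{-1})$. For the second summand, which lives in $P_NH$, I would apply Lemma \ref{l: 2.10 in Hutz Jen} with the solution process $\leftarrow X^N$, the comparison process $\leftarrow P_NX^0$, and $\psi,\varphi$ chosen so that $\psi(X^N,P_NX^0)=A_NX^N+F_N(P_NX^0)$ and $\varphi(X^N,P_NX^0)=B_N(P_NX^0)$, with $\chi_s$ a constant multiple of $(1+\|X^N_s\|_{H_{1/2}}^{\rho}+\|P_NX^0_s\|_{H_{1/2}}^{\rho})$ coming from the one-sided Lipschitz (coercivity) estimate for the Burgers nonlinearity $F(x)=-c_1x'x$: writing $F(u)-F(v)=-c_1\bigl(u'(u-v)+(u-v)'v\bigr)$, integrating by parts using the zero Dirichlet boundary conditions, applying the one-dimensional Gagliardo--Nirenberg inequality $\|w\|_{L^4}^2\le C\|w\|_H^{3/2}\|w\|_{H_{1/2}}^{1/2}$ and Young's inequality absorbs the dangerous term into $\tfrac14\|u-v\|_{H_{1/2}}^2$. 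The residual ``source'' terms $\psi(X^N,P_NX^0)-(A_NP_NX^0+P_NF(X^0))=P_N\bigl(F(P_NX^0_s)-F(X^0_s)\bigr)$ and $\varphi(X^N,P_NX^0)-P_NB(X^0)=P_N\bigl(B(P_NX^0_s)-B(X^0_s)\bigr)P_N$ are controlled, via the local Lipschitz property of $F$ and the global Lipschitz property \eqref{eq: Lipschitz cont B sett 3} and global bound \eqref{eq: global bound B sett 3} of $B$, by $\|(\id_H-P_N)X^0_s\|_H$ times polynomially growing factors in the $H_{1/2}$-norms; by the spectral estimate and the uniform $H_{1/2}$-moment bounds these are $O(N^{-1})$ in the relevant $L^p$-spaces. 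Lemma \ref{l: 2.10 in Hutz Jen} (with $r<p$ and a finite conjugate exponent $q$) then yields $\|P_NX^0_t-X^N_t\|_{L^r(\P,H)}\le C\,N^{-1}$, and the corollary follows after passing from $r$ back to the prescribed integrability by enlarging the auxiliary exponent.

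The main obstacle is to guarantee that the exponential prefactor $\|\exp(\int_0^t[\cdots+\chi_s]^+\,\ud s)\|_{L^q(\P,\R)}$ in Lemma \ref{l: 2.10 in Hutz Jen} is finite uniformly in $N$ for all $q\in[1,\infty)$: since $F$ is only locally Lipschitz, $\chi_s$ necessarily involves powers of the $H_{1/2}$-norms of $X^N$ and $X^0$, for which Lemma \ref{l: X lp H1/2 bound} only provides polynomial $L^p(\P,\R)$-bounds, not exponential integrability of the time integral. I would resolve this by invoking the exponential moment bounds for $X^N$ and $X^0$ in an appropriate norm --- available by the techniques of Sections \ref{sec:exponential}--\ref{sec:moments} (applied with $\Pi\leftarrow\id_H$, $\kappa\leftarrow\id$) or quotable from the literature --- or, more economically, by quoting the corresponding spatial convergence result for spectral Galerkin approximations of the stochastic Burgers equation directly from Jentzen, Lindner \& Pu\v{s}nik \cite{JentzenLindnerPusnik2019} (respectively Hutzenthaler \& Jentzen \cite{HutzenthalerJentzen2020}): once Lemma \ref{l: X lp H1/2 bound} has been established, its conclusion (together with \eqref{eq: global bound B sett 3}, \eqref{eq: Lipschitz cont B sett 3}, and the known local Lipschitz and coercivity estimates for the one-dimensional Burgers nonlinearity) supplies precisely the hypotheses of that result, and the eigenvalue asymptotics $\lambda_i\sim-\pi^2i^2$ convert the gained half-derivative of spatial regularity into the claimed rate $N^{-1}$.
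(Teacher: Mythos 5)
Your proposal is essentially correct, and your ``more economical'' alternative is precisely what the paper does: the published proof is a single sentence quoting Lemma~\ref{l: X lp H1/2 bound} for the uniform $H_{1/2}$-moment bounds and then Display~(107) in Corollary~3.11 of Hutzenthaler \& Jentzen~\cite{HutzenthalerJentzen2020} (with $\alpha\curvearrowleft 1$, $r\curvearrowleft p$), which converts that half-order of extra spatial regularity into the rate $N^{-1}$ via exactly the spectral estimate and eigenvalue asymptotics you record. Your first, more explicit route---decomposing $X^0-X^N=(\id_H-P_N)X^0+(P_NX^0-X^N)$ and feeding the second piece into Lemma~\ref{l: 2.10 in Hutz Jen} with $\psi,\varphi$ the discretized coefficients evaluated along the comparison process---is a legitimate unfolding of what the quoted corollary proves, and you correctly identify the one real obstacle: the exponential prefactor in Lemma~\ref{l: 2.10 in Hutz Jen} needs exponential integrability of $\int_0^T\bigl(\|X^N_s\|^2_{H_{1/2}}+\|X^0_s\|^2_{H_{1/2}}\bigr)\ud s$, not merely polynomial $L^p$-moments. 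That exponential moment bound is in fact available in the paper (see \eqref{eq: exp bound X} in the proof of Proposition~\ref{prop: Konv result}, obtained from \cite[Corollary~3.3]{hudde2021stochastic} together with \eqref{eq: Lyapunov equation Burger}), so your direct route could be completed; the paper simply chooses to quote the ready-made spatial-rate result rather than re-derive it. The citation buys brevity; your direct argument would buy self-containedness at the cost of reproving a result that \cite{HutzenthalerJentzen2020} already supplies under the same hypotheses.
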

		\begin{proof}
			This follows directly from Lemma \ref{l: X lp H1/2 bound} and 
			Display (107) in Corollary 3.11 in Hutzenthaler \& Jentzen \cite{HutzenthalerJentzen2020}
			(with $\alpha \curvearrowleft 1$, $r \curvearrowleft p$).	
		\end{proof}
In the next proposition we apply Proposition \ref{prop: basic exp bound}
to get exponential moment bounds for tamed exponential Euler approximations
of stochastic Burgers equations.
		\begin{prop}
		\label{prop: expo Momente Burger}
		Assume Setting \ref{sett 3},
		let $c_3 \in [1,\infty)$,
		$p\in [4,\infty)$,
		$\gamma_2 \in [\tfrac{-1-2\gamma_1}{2} , \gamma_1+\nicefrac 12]$,
		$\gamma_3 \in [0, \infty)$,
		$\gamma_4 \in (0, \nicefrac 12)$,
		for every $s\in[0,T]$ let
		$X^{N,n}_s \colon [s,T] \times \Omega \to H$, $N,n \in \N$, be an 
		adapted stochastic process with continuous sample paths
		satisfying that for all $t\in [s,T]$ it holds a.s.\@ that
		\begin{equation}
				X^{N,n}_{s,t}
			=
				\int_s^t e^{(t-u)A_N} B_N(Y^{N,n}_{\llcorner u \lrcorner_{n} }) \ud W_u,
		\end{equation}
		assume for all $x \in H$, $z \in HS(H,H)$ and all $n \in \N$ that
		\begin{align}\
			\label{eq: Pi H bound Burger}
			&\| \Pi^n(x) \|_H \leq c_2 (n^{-\gamma_2} \wedge \|x\|_H),\\
			\label{eq: Pi H 1/2 bound Burger}
			&\| \Pi^n(x) \|_{H_{1/2}} \leq c_2 \|x\|_{H_{1/2}},\\
			&\|(D \Pi^n)(x)- \id_H\|_{L(H,H)}
			\leq
				c_2 \|x\|_{H}, \\
			&\|(-A)^{\gamma_4} ((D^n \Pi)(x)- L^n(x))\|_{L(H,H)}
			\leq
				c_2 \|x\|_{H_{\gamma_4}}, \\
			&\|(D \Pi^n)(x) (A x)-A \Pi^n(x)\|_H
			\leq
				c_2 n^{-\gamma_3} \|x\|_{H_{1/2}} (\|x\|_{H_{1/2}} +1) (\|x\|_{H} +1)^{c_3}, \\
			&\Big\|
				\sum^\infty_{i =1}
					(D^2 \Pi^n)(x)\big(
						z e_i,
						z e_i \big)
				\Big\|_H
			\leq 
				c_2 \|x\|_H \cdot \|z \|^2_{HS(H,H)}.
		\end{align}
		Then it holds that
		\begin{equation}
		\label{t: Lp H 1/2 estimate}
					\sup_{t \in [0,T]}
					\sup_{N \in \N}
					\sup_{n \in \N}
					\|Y^{N,n}_t\|_{L^p(\P;H_{1/2})} 
			< 
				\infty,
		\end{equation}
		that
		\begin{align}
			\begin{split}
				&\sup_{t \in [0,T]}
					\sup_{N \in \N}
					\sup_{n \in \N \cap [T,\infty)}
					n^{\gamma_3 \wedge \nicefrac 12}
					\bigg\|
					\1_{D_n}(Y^{N,n}_{\llcorner t \lrcorner_{n}}) \Big(
						F_N(Y^{N,n}_{\llcorner t \lrcorner_{n}})
						+(D \Pi^n)(X^{N,n}_{\llcorner t \lrcorner_{n},t}) (A_N X^{N,n}_{\llcorner t \lrcorner_{n},t})
				\\ & \qquad
							-A_N \Pi^n(X^{N,n}_{\llcorner t \lrcorner_{n},t})
					+\tfrac 12
						\sum_{i \in \mathcal{J}}
							(D^2 \Pi^n)(X^{N,n}_{\llcorner t \lrcorner_{n},t}) \Big(
								B_N(
									Y^{N,n}_{\llcorner t \lrcorner_n }
								) \tilde{e}_i,
								B_N(
									Y^{N,n}_{\llcorner t \lrcorner_n }
								) \tilde{e}_i
							\Big)
				\\ & \qquad
						- F_N(Y^{N,n}_t)
					\Big)
				\bigg\|_{L^p(\P;H_{-1/2})}
			<
				\infty,
			\end{split}
		\end{align}
		that
		\begin{equation}
			\begin{split}
				\sup_{t \in [0,T]}
					\sup_{N \in \N}
					\sup_{n \in \N \cap [T,\infty)}
					&n^{\gamma_3 \wedge \nicefrac 12}
					\Big\|
					\1_{D_n}(Y^{N,n}_{\llcorner t \lrcorner_{n}}) \Big(
						(D \Pi^n)(X^{N,n}_{\llcorner t \lrcorner_{n},t}) (
								B_N(
									Y^{N,n}_{\llcorner t \lrcorner_n }
								)
						)
				\\ & \qquad
					-B_N(
							Y^{N,n}_{t}
						)
					\Big)
				\Big \|_{L^p(\P;HS(H,H))}
			<
				\infty,
			\end{split}
		\end{equation} 
		and that
			\begin{equation}
					\sup_{t \in [0,T]}
					\sup_{N \in \N}
					\sup_{n \in \N \cap [T,\infty)}
					\E \Big[ 
						\exp \Big(
							V(t,Y^{N,n}_t)
							+ \int_{0}^t
								\1_{D_n}(Y^{N,n}_{\llcorner s \lrcorner_{n}})
								\overline{V}(Y^{N,n}_s)
							\ud s 
						\Big) 
					\Big] 
			< 
				\infty.
		\end{equation}
	\end{prop}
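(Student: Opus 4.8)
The plan is to verify the hypotheses of Proposition \ref{prop: basic exp bound} and then invoke it directly. To this end, fix $\gamma_2 \curvearrowleft \gamma_4$, $\gamma_5 \curvearrowleft \gamma_3$, $c_1 \curvearrowleft$ the right-hand side of \eqref{eq: def c_1} (computed from the constants $c_2, c_3, c_5, \eta, \varsigma$ of the present setting), $c_4 \curvearrowleft$ the right-hand side of \eqref{eq: def c_4}, $c_5 \curvearrowleft c_3 \vee 1$, $c_7 \curvearrowleft c_3$, $c_8 \curvearrowleft c_3$, $c_9 \curvearrowleft 1$ (since $F_N$ is quadratic), $\alpha \curvearrowleft -\tfrac 12 + \epsilon_0$ for a suitable small $\epsilon_0>0$ so that $\alpha \in (\beta - 1, 0]$, $\beta \curvearrowleft 0$, $\gamma \curvearrowleft \tfrac 12$, and $\theta \curvearrowleft \{\tfrac{Tk}{n} : k \in \{0,\ldots,n\}\}$ so that $\kappa = \llcorner \cdot \lrcorner_n$ and $|\theta| = T/n \le 1$ for $n \ge T$. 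The first block of work is to check the ``input'' assumptions \eqref{eq: x in D bound 2.13}--\eqref{eq: V in D bound 2.13} on the set $D_n$ from \eqref{eq: def of D}: the bound $\|x\|_{H_{1/2}} \le (T/n)^{\gamma_1}$ is the definition of $D_n$; the bound $\|F_N(x)\|_{H_{-1/2}} \le c_2 |\theta|^{\gamma_1}$ follows from \eqref{eq: F estimates in H norm2} (or \eqref{eq: F estimates in 4/15 norm}) together with the embedding $H_{4/15} \hookrightarrow H$ and the $H_{1/2}$-bound on $D_n$; and $V(t,x) = e^{-2\eta^2 t}(\|x\|_H^2+1) \le c_1 |\theta|^{\gamma_1}$ follows from the $H$-bound implied by the $H_{1/2}$-bound on $D_n$ (using $\gamma_1 < 0$ and $|\theta| \le 1$), after possibly enlarging $c_1$.

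The second block is to check the structural assumptions on $\Pi^n$, namely \eqref{eq: Pi H bound 2.13}--\eqref{eq: D^2 HS bound 2.13}: these are precisely the hypotheses \eqref{eq: Pi H bound Burger}--\eqref{eq: D^2 HS bound Burger} of the present proposition (with $\gamma_4 \curvearrowleft \gamma_2$, $\gamma_5 \curvearrowleft \gamma_3$, $\beta \curvearrowleft 0$, and with $\id_H$ in place of $L^n$ where needed, using \eqref{eq: Ln is diagonal}). Then one needs the Lyapunov inequality \eqref{eq: Lyapunov equation 2.13}: with $V(t,x) = e^{-2\eta^2 t}(\|x\|_H^2 + 1)$ one has $\nabla_x V(t,x) = 2 e^{-2\eta^2 t} x$ and $\Hess_x V(t,x) = 2 e^{-2\eta^2 t} \id_H$, so the left-hand side equals $e^{-2\eta^2 t}\big(-2\eta^2(\|Y_t\|_H^2+1) + 2\langle Y_t, F_N(Y_t) + A_N Y_t\rangle_H + 2\|B_N^*(Y_t) Y_t\|_H^2/\text{(factor)} + \|B_N(Y_t)\|_{HS}^2 + \epsilon \|Y_t\|_{H_{1/2}}^2\big)$; using $\langle x, F_N(x)\rangle_H = 0$ for the Burgers nonlinearity (a standard antisymmetry identity, cf.\ \cite{JentzenLindnerPusnik2019}), $\langle Y_t, A_N Y_t\rangle_H = -\|Y_t\|_{H_{1/2}}^2$, the global bound $\|B_N(x)\|_{HS(H,H)} \le \eta$ from \eqref{eq: global bound B sett 3}, and the choice of $\epsilon$, one obtains the required $\le 0$ after absorbing $\epsilon \|Y_t\|_{H_{1/2}}^2$ into $-2\|Y_t\|_{H_{1/2}}^2$ and the $\eta^2$-terms into $-2\eta^2$. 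The remaining moment/regularity assumptions \eqref{eq: 1st V derivative bound 2.13}--\eqref{eq: V loc Lip cont 2.13} are verified as follows: \eqref{eq: 1st V derivative bound 2.13} and \eqref{eq: 2nd V derivative bound 2.13} reduce, for this quadratic $V$, to $L^q$-bounds on $\|Y^{N,n}_s\|_H$, which follow from the $H_{1/2}$-bound on $D_n$ applied to $Y^{N,n}_{\llcorner t \lrcorner_n}$ together with Lemma \ref{l: Lp H1/2- estimate Y -approxY} controlling $Y_s - Y_{\llcorner t\lrcorner_n}$; \eqref{eq: overline V estimate 2.13} is exactly Lemma \ref{prop: H12 exp bound} (with $\epsilon$ and the constants of the present setting, using \eqref{eq: eps small} which holds by the standing assumption $\epsilon \le (96Te c_2\eta^2)^{-1}$); \eqref{eq: Y 1/2 Lp estimate} and \eqref{eq: F H Lp estimate} are the uniform moment bounds, which one establishes first (see next paragraph); and \eqref{eq: V loc Lip cont 2.13} is an elementary estimate for $V(\cdot) = e^{-2\eta^2 \cdot}(\|\cdot\|_H^2+1)$ using $|\|x\|_H^2 - \|y\|_H^2| \le \|x-y\|_H(\|x-y\|_H + 2\|x\|_H)$ and $|e^{-2\eta^2 a} - e^{-2\eta^2 b}| \le 2\eta^2|a-b|$.

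The uniform $H_{1/2}$-moment bound \eqref{t: Lp H 1/2 estimate} is proved separately, by applying Lemma \ref{l: Lp H 1/2 estimate Y} with $\kappa = \llcorner \cdot\lrcorner_n$, $\gamma \curvearrowleft \tfrac 12$, $\gamma_2 \curvearrowleft \gamma_4$, $\alpha \curvearrowleft 91/120$ (or any $\alpha \in [\tfrac 12, 1)$ for which a H\"older bound is available), $\alpha_1 \curvearrowleft -1/3$, $\alpha_2 \curvearrowleft 4/15$, $q_2 \curvearrowleft 2$; its hypotheses are supplied by \eqref{eq: F estimates in H norm2}, \eqref{eq: F estimates in 4/15 norm}, \eqref{eq: Lipschitz cont B sett 3}, \eqref{eq: Pi H bound Burger}--\eqref{eq: D^2 HS bound Burger}, Lemma \ref{l: DPi A estimate} (for \eqref{eq: DPi A Lp etimate Lem4}), and Lemma \ref{l: Lp holder estimate Y} (for the required H\"older continuity of $Y^{N,n}$, whose own hypotheses are likewise met by the quadratic growth of $F_N$ and the $\Pi^n$-bounds); the point is that the right-hand side of Lemma \ref{l: Lp H 1/2 estimate Y} depends on $Y_0$ only through $\|Y_0\|_{L^p(\P;H_{1/2})}$, which is uniformly bounded by assumption, and on $n$ only through nonpositive powers of $|\theta| = T/n \le 1$, hence is uniformly bounded; a Lemma \ref{l: Lp estimate Y}-type a-priori bound rules out the circularity in the $H$-moments. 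Once \eqref{t: Lp H 1/2 estimate} is in hand, \eqref{eq: Y 1/2 Lp estimate} and \eqref{eq: F H Lp estimate} follow (the latter via \eqref{eq: F estimates in H norm2}), and the two displayed ``$F$-approximation'' and ``$B$-approximation'' bounds are exactly \eqref{eq: F approx -F estimate lem} and \eqref{eq: B approx - B estimate lem} output by Proposition \ref{prop: basic exp bound}, which also yields the final exponential-moment bound; taking the supremum over $N \in \N$ and $n \in \N \cap [T,\infty)$ is harmless since every bound produced is $N$-free and $n$-uniform, and the finitely many $n < T$ are absorbed trivially. The main obstacle I anticipate is the careful bookkeeping needed to make the constants $c_1, c_4$ of Proposition \ref{prop: basic exp bound} consistent with the conditions \eqref{eq: def c_4}--\eqref{eq: def c_1} (these are circular-looking lower bounds) while simultaneously matching the $\gamma$-exponent constraint \eqref{eq: def of gamma}, i.e.\ $-2\gamma_4 \le (\tfrac 12 - 0) \wedge (1 - 0 + \alpha) \wedge \gamma_3$, which forces the admissible range of $\gamma_4$ and dictates the choice of $\alpha$ close to $-\tfrac 12$; everything else is an assembly of the preceding lemmas.
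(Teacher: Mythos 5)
Your overall strategy matches the paper's proof: first bootstrap the uniform $L^p(\P;H_{1/2})$ moment bound via Lemmas \ref{l: Lp estimate Y}, \ref{l: Lp holder estimate Y}, \ref{l: Lp H 1/2 estimate Y}, then verify the hypotheses of Proposition \ref{prop: basic exp bound} and invoke it; the identification of the Lyapunov inequality via the antisymmetry $\langle P_N x, -c_1 (P_N x)(P_N x)'\rangle_H = 0$, of \eqref{eq: overline V estimate 2.13} with Lemma \ref{prop: H12 exp bound}, and of the bootstrap chain are all as in the paper.

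There is, however, a genuine gap in your parameter assignment when calling Proposition \ref{prop: basic exp bound}. You set the $\gamma_2$ of Proposition \ref{prop: basic exp bound} to be $\gamma_4$ of the present statement. But $\gamma_4 \in (0,\nicefrac 12)$ is strictly positive, while Proposition \ref{prop: basic exp bound} requires $\gamma_2 \in [-\nicefrac 14,0]$. This is not a formality: the hypotheses \eqref{eq: 1st V derivative bound 2.13}--\eqref{eq: 2nd V derivative bound 2.13} demand an upper bound of order $c_2 |\theta|^{\gamma_2}$, and with $|\theta|=T/n$ and $\gamma_2>0$ that bound would tend to $0$ as $n\to\infty$, whereas the quantities being bounded, e.g.\ $2e^{-2\eta^2 s}\|\1_{D_n}(\cdot)Y^{N,n}_s\|_{L^8(\P;H_{1/2})}$, are merely uniformly bounded (in fact they grow like $n^{-\gamma_1}$ on $D_n$), so those hypotheses become unattainable. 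Consequently the derivation you sketch of the constraint $-2\gamma_4 \le (\gamma-\beta)\wedge(1-\beta+\alpha)\wedge\gamma_5$ as ``forcing $\alpha$ close to $-\tfrac 12$'' is also off: for a positive exponent the left side is negative and the constraint is vacuous, so it imposes nothing on $\alpha$. The correct choice of $\gamma_2$ (in Proposition \ref{prop: basic exp bound}'s notation) is a nonpositive exponent tied to the desired convergence rate; the paper takes $\gamma_2 \curvearrowleft (-\tfrac{\gamma_3}{2}) \vee (-\tfrac 14 - \gamma_1)$, which lies in $[-\nicefrac 14,0]$, satisfies \eqref{eq: def of gamma}, and produces the $n^{-\gamma_3 \wedge \nicefrac 12}$ factor in the two approximation estimates. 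The $\gamma_4$ of the present statement should instead be fed into the $\gamma_2$ of Lemma \ref{l: Lp H 1/2 estimate Y} and Lemma \ref{l: e B noise H1/2 bound} (where that exponent must lie in $[0,\gamma]$), a separate slot that you do use correctly in your third paragraph; these are different roles and must not be conflated. Two minor points: $\alpha=-\nicefrac 12 \in (\beta-1,0]$ works directly with $\beta=0$, so the $\eps_0$ perturbation is unnecessary, and the local-Lipschitz verification for $V(t,x)=e^{-2\eta^2 t}(\|x\|_H^2+1)$ yields $c_7=1$, not $c_7=c_3$; taking $c_7=c_3>1$ would weaken the bound when $\|x-y\|_H<1$ and break the verification of \eqref{eq: V loc Lip cont 2.13}.
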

	\begin{proof}
		We prove the assertion by an application of Proposition
		\ref{prop: basic exp bound}.
		Therefore, we will check all of the assumptions.
		First note that for all $t \in [0,T]$, $s \in [t,T]$, and all $x \in H$
		it holds that
		\begin{equation}
				V(r, e^{tA} x)
			=
				e^{-2\eta^2 r} (\|e^{tA} x\|^2_H+1)
			\leq
				e^{-2\eta^2 (r-t)} (\|x\|^2_H+1)
			=
				V(r-t,x),
		\end{equation}
		and this verifies \eqref{eq: S contraction}.
		Moreover,
		Lemma 4.8 in Jentzen, Lindner \& Pu\u{s}nik 
		\cite{JentzenLindnerPusnik2019},
		Lemma 4.3(v) in Jentzen, Lindner \& Pu\u{s}nik 
		\cite{JentzenLindnerPusnik2019}, and
		\eqref{eq: def of D}
		prove that there exists a $C \in (0, \infty)$ such that
		for all $N \in \N$,
		$n \in \N$,
		and all $x,y \in H$ it holds that
		\begin{equation}
			\begin{split}
					&\|F_N(x) -F_N(y)\|_{H_{-1/2}}
				=
					\|P_N(F(P_N x) -F(P_N y))\|_{H_{-1/2}} 
				\\ \leq{} &
					|c_1| \|(P_N x)^2 -(P_N y)^2\|_{H} 
				\leq{} 
					|c_1|
						\|P_N(x-y)\|_H
						\|P_N(x+y)\|_{L^\infty((0,1),\R)}
				\\ \leq{} & 
					C
						\|x-y\|_H
						\|x+y\|_{H_{1/2}}
			\end{split}
		\end{equation}
		and this assures \eqref{eq: local Lipschitz F 2.13}
		(with $\alpha \curvearrowleft -\tfrac 12$, $\beta \curvearrowleft 0$, 
		$\gamma \curvearrowleft \nicefrac 12$,
		$c_8 \curvearrowleft 1$).
		In addition, the fact that $\inf_{n\in \N} |\lambda_n| \geq 1$,
		Lemma 4.13(ii) in Jentzen, Lindner \& Pu\u{s}nik 
		\cite{JentzenLindnerPusnik2019} (with $y \curvearrowleft 0$, $c_0 \curvearrowleft 1$)
		and \eqref{eq: def of D}
		show for all $N \in \N$,
		$n \in \N$,
		and all $x \in D_n$ that
		\begin{equation}
		\label{eq: F in D bound burger}
					\|F_N(x)\|_{H_{-1/2}}
				\leq
					\|F_N(x)\|_H 
				=
					\|P_N F(P_N x)\|_H 
				\leq{} 
					\tfrac{|c_1|}{\sqrt{3}}
						\|P_N x\|^2_{H_{1/2}} 
				\leq{} 
					\tfrac{|c_1|}{\sqrt{3}} T^{\gamma_1}
						n^{-\gamma_1},
		\end{equation}
		which verifies \eqref{eq: F in D bound 2.13}.
		Furthermore, \eqref{eq: def of D} imply for all
		$n \in \N$
		and all $(t,x) \in [0,T] \times D_n$ that
		\begin{equation}
					\|V(t,x)\|_H 
				\leq
					\|x\|^2_H +1
				\leq{} 
					\|x\|^2_{H_{1/2}} +1
				\leq{} 
					(1+T^{\gamma_1}) n^{-\gamma_1}
		\end{equation}
		and this shows \eqref{eq: V in D bound 2.13}.
		Next note that Corollary 4.22 (with $\alpha_1 \curvearrowleft \tfrac{91}{120}$) 
		in Jentzen, Lindner \& Pu\u{s}nik 
		\cite{JentzenLindnerPusnik2019}
		imply
		that there exists a $C \in (0,\infty)$ such that for all $x \in H$ and all $N \in \N$
		it holds that
		\begin{align}
		\label{eq: F estimates in H norm}
				\|F_N(x)\|_{H_{-91/120}}
			\leq
				\|F(P_N(x))\|_{H_{-91/120}}
			\leq
				C \|P_N(x)\|^2_{H}
			\leq
				C \|x\|^2_{H}.
		\end{align}
		Thus, Lemma \ref{l: Lp estimate Y} (with $\alpha \curvearrowleft 0$),
		\eqref{eq: F in D bound burger},
		Lemma 4.23 in Jentzen, Lindner \& Pu\u{s}nik 
		\cite{JentzenLindnerPusnik2019}, and
		\eqref{eq: F estimates in H norm}
		ensure for all $p \in [2,\infty)$ that 
		\begin{equation}
		\label{eq: Lp estimate}
					\sup_{t \in [0,T]}
					\sup_{N \in \N}
					\sup_{n \in \N}
					\|Y^{N,n}_t\|_{L^p(\P;H)} 
			< 
				\infty.
		\end{equation}
		In addition, 
		Lemma 4.16 in Jentzen, Lindner \& Pu\u{s}nik 
		\cite{JentzenLindnerPusnik2019}
			(with $\gamma \curvearrowleft \tfrac{4}{15}$, $\vartheta \curvearrowleft \tfrac{1}{3}$,
			$v \curvearrowleft x$, $w \curvearrowleft 0$)
		proves that there exists a $C \in (0,\infty)$ such that for all $x \in H$ and all $N \in \N$
		it holds that
		\begin{align}
		\label{eq: F estimates in different norm}
				\|F_N(x)\|_{H_{-1/3}}
			\leq
				C \|P_N(x)\|_{H_{4/15}} (1+\|P_N(x)\|_{H_{4/15}})
			\leq
				2C (1+\|x\|^2_{H_{4/15}}).
		\end{align}
		Therefore, Lemma \ref{l: Lp H 1/2 estimate Y}
			(with $c \curvearrowleft \infty$,
			$\varsigma \curvearrowleft \infty$,
			$\alpha \curvearrowleft 91/120$, $\alpha_1 \curvearrowleft -91/120$, 
			$\alpha_2 \curvearrowleft 0$,
			$\gamma \curvearrowleft 4/15$, 
			$\gamma_2 \curvearrowleft \gamma_4$,
			$q_1 \curvearrowleft 2$, $q_2 \curvearrowleft 2$),
		Lemma \ref{l: DPi A estimate},
		\eqref{eq: F estimates in H norm},
		\eqref{eq: Lp estimate} verify for all $p \in [2,\infty)$ that
		\begin{equation}
		\label{eq: Lp H 15 estimate}
					\sup_{t \in [0,T]}
					\sup_{N \in \N}
					\sup_{n \in \N}
					\|Y^{N,n}_t\|_{L^p(\P;H_{4/15})} 
			< 
				\infty.
		\end{equation}
		Hence, Lemma \ref{l: Lp holder estimate Y} 
			(with $\alpha \curvearrowleft 91/120$, $\alpha_1 \defeq 0$, $q \curvearrowleft 2$),
		Lemma \ref{l: DPi A estimate},
		\eqref{eq: F estimates in H norm},
		\eqref{eq: Lp H 15 estimate},
		and the fact that $\forall x\in H \colon \|x\|_{29/120} \leq \|x\|_{H_{4/15}}$
		imply
		for all $p \in [2,\infty)$ that
		\begin{equation}
		\label{eq: Lp H holder estimate}
					\sup_{s,t \in [0,T]}
					\sup_{N \in \N}
					\sup_{n \in \N}
						\tfrac{\|Y^{N,n}_t -Y^{N,n}_s\|_{L^p(\P;H)}}{|t-s|^{29/120}} 
			< 
				\infty.
		\end{equation}
		Lemma \ref{l: Lp H 1/2 estimate Y}
		(with $\alpha \curvearrowleft 91/120$, $\alpha_1 \curvearrowleft -1/3$, 
			$\alpha_2 \curvearrowleft 4/15$,
			$\gamma \curvearrowleft 1/2$, 
			$\gamma_2 \curvearrowleft \gamma_4$,
			$q_1 \curvearrowleft 2$, $q_2 \curvearrowleft 2$),
		Lemma \ref{l: DPi A estimate},
		\eqref{eq: Lp H holder estimate},
		\eqref{eq: Lipschitz cont B sett 3},
		\eqref{eq: F estimates in H norm},
		\eqref{eq: F estimates in different norm},
		\eqref{eq: Lp H 15 estimate} 
	  then show for all $p \in [2,\infty)$ that
		\begin{equation}
		\label{eq: Lp H 1/2 estimate}
					\sup_{t \in [0,T]}
					\sup_{N \in \N}
					\sup_{n \in \N}
					\|Y^{N,n}_t\|_{L^p(\P;H_{1/2})} 
			< 
				\infty.
		\end{equation}
		Thus, Lemma 4.13(ii) in Jentzen, Lindner \& Pu\u{s}nik 
		\cite{JentzenLindnerPusnik2019} (with $w \curvearrowleft 0$, $c_0 \curvearrowleft 1$) 
		demonstrates for all $p \in [2,\infty)$ that
		\begin{equation}
		\label{eq: Lp H 17 estimate}
					\sup_{t \in [0,T]}
					\sup_{N \in \N}
					\sup_{n \in \N}
					\|F_N(Y^{N,n}_t)\|_{L^p(\P;H)} 
			\leq 
				|c_1
				|\sup_{t \in [0,T]}
					\sup_{N \in \N}
					\sup_{n \in \N}
					\| \|Y^{N,n}_t\|^2_{H_{1/2}}\|_{L^p(\P;\R)}
			<
				\infty
		\end{equation}
		and this verifies \eqref{eq: F H Lp estimate}.
		Next note that \eqref{eq: Lp H 1/2 estimate} ensures that
		\begin{equation}
				\sup_{t\in [0,T]} \sup_{N \in \N}
					\sup_{n \in \N} \big[
					\|
						(\tfrac{\partial}{\partial x}V)(t,Y^{N,n}_t)
					\|_{L^{8}(\P; L(H_{-1/2},\R))}
				\big]
			=
				\sup_{t\in [0,T]} \big[
					2 e^{-2\eta^2 t}
					\|
						Y^{N,n}_t
					\|_{L^{8}(\P; H_{1/2})}
				\big]
			<
				\infty
		\end{equation}
		and that
		\begin{equation}
			\sup_{t\in [0,T]} \sup_{N \in \N}
					\sup_{n \in \N} \big[
				\|
					(\Hess_x V) (t,Y^{N,n}_t)
				\|_{L^4(\P;L(H,H))}
				\big]
			=
				\sup_{t\in [0,T]} \big[
					\|
						2 e^{-2\eta^2 t}
					\|_{L^4(\P;L(H,H))}
				\big]
			=
				2,
		\end{equation}
		which implies \eqref{eq: 1st V derivative bound 2.13} and
		\eqref{eq: 2nd V derivative bound 2.13}.
		Moreover, Lemma 4.23 in Jentzen, Lindner \& Pu\u{s}nik 
		\cite{JentzenLindnerPusnik2019} 
		and the fact that $\eps \leq 1$ verify that for all
		$N \in \N$ and all
		$(s,x),(t,y) \in [0,T] \times P_N(H)$ it holds that
		\begin{equation}
		\label{eq: Lyapunov equation Burger}
			\begin{split}
					&e^{2\eta^2 t} \Big(
						(\tfrac{\partial}{\partial t} V) (t,x)
						+\langle \nabla_x V(t,x), F_N(x)+A_N(x) \rangle_H
						+\tfrac 12 \langle B_N(x), (\Hess_x V)(t,x) B_N(x)\rangle_{HS(H,H)} \\
				& \qquad
						+ \tfrac 12 \| B_N^*(x) \, \nabla_x V(x) \|^2_H +\overline{V}(x)
					\Big)
				\\ ={} &
						-2\eta^2 (\|x\|_H^2+1)
						+2 \langle x, -c_1 P_N(x \cdot x')+ x'' \rangle_H
						+\langle P_N B(P_N x),  P_N B(P_N x)\rangle_{HS(H,H)} 
				\\ & \qquad
						+ 2\| P_N (B^*(P_N x) \, P_N x) \|^2_H 
						+e^{2\eta^2 t} e^{-2\eta^2 T} \eps \|x\|^2_{H_{1/2}}
				\\ \leq {} &
						-2\eta^2 (\|x\|_H^2+1)
						-2 \|x\|^2_{H_{1/2}}
						+\|B(P_N x)\|^2_{HS(H,H)} 
						+ 2\| B^*(P_N x) \|^2_{L(H,H)}  \|x\|_H^2
						+\|x\|^2_{H_{1/2}}
				\\ \leq {} &
						-2\eta^2 (\|x\|_H^2+1)
						+\eta^2(2\|x\|_H^2 +1)
				\leq 0
			\end{split}
		\end{equation}
		and this ensures \eqref{eq: Lyapunov equation 2.13}.
		Moreover, Lemma \ref{prop: H12 exp bound} (with $\alpha \curvearrowleft 0$), the fact that
		$\eps \leq  (96 T e c_2 \eta^2)^{-1}$,
		\eqref{eq: F in D bound burger}, and
		\eqref{eq: Pi H 1/2 bound Burger} prove that
		there exists a $C \in (0, \infty)$ such that for all
		$n, N \in \N$, $t \in [0,T]$,  and all
		$s \in (\llcorner t \lrcorner_{n},t]$ it holds that
		\begin{equation} 
			\big\| 
						\1_{D_{n}}(Y^{N,n}_{\llcorner t \lrcorner_{n}}) 
						\exp \big(
							\int_{\llcorner t \lrcorner_n}^s 
								\overline{V}(r,Y^{N,n}_r) 
						\big) \ud r 
					\big \|_{L^4(\P;\R)}
			\leq 
				e^{C (s-\llcorner t \lrcorner_n)^{1+\gamma_1}},
		\end{equation}
		which yields \eqref{eq: overline V estimate 2.13}.
		In addition, the fact that for all $t \in [0,T]$ and all $s \in [t,T]$
		it holds that $1-e^{(t-s) \eta^2} \leq (s-t) \eta^2$
		proves for all $t \in [0,T]$, $s \in [t,T]$, and all $x,y \in H$ that
		\begin{equation}
			\begin{split}
					&|V(t,x) -V(s,y)| 
				={}
					\big|
						e^{-t \eta^2} (\|x\|^2_H+1)-e^{-s \eta^2} (\|y\|^2_H+1) 
					\big|
				\\ ={} &
					\big|
						e^{-t \eta^2} (1-e^{(t-s) \eta^2}) (\|x\|_H^2+1)
						+\langle x-y, x+y\rangle_H e^{-s \eta^2}
					\big|
				\\ \leq{} &
					V(t,x) (s-t) \eta^2 +\|x-y\|_H (\|y-x+2x\|_H e^{-s \eta^2})
				\\ \leq{} &
					(s-t + \|x-y\|_H) (V(t,x)  \eta^2 +\|y-x+2x\|_H e^{-s \eta^2})
				\\ \leq{} &
					(s-t + \|x-y\|_H) \big( V(t,x) \eta^2 +\|y-x\|_H+\|x\|^2_H +1\big)
				\\ \leq{} &
					(s-t + \|x-y\|_H) ( V(t,x) \eta^2 +\|y-x\|_H+V(t,x)e^{t \eta^2} +1)
				\\ \leq{} &
					(s-t + \|x-y\|_H) ( V(t,x) (\eta^2+e^{T \eta^2} )+\|y-x\|_H +1)
				\\ \leq{} &
					(s-t + \|x-y\|_H) (\eta^2+e^{T \eta^2}) ( V(t,x)+\|y-x\|_H +1)
			\end{split}
		\end{equation}
	and this shows \eqref{eq: V loc Lip cont 2.13}.
	Thus, Proposition \ref{prop: basic exp bound} 
	(with $c_5 \curvearrowleft c_2 \vee (\eta^2 +e^{T \eta^2})$,
	$c_7 \curvearrowleft 1$, $q \curvearrowleft 2p$, 
	$c_8 \curvearrowleft c_3$,
	$c_9 \curvearrowleft 1$,
	$\alpha \curvearrowleft -\tfrac 12$,
	$\beta \curvearrowleft 0$,
	$\gamma \curvearrowleft \nicefrac 12$,
	$\gamma_2 \curvearrowleft (-\tfrac {\gamma_3}{2}) \vee (-\tfrac 14-\gamma_1)$, 
	$\gamma_3 \curvearrowleft 1+\gamma_1$,
	$\gamma_4 \curvearrowleft \gamma_2$, $\gamma_5 \curvearrowleft \gamma_3$, 
	$\theta \curvearrowleft \{kT/n \colon k \in \{0, \ldots, n\}\}$,
	$U \curvearrowleft H$,
	$(\lambda_i)_{i \in \mathcal{I}} \curvearrowleft (\lambda_{i \wedge N})_{i \in \N}$,
	$A \curvearrowleft  A_N$)
	implies that
		\begin{align}
			\begin{split}
				&\sup_{t \in [0,T]}
					\sup_{N \in \N}
					\sup_{n \in \N \cap [T,\infty)}
					n^{\gamma_3 \wedge \nicefrac 12}
					\bigg\|
					\1_{D_n}(Y_{\llcorner t \lrcorner_{n}}) \Big(
						F_N(Y^{N,n}_{\llcorner t \lrcorner_{n}})
						+(D \Pi^n)(X^{N,n}_{\llcorner t \lrcorner_{n},t}) (A_N X^{N,n}_{\llcorner t \lrcorner_{n},t})
				\\ & \quad
							-A_N \Pi^n(X^{N,n}_{\llcorner t \lrcorner_{n},t})
					+\tfrac 12
						\sum_{i \in \N}
							(D^2 \Pi^n)(X^{N,n}_{\llcorner t \lrcorner_{n},t}) \Big(
								B_N(
									Y^{N,n}_{\llcorner t \lrcorner_n }
								) e_i,
								B_N(
									Y^{N,n}_{\llcorner t \lrcorner_n }
								) e_i
							\Big)
			\\ & \qquad
						- F_N(Y^{N,n}_t)
					\Big)
				\bigg\|_{L^p(\P;H_{-1/2})}
			<
				\infty,
			\end{split}
		\end{align}
		that
		\begin{equation}
			\begin{split}
				&\sup_{t \in [0,T]}
					\sup_{N \in \N}
					\sup_{n \in \N \cap [T,\infty)}
					n^{\gamma_3 \wedge \nicefrac 12}
					\Big\|
					\1_{D_n}(Y^{N,n}_{\llcorner t \lrcorner_{n}}) \Big(
						(D \Pi^n)(X^{N,n}_{\llcorner t \lrcorner_{n},t}) (
								B_N(
									\llcorner t \lrcorner_n,
									Y^{N,n}_{\llcorner t \lrcorner_n }
								)
						)
			\\ & \quad
					-B_N(
							\llcorner t \lrcorner_n,
							Y^{N,n}_{\llcorner t \lrcorner_n }
						)
					\Big)
				\Big \|_{L^p(\P;HS(H,H))}
			<
				\infty,
			\end{split}
		\end{equation} 
		and that
			\begin{equation}
					\sup_{t \in [0,T]}
					\sup_{N \in \N}
					\sup_{n \in \N \cap [T,\infty)}
					\E \Big[ 
						\exp \Big(
							V(t,Y^{N,n}_t)
							+ \int_{0}^t
								\1_{D_n}(Y^{N,n}_{\llcorner s \lrcorner_{n}})
								\overline{V}(Y^{N,n}_s)
							\ud s 
						\Big) 
					\Big] 
			< 
				\infty.
		\end{equation}
		and this completes the proof of Proposition \ref{prop: expo Momente Burger}.
	\end{proof}

The next proposition shows that tamed exponential Euler approximations
of stochastic Burgers equations converge with strong temporal convergence rate $1/2$.
\begin{prop}
		\label{prop: Konv result}
		Assume Setting \ref{sett 3},
		$c_3 \in [1,\infty)$,
		let $p \in [4,\infty)$,
		$\gamma_2 \in [-\gamma_1-\nicefrac 12  , \gamma_1+\nicefrac 12]$,
		$\gamma_3 \in [0, \nicefrac 12]$,
		$\gamma_4 \in (0, \nicefrac 12)$,
		let $X^N \colon [0,T] \times \Omega \to H$, $N \in \N$,
		be adapted stochastic processes with continuous sample paths
		satisfying for all $N \in \N$
		and all $t \in [0,T]$ a.s.\@ that
		\begin{equation}
		\label{eq: def X} 
					X_t^{N}
				={} 
					e^{t A_N}
					P_N(X^{N}_{0})
					+
					\int^t_{0}
						e^{(t-s)A_N}
						F_N(X^{N}_{s})
					\ud s
					+
						\int^t_{0}
							e^{(t-s)A_N}
							B_N(X^{N}_{s})
						\dWs,
		\end{equation}
		assume for all $x \in H$, $z \in HS(H,H)$ and all $n \in \N$ that
		\begin{align}
			\label{eq: Pi H bound Konv}
			&\| \Pi^n(x) \|_H \leq c_2 (n^{-\gamma_2} \wedge \|x\|_H),\\
			\label{eq: Pi H 1/2 bound Konv}
			&\| \Pi^n(x) \|_{H_{1/2}} \leq c_2 \|x\|_{H_{1/2}},\\
			&\|(D \Pi^n)(x)- \id_H\|_{L(H,H)}
			\leq
				c_2 \|x\|_{H}, \\
			&\|(-A)^{\gamma_4} ((D \Pi)(x)- L(x))\|_{L(H,H)}
			\leq
				c_2 \|x\|_{H_{\gamma_4}}, \\
			&\|(D \Pi^n)(x) (A x)-A \Pi^n(x)\|_H
			\leq
				c_2 n^{-\gamma_3} \|x\|_{H_{1/2}} (\|x\|_{H_{1/2}} +1)(\|x\|_{H} +1)^{c_3}, \\
			&\Big\|
				\sum_{i \in \N}
					(D^2 \Pi^n)(x)\big(
						z e_i,
						z e_i \big)
				\Big\|_H
			\leq 
				c_2 \|x\|_H \cdot \|z \|^2_{HS(H,H)}.
		\end{align}
		Then for all $r \in (0,p)$
		there exists a $C \in (0,\infty)$ such that for all
		$N \in \N$, $n\in \N \cap [T,\infty)$ it holds that
			\begin{equation}
					\sup_{t \in [0,T]}
					\|
						X^N_{t}-Y^{N,n}_{ t}
					\|_{L^r(\P; H)} 
			\leq{} 
				C\|X^N_0-Y^{N,n}_0\|_{L^p(\P;H)}
				+C n^{-\gamma_3}.
		\end{equation}
	\end{prop}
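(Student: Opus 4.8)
The plan is to apply the generalized perturbation estimate of Lemma \ref{l: 2.10 in Hutz Jen} to the pair $(X^N,Y^{N,n})$. First I would record that both processes solve genuine Itô equations in $H$: since $F_N=P_NF(P_N\cdot)$ and $B_N=P_NB(P_N\cdot)P_N$ one has $X^N_t=X^N_0+\int_0^t(A_NX^N_s+F_N(X^N_s))\ud s+\int_0^tB_N(X^N_s)\dWs$, and by Lemma \ref{l: Y representation} the mild identity \eqref{eq: def Y sett 3} is equivalent to $Y^{N,n}_t=Y^{N,n}_0+\int_0^t(A_NY^{N,n}_s+\1_{D_n}(Y^{N,n}_{\llcorner s\lrcorner_n})G^{N,n}_s)\ud s+\int_0^t\1_{D_n}(Y^{N,n}_{\llcorner s\lrcorner_n})(D\Pi^n)(X^{N,n}_{\llcorner s\lrcorner_n,s})B_N(Y^{N,n}_{\llcorner s\lrcorner_n})\dWs$, where $G^{N,n}_s=F_N(Y^{N,n}_{\llcorner s\lrcorner_n})+(D\Pi^n)(X^{N,n}_{\llcorner s\lrcorner_n,s})(A_NX^{N,n}_{\llcorner s\lrcorner_n,s})-A_N\Pi^n(X^{N,n}_{\llcorner s\lrcorner_n,s})+\tfrac12\sum_{i\in\N}(D^2\Pi^n)(X^{N,n}_{\llcorner s\lrcorner_n,s})(B_N(Y^{N,n}_{\llcorner s\lrcorner_n})e_i,B_N(Y^{N,n}_{\llcorner s\lrcorner_n})e_i)$. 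I would then introduce the grid-valued stopping time $\tau_{N,n}=\inf\{t\in[0,T]\colon Y^{N,n}_{\llcorner t\lrcorner_n}\notin D_n\}\wedge T$, so that all indicators equal $1$ on $[0,\tau_{N,n})$; by Markov's inequality together with the uniform bounds $\sup_{t,N,n}\|Y^{N,n}_t\|_{L^{p'}(\P;H_{1/2})}<\infty$ for every $p'$ (from \eqref{t: Lp H 1/2 estimate}) the probability $\P(\tau_{N,n}<T)$ decays faster than any power of $n$, and since $\sup_{t,N,n}\|X^N_t-Y^{N,n}_t\|_{L^{p'}(\P;H)}<\infty$ (Lemma \ref{l: X lp H1/2 bound} and \eqref{t: Lp H 1/2 estimate}) the event $\{\tau_{N,n}<T\}$ contributes, via H\"older's inequality, negligibly to $\|X^N_t-Y^{N,n}_t\|_{L^r(\P;H)}$; hence it suffices to estimate $\|X^N_{\tau_{N,n}\wedge t}-Y^{N,n}_{\tau_{N,n}\wedge t}\|_{L^r(\P;H)}$ uniformly in $N,n,t$.

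I would apply Lemma \ref{l: 2.10 in Hutz Jen} with $\mu(x)=A_Nx+F_N(x)$, $\sigma(x)=B_N(x)$, $\psi(x,y)=A_Ny+F_N(y)$, $\varphi(x,y)=B_N(y)$, $\tau=\tau_{N,n}\wedge t$, the free parameter in that lemma set to $1$, and a predictable weight $\chi$. Writing $v_s=X^N_s-Y^{N,n}_s$, one has $\langle v_s,A_Nv_s\rangle_H=-\|v_s\|_{H_{1/2}}^2$ and $\|\sigma(X^N_s)-\varphi(X^N_s,Y^{N,n}_s)\|_{HS(H,H)}\le\varsigma\|v_s\|_H$ by \eqref{eq: Lipschitz cont B sett 3}. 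The algebraic heart is the Burgers one-sided bound: since the elements of $P_N(H)$ are smooth and vanish at the endpoints, two integrations by parts give $\langle v,F_N(x)-F_N(y)\rangle_H=-\tfrac{c_1}{2}\langle(P_Nv)^2,(P_Ny)'\rangle_{L^2}$, whence $|\langle v,F_N(x)-F_N(y)\rangle_H|\le\tfrac{|c_1|}{2}\|P_Nv\|_{L^4}^2\|y\|_{H_{1/2}}$; by the one-dimensional Gagliardo--Nirenberg inequality $\|w\|_{L^4}\le C\|w\|_H^{3/4}\|w\|_{H_{1/2}}^{1/4}$ and Young's inequality this yields, for every $\delta\in(0,1)$, $|\langle v,F_N(x)-F_N(y)\rangle_H|\le\delta\|v\|_{H_{1/2}}^2+C_\delta(1+\|y\|_{H_{1/2}}^{4/3})\|v\|_H^2$. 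Choosing $\chi_s$ proportional to $\|v_s\|_{H_{1/2}}^2/\|v_s\|_H^2$, with the proportionality constant small enough that the surviving $H_{1/2}$-coefficient stays negative both in the exponential bracket of Lemma \ref{l: 2.10 in Hutz Jen} and after absorbing the perturbation term $\langle v_s,\psi-a_s\rangle\le\delta'\|v_s\|_{H_{1/2}}^2+\tfrac1{4\delta'}\|G^{N,n}_s-F_N(Y^{N,n}_s)\|_{H_{-1/2}}^2$, the integrand of the exponential factor is bounded by $C(1+\|Y^{N,n}_s\|_{H_{1/2}}^{4/3})$ and the integrand of the error factor by $C(\|G^{N,n}_s-F_N(Y^{N,n}_s)\|_{H_{-1/2}}^2+\|B_N(Y^{N,n}_s)-(D\Pi^n)(X^{N,n}_{\llcorner s\lrcorner_n,s})B_N(Y^{N,n}_{\llcorner s\lrcorner_n})\|_{HS(H,H)}^2)$ on $[0,\tau_{N,n})$.

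The exponential (random-Gronwall) factor is then controlled via the uniform exponential moment estimate of Proposition \ref{prop: expo Momente Burger}: because the exponent $\tfrac43$ is strictly sub-quadratic, Young's inequality bounds $\int_0^{\tau_{N,n}\wedge t}\|Y^{N,n}_s\|_{H_{1/2}}^{4/3}\ud s\le\lambda\int_0^T\1_{D_n}(Y^{N,n}_{\llcorner s\lrcorner_n})\|Y^{N,n}_s\|_{H_{1/2}}^2\ud s+C_\lambda$ with $\lambda>0$ arbitrarily small, so that for each finite $q$ one may pick $\lambda$ making the coefficient match $e^{-2\eta^2T}\eps$ and obtain $\|\exp(\int_0^{\tau_{N,n}\wedge t}[\,\cdots\,]^+\ud s)\|_{L^q(\Omega;\R)}\le C_q$ uniformly in $N,n$; in particular this covers the $q$ conjugate to any prescribed $r\in(0,p)$. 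For the error factor, the quantities $\psi(X^N_s,Y^{N,n}_s)-a_s=F_N(Y^{N,n}_s)-G^{N,n}_s$ and $\varphi(X^N_s,Y^{N,n}_s)-b_s=B_N(Y^{N,n}_s)-(D\Pi^n)(X^{N,n}_{\llcorner s\lrcorner_n,s})B_N(Y^{N,n}_{\llcorner s\lrcorner_n})$ are, by the second and third displays of Proposition \ref{prop: expo Momente Burger} (note $\gamma_3\wedge\tfrac12=\gamma_3$), of order $n^{-\gamma_3}$ in $L^p(\P;H_{-1/2})$ resp.\ $L^p(\P;HS(H,H))$; combined with the polynomial moment bounds of Lemma \ref{l: X lp H1/2 bound} and Proposition \ref{prop: expo Momente Burger} and H\"older's inequality, and using that Lemma \ref{l: 2.10 in Hutz Jen} is engineered so that a drift perturbation of order $n^{-\gamma_3}$ in the negative-order norm produces an error of the same order (the semigroup smoothing being encoded in the coercivity term one is allowed to subtract off), the error factor is bounded by $C(\|X^N_0-Y^{N,n}_0\|_{L^p(\P;H)}+n^{-\gamma_3})$. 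Multiplying the two factors, taking the supremum over $t$, and re-inserting the negligible $\{\tau_{N,n}<T\}$ contribution gives the assertion.

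The principal technical difficulty is the uniform control of the exponential factor: the quadratic Burgers estimate $|\langle v,F_N(x)-F_N(y)\rangle_H|\lesssim\|v\|_{H_{1/2}}^2+\|y\|_{H_{1/2}}^2\|v\|_H^2$ would force an exponent $C\int_0^T\|Y^{N,n}_s\|_{H_{1/2}}^2\ud s$ whose fixed constant $C$ is not dominated by the fixed (and not freely adjustable) coefficient $e^{-2\eta^2T}\eps$ of $\overline V$, so that Proposition \ref{prop: expo Momente Burger} would not supply the required $L^q$ moments for $q$ large, i.e.\ for $r$ near $p$; this is precisely why the sharper Gagliardo--Nirenberg estimate with the sub-quadratic power $\tfrac43$ of $\|Y^{N,n}_s\|_{H_{1/2}}$ is needed. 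A secondary, more bookkeeping, difficulty is the careful treatment of the excursions of $Y^{N,n}_{\llcorner\cdot\lrcorner_n}$ out of $D_n$ through the stopping time $\tau_{N,n}$ and high-moment Markov estimates, and the matching of all H\"older exponents so that the order $n^{-\gamma_3}$ of the discretization error is transmitted to $X^N-Y^{N,n}$ without loss.
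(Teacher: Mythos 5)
Your proposal follows essentially the same route as the paper: the Itô reformulation, the grid-valued stopping time $\tau_{N,n}$ with the Markov/Chebyshev tail bound on the $\{\tau_{N,n}<T\}$ event, the application of Lemma \ref{l: 2.10 in Hutz Jen}, the exponential moment inputs from Proposition \ref{prop: expo Momente Burger} (for $Y^{N,n}$) and from \cite{hudde2021stochastic} (for $X^N$), and the absorption of the $O(n^{-\gamma_3})$ drift/diffusion perturbations in $H_{-1/2}$ and $HS(H,H)$ are precisely the paper's steps, with your Gagliardo--Nirenberg/Young sub-quadratic $4/3$-power argument being exactly what the cited estimate (5.11) of Cox, Hutzenthaler and Jentzen packages into a $1/q$-small coefficient, and your use of a $v$-dependent $\chi$ to distribute coercivity between the two brackets being an equivalent variant of the paper's choice $\psi(x,y)=\tfrac12A_N(x+y)+F_N(y)$ with constant $\chi$. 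One small slip worth noting: the one-sided Burgers bound should involve $\|x\|_{H_{1/2}}+\|y\|_{H_{1/2}}$ rather than $\|y\|_{H_{1/2}}$ alone (since two integrations by parts give $\langle v,F_N(x)-F_N(y)\rangle_H$ in terms of $(P_N(x+y))'$), which is exactly why the paper needs the exponential bound \eqref{eq: exp bound X} for $X^N$ alongside \eqref{eq: exp bound Y} for $Y^{N,n}$.
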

	\begin{proof}
		First note that existence of $X^N$, $N \in \N$, follows e.g.\@ from
		Gy{\"o}ngy \& Krylov \cite{GyongyKrylov1996}.
		Next denote 
		for every $s\in[0,T]$ by
		$Z^{N,n}_s \colon [s,T] \times \Omega \to H$, $n, N \in \N$, an 
		$(\mathbb{F}_t)_{t \in [s,T]}$-adapted stochastic process with continuous sample paths
		satisfying for all $t\in [s,T]$ and all $n,N \in \N$ that
		a.s.\@ it holds that
		\begin{equation}
				Z^{N,n}_{s,t}
			=
				\int_s^t 
					e^{(t-u)A} B_N(Y^{N,n}_{\llcorner u \lrcorner_{n} }) 
				\ud W_u, 
		\end{equation}
		denote by $a^{N,n} \colon [0,T] \times \Omega \to H$, $n, N \in \N$,  the 
		stochastic processes with continuous sample paths
		satisfying for all $t\in [0,T]$ and all $n,N \in \N$ that
		\begin{align}
			\begin{split}
					a^{N,n}_t
				={} &
					A_N Y^{N,n}_{t }
					+\1_{D_n}(Y^{N,n}_{\llcorner t \lrcorner_{n}}) \Big(
						F_N(Y^{N,n}_{\llcorner t \lrcorner_{n}})
						+(D \Pi^n)(Z^{N,n}_{\llcorner t \lrcorner_{n},t}) (A_N Z^{N,n}_{\llcorner t \lrcorner_{n},t})
							-A_N \Pi^n(Z^{N,n}_{\llcorner t \lrcorner_{n},t})
				\\ &
					+\tfrac 12 \1_{D_n}(Y^{N,n}_{\llcorner t \lrcorner_{n}})
						\sum_{i =1}^\infty
							(D^2 \Pi^n)(Z^{N,n}_{\llcorner t \lrcorner_{n},t}) \Big(
								B_N(
									Y^{N,n}_{\llcorner t \lrcorner_n }
								) e_i,
								B_N(
									Y^{N,n}_{\llcorner t \lrcorner_n }
								) e_i
							\Big)
					\Big),
			\end{split}
		\end{align}
		denote by $b^{N,n} \colon [0,T] \times \Omega \to HS(U,H)$, $n, N \in \N$,  the 
		stochastic processes with continuous sample paths
		satisfying for all $t\in [0,T]$ and all $n,N \in \N$ that
		\begin{align}
			\begin{split}
				b_t^{N,n}
			=
						\1_{D_n}(Y^{N,n}_{\llcorner t \lrcorner_{n}}) \Big(
							(D \Pi^n)(Z^{N,n}_{\llcorner t \lrcorner_{n},t}) \big(
									B_N(
										Y^{N,n}_{\llcorner t \lrcorner_n }
									)
								\big)
						\Big),
			\end{split}
		\end{align}
		and denote by $\tau_{N,n} \colon \Omega \to [0,T]$, $n,N \in \N$, the stopping times
		satisfying for all $n,N \in \N$ that
		\begin{equation}
		\label{eq: def of tau Burger}
			\tau_{N,n} = \inf (\{s \in \{0, \tfrac Tn, \ldots, T\} \colon Y^{N,n}_s \notin D_n\} \cup \{T\}).
		\end{equation}
		Then it follows from \eqref{eq: def Y} that
		for all $n, N \in \N$ and all $t \in [0,T]$
		it holds a.s.\@ that
		\begin{equation}
			Y_t^{N,n}
				={} 
					Y^{N,n}_{0}
					+
					\int^t_{0}
						a^{N,n}_s
					\ud s
				\\
					+\int^t_{0}
						b^{N,n}_s
					\dWs.
		\end{equation}
		Moreover, 
		\eqref{eq: def of tau Burger} shows 
		for all $n, N \in \N$ and all $r \in (0,p)$ that
		\begin{equation}
		\label{eq: split of sum}
			\begin{split}
					&\sup_{t \in [0,T]}
					\|
						X^N_{t}-Y^{N,n}_{t}
					\|_{L^r(\P; H)} 
			\\ \leq{} &
					\sup_{t \in [0,T]} \big(
						\|
							(X^N_{t}-Y^{N,n}_{t}) \1_{[t,T]}(\tau_{N,n})
						\|_{L^r(\P; H)} 
						+\|
							(X^N_{t}-Y^{N,n}_{t})
							\1_{[0,t)}(\tau_{N,n})
						\|_{L^r(\P; H)} 
					\big)
				\\ \leq{} &
					\sup_{t \in [0,T]} \big(
						\|
							X^N_{t \wedge \tau_{N,n}}-Y^{N,n}_{t \wedge \tau_{N,n}}
						\|_{L^r(\P; H)} 
						+\|
							(X^N_{t}-Y^{N,n}_{t}) \1_{H \backslash D_n}(Y^{N,n}_{\tau_{N,n}})
						\|_{L^r(\P; H)}
					\big).
			\end{split}
		\end{equation}
		In addition, note that
		Proposition \ref{prop: expo Momente Burger}
		verifies that
		\begin{equation}
		\label{eq: exp bound Y}
					\sup_{t \in [0,T]}
					\sup_{N \in \N}
					\sup_{n \in \N \cap [T,\infty)}
					\E \Big[ 
						\exp \Big(
							V(t,Y^{N,n}_t)
							+ \int_{0}^t
								\1_{D_n}(Y^{N,n}_{\llcorner s \lrcorner_{n}})
								\overline{V}(Y^{N,n}_s)
							\ud s 
						\Big) 
					\Big] 
			< 
				\infty.
		\end{equation}
		Furthermore, \cite[Corollary 3.3]{hudde2021stochastic}
    (with $O\curvearrowleft H$,
    $\mu\curvearrowleft ([0,T]\times H\ni (t,x)\mapsto
    Ax+F(x)\in H)$,
    $\sigma\curvearrowleft ([0,T]\times H\ni (t,x)\mapsto
    B(x)\in \textup{HS}(U,H))$,
    $\tau \curvearrowleft t$)
		and \eqref{eq: Lyapunov equation Burger}
		yield that
		\begin{equation}
		\label{eq: exp bound X}
					\sup_{t \in [0,T]}
					\sup_{N \in \N}
					\E \Big[ 
						\exp \Big(
							V(t,X^{N}_t)
							+ \int_{0}^t
								\overline{V}(X^N_s)
							\ud s 
						\Big) 
					\Big] 
			< 
				\infty.
		\end{equation}
		Moreover, Proposition \ref{prop: expo Momente Burger} ensures
		that
		there exists a $C \in (0,\infty)$ such that
		for all $t \in [0,T]$ and all $N,n \in \N$
		it holds
		that
		\begin{align}
			\begin{split}
			\label{eq: F approx -F estimate Konv}
				&\|
					\1_{D_n}(Y^{N,n}_{\llcorner t \lrcorner_{n}}) ( 
						A_N Y_t^{N,n}+F_N(Y_t^{N,n})-a^{N,n}_t
					)
				\|_{L^p(\P;H_{-1/2})}
			\\ ={} &
				\bigg\|
					\1_{D_n}(Y^{N,n}_{\llcorner t \lrcorner_{n}}) \Big(
						F_N(Y^{N,n}_{\llcorner t \lrcorner_{n}})
						+(D \Pi^n)(Z^{N,n}_{\llcorner t \lrcorner_{n},t}) (A_N Z^{N,n}_{\llcorner t \lrcorner_{n},t})
							-A \Pi^n(Z^{N,n}_{\llcorner t \lrcorner_{n},t})
				\\ &
					+\tfrac 12 
						\sum_{i =1}^\infty
							(D^2 \Pi^n)(Z^{N,n}_{\llcorner t \lrcorner_{n},t}) \Big(
								B_N(
									Y^{N,n}_{\llcorner t \lrcorner_n }
								) e_i,
								B_N(
									Y^{N,n}_{\llcorner t \lrcorner_n }
								) e_i
							\Big)
						- F_N(Y^{N,n}_t)	
					\Big)
				\bigg\|_{L^p(\P;H_{-1/2})}
			\\ \leq{} &
					C n^{-\gamma_3}
			\end{split}
		\end{align}
		and that
		\begin{align} 
			\begin{split} 
				\label{eq: B approx - B estimate Konv}
					&\|
					\1_{D_n}(Y^{N,n}_{\llcorner t \lrcorner_{n}}) ( 
						B_N(Y_t^{N,n})-b^{N,n}_t
					)
				\|_{L^p(\P;HS(H,H))}
			\\ ={} &
					\Big\|
						\1_{D_n}(Y^{N,n}_{\llcorner t \lrcorner_{n}}) \Big(
							(D \Pi^n)(Z^{N,n}_{\llcorner t \lrcorner_{n},t}) \big(
									B_N(
										Y^{N,n}_{\llcorner t \lrcorner_n}
									)
							\big)
							-B_N(
										Y^{N,n}_{t}
									)
						\Big)
					\Big \|_{L^p(\P;HS(H,H))}
				\leq{}	
					C n^{-\gamma_3}.
			\end{split} 
		\end{align} 
		Furthermore, 
		(5.11) in Cox, Hutzenthaler \& Jentzen \cite{CoxHutzenthalerJentzen2013}
		(with $q \curvearrowleft \tfrac { \pi q}{2  e^{-2\eta^2 T} \eps \eta}$,
		$c \curvearrowleft 2c_1$,
		$\sigma_n \curvearrowleft 2\sqrt{p-1} \, B_n$),
		proves that for all
		$q\in (0,\infty)$
		there exists a $C \in (0,\infty)$
		such that for all $N,n \in \N$ and all $t\in [0,T]$
		it holds that
		\begin{equation}
			\begin{split}
					&\exp\Big(
						\int_0^{\tau_{N,n} \wedge t} \Big[
							\frac
								{
									\langle
										X^N_s-Y^{N,n}_s ,
										A_NX^{N}_s +F_N(X^{N}_s) -\tfrac 12 A_N (X^{N}_s+Y^{N,n}_s)
										- F_N(Y^{N,n}_s)
									\rangle_H
								}
								{\| X^{N}_s -Y^{N,n}_s\|^2_{H}}
				\\ & \qquad\qquad
							+\frac{
									(p-1)
									\|
										B_N (X^{N}_s) -B_N (Y^{N,n}_s)
									\|^2_{HS(H,H)}
								}
								{\| X^{N}_s -Y^{N,n}_s\|^2_{H}}
							+2 \tfrac{p-1}{p}
						\Big]^+ \ud s
					\Big)
				\\ ={} &
					\exp\Big(
						\tfrac 12
						\int_0^{\tau_{N,n} \wedge t} \Big[
							\frac
								{
									\langle
										X^N_s-Y^{N,n}_s ,
										A_NX^{N}_s +2 F_N(X^{N}_s) 
										- 2 F_N(Y^{N,n}_s) -A_N Y^{N,n}_s
									\rangle_H
								}
								{\| X^{N}_s -Y^{N,n}_s\|^2_{H}}
				\\ & \qquad\qquad
							+\frac{
									2(p-1)
									\|
										B_N (X^{N}_s) -B_N (Y^{N,n}_s)
									\|^2_{HS(H,H)}
								}
								{\| X^{N}_s -Y^{N,n}_s\|^2_{H}}
							+4 \tfrac{p-1}{p}
						\Big]^+ \ud s
					\Big)
				\\ \leq{} &
					\exp\Big(
						\int_0^T
							\1_{D_n}(Y^{N,n}_{\llcorner s \lrcorner_{n}}) \big(
								e^{-2\eta^2 T} \tfrac{\eps}{q} \, (
									\|X_s^{N}\|^2_{H_{1/2}} 
									+\|Y_s^{N,n}\|^2_{H_{1/2}}
								)
								+C
							\big)
						\ud s
					\Big).
			\end{split}
		\end{equation}
		Combining this with \eqref{eq: exp bound Y}
		and \eqref{eq: exp bound X}
		shows that for all $q\in (0,\infty)$
		there exists a $C \in (0,\infty)$
		such that for all $N \in \N$, $n \in \N \cap [T,\infty)$ and all $t\in [0,T]$
		it holds that
		\begin{equation}
		\label{eq: exp lq bound}
			\begin{split}
					&\bigg\|
						\exp\Big(
							\int_0^{\tau_{N,n} \wedge t} \Big[
								\frac
									{
										\langle
											X^N_s-Y^{N,n}_s ,
											A_NX^{N}_s +F_N(X^{N}_s) -\tfrac 12 A_N( X^{N}_s+Y^{N,n}_s)
											- F_N(Y^{N,n}_s) 
										\rangle_H
									}
									{\| X^{N}_s -Y^{N,n}_s\|^2_{H}}
				\\ & \qquad\qquad
								+\frac{
										(p-1)
										\|
											B_N (X^{N}_s) -B_N (Y^{N,n}_s)
										\|^2_{HS(H,H)}
									}
									{\| X^{N}_s -Y^{N,n}_s\|^2_{H}}
								+2 \tfrac{p-1}{p}
							\Big]^+ \ud s
						\Big)
					\bigg\|_{L^q(\P;\R)}
				\\ \leq{} &
						C.
			\end{split}
		\end{equation}
		Moreover, 
		Young's inequality
		yields  
		for all 
		$N,n \in \N$ and all $t \in [0,T]$
		that
		\begin{equation}
		\label{eq:remaining diff bound1}
			\begin{split}
					&\E \Big[
						\1_{D_n}(Y^{N,n}_{\llcorner t \lrcorner_{n}}) \cdot \Big(
						\|X_t^{N}-Y_t^{N,n}\|^{(p-2)}_H 
					\\ & \qquad
						\big[
							\langle 
								X_t^{N}-Y_t^{N,n}, 
								\tfrac{1}{2} A_N (X_t^{N} +Y_t^{N,n}) + F_N(Y_t^{N,n})-a^{N,n}_t
							\rangle_H
				\\ & \qquad
							+ (p-1)\|b^{N,n}_t-B_N(Y_t^{N,n})\|^2_{HS(H,H)}
							- 2\tfrac{p-2}{p} \|X_t^{N}-Y_t^{N,n}\|^{2}_H
						\big]
					\Big)^+
					\Big]
				\\ \leq{} &
					\Big\| 
						\1_{D_n}(Y^{N,n}_{\llcorner t \lrcorner_{n}}) \cdot \Big(
						2\tfrac{p-2}{p}\|X_t^{N}-Y_t^{N,n}\|^{p}_H 
					\\ & \qquad
						+\tfrac{2}{p}
						\big [ 
							\big | (
								\langle X_t^{N}-Y_t^{N,n}, A_N Y_t^{N,n} + F_N(Y_t^{N,n})-a^{N,n}_t \rangle_H
								-\tfrac 12
						\|X_t^{N} -Y_t^{N,n}\|^2_{H_{1/2}}
						)^+ \big|^{p/2}
				\\ & \qquad
							+ (p-1)^{p/2}\|b^{N,n}_t-B_N(Y_t^{N,n})\|^p_{HS(H,H)}
						\big ]
							- 2\tfrac{p-2}{p} \|X_t^{N}-Y_t^{N,n}\|^{p}_H
					\Big)^+
					\Big\|_{L^1(\P;\R)}
				\\ \leq{} &
					\tfrac{2}{p}
						\big\| \big(
							\| X_t^{N}-Y_t^{N,n}\|_{H_{1/2}} 
							\|
								\1_{D_n}(Y^{N,n}_{\llcorner t \lrcorner_{n}}) \big(
									A_N Y_t^{N,n}+F_N(Y_t^{N,n})-a^{N,n}_t
								\big)
							\|_{H_{-1/2}}
					\\ & \qquad
							-\tfrac 12
						\|X_t^{N} -Y_t^{N,n}\|^2_{H_{1/2}}
						\big)^+\big\|^{\nicefrac p2}_{L^{p/2}(\P;\R)}
					\\ & \qquad
						+\tfrac{2(p-1)^{p/2}}{p} \|
							\1_{D_n}(Y^{N,n}_{\llcorner t \lrcorner_{n}}) \big(
								b^{N,n}_t-B_N(Y_t^{N,n})
							\big)
						\|^p_{L^{p}(\P;HS(H,H))}.
			\end{split}
		\end{equation}
		Furthermore, 
		Young's inequality,
		\eqref{eq: F approx -F estimate Konv} and
		\eqref{eq: B approx - B estimate Konv}
		establish
		that 
		there exists a $C \in (0,\infty)$
		such that 
		for all 
		$N,n \in \N$
		it holds 
		that
		\begin{equation}
		\label{eq:remaining diff bound2}
			\begin{split}
					&\int_0^T
						\tfrac{2}{p}
						\big\| \big(
							\| X_t^{N}-Y_t^{N,n}\|_{H_{1/2}} 
							\|
								\1_{D_n}(Y^{N,n}_{\llcorner t \lrcorner_{n}}) \big(
									A_N Y_t^{N,n}+F_N(Y_t^{N,n})-a^{N,n}_t
								\big)
							\|_{H_{-1/2}}
					\\ & \qquad
							-\tfrac 12
						\|X_t^{N} -Y_t^{N,n}\|^2_{H_{1/2}}
						\big)^+
						\big\|^{\nicefrac p2}_{L^{p/2}(\P;\R)}
					\\ & \qquad
						+\tfrac{2(p-1)^{p/2}}{p} \|
							\1_{D_n}(Y^{N,n}_{\llcorner t \lrcorner_{n}}) \big(
								b^{N,n}_t-B_N(Y_t^{N,n})
							\big)
						\|^p_{L^{p}(\P;HS(H,H))}
					\ud t
				\\ \leq{} &
					\int_0^T
						\tfrac{2}{p}
						\big\| 
							\tfrac 12
							\|
								\1_{D_n}(Y^{N,n}_{\llcorner t \lrcorner_{n}}) \big(
									A_N Y_t^{N,n}+F_N(Y_t^{N,n})-a^{N,n}_t
								\big)
							\|^2_{H_{-1/2}}
						\big\|^{\nicefrac p2}_{L^{p/2}(\P;\R)}
					\\ & \qquad
						+\tfrac{2(p-1)^{p/2}}{p} \|
							\1_{D_n}(Y^{N,n}_{\llcorner t \lrcorner_{n}}) \big(
								b^{N,n}_t-B_N(Y_t^{N,n})
							\big)
						\|^p_{L^{p}(\P;HS(H,H))}
					\ud t
				\\ ={} &
					\int_0^T
					\tfrac{1}{p}
						\big\| 
								\1_{D_n}(Y^{N,n}_{\llcorner t \lrcorner_{n}}) \big(
									A_N Y_t^{N,n}+F_N(Y_t^{N,n})-a^{N,n}_t
								\big)
						\big\|^{p}_{L^{p}(\P;H_{-1/2})}
					\\ & \qquad
						+\tfrac{2(p-1)^{p/2}}{p} \|
							\1_{D_n}(Y^{N,n}_{\llcorner t \lrcorner_{n}}) \big(
								b^{N,n}_t-B_N(Y_t^{N,n})
							\big)
						\|^p_{L^{p}(\P;HS(H,H))}
					\ud t
				\\ \leq{} &
					\int_0^T
						C n^{-\gamma_3 p}
						+C n^{-\gamma_3 p}
					\ud t
				\leq{} 
					2T C n^{-\gamma_3 p}.
			\end{split}
		\end{equation}
		Thus, Lemma \ref{l: 2.10 in Hutz Jen}
		(with $\eps \curvearrowleft 1$, $\mu \curvearrowleft A_N +F_N$, $\sigma \curvearrowleft B_N$,
		$\psi \curvearrowleft (H^2 \ni (x,y) \mapsto \tfrac 12 A_N(x+y) +F_N(y) \in H)$,
		$\varphi \curvearrowleft (H^2 \ni (x,y) \mapsto B_N(y) \in HS(H,H))$,
		$\tau \curvearrowleft \tau_{N,n} \wedge t$
		and $\chi \curvearrowleft 2 \tfrac{p-1}{p}$),
		\eqref{eq: exp lq bound},
		\eqref{eq:remaining diff bound1}, and
		\eqref{eq:remaining diff bound2}
		imply that for all $r \in (0,p)$ there exists a
		$C \in (0,\infty)$
		such that for all $N \in \N$, $n \in \N \cap [T,\infty)$
		and all $t \in [0,T]$
		it holds that
		\begin{equation}
		\label{eq: approx till tau}
				\|
					X_{\tau_{N,n} \wedge t}^{N}-Y_{\tau_{N,n} \wedge t}^{N,n}
				\|_{L^r(\P;H)}
			\leq
				C\|X^N_0-Y^{N,n}_0\|_{L^p(\P;H)} + C n^{-\gamma_3}.
		\end{equation}
		Next note that Proposition \ref{prop: expo Momente Burger} (with $p \curvearrowleft q$)
		demonstrates for all $q \in [2,\infty]$ that
		\begin{equation}
					\sup_{t \in [0,T]}
					\sup_{N \in \N}
					\sup_{n \in \N} 
							\|Y^{N,n}_t\|_{L^q(\P;H)} 
				\leq
					\sup_{t \in [0,T]}
					\sup_{N \in \N}
					\sup_{n \in \N}
					\|Y^{N,n}_t\|_{L^q(\P;H_{1/2})} 
			< 
				\infty.
		\end{equation}
		Moreover, \eqref{eq: exp bound X}
		ensures for all $q \in [4,\infty]$ that
		\begin{equation}
		\label{eq: Lq bound for X}
					\sup_{t \in [0,T]}
					\sup_{N \in \N}
					\|X^{N}_t\|_{L^q(\P;H)} 
			< 
				\infty.
		\end{equation}
		In addition, \eqref{eq: def of tau Burger} implies for all $N \in \N$ 
		and all $n \in \N \cap [T,\infty)$ that
		\begin{align}
		\label{eq: approx of tau in Lp}
			\begin{split}
					&\| \1_{H \backslash D_n}(Y^{N,n}_{\tau_{N,n}}) \|^{2p}_{L^{2p}(\P;\R)}
				=
							\P \big(
									\|Y^{N,n}_{\tau_{N,n}}\|^2_{H_{1/2}}
								> 
									\big(\tfrac{T}{n}\big)^{\gamma_1}
							\big) 
				\\ \leq{} &
						\P \big(
									\cup^n_{s=0} \big \{
											\|Y^{N,n}_{sT/n}\|^2_{H_{1/2}}
										> 
											\big(\tfrac{T}{n}\big)^{\gamma_1}
									\big \}
							\big) 
				\leq{} 
						\sum_{s \in \{0, \frac Tn, \ldots, T\}}
						\P \big(
									\|Y^{N,n}_{s}\|^2_{H_{1/2}}
								> 
									\big(\tfrac{T}{n}\big)^{\gamma_1}
							\big) 
				\\ \leq{} &
						(n+1)
						\sup_{s \in [0,T]}
						\frac
							{
								\E[ \|Y^{N,n}_{s}\|_{H_{1/2}}^{-2(p+1)/\gamma_1} ] \, 
									n^{-(p+1)}
							} 
							{T^{-(p+1)}} 
				\leq{}
						\sup_{s \in [0,T]}
						\frac
							{
								2\E[ \|Y^{N,n}_{s}\|_{H_{1/2}}^{-2(p+1)/\gamma_1} ] \, 
									n^{-p}
							} 
							{T^{-(p+1)}}.
			\end{split}
		\end{align}
		Therefore, \eqref{eq: Lq bound for X} and \eqref{eq: approx of tau in Lp}
		yield that
		there exists a $C \in (0,\infty) $ such that for all $n \in \N \cap [T,\infty)$
		it holds that
		\begin{equation}
		\label{eq: approx after tau}
			\begin{split}
					&\sup_{N \in \N} \sup_{t\in [0,T]}
						\|
							(X^N_{t}-Y^{N,n}_{t})
							\1_{H \backslash D_n}(Y^{N,n}_{\tau_{N,n}})
						\|_{L^p(\P; H)} 
				\\ \leq{} & 
					\sup_{N \in \N} \sup_{t\in [0,T]}
						\Big(
							(
								\|
									X^N_{t}
								\|_{L^{2p}(\P; H)} 	
								+\|
									Y^{N,n}_{t}
								\|_{L^{2p}(\P; H)} 	
							)
								\|
									\1_{H \backslash D_n}(Y^{N,n}_{\tau_{N,n}})
								\|_{L^{2p}(\P; \R)} 	
						\Big)
				\\ \leq{} &
					2C
					\sup_{N \in \N}  \sup_{s\in [0,T]}
					\bigg( \bigg(
						\frac
							{
								2\E[ \|Y^{N,n}_{s}\|_{H_{1/2}}^{-2(p+1)/\gamma_1} ] \, 
									n^{-p}
							} 
							{T^{-(p+1)}} 
					\bigg)^{\nicefrac 1{2p}} \bigg)
				\leq{} 
					4 C^2 T^{(p+1)/2p}
					n^{-\nicefrac 12}.
			\end{split}
		\end{equation}
		Combining \eqref{eq: split of sum}, \eqref{eq: approx till tau}
		and \eqref{eq: approx after tau}
		completes the proof of Proposition \ref{prop: Konv result}.
		\end{proof}
	In the next lemma we give sufficient conditions on the taming function $\Pi$.
	\begin{lemma}
		\label{l: suff cond Pi}
		Let
		$(H, \langle \cdot, \cdot \rangle_H, \| \cdot \|_{H})$ and
		$(U, \langle \cdot, \cdot \rangle_U, \| \cdot \|_{U})$
		be separable $\R$-Hilbert spaces with $\# H \wedge \# U> 1$,
		let $\mathcal{J} \subseteq \N$, 
		let
		$(\tilde{e}_j)_{j \in \mathcal{J}} \subseteq U$, be an orthonormal basis of $U$,
		let $f \in C^2([0,\infty), \R)$
		satisfy that
		$
			f(0)=1
		$,  
		$
			\sup_{t\in (0,\infty)} |(t \vee 1) f(t^2)| <\infty
		$, 
		$
			\sup_{t\in (0,\infty)} |(t \vee 1)  f'(t^2)| <\infty
		$, 
		$
			\sup_{t\in (0,\infty)} |t  f''(t)| <\infty,
		$
		let $R,\gamma \in [0,\infty)$,
		$C \in (0,\infty)$,
		 $\gamma_1 \in [0,\nicefrac 12)$,
		let $\Pi^n \colon H \to H$, $n\in \N$,
		satisfy for all $x\in H$ and all $n\in \N$ that
		\begin{equation}
		\label{eq: def of Pi}
			\Pi^n(x)={} f(\nicefrac 12 \|x\|^2_H \cdot n^{-2\gamma}) \, x,
		\end{equation}
		and let $L^n \colon H \to L(H,H)$, $n\in \N$,
		satisfy for all $x,y\in H$ and all $n\in \N$ that
		\begin{equation}
			L^n(x)y ={} f(\nicefrac 12 \|x\|^2_H \cdot n^{-2\gamma}) \, y.
		\end{equation}
		Then there exists a $c \in (0,\infty)$
		such that for all $x \in H$, $z \in HS(U,H)$, $n \in \N$,
		$r \in [0,R]$,
		and all $A \in L(H,H)$ with $\inf_{x \in H\backslash \{0\}} \tfrac{\|Ax\|_H}{\|x\|_H} \geq C$
		and with $ \forall x\in H \colon \langle Ax,x \rangle_H = \langle x, Ax \rangle_H <0$
		it holds that
		\begin{align}
			&\Pi^n\in \C^2(H,H), \\
			&\| \Pi^n(x) \|_H 
				\leq c \, n^{\gamma},\\
			&\| (-A)^r \Pi^n(x) \|_{H} 
				\leq c  \| (-A)^r  x\|_{H},\\
			&\| (-A)^r ((D \Pi^n)(x)- \id_H) (-A)^{-r} \|_{L(H,H)}
			\leq
				c \|(-A)^r x\|_{H}, \\
			&\|(-A)^{\gamma_1}((D \Pi^n)(x)- L^n(x))  (-A)^{-r} \|_{L(H,H)}
			\leq
				c \|  (-A)^{r+\gamma_1} x\|_{H}, \\
			&\big\|(-A)^{r} \big( (D \Pi^n)(x) (A x)-A \Pi^n(x) \big) \big\|_{H}
			\leq
				c \, n^{-2\gamma} 
					\|(-A)^{r+\nicefrac 12} x\|_{H} (\|(-A)^{r+\nicefrac 12} x\|_{H} +1) (\|(-A)^{r} x\|_{H}+1), \\
			&\Big\|
				(-A)^{r}
				\sum_{i \in \mathcal{J}}
					(D^2 \Pi^n)(x)\big(
						z \tilde{e}_i,
						z \tilde{e}_i \big)
				\Big\|_{H}
			\leq 
				c \|(-A)^{r} x\|_{H} \cdot \|(-A)^{r} z \|^2_{HS(U,H)}.
		\end{align}
	\end{lemma}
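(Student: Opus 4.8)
The plan is to verify each of the seven claimed bounds by direct differentiation of the scalar-valued prefactor $g_n(x) = f(\tfrac12\|x\|_H^2 n^{-2\gamma})$ and careful bookkeeping of the powers of $(-A)$. First I would record the derivatives of $\Pi^n$. Writing $\phi_n(x) = \tfrac12\|x\|_H^2 n^{-2\gamma}$, we have $(D\phi_n)(x)h = n^{-2\gamma}\langle x,h\rangle_H$ and $(D^2\phi_n)(x)(h,k)=n^{-2\gamma}\langle h,k\rangle_H$, so that
\begin{equation}
(D\Pi^n)(x)h = f(\phi_n(x))h + f'(\phi_n(x))\,n^{-2\gamma}\langle x,h\rangle_H\,x,
\end{equation}
and $(D^2\Pi^n)(x)(h,k)$ is a sum of terms each carrying a factor $n^{-2\gamma}$ or $n^{-4\gamma}$, an evaluation of $f$, $f'$, or $f''$ at $\phi_n(x)$, and at most three ``vectors'' among $x,h,k$ paired via $\langle\cdot,\cdot\rangle_H$. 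The $\C^2$ claim is then immediate from $f\in\C^2$. The key quantitative input is the hypothesis that the maps $t\mapsto (t\vee1)f(t^2)$, $t\mapsto(t\vee1)f'(t^2)$, and $t\mapsto t\,f''(t)$ are bounded; setting $t=\|x\|_H n^{-\gamma}$ this gives, for a constant $M<\infty$ depending only on $f$, the scalar bounds $|f(\phi_n(x))|\le M(\|x\|_H n^{-\gamma}\vee1)^{-1}$ when $\|x\|_H n^{-\gamma}\ge1$ (and $\le M$ always since $f(0)=1$ is incorporated), $\|x\|_H n^{-\gamma}|f'(\phi_n(x))|\le M$, and $\phi_n(x)|f''(\phi_n(x))|\le M$. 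These three inequalities are exactly what is needed to turn each occurrence of $f,f',f''$ together with the accompanying powers of $\|x\|_H$ and $n^{-2\gamma}$ into something bounded.

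Next I would dispatch the easy bounds. For $\|\Pi^n(x)\|_H = |f(\phi_n(x))|\,\|x\|_H$: if $\|x\|_H\le n^\gamma$ this is $\le M\|x\|_H\le Mn^\gamma$, and if $\|x\|_H> n^\gamma$ then $|f(\phi_n(x))|\le M(\|x\|_H n^{-\gamma})^{-1}$, giving $\|\Pi^n(x)\|_H\le Mn^\gamma$; hence $c=M$ works. For $\|(-A)^r\Pi^n(x)\|_H$, since $\Pi^n(x)=f(\phi_n(x))x$ with $f(\phi_n(x))$ a scalar, we have $\|(-A)^r\Pi^n(x)\|_H = |f(\phi_n(x))|\,\|(-A)^r x\|_H\le M\|(-A)^r x\|_H$. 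For $(-A)^r((D\Pi^n)(x)-\id_H)(-A)^{-r}$: subtracting $\id_H$ kills nothing in the second term but changes the first term's scalar from $f(\phi_n(x))$ to $f(\phi_n(x))-1$; I would use $|f(t)-1|\le \|f'\|_{\infty,[0,t]}\,t$ together with the $t f''$-type bound, or more directly bound $|f(\phi_n(x))-1|+|f'(\phi_n(x))|n^{-2\gamma}\|x\|_H^2$ by $c\,n^{-2\gamma}\|x\|_H^2\le c\,\|x\|_H\cdot(\|x\|_H n^{-\gamma})\cdot n^{-\gamma}$, and since $\|x\|_H n^{-\gamma}|f'(\phi_n(x))|\le M$ one gets the factor $\|x\|_H$; the conjugation by $(-A)^{\pm r}$ passes through because everything is a scalar multiple of $x$ or of the rank-one operator $h\mapsto\langle x,h\rangle_H x$, and $\langle x,\cdot\rangle_H$ composed with $(-A)^{-r}$ then $(-A)^r\langle x,\cdot\rangle_H$-applied is estimated by $\|(-A)^r x\|_H\|(-A)^{-r}\cdot\|$. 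The $L^n$-comparison bound is similar: $(D\Pi^n)(x)-L^n(x)$ is exactly the rank-one term $f'(\phi_n(x))n^{-2\gamma}\langle x,\cdot\rangle_H x$, so $\|(-A)^{\gamma_1}((D\Pi^n)(x)-L^n(x))(-A)^{-r}\|_{L(H,H)}\le |f'(\phi_n(x))|n^{-2\gamma}\|(-A)^{\gamma_1}x\|_H\|(-A)^{-r}x\|_H$, and writing $n^{-2\gamma}\|(-A)^{-r}x\|_H\le n^{-\gamma}\cdot(n^{-\gamma}\|x\|_H)$ with the standard interpolation $\|x\|_H\le\|(-A)^{r+\gamma_1}x\|_H$-type estimate (valid since $(-A)$ has spectrum bounded away from $0$ under the stated hypothesis on $A$), one absorbs $f'$ against $n^{-\gamma}\|x\|_H$ and is left with $c\|(-A)^{r+\gamma_1}x\|_H$.

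The two genuinely involved bounds are the commutator $(-A)^r((D\Pi^n)(x)(Ax)-A\Pi^n(x))$ and the second-derivative/trace term. For the commutator, compute
\begin{equation}
(D\Pi^n)(x)(Ax)-A\Pi^n(x) = f(\phi_n(x))Ax + f'(\phi_n(x))n^{-2\gamma}\langle x,Ax\rangle_H x - f(\phi_n(x))Ax = f'(\phi_n(x))n^{-2\gamma}\langle x,Ax\rangle_H\,x,
\end{equation}
so the leading-order cancellation is exact and only the rank-one correction survives; then $\|(-A)^r(\cdot)\|_H = |f'(\phi_n(x))|n^{-2\gamma}|\langle x,Ax\rangle_H|\,\|(-A)^r x\|_H$, and using $|\langle x,Ax\rangle_H| = \langle(-A)x,x\rangle_H = \|(-A)^{1/2}x\|_H^2$ together with $n^{-2\gamma}|f'(\phi_n(x))|\le M n^{-\gamma}/\|x\|_H$ (when $\|x\|_H n^{-\gamma}\ge 1$; the complementary case gives $n^{-2\gamma}$ directly) one obtains the bound $c\,n^{-2\gamma}\|(-A)^{1/2}x\|_H^2\,\|(-A)^r x\|_H$, which is dominated by the claimed $c\,n^{-2\gamma}\|(-A)^{r+1/2}x\|_H(\|(-A)^{r+1/2}x\|_H+1)(\|(-A)^rx\|_H+1)$ after using $\|(-A)^{1/2}x\|_H\le\|(-A)^{r+1/2}x\|_H$ and $\|(-A)^rx\|_H\le\|(-A)^{r+1/2}x\|_H\cdot$(something) or just bounding crudely — note the polynomial slack on the right makes this comfortable. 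For the trace term, expand $(D^2\Pi^n)(x)(z\tilde e_i,z\tilde e_i)$; each of its summands is a scalar ($f$, $f'$, or $f''$ evaluated at $\phi_n(x)$, times a power of $n^{-2\gamma}$) times either $\|z\tilde e_i\|_H^2\,x$, or $\langle x,z\tilde e_i\rangle_H\,z\tilde e_i$, or $\langle x,z\tilde e_i\rangle_H^2\,x$; summing over $i\in\mathcal J$ turns $\sum_i\|z\tilde e_i\|_H^2$ into $\|z\|_{HS(U,H)}^2$ and $\sum_i\langle x,z\tilde e_i\rangle_H^2$ into $\|z^*x\|_U^2\le\|z\|_{HS(U,H)}^2\|x\|_H^2$, so $\|(-A)^r\sum_i(D^2\Pi^n)(x)(z\tilde e_i,z\tilde e_i)\|_H$ is a finite combination of terms like $|f^{(j)}|\cdot n^{-2\gamma k}\cdot\|z\|_{HS}^2\cdot\|x\|_H^{2k}\cdot\|(-A)^rx\|_H$, each absorbed by pairing $n^{-2\gamma}\|x\|_H^2$-blocks against the $(t\vee1)f$-, $(t\vee1)f'$-, and $tf''$-bounds to leave $c\|(-A)^rx\|_H\|z\|_{HS}^2$; finally replace $\|z\|_{HS(U,H)}$ by $\|(-A)^rz\|_{HS(U,H)}$ using that $(-A)$ is bounded below, as in the other bounds. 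The main obstacle throughout is purely organizational: the second-derivative formula has several terms and one must, for each, correctly match the number of powers of $n^{-2\gamma}$ and of $\|x\|_H$ to the available $f,f',f''$ decay so that nothing grows in $n$ or in $\|x\|_H$; once the three ``$f$-decay'' inequalities are extracted at the outset, every individual estimate is a one-line application of Cauchy--Schwarz and the interpolation-space inequality $\|(-A)^ax\|_H\le(\inf_i|\lambda_i|)^{a-b}\|(-A)^bx\|_H$ for $a\le b$.
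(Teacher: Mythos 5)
Your proposal takes the same route as the paper: differentiate $\Pi^n$ explicitly, observe that the ``commutator'' term $(D\Pi^n)(x)(Ax)-A\Pi^n(x)$ collapses to the rank-one correction $f'(\phi_n(x))\,n^{-2\gamma}\langle x,Ax\rangle_H\,x$, and then bound each of the seven claims term by term using the decay hypotheses on $f,f',f''$ together with $\|x\|_H\leq C^{-r}\|(-A)^rx\|_H$. The commutator cancellation, the trace-term estimate (summing over $i$ to produce $\|z\|_{HS}^2$ and $\|z^*x\|_U^2$), and the easy norm bounds all agree with the paper.

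There is, however, a gap in your treatment of $(-A)^r((D\Pi^n)(x)-\id_H)(-A)^{-r}$. The scalar part requires the uniform bound $|f(\phi_n(x))-1|\leq c\,\|x\|_H$. Your ``direct'' argument first bounds $|f(\phi_n(x))-1|\leq c\,n^{-2\gamma}\|x\|_H^2$ (a mean-value estimate costing $\sup|f'|$) and then writes $c\,n^{-2\gamma}\|x\|_H^2=c\,\|x\|_H\cdot(\|x\|_Hn^{-\gamma})\cdot n^{-\gamma}$, attempting to absorb the dangerous factor $\|x\|_Hn^{-\gamma}$ by invoking $\|x\|_Hn^{-\gamma}|f'(\phi_n(x))|\leq M$. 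But that decay of $f'$ has already been spent in producing the quadratic bound; it cannot be used a second time to cancel the extra $\|x\|_Hn^{-\gamma}$, which is unbounded. As written, $n^{-2\gamma}\|x\|_H^2\leq c'\|x\|_H$ is simply false for large $\|x\|_H$. The paper resolves this with an explicit dichotomy: for $\|x\|_H\leq\eps$ (where $\eps$ is fixed via the limit $\lim_{x\to 0}\tfrac{f(\phi_n(x))-1}{\phi_n(x)}=f'(0)$) one gets $|f(\phi_n(x))-1|\leq(|f'(0)|+1)\|x\|_H$; for $\|x\|_H\geq\eps$ one uses $|f(\phi_n(x))-1|\leq 1+\sup_t|f(t)|\leq\eps^{-1}(1+\sup_t|f(t)|)\|x\|_H$. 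You should replace your incoherent chain with this (or an equivalent) case split; the rest of the argument then goes through as you describe.
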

	\begin{proof}
		First note that  \eqref{eq: def of Pi}
		and $f \in \C^2([0,\infty),\R)$ imply
		for all $n \in \N$ and all $x,y,z \in H$ that $\Pi^n \in \C^2(H,H)$
		and that
		\begin{align}
			\label{eq: DPi}
					&(D \Pi^n)(x)(y) 
				={} 
					f(\nicefrac 12 \|x\|^2_H \cdot n^{-2\gamma}) y
					+n^{-2\gamma} x \langle x, y \rangle_H f'(\nicefrac 12 \|x\|^2_H \cdot n^{-2\gamma}),
			\\ 
			\label{eq: D^2Pi}
			\begin{split}
					&(D^2 \Pi^n)(x)(y,z) 
				\\ ={} & 
					n^{-2\gamma} y \langle x, z \rangle_H f'(\nicefrac 12 \|x\|^2_H \cdot n^{-2\gamma})
					+n^{-2\gamma} z \langle x, y \rangle_H f'(\nicefrac 12 \|x\|^2_H \cdot n^{-2\gamma})
				\\ &
					+n^{-2\gamma} x \langle z, y \rangle_H f'(\nicefrac 12 \|x\|^2_H \cdot n^{-2\gamma})
					+n^{-4\gamma} x \langle x, y \rangle_H \langle x, z \rangle_H 
					f''(\nicefrac 12 \|x\|^2_H \cdot n^{-2\gamma})
			\\ ={} & 
					n^{-2\gamma} f'(\nicefrac 12 \|x\|^2_H \cdot n^{-2\gamma}) \big(
						y \langle x, z \rangle_H 
						+z \langle x, y \rangle_H
						+x \langle y, z \rangle_H
					\big)
				\\ &
					+n^{-4\gamma} x \langle x, y \rangle_H \langle x, z \rangle_H 
					f''(\nicefrac 12 \|x\|^2_H \cdot n^{-2\gamma}).
			\end{split}
		\end{align}
		Next note, that \eqref{eq: def of Pi}
		implies 
		for all $x \in H$ that
		\begin{align}
		\label{eq: H bound}
				\|\Pi^n(x) \|_{H}
			= 
				|f(\nicefrac 12 \|x\|^2_H \cdot n^{-2\gamma})| (\|x\|_H^2 n^{-2\gamma})^{\nicefrac 12}
				n^{\gamma}
			\leq
				2n^\gamma \sup_{t\in (0,\infty)} |t \cdot f(t^2)|
		\end{align}
		and that 	for all $r \in [0,R]$
		and all $A \in L(H,H)$ with $\inf_{x \in H\backslash \{0\}} \tfrac{\|Ax\|_H}{\|x\|_H} \geq C$
		and with $ \forall x\in H \colon \langle Ax,x \rangle_H = \langle x, Ax \rangle_H <0$
		it holds that
		\begin{align}
		\label{eq: H r bound}
				\|(-A)^{r} \Pi^n(x) \|_{H} 
			=
				f(\nicefrac 12 \|x\|^2_H \cdot n^{-2\gamma}) \| (-A)^{r} x\|_{H} 
			\leq
				\|(-A)^{r} x\|_{H} \sup_{t \in (0,\infty)} |f(t)|.
		\end{align}
		Moreover, the assumption $f(0)=1$ implies that 
		\begin{equation}
			\lim_{x \to 0}
				\frac
					{f(\nicefrac 12 \|x\|^2_H \cdot n^{-2\gamma})-1}
					{\nicefrac 12 \|x\|^2_H \cdot n^{-2\gamma}}
			=
				f'(0).
		\end{equation}
		Thus,
		there
		exists an $\eps \in (0,1]$, which we fix for the rest of the proof, such that
		for all $x \in H$ with $\|x \|_H \leq \eps$ it holds that
		\begin{align}
		\label{eq: f-1 first diff}
				|f(\nicefrac 12 \|x\|^2_H \cdot n^{-2\gamma})-1|
			\leq
				\big( |f'(0)| + |f'(0)| +2) \nicefrac 12 \|x\|^2_H \cdot n^{-2\gamma}
			\leq
				(|f'(0)| +1) \|x\|_H.
		\end{align}
		In addition, for all $x \in H$ with $\|x \|_H \geq \eps$ it holds that
		\begin{align}
				|f(\nicefrac 12 \|x\|^2_H \cdot n^{-2\gamma})-1|
			\leq
				1+\sup_{t\in (0,\infty)} |f(t)|
			\leq
				 \eps^{-1} (1+\sup_{t\in (0,\infty)} |f(t)|) \|x\|_H.
		\end{align}
		Combining \eqref{eq: DPi}, \eqref{eq: f-1 first diff}, and \eqref{eq: f-1 2nd diff}
		shows 
		for all $x, y \in H$, 	$r \in [0,R]$,
		and all $A \in L(H,H)$ with $\inf_{x \in H\backslash \{0\}} \tfrac{\|Ax\|_H}{\|x\|_H} \geq C$
		and with $ \forall x\in H \colon \langle Ax,x \rangle_H = \langle x, Ax \rangle_H <0$
		that
		\begin{align}
		\label{eq: f-1 2nd diff}
			\begin{split}
					&\|(-A)^r ((D \Pi^n)(x)(y)- y)\|_{H}
				\\ \leq{} &
					|f(\nicefrac 12 \|x\|^2_H \cdot n^{-2\gamma})-1| \cdot \|(-A)^r y\|_{H}
						+n^{-2\gamma} \|x\|_H\|(-A)^r x\|_{H} \|y\|_H  |f'(\nicefrac 12 \|x\|^2_H \cdot n^{-2\gamma})|
				\\ \leq{} &
					\Big(
						\eps^{-1} (1+\sup_{t\in (0,\infty)} |f(t)|) 
						+|f'(0)|+1
					\Big)
						\|x\|_H \cdot \| (-A)^r y\|_{H}
			\\ &
						+(n^{-2\gamma} \|x\|_H^2)^{\nicefrac 12} n^{-\gamma}
							\|y\|_H \|(-A)^r x\|_{H} |f'(\nicefrac 12 \|x\|^2_H \cdot n^{-2\gamma})|
				\\ \leq{} &
					\Big(
						\eps^{-1} (1+\sup_{t\in (0,\infty)} |f(t)|) 
						+|f'(0)|+1
						+2\sup_{t\in (0,\infty)} |t f'(t^2)|
					\Big)
					C^{-r}
						\|(-A)^r x\|_{H} \cdot \|(-A)^r y\|_{H}
				\\ \leq{} &
					\Big(
						\eps^{-1} (1+\sup_{t\in (0,\infty)} |f(t)|) 
						+|f'(0)|+1
						+2\sup_{t\in (0,\infty)} |t f'(t^2)|
					\Big)
					(1\vee C^{-R})
						\|(-A)^r x\|_{H} \cdot \|(-A)^r y\|_{H}.
			\end{split}
		\end{align}
		Analogously it follows for all $x, y \in H$, $r \in [0,R]$,
		and all $A \in L(H,H)$ with $\inf_{x \in H\backslash \{0\}} \tfrac{\|Ax\|_H}{\|x\|_H} \geq C$
		and with $ \forall x\in H \colon \langle Ax,x \rangle_H = \langle x, Ax \rangle_H <0$ that
			\begin{align}
		\label{eq: DPi - L est}
			\begin{split}
					&\|(-A)^{r+\gamma_1} ((D \Pi^n)(x)(y)- L(x)y)\|_{H}
				\\ ={} &
					n^{-2\gamma} \|(-A)^{r+\gamma_1} x\|_{H}
						\langle x, y \rangle_H f'(\nicefrac 12 \|x\|^2_H \cdot n^{-2\gamma})
				\\ \leq{} &
					(n^{-2\gamma} \|x\|_H^2)^{\nicefrac 12} n^{-\gamma}
							\|y\|_H \|(-A)^{r+\gamma_1} x\|_{H} |f'(\nicefrac 12 \|x\|^2_H \cdot n^{-2\gamma})|
				\\ \leq{} &
					2\sup_{t\in (0,\infty)} |t f'(t^2)| \cdot
						\|(-A)^{r+\gamma_1} x\|_{H} \cdot \|y\|_{H}.
			\end{split}
		\end{align}
		Next note, that \eqref{eq: DPi} verifies for all $x \in H$, $r \in [0,R]$,
		and all $A \in L(H,H)$ with $\inf_{x \in H\backslash \{0\}} \tfrac{\|Ax\|_H}{\|x\|_H} \geq C$
		and with $ \forall x\in H \colon \langle Ax,x \rangle_H = \langle x, Ax \rangle_H <0$ that
		\begin{align}
			\begin{split}
			\label{eq: D Pi A bound}
					&\|(-A)^{r} ((D \Pi^n)(x) (A x)-A \Pi^n(x))\|_{H}
				\\ ={} &
					n^{-2\gamma} \|(-A)^{r} x\|_{H} \cdot \| (-A)^{\nicefrac 12} x\|^2_{H} 
						|f'(\nicefrac 12 \|x\|^2_H \cdot n^{-2\gamma})|
				\\ \leq{} &
					n^{-2\gamma} C^{-r} (\|(-A)^{r} x\|_{H}+1) (\|(-A)^{r+\nicefrac 12} x\|_{H} +1) 
						\|x\|_{H_{1/2+r}}  
						\sup_{t\in (0,\infty) } |f'(t)|
				\\ \leq{} &
					n^{-2\gamma} (1\vee C^{-R}) (\|(-A)^{r} x\|_{H}+1) (\|(-A)^{r+\nicefrac 12} x\|_{H} +1) 
						\|x\|_{H_{1/2+r}}  
						\sup_{t\in (0,\infty) } |f'(t)|.
			\end{split}
		\end{align}
		Finally \eqref{eq: D^2Pi} yields for all $x \in H$, $z \in HS(U,H)$, $r \in [0,R]$,
		and all $A \in L(H,H)$ with $\inf_{x \in H\backslash \{0\}} \tfrac{\|Ax\|_H}{\|x\|_H} \geq C$
		and with $ \forall x\in H \colon \langle Ax,x \rangle_H = \langle x, Ax \rangle_H <0$ that
		\begin{align} \begin{split}
		\label{D^2 Pi bound}
				&\Big\|
					(-A)^{r}
					\sum_{i \in \mathcal{J}}
						(D^2 \Pi^n)(x)\big(
							z \tilde{e}_i,
							z \tilde{e}_i \big)
				\Big\|_{H}
			\\ \leq{} &
					\sum_{i \in \mathcal{J}} \Big(
						n^{-2\gamma} |f'(\nicefrac 12 \|x\|^2_H \cdot n^{-2\gamma})| \big(
						2\|(-A)^{r} z \tilde{e}_i\|_{H} \langle x, z \tilde{e}_i \rangle_H 
						+\|(-A)^{r} x\|_{H} \langle z \tilde{e}_i, z \tilde{e}_i \rangle_H
					\big)
				\\ & \quad
					+n^{-4\gamma} \|(-A)^{r} x\|_{H} \langle x, z \tilde{e}_i \rangle_H \langle x, z \tilde{e}_i \rangle_H 
					|f''(\nicefrac 12 \|x\|^2_H \cdot n^{-2\gamma})|
					\Big)
			\\ \leq{} &
					\sum_{i \in \mathcal{J}} \Big(
						3 C^{-2r}
						|f'(\nicefrac 12 \|x\|^2_H \cdot n^{-2\gamma})| \cdot
						\|(-A)^{r} z \tilde{e}_i\|^2_{H} \| (-A)^{r} x\|_{H} 
				\\ & \quad
					+C^{-2r} (n^{-2\gamma} \|x\|^2_H) \|(-A)^{r} x\|_{H} 
					\|(-A)^{r} z \tilde{e}_i \|^2_{H}
					|f''(\nicefrac 12 \|x\|^2_H \cdot n^{-2\gamma})|
					\Big)
			\\ \leq{} &
				(1 \vee C^{-2R})
				\big(
					3 (\sup_{t \in (0, \infty)}|f'(t)|)
					+2 \sup_{t \in (0, \infty)}|t f''(t)|
				\big)
				\|(-A)^{r} x\|_{H} \|(-A)^{r} z\|^2_{HS(U,H)}.
		\end{split} \end{align}
		Combining \eqref{eq: H bound}, \eqref{eq: H r bound},
		\eqref{eq: f-1 2nd diff}, \eqref{eq: DPi - L est},
		\eqref{eq: D Pi A bound},  and
		\eqref{D^2 Pi bound}
		finishes the proof of Lemma \ref{l: suff cond Pi}.
	\end{proof}
  
In the next corollary we combine Proposition \ref{prop: Konv result}
and 
Corollary \ref{cor: spatial conv}
to establish temporal convergence rate $1/2$ and spatial convergence rate $1$.
\begin{corollary}\label{cor:comb_res}
	Assume Setting \ref{sett 3},
	let $\gamma_1 = \nicefrac 14$,
  let $\xi \in H_{1/2}$, 
	let $f \in \C^2([0,\infty),\R)$
	satisfy for all $x \in [0,\infty)$ that
	$f(x) = \tfrac{1}{1+2x}$, let
	$\Pi^n \colon H \to H$, $n\in \N$,
	let
	satisfy for all $x\in H$ and all $n\in \N$ that
	\begin{equation}
		\Pi^n(x)={} f(\nicefrac 12 \|x\|^2_H \cdot n^{-1/2}) \, x,
	\end{equation}
	let $Y^{N,n}_0 = P_N(\xi)$, $N,n \in \N$,
	let $X\colon[0,T]\times\Omega\to D((-A)^{\frac{1}{2}})$ 
	be an adapted stochastic process with continuous sample paths such that for all $t\in[0,T]$
    it holds a.s.\ that
    \begin{equation}  \begin{split}
      X_t=e^{tA}\xi+\int_0^t e^{(t-s)A}F(X_s) \ud s+\int_0^t e^{(t-s)A} B(X_s) \dWs.
    \end{split}     \end{equation}
  Then
     for all $\eps,p\in(0,\infty)$ there exists $C\in\R$ such that for all $N,n\in\N$ it holds that
     \begin{equation}  \begin{split}
       \sup_{t\in[0,T]}\Big(\E\Big[\big\|X_t-Y_t^{N,n}\big\|^p_H\Big]\Big)^{\frac{1}{p}}\leq
       C\big(N^{\eps-1}+n^{-\frac{1}{2}}\big).
     \end{split}     \end{equation}
\end{corollary}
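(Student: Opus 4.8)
The plan is to combine the temporal convergence estimate from Proposition~\ref{prop: Konv result} with the spatial convergence estimate from Corollary~\ref{cor: spatial conv} via the triangle inequality, using the Galerkin approximations $X^N$ as an intermediary. First I would verify that the taming functions $\Pi^n$ defined through $f(x)=\tfrac{1}{1+2x}$ satisfy all the hypotheses imposed on $\Pi^n$ in Settings~\ref{sett 3} and in Propositions~\ref{prop: expo Momente Burger} and~\ref{prop: Konv result}. This is precisely what Lemma~\ref{l: suff cond Pi} delivers: one checks that this particular $f$ lies in $\C^2([0,\infty),\R)$, that $f(0)=1$, and that the three boundedness conditions $\sup_{t\in(0,\infty)}|(t\vee 1)f(t^2)|<\infty$, $\sup_{t\in(0,\infty)}|(t\vee 1)f'(t^2)|<\infty$, and $\sup_{t\in(0,\infty)}|tf''(t)|<\infty$ hold (all of which are immediate from $f(x)=(1+2x)^{-1}$, $f'(x)=-2(1+2x)^{-2}$, $f''(x)=8(1+2x)^{-3}$). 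Applying Lemma~\ref{l: suff cond Pi} with $\gamma\curvearrowleft\nicefrac14$, $r\curvearrowleft 0$ and $r\curvearrowleft\nicefrac12$ as needed (and with $A\curvearrowleft A_N$, whose eigenvalues are bounded away from $0$ uniformly in $N$ since $|\lambda_i|=\pi^2 i^2\geq\pi^2$), one obtains the structural bounds on $\|\Pi^n(x)\|_H$, $\|\Pi^n(x)\|_{H_{1/2}}$, $\|(D\Pi^n)(x)-\id_H\|$, $\|(-A)^{\gamma_4}((D\Pi^n)(x)-L^n(x))\|$, $\|(D\Pi^n)(x)(Ax)-A\Pi^n(x)\|_H$, and the $D^2\Pi^n$-term; here $\gamma_2=\nicefrac14$ matches the factor $n^{-1/2}=n^{-2\gamma}$ in the definition of $\Pi^n$, so the bound $\|\Pi^n(x)\|_H\leq c_2(n^{-\gamma_2}\wedge\|x\|_H)$ holds with $\gamma_2=\gamma_1=\nicefrac14$.

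Next I would invoke Proposition~\ref{prop: Konv result} with $\gamma_1=\nicefrac14$, $\gamma_2=\nicefrac14$, $\gamma_3=\nicefrac12$, and an appropriate $\gamma_4\in(0,\nicefrac12)$ (e.g.\ $\gamma_4=\nicefrac14$). Since $Y^{N,n}_0=P_N(\xi)$ and $X^N_0=P_N(\xi)$ (the Galerkin process started from $P_N(\xi)$), the initial-error term $\|X^N_0-Y^{N,n}_0\|_{L^p(\P;H)}$ vanishes, so for every $r\in(0,p)$ there is a constant with $\sup_{t\in[0,T]}\|X^N_t-Y^{N,n}_t\|_{L^r(\P;H)}\leq Cn^{-1/2}$ uniformly in $N$ and in $n\in\N\cap[T,\infty)$. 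Simultaneously, Corollary~\ref{cor: spatial conv} gives $\sup_{t\in[0,T]}\|X^0_t-X^N_t\|_{L^r(\P;H)}\leq CN^{-1}$, where $X^0$ is the Galerkin process with $P_0=\id_H$, i.e.\ the mild solution $X$ of the full equation with initial value $\xi$ (note $\xi\in H_{1/2}$ so the hypotheses of Corollary~\ref{cor: spatial conv} and Lemma~\ref{l: X lp H1/2 bound} are met). Combining these two estimates through $\|X_t-Y^{N,n}_t\|_{L^r}\leq\|X_t-X^N_t\|_{L^r}+\|X^N_t-Y^{N,n}_t\|_{L^r}$ yields $\sup_{t\in[0,T]}\|X_t-Y^{N,n}_t\|_{L^r(\P;H)}\leq C(N^{-1}+n^{-1/2})$ for all $N\in\N$ and all $n\in\N\cap[T,\infty)$.

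It then remains to upgrade this to the claimed estimate for arbitrary $p\in(0,\infty)$ and to extend it to the finitely many $n<T$. For the first point, given $p\in(0,\infty)$ one applies the above with $r=p$ whenever $p<\infty$ is not an obstruction; but since Proposition~\ref{prop: Konv result} requires $r\in(0,p')$ for the parameter $p'\in[4,\infty)$ appearing there, and since all the hypotheses on $\Pi^n$ hold for every $p'$, one simply chooses $p'>p$ large enough and sets $r=p$. (Here the $\eps$ in the statement is harmless slack: $N^{\eps-1}\geq N^{-1}$, so the stronger bound $N^{-1}$ already implies the stated one; alternatively, the $\eps$ absorbs the logarithmic-type considerations that never actually arise for the spatial rate, which is genuinely $1$.) For the finitely many exceptional $n<T$, note that $n$ ranges over $\{1,\dots,\lceil T\rceil-1\}$, a finite set; for each such $n$ one has $\sup_{t\in[0,T]}\|X_t-Y^{N,n}_t\|_{L^p(\P;H)}\leq \sup_{t}\|X_t\|_{L^p}+\sup_t\|Y^{N,n}_t\|_{L^p}$, and both suprema are finite — the first by the moment bounds underlying Corollary~\ref{cor: spatial conv}, the second by \eqref{t: Lp H 1/2 estimate} of Proposition~\ref{prop: expo Momente Burger} — so one enlarges $C$ by a factor $(\lceil T\rceil)^{1/2}$ to cover these cases since $n^{-1/2}\geq T^{-1/2}>0$ there. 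Finally one uses $\big(\E[\|X_t-Y^{N,n}_t\|_H^p]\big)^{1/p}=\|X_t-Y^{N,n}_t\|_{L^p(\P;H)}$ for $p\geq 1$ and Jensen's inequality for $0<p<1$ to rephrase the bound in the form stated.

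The main obstacle will be the bookkeeping in the first step: namely, carefully matching the abstract hypotheses of Proposition~\ref{prop: Konv result} (and through it Proposition~\ref{prop: expo Momente Burger}, which in turn rests on Proposition~\ref{prop: basic exp bound} and all the auxiliary lemmas) to the concrete Burgers setting with this specific rational taming function, and in particular tracking the parameter constraints — $\gamma_1=\nicefrac14$ forces $\gamma_2\in[-\nicefrac12,\nicefrac12]$ and $\gamma_3\in[0,\nicefrac12]$ via $\gamma_2\in[-\gamma_1-\nicefrac12,\gamma_1+\nicefrac12]$, and one must confirm that $\gamma_2=\nicefrac14$, $\gamma_3=\nicefrac12$ are admissible — and verifying the uniform-in-$N$ ellipticity $\inf_{x\neq 0}\|A_N x\|_H/\|x\|_H\geq C$ needed to apply Lemma~\ref{l: suff cond Pi} with $A\curvearrowleft A_N$. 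Once these verifications are in place, the combination of the two rates via the triangle inequality and the handling of small $n$ and of $p<1$ are entirely routine.
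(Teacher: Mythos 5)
Your proposal follows exactly the paper's route: verify the hypotheses on $\Pi^n$ via Lemma~\ref{l: suff cond Pi} (after computing $f'$, $f''$ and checking the three sup-conditions), invoke Proposition~\ref{prop: Konv result} with the Burgers parameters to get the temporal rate $n^{-1/2}$ uniformly in $N$ (the initial-error term vanishes since both $X^N_0$ and $Y^{N,n}_0$ equal $P_N(\xi)$), and then combine with Corollary~\ref{cor: spatial conv} by the triangle inequality. Your additional bookkeeping for $p<1$, for the finitely many $n<T$, and the remark that the spatial bound is genuinely $N^{-1}$ (so $N^{\eps-1}$ is slack) is not in the paper but is correct and fills gaps the authors glossed over. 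The only slip is a minor numerical one in the reported admissible interval for $\gamma_2$ (from $\gamma_2\in[-\gamma_1-\nicefrac12,\gamma_1+\nicefrac12]$ with $\gamma_1=-\nicefrac14$ one gets $[-\nicefrac14,\nicefrac14]$, not $[-\nicefrac12,\nicefrac12]$, and the paper's ``$\gamma_1=\nicefrac14$'' must be read as $-\nicefrac14$ to be consistent with Setting~\ref{sett 3}); since the chosen $\gamma_2$ is in range either way, this is inconsequential.
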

\begin{proof}
	First denote by $X^N \colon [0,T] \times \Omega \to H$, $N \in \N$,
		adapted stochastic processes with continuous sample paths
		satisfying for all $N \in \N$
		and all $t \in [0,T]$ a.s.\@ that
		\begin{equation}
					X_t^{N}
				={} 
					e^{t A_N}
					P_N(\xi)
					+
					\int^t_{0}
						e^{(t-s)A_N}
						F_N(X^{N}_{s})
					\ud s
					+
						\int^t_{0}
							e^{(t-s)A_N}
							B_N(X^{N}_{s})
						\dWs.
		\end{equation}
		Moreover, note that for all $t \in [0,\infty)$ it holds that
		\begin{align}
				f'(t) = \tfrac{-2}{(1+2t)^2},
				\qquad
				f''(t) = \tfrac{8}{(1+2t)^3},
		\end{align}
		and thus it holds that
		\begin{align}
				\sup_{t\in (0,\infty)} (
					|(t \vee 1) f(t^2)| 
					+ |(t \vee 1)  f'(t^2)|
					+|t  f''(t)|
				)
			< \infty.
		\end{align}
		Therefore, Proposition \ref{prop: Konv result} 
			(with $c_3 \curvearrowleft 1$, $\gamma_1 \curvearrowleft \nicefrac 14$,
			$\gamma_2 \curvearrowleft \nicefrac 14$,
			$\gamma_3 \curvearrowleft \nicefrac 12$,
			$\gamma_4 \curvearrowleft \nicefrac 14$)
		and
		Lemma \ref{l: suff cond Pi} 
			(with $\gamma \curvearrowleft \nicefrac 14$, $\gamma_1 \curvearrowleft \nicefrac 14$,
			$r\curvearrowleft \nicefrac 12$ resp. $r\curvearrowleft 0$)
		prove that for all $p\in (0,\infty)$ there exists a $C \in (0,\infty)$ such that
		\begin{align}
		\nonumber 
			\sup_{t\in[0,T]} \sup_{N\in\N}\Big(\E\Big[\big\|X^N_t-Y_t^{N,n}\big\|^p_H\Big]\Big)^{\frac{1}{p}}
		\leq{} 
			C n^{-\nicefrac 12}.
	\end{align}
	Hence, Corollary \ref{cor: spatial conv}
	ensures that for all $p, \eps \in (0,\infty)$ there exists a $C \in (0,\infty)$ such that
	\begin{align}
		\nonumber 
			\sup_{t\in[0,T]}\Big(\E\Big[\big\|X_t-Y_t^{N,n}\big\|^p_H\Big]\Big)^{\frac{1}{p}}
		\leq{} 
			&\sup_{t\in[0,T]}
				\Big(\E\Big[\big\|X_t-X_t^{N}\big\|^p_H\Big]\Big)^{\frac{1}{p}}
			+\sup_{t\in[0,T]} \Big(\E\Big[\big\|X^N_t-Y_t^{N,n}\big\|^p_H\Big]\Big)^{\frac{1}{p}}
		\\ \leq{} & 
			C(N^{-1}+n^{-\nicefrac 12}).
	\end{align}
  This finishes the proof of \cref{cor:comb_res}.
\end{proof}

{
\bibliographystyle{acm}
\bibliography{references.bib}

}
\end{document}